\newlength\ys
\algnewcommand\algorithmicforeach{\textbf{for each}}
\algnewcommand\algorithmicvariables{\textbf{Variables:}}
\algnewcommand\Variables{\item[\algorithmicvariables]}
\algrenewcommand\algorithmicrequire{\textbf{Input:}}
\algrenewcommand\algorithmicensure{\textbf{Output:}}
\algnewcommand\Fixedcomment[1]{\hfill\makebox[0.4\textwidth][l]{$\triangleright$ #1}}
\tikzset{curlybrace/.style={decoration=brace,decorate}}
\tikzset{trinode/.style={draw,triangle,minimum width=2.0cm}}
\tikzset{snake/.style={decorate,decoration=snake}}
\tikzset{curlybrace/.style={decoration=brace,decorate}}
\tikzset{triangle/.style={regular polygon,regular polygon sides=3}}
\tikzset{edge from parent path={(\tikzparentnode) -- (\tikzchildnode.north)}}
\newtheorem{theorem}{Theorem}[section]
\newtheorem{lemma}[theorem]{Lemma}
\newtheorem{proposition}[theorem]{Proposition}
\newtheorem{corollary}[theorem]{Corollary}
\newtheorem{observation}[theorem]{Observation}
\newtheorem{example}[theorem]{Example}
\newtheorem{definition}[theorem]{Definition}
\newcommand{\hp}{\hat{+}}
\newcommand{\hti}{\hat{\times}}
\newcommand{\cR}{\mathcal R}
\newcommand{\rmM}{\mathrm{M}}
\newcommand{\rmB}{\mathrm{B}}
\newcommand{\C}{\mathrm{C}}
\newcommand{\T}{\mathrm{T}}
\newcommand{\sfM}{\mathsf{M}}
\newcommand{\sfA}{\mathsf{A}}
\newcommand{\sfB}{\mathsf{B}}
\newcommand{\A}{\mathsf{A}}
\newcommand{\B}{\mathsf{B}}
\newcommand{\M}{\mathsf{M}}
\newcommand{\sfT}{\mathsf{T}}
\newcommand{\0}{\mathbb{0}}
\newcommand{\1}{\mathbb{1}}
\newcommand{\pos}{\mathrm{pos}}
\newcommand{\size}{\mathrm{size}}
\newcommand{\rk}{\mathrm{ar}}
\newcommand{\wcl}{\mathrm{wcl}}
\newcommand{\rmPT}{{\mathrm{PT}}}
\newcommand{\rmPTid}{{\mathrm{PT}_{\mathrm{id}}}}
\newcommand{\rmST}{{\mathrm{ST}}}
\newcommand{\rmSTid}{{\mathrm{ST}_{\mathrm{id}}}}
\newcommand{\sfST}{\mathsf{ST}}
\newcommand{\rmAC}{\mathrm{AC}}
\newcommand{\XAC}{X/{\rmAC}}
\newcommand{\mrd}{\mathrm{rd}}
\newcommand{\mrid}{\mathrm{id}}
\newcommand{\midrd}{\mathrm{id,rd}}
\newcommand{\sfFS}{\mathsf{FS}}
\newcommand{\rmFS}{\mathrm{FS}}
\newcommand{\sfFB}{\mathsf{FB}}
\newcommand{\rmFB}{\mathrm{FB}}
\newcommand{\sfNsb}{\mathsf{N}_{\mathrm{sb}}}
\newcommand{\bbNsb}{\mathbb{N}_{\mathrm{sb}}}
\newcommand{\sfNrd}{\mathsf{N}_{\mathrm{rd}}}
\newcommand{\bbNrd}{\mathbb{N}_{\mathrm{rd}}}
\newcommand{\sfBidrd}{\mathsf{B}_{\mathrm{id,rd}}}
\newcommand{\bbBidrd}{\mathbb{B}_{\mathrm{id,rd}}}
\newtheorem{theorem-rect}[theorem]{Theorem}
\newtheorem{corollary-rect}[theorem]{Corollary}
\newtheorem{lemma-rect}[theorem]{Lemma}
\newtheorem{construction-rect}[theorem]{Construction}
\newtheorem{definition-rect}[theorem]{Definition}
    \pgfmathsetlengthmacro\lw{.7pt+.5\pgflinewidth}
      \pgfmathtruncatemacro\dashnum{%
        round((\pgfdecoratedinputsegmentlength-3pt)/6pt)
      }
      \pgfmathsetmacro\dashscale{%
        \pgfdecoratedinputsegmentlength/(\dashnum*6pt + 3pt)
      }
      \pgfmathsetlengthmacro\dashunit{3pt*\dashscale}
\tikzset{small circle/.style={circle, draw=black, inner sep=0pt,outer sep=0pt, minimum size=2.5pt}}
\title{\bf \LARGE Free polynomial strong bimonoids}
\date{\today\endgraf \normalsize
  \vspace{40mm}
  \hspace*{70mm}
  }
 \author{Manfred Droste\\Leipzig University, Germany \\ \texttt{\footnotesize droste@informatik.uni-leipzig.de}\and
    Zolt\'an F\"ul\"op\footnote{Project no TKP2021-NVA-09 has been implemented with the support provided by the Ministry of Culture and Innovation of Hungary from the National Research, Development and Innovation Fund, financed under the TKP2021-NVA funding scheme.}\\ University of Szeged, Hungary\\\texttt{\footnotesize fulop@inf.u-szeged.hu}}
\begin{document}
\maketitle



\vspace{-35mm}

\begin{abstract} Recently, in weighted automata theory the weight structure of strong bimonoids has found much interest; they form a generalization of semirings and are closely related to near-semirings studied in algebra.
Here, we define polynomials over a set $X$ of indeterminates as well as an addition and a multiplication. 
We show that with these operations, they form a right-distributive strong bimonoid, that this polynomial strong bimonoid is free over $X$ in the class of all right-distributive strong bimonoids and that it is both left- and right-cancellative. We show by purely algebraic reasoning that two arbitrary terms are equivalent modulo the laws of right-distributive strong bimonoids if and only if their representing polynomials are equivalent by 
the laws of only associativity and commutativity of addition and associativity of multiplication.
We give effective procedures for constructing the representing polynomials. As a consequence, we obtain that the  equivalence of arbitrary terms modulo the laws of right-distributive strong bimonoids can be decided in exponential time. Using term-rewriting methods, we show that each term can be reduced to a unique polynomial as normal form. We also derive corresponding results for the free idempotent right-distributive polynomial strong bimonoid over $X$.
We construct an idempotent strong bimonoid which is weakly locally finite but not locally finite and show an 
application of it in weighted automata theory.
\end{abstract}

{\bf Keywords:} polynomials over semirings, free strong bimonoids, cancellation property,  \\ \indent AC-reduction, idempotency, weighted automata

{\bf AMSC:} 16Y60, 16Y30,  08A40, 08B20, 68Q45, 03D15

\section{Introduction}\label{sect:introduction}

Polynomials are fundamental in many areas of Mathematics. The coefficients of polynomials are often taken from algebraic structures like fields, rings or semirings. With suitable definitions of addition and multiplication, the polynomials then also form rings or semirings, which is crucial for their importance. Already in the last century, in algebra near-fields, which can be viewed as fields satisfying only a one-sided distributivity law, were investigated \cite{dic05, zas36}. Subsequently, a theory of near-rings \cite{pil77} and near-semirings \cite{hooroo67} was developed. Basic examples for such structures are provided by additive monoids of functions with composition as multiplication operation; also, the usual multiplication of ordinal numbers is left- but not right-distributive. Near-rings also occur in operator theory and in mathematical physics \cite{sch97}. In computer science, one-sided distributivity has been investigated intensively in Structural Operational Semantics and for process algebras, cf. \cite{acecimingmou12} for a survey and unifying approach.
Unification problems in equational theories with only one-sided distributivity have been investigated already in \cite{tidarn87}; such asymmetric unification arises naturally in symbolic analysis of cryptographic protocols, see \cite{erbesckapliu13,marmeanar15}.
Recently, in theoretical computer science weighted automata with weights from strong bimonoids were investigated. 
Strong bimonoids form an extension of semirings obtained by not requiring the distributivity laws.
For weighted automata, right-distributivity of the weight structures is important for the coincidence of different kinds of behaviors of the automata.
For a survey on this area in tree automata theory, see \cite{fulvog24}.

Polynomials with coefficients from the semiring of natural numbers (resp., the ring of integers) over a set  $X$  of 
non-commuting variables, with the standard definitions of addition and multiplication, again form a semiring (resp., a ring), 
which, in fact, is isomorphic to the free semiring (resp., ring) of all terms over  $X$; moreover, each term can be represented (modulo the semiring laws) by a polynomial. 
Similar representations by 'polynomial terms' are essential for many fields of algebra, cf. \cite{gra68}.

It is a natural question, both intrinsically and motivated by the above, to consider the effect of missing left- or right-distributivity (or both).

In this paper, we aim to develop strong bimonoids of polynomials, in particular,
with the assumption of right-distributivity and possibly idempotence of addition.
For this, we will define polynomials as congruence classes of particular terms, with suitable definitions of addition and multiplication. 

Our main results are the following; here we concentrate on the right-distributive case. 
We provide two definitions of the multiplication, an inductive and a direct one, and we show that they yield the same result and that we obtain, in particular, a right-distributive strong bimonoid of polynomials. 

We then show that this right-distributive strong bimonoid of polynomials is both left- and right-cancellative.
Here, we only have to assume that the two polynomials occurring as right factors have the same size. 
This provides a crucial difference to the standard semiring of polynomials with natural numbers (or integers) as coefficients, where the analogous result fails. It shows that in our setting we have a basically different multiplication and equality, due to the absence of left-distributivity. Our proof of this general cancellation result proceeds by an analysis of the structure of graphs associated with the polynomials.  

Next, we extend the important result that the semiring of polynomials with coefficients in the natural numbers over a set  $X$  of non-commuting variables is isomorphic to the free semiring of terms over $X$ into our setting. We will show that our strong bimonoid of polynomials over  $X$  is isomorphic to the free right-distributive strong bimonoid of terms over  $X$.  Consequently, the polynomials can be seen as concrete representations of the arbitrary terms in the free right-distributive strong bimonoid.

Moreover, we show that two arbitrary terms are equivalent modulo the laws of right-distributive strong bimonoids if and only if their representing polynomials are equivalent by the laws of only associativity and commutativity of addition resp. associativity of multiplication. In the term rewriting literature, this is regarded as an AC-reduction result.
This is usually achieved by an involved analysis of critical pairs; here we obtain it by algebraic means. 
We give effective procedures for constructing the representing polynomials. As a consequence, we obtain that the mentioned equivalence of arbitrary terms can be decided in exponential time. Similar results are obtained for a notion of simple terms and the free semiring of all terms (here, with linear time complexity for the decidability part), and for suitably defined idempotency-reduced polynomials and the free idempotent right-distributive strong bimonoid.

Furthermore, using term-rewriting methods, we show that each term can be reduced to a unique polynomial term as normal form. 
As a consequence, for the free idempotent right-distributive strong bimonoid, we obtain such a reduction with uniqueness up to additive associativity and commutativity.

Our results have applications in the theory of weighted automata. 
Recall that a classical automaton either accepts or rejects a given input, so, for a given input, it produces a yes/no- or $1$/$0$-answer. Weighted automata assign to each possible input an element from a more general weight structure; this value may reflect, e.g., the number of possible executions or the minimal or maximal amount of resources needed for the execution of the given input. 
Here, weight structures of the literature include semirings, lattices and, more generally, strong bimonoids.
A central question investigated from the beginnings of weighted automata theory is 
whether weighted automata produce finitely or infinitely many such values.
It is easy to see that if the strong bimonoid  $\sfB$  is locally finite 
(i.e., each finitely generated strong subbimonoid is finite), 
then each weighted word automaton and each weighted tree automaton over  $\sfB$ produces only finitely many values 
(cf., e.g., \cite{fulvog24}).
Recently, it was shown that if the strong bimonoid $\sfB$  satisfies a weaker local finiteness condition,
then each weighted word automaton still produces only finitely many values  (cf. \cite{drostuvog10}).
However, assuming $\sfB$  is not locally finite, even if  $\sfB$ is right-distributive, there are weighted tree automata which produce infinitely many values, cf. \cite{drofultepvog24}. 
Hence the question arises whether there exist such right-distributive weakly locally finite strong bimonoids which are not locally finite. If yes, then we have an essential difference in the computation power of weighted tree automata vs. weighted word automata.
This was answered positively in \cite{drofultepvog24}.
As indicated above, calculation of maximal or minimal values are important in various quantitative settings; therefore we wish to incorporate idempotency into our strong bimonoid. 
Using our previous results, in Section \ref{sect:wlc-strong-bimonoids}, we sharpen the mentioned result of  \cite{drofultepvog24} by constructing an idempotent and right-distributive strong bimonoid which is weakly locally finite but not locally finite.


\section{Strong bimonoids and universal algebra background}
\label{sec:preliminaries}

\paragraph{Strong bimonoids.} In this paper, we will consider algebraic structures which comprise the class of semirings and which are defined as follows.

\sloppy A \emph{strong bimonoid} \cite{drostuvog10,cirdroignvog10,drovog10,drovog12} is an algebra $\B=(B,\oplus,\otimes,\0,\1)$ such that $(B,\oplus,\0)$ is a commutative monoid, $(B,\otimes,\1)$ is a monoid, and $\0$ is annihilating with respect to $\otimes$, i.e.,  $b\otimes \0 = \0 \otimes b = \0$  for all  $b \in B$. The operations $\oplus$ and $\otimes$ are called addition and multiplication, respectively. 

A strong bimonoid $\B=(B,\oplus,\otimes,\0,\1)$  is
\begin{compactitem}
  \item \emph{commutative} if $\otimes$ is commutative,
\item \emph{left-distributive} if  
$a \otimes (b \oplus c) = (a \otimes b) \oplus (a \otimes c)$  for all  $a,b,c \in B$,
\item \emph{right-distributive} if  
$(a \oplus b) \otimes c = (a \otimes c) \oplus (b \otimes c)$  for all  $a,b,c \in B$,
\item \emph{idempotent} if  $b \oplus b = b$  for all  $b \in B$.
  \end{compactitem}

A \emph{semiring} \cite{hebwei93,gol99} is a distributive strong bimonoid, i.e., a strong bimonoid which is left-distributive and right-distributive. Clearly, a commutative right-distributive strong bimonoid is also left-distributive and hence a semiring.

We give a few examples of semirings, of strong bimonoids without or with only right- (or left-) distributivity, respectively, and of research involving one-sided distributivity.

\begin{example} \label{ex:strong bimonoids} \rm
(a) Semirings include the Boolean semiring  $(\mathbb{B},\vee,\wedge,\0,\1)$ of truth values, the semiring  $(\mathbb{N},+,\times,0,1)$ of natural numbers, all rings and fields, and all distributive lattices  $(L,\vee,\wedge,0,1)$  with smallest element $0$ and greatest element $1$.

(b)  The plus-min strong bimonoid of extended natural numbers  $(\mathbb{N}_\infty,+,\min,0,\infty)$  where  $\mathbb{N}_\infty = \mathbb{N} \cup \{\infty\}$  and $+$  and  $\min$  are the usual addition and minimum operations, respectively, on natural numbers including  $\infty$.

(c)  The plus-plus strong bimonoid of natural numbers (cf. \rm \cite[Ex.~2.3]{drofultepvog24})
$(\mathbb{N_\0},\oplus,+,\0,0)$  with a new zero element  $\0 \notin \mathbb{N}$. The binary operations  $\oplus$  and  $+$, if restricted to  $\mathbb{N}$, are both the usual addition of natural numbers, and  $\0 \oplus x = x \oplus \0 = x$  and  $\0 + x = x + \0 = \0$  for each  $x \in \mathbb{N}_\0$. 

(d) All lattices  $(L,\vee,\wedge,0,1)$  with smallest element $0$ and greatest element $1$  are strong bimonoids.

(e) Near-fields, near-rings and near-semirings provide typical examples of right-distributive strong bimonoids, cf. \rm \cite{pil77, hooroo67, kri05}. Here, to obtain a right-distributive strong bimonoid, we have to add the natural requirements that the addition operation is commutative, 
the additively neutral element  $\0$ also acts like a multiplicative zero, and there is a unit element for multiplication.

(f) $\rm{Diff}^0(\mathbb{R}) = \{f: \mathbb{R} \rightarrow \mathbb{R} \mid f \; \text{is differentiable}, f(0) = 0\}$ with the usual addition and composition of functions forms a right-distributive strong bimonoid.

(g)  Let  $(\mathbb{N}[x]_0, \oplus, \circ, \0, \1)$  be the set of all polynomials over a variable  $x$  with coefficients (e.g.) in  $\mathbb{N}$  and with constant term equal to  $0$, where  $\oplus$  is the usual addition of polynomials, multiplication is given by composition, $\0$  is the zero polynomial and   $\1 = x$.
This strong bimonoid is right-distributive but not left-distributive.

(h) Multiplication of ordinal numbers is left-distributive but not right-distributive over addition
(since, e.g. $(1 + 1) \times \omega = \omega \neq 1 \times \omega + 1 \times \omega$). Hence, e.g., the set of all ordinal numbers strictly below $\omega^\omega$ with the usual addition and multiplication of ordinal numbers forms a left-distributive strong bimonoid.

(i) The strong bimonoid of words  $(\Sigma^* \cup \{\infty\}, \wedge, \cdot, \infty, \varepsilon)$  with a new element  $\infty \notin \Sigma^*$. For  $u, v \in \Sigma^*$, 
$u \wedge v$ is the greatest common prefix of  $u$  and  $v$, and
$u \cdot v = uv$  is the usual concatenation of  $u$  and  $v$.
Moreover,  $u \wedge \infty = \infty \wedge u = u$  and  $u \cdot \infty = \infty \cdot u = \infty$
for all  $u \in \Sigma^* \cup \{\infty\}$. This strong bimonoid has been investigated in \cite[section 3.6]{moh97}
for string-to-weight transducers in natural language processing. It is left-distributive, but, if  $\Sigma$  has at least two elements, it is not right-distributive and hence not a semiring.

(j)  A survey of and unifying approach for much research in Structural Operational Semantics and process algebras stressing right-distributivity (called 'left-distributivity' in the paper) is given in \cite{acecimingmou12}.

(k)  Unification problems in equational theories with only one-sided distributivity have been investigated already in \cite{tidarn87}.
Such asymmetric unification arises naturally in symbolic analysis of cryptographic protocols, see \cite{erbesckapliu13, marmeanar15}.

($\ell$)  Right-residuated lattices with right-distributivity (which give rise to examples of idempotent right-distributive strong bimonoids investigated here) may also be used to give the semantics of non-commutative substructural logics such as the non-commutative Lambek calculus, cf.
\cite[section 4]{galraf04}, \cite{velwan24}.

(m)  Nonlinear operators in functional analysis and mathematical physics, with addition and composition, also give rise to right-distributive strong bimonoids, cf. \cite[p.3842]{sch97}. \hfill$\Box$
\end{example}

For many further examples of strong bimonoids, we refer to \cite{drostuvog10,cirdroignvog10}  and in particular to \cite[Ex.~2.7.10, 1-11]{fulvog24}), highlighting the theory of weighted tree automata with weights in strong bimonoids.

Our goal is to define notions of polynomials (in non-commuting variables) with values in the natural numbers  which extend the standard notions of polynomials in non-commuting variables, and to obtain strong bimonoids of such polynomials over non-commuting variables. For this, we recall basic notions of universal algebra which we will employ subsequently.

\paragraph{General.} We denote by $\mathbb{N}$  the set of nonnegative integers and $\mathbb{N}_+=\mathbb{N}\setminus\{0\}$. For every $k,n \in \mathbb{N}$, we denote by $[k,n]$ the set $\{i\in \mathbb{N} \mid k \le i \le n\}$ and we abbreviate $[1,n]$ by $[n]$. Hence $[0] = \emptyset$.

Let $A$ be a set.  Then $A^*$ denotes the set of all finite strings over $A$, and $\varepsilon$ denotes the empty string. Any subset $\rho \subseteq A\times A$ is a \emph{(binary) relation on $A$}.
Let $\rho$ be a binary relation on $A$. As usual $\rho^n$ denotes the $n$ the power of $\rho$ for each $n\in \mathbb{N}$. Moreover, $\rho^+$ and $\rho^*$  denote the transitive closure, and the reflexive and transitive closure of $\rho$, respectively\footnote{It will always be clear from the context whether $\rho^*$ denotes the reflexive and  transitive closure of the relation $\rho$ or the set of all finite sequences over the set $\rho$.}

\paragraph{Signature and terms.}    A {\em signature} is a pair $(\Sigma,\rk)$, where $\Sigma$ is a non-empty set and $\rk: \Sigma \rightarrow \mathbb{N}$ is a mapping, called \emph{arity  mapping}. For each $k\in \mathbb{N}$, we put $\Sigma^{(k)}=\{\sigma\in \Sigma \mid \rk(\sigma)=k\}$. Usually, we abbreviate $(\Sigma,\rk)$ by $\Sigma$. If $\Sigma$ is finite, then we also call it \emph{ranked alphabet} (cf. e.g. \cite{fulvog24}).


Let $\Sigma$ be a signature and $X$ be a set disjoint with $\Sigma$.
The set of \emph{$\Sigma$-terms over $X$, } denoted by $\T_\Sigma(X)$, is the smallest set $T$ such that (i) $\Sigma^{(0)} \cup X \subseteq T$ and (ii) for every $k \in \mathbb{N}_+$, $\sigma \in \Sigma^{(k)}$, and $t_1,\ldots, t_k \in T$, we have $\sigma(t_1,\ldots,t_k) \in T$. We abbreviate $\T_\Sigma(\emptyset)$ by $\T_\Sigma$.

The  {\em size} of a term $t\in \T_\Sigma(X)$, denoted by $\size(t)$, is the total number of occurrences of elements from  $\Sigma \cup X$  in  $t$.

\paragraph{Universal algebra.}  We assume that the reader is familiar with basic concepts of universal algebra,
like subalgebra, subalgebra generated by a subset, homomorphism, congruence relation (for short: congruence), quotient algebra,  and free algebra with a generating set \cite{bursan81,wec92}, and \cite{baanip98}. 

Let $\Sigma$ be a signature. We regard the elements of  $\Sigma$ as operation symbols. 
An \emph{algebra $\sf A$ of type $\Sigma$} ($\Sigma$-algebra) is a pair $\sfA=(A,\theta)$ where $A$ is a nonempty set and $\theta$ is a mapping from $\Sigma$ to the family of finitary operations on $A$ such that for every $k\in \mathbb{N}$ and  $\sigma \in \Sigma^{(k)}$, 
the operation $\theta(\sigma)$ has arity $k$. 
As usual, nullary operations are interpreted as constants.

Let $X$ be a set. The \emph{$\Sigma$-term algebra over $X$}, denoted by $\sfT_\Sigma(X)$,  is the $\Sigma$-algebra
\(\sfT_\Sigma(X) = (\T_\Sigma(X),\theta_\Sigma)\) where, for every $k \in \mathbb{N}$, $\sigma \in \Sigma^{(k)}$, and $t_1,\ldots, t_k \in \T_\Sigma(X)$, we let $\theta_\Sigma(\sigma)(t_1,\ldots,t_k) = \sigma(t_1,\ldots,t_k)$. The \emph{$\Sigma$-term algebra}, denoted by $\sfT_\Sigma$, is the $\Sigma$-term algebra over $\emptyset$, i.e., $\sfT_\Sigma = \sfT_\Sigma(\emptyset)$.

  \label{page:initial-algebra}
Let $\mathcal{K}$ be an arbitrary class of $\Sigma$-algebras. Moreover, let $\mathsf{F}=(F,\eta)$ be a $\Sigma$-algebra in $\mathcal{K}$ and $X \subseteq F$  such that $\mathsf{F}$ is generated by $X$. 
The algebra~$\mathsf{F}$ is called \emph{freely generated in~$\mathcal{K}$ by $X$} if, for every $\Sigma$-algebra $\A=(A,\theta)$ in $\mathcal{K}$ and mapping $f\colon X \rightarrow A$, there exists an extension of $f$ to a $\Sigma$-algebra homomorphism $h\colon F \rightarrow A$ from $\mathsf{F}$ to $\A$. 
If the extension $h$ exists, then it is unique since $X$ generates $\mathsf{F}$, cf. e.g. \cite[Lm.~3.3.1]{baanip98}. A $\Sigma$-algebra  $\mathsf{F}$  is  \emph{free in $\mathcal{K}$}, if it is freely generated in $\mathcal{K}$ by some subset  $X \subseteq F$.

\begin{lemma} \label{lm:bijective-generating-sets} \rm\cite[Thm.~3.3.3]{baanip98} Let  $\A$ and $\B$ be free $\Sigma$-algebras in~$\mathcal{K}$, freely generated by the sets $X$ and  $Y$,  respectively.
If $|X| = |Y|$, then  $\A$  and  $\B$  are isomorphic.\hfill$\Box$
\end{lemma}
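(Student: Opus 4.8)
The plan is to build mutually inverse homomorphisms between $\A$ and $\B$ by invoking the universal property of freeness twice, once in each direction. Since $|X| = |Y|$, I would first fix a bijection $g\colon X \to Y$ and write $g^{-1}\colon Y \to X$ for its inverse. Viewing $g$ as a map from $X$ into the carrier set $B$ of $\B$ (using $Y \subseteq B$) and using that $\A$ is freely generated in $\mathcal{K}$ by $X$ while $\B \in \mathcal{K}$, there exists a $\Sigma$-algebra homomorphism $h\colon A \to B$ from $\A$ to $\B$ extending $g$. Symmetrically, viewing $g^{-1}$ as a map from $Y$ into $A$ and using that $\B$ is freely generated in $\mathcal{K}$ by $Y$ while $\A \in \mathcal{K}$, there exists a homomorphism $h'\colon B \to A$ from $\B$ to $\A$ extending $g^{-1}$.

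Next I would verify that $h' \circ h = \id_A$ and $h \circ h' = \id_B$. The composition $h' \circ h\colon A \to A$ is a homomorphism from $\A$ to $\A$, and for each $x \in X$ we have $(h' \circ h)(x) = h'(g(x)) = g^{-1}(g(x)) = x$, so $h' \circ h$ is an extension of the inclusion map $X \hookrightarrow A$ to a homomorphism on $\A$. But $\id_A$ is also such an extension, and since $X$ generates $\A$, the homomorphic extension of a map defined on $X$ is unique (the uniqueness clause recorded in the paragraph preceding the lemma). Hence $h' \circ h = \id_A$, and running the same argument with the roles of $\A$, $X$ and $\B$, $Y$ interchanged gives $h \circ h' = \id_B$. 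Consequently $h$ is a bijective homomorphism, its set-theoretic inverse $h'$ is again a homomorphism, and therefore $h$ is an isomorphism $\A \cong \B$.

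I do not anticipate a genuine obstacle; the proof is a short diagram chase. The only points that need care are (i) that freeness of $\A$ may be applied to $\B$ and vice versa, which is legitimate precisely because both algebras belong to $\mathcal{K}$, and (ii) that the uniqueness of the homomorphic extension is available, which holds because the generating sets $X$ and $Y$ span the whole algebras $\A$ and $\B$.
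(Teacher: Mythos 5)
Your proof is correct and is exactly the standard argument behind the cited result (the paper itself gives no proof, only the reference to \cite[Thm.~3.3.3]{baanip98}): extend a bijection $X\to Y$ and its inverse to homomorphisms via freeness, then use uniqueness of extensions on generating sets to conclude that the two composites are the identities. Both points you flag for care are handled correctly, so nothing further is needed.
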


We will use the following well-known result without any reference.

\begin{theorem}\label{thm-free-algebra-theorem}  (cf. \cite[Thm.~3.4.2]{baanip98}, \cite[Thm.~II.10.8]{bursan81}, \cite[p.~18, Thm.~4]{wec92}) \ $\sfT_\Sigma(X)$ is a free $\Sigma$-algebra in the class of all $\Sigma$-algebras, freely generated by the set $X$.\hfill$\Box$
\end{theorem}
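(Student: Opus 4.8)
The plan is to check directly, from the definition of ``freely generated'' on page~\pageref{page:initial-algebra}, the two properties that together make up the claim: that $X$ generates $\sfT_\Sigma(X)$, and that every map $f\colon X\to A$ into an arbitrary $\Sigma$-algebra $\A=(A,\theta)$ admits an extension to a $\Sigma$-homomorphism $h\colon \sfT_\Sigma(X)\to\A$. Uniqueness of the extension then needs no separate argument: it follows from the remark after the definition, since $X$ generates $\sfT_\Sigma(X)$.

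First I would show that $X$ generates $\sfT_\Sigma(X)$. By clause~(i) of the definition of $\T_\Sigma(X)$ we have $\Sigma^{(0)}\cup X\subseteq\T_\Sigma(X)$, so $X$ is a subset of the carrier. Let $S$ be the carrier of the subalgebra of $\sfT_\Sigma(X)$ generated by $X$. Then $X\subseteq S$; each $c\in\Sigma^{(0)}$ lies in $S$, being the value of the nullary operation $\theta_\Sigma(c)$, which every subalgebra must contain; and $S$ is closed under every $\theta_\Sigma(\sigma)$, i.e.\ under the formation rule $(t_1,\dots,t_k)\mapsto\sigma(t_1,\dots,t_k)$. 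Thus $S$ satisfies the two closure conditions (i) and (ii) defining $\T_\Sigma(X)$ as a smallest set, so $\T_\Sigma(X)\subseteq S$, and hence $S=\T_\Sigma(X)$.

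Next I would define $h$ by structural recursion on terms: $h(x)=f(x)$ for $x\in X$; $h(c)=\theta(c)$ for $c\in\Sigma^{(0)}$; and $h\bigl(\sigma(t_1,\dots,t_k)\bigr)=\theta(\sigma)\bigl(h(t_1),\dots,h(t_k)\bigr)$ for $k\in\mathbb{N}_+$, $\sigma\in\Sigma^{(k)}$ and $t_1,\dots,t_k\in\T_\Sigma(X)$. For this recursion to be legitimate one needs the unique-readability property of terms: every element of $\T_\Sigma(X)$ belongs either to $\Sigma^{(0)}\cup X$ or is of the form $\sigma(t_1,\dots,t_k)$ for a uniquely determined $\sigma$ and uniquely determined $t_1,\dots,t_k$, the two cases are mutually exclusive, and within the first case $\Sigma^{(0)}$ and $X$ are disjoint. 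Granting this, an induction on $\size(t)$ shows $h$ is a well-defined total function on $\T_\Sigma(X)$. That $h$ is a $\Sigma$-homomorphism is then immediate: for $\sigma\in\Sigma^{(k)}$ with $k\ge 1$ the defining clause says exactly $h\bigl(\theta_\Sigma(\sigma)(t_1,\dots,t_k)\bigr)=\theta(\sigma)\bigl(h(t_1),\dots,h(t_k)\bigr)$, and for $c\in\Sigma^{(0)}$ it says $h(\theta_\Sigma(c))=\theta(c)$. Finally $h|_X=f$ by the first clause, so $h$ extends $f$.

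The only genuinely substantive ingredient --- the only place where one proves something rather than merely unwinds definitions --- is the unique-readability of terms underlying the recursion that defines $h$. If $\T_\Sigma(X)$ is realised as a set of strings, this is the classical balanced-parenthesis argument (no proper nonempty prefix of a well-formed term is well-formed, so the outermost operation symbol and the list of immediate subterms are recoverable); if terms are realised as finite ordered labelled trees, it is built into the representation. I would simply invoke this standard fact. Everything else --- the generation claim, the homomorphism property, and the uniqueness of $h$ --- is then either a short structural induction or a direct appeal to the already-stated general facts about generating sets.
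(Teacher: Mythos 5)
The paper does not prove this theorem at all --- it is stated with a $\Box$ and cited from the standard references (Baader--Nipkow, Burris--Sankappanavar, Wechler), and your argument is exactly the classical proof given there: generation via the smallest-set definition of $\T_\Sigma(X)$, existence of the extension by structural recursion resting on unique readability, and uniqueness from the generation property. Your proof is correct, and you rightly isolate unique readability of terms (guaranteed here also by the paper's standing assumption that $X$ is disjoint from $\Sigma$) as the one genuinely substantive ingredient.
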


In the rest of this section, $\sfA$ denotes an arbitrary $\Sigma$-algebra $(A,\theta)$.

Let  $\rho$ be a congruence on $\sfA$. 
For each $a\in A$, we let $[a]_\rho = \{b\in A \mid a\rho b\}$,
the \emph{congruence class of $a$ (modulo $\rho$)}. 
For each $B\subseteq A$, we put $B/\!\rho =\{[a]_\rho \mid a\in B\}$.
The \emph{quotient algebra of $\sfA$ by $\rho$} is the $\Sigma$-algebra $\sfA/\!\rho = (A/\!\rho, \theta/\!\rho)$ where, for every $k \in \mathbb{N}$,  $\sigma \in \Sigma^{(k)}$, and $a_1,\ldots,a_k \in A$, we have $(\theta/\!\rho)(\sigma)([a_1]_{\rho},\ldots,[a_k]_{\rho}) =  [\theta(\sigma)(a_1,\ldots, a_k)]_{\rho}$.

Next we wish to consider $\Sigma$-identities and the congruence on $\sfA$ induced by a set of such identities. For this, we introduce the necessary concepts. 

Let $Z=\{z_1,z_2,\ldots\}$ be a set, we call the elements of  $Z$ \emph{variables}. For each $n\in \mathbb{N}$, we put $Z_n=\{z_1,\ldots,z_n\}$.

An \emph{assignment} is a mapping $\varphi: Z \to A$. In the sequel, its unique extension to a $\Sigma$-algebra homomorphism from  $\sfT_\Sigma(Z)$ to $\sfA$ will be denoted also by $\varphi$.
For an arbitrary $t \in \T_\Sigma(Z)$, we call $\varphi(t)$ the {\em evaluation of $t$ in $ \sfA$ by $\varphi$}.

A \emph{$\Sigma$-identity over $Z$} (or: identity) is a pair $(\ell, r)$ where $\ell,r \in \T_\Sigma(Z)$.  The $\Sigma$-algebra $\sfA$ \emph{satisfies the identity $(\ell, r)$} if, for every assignment $\varphi: Z \to A$, we have
$\varphi(\ell) = \varphi(r)$.

\begin{lemma}\rm \label{identitiestofactoralgebras}\cite[Th.~II.6.10 and Lm.~II.11.3]{bursan81}
If $\sfA$ satisfies an identity $(\ell, r)$ and $\rho$ is a congruence on $\sfA$, then $\sfA/\!\rho$ also satisfies the identity $(\ell, r)$. \hfill$\Box$
\end{lemma}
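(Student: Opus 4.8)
The plan is to exploit the universal property of the term algebra $\sfT_\Sigma(Z)$ together with the fact that the canonical projection onto a quotient algebra is a homomorphism. First I would note that the map $\pi\colon A \to A/\!\rho$ given by $\pi(a) = [a]_\rho$ is a surjective $\Sigma$-algebra homomorphism from $\sfA$ to $\sfA/\!\rho$: this is precisely what the defining equation $(\theta/\!\rho)(\sigma)([a_1]_\rho,\dots,[a_k]_\rho) = [\theta(\sigma)(a_1,\dots,a_k)]_\rho$ of the quotient algebra asserts.

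Next, fix an arbitrary assignment $\psi\colon Z \to A/\!\rho$, with the same letter denoting its unique homomorphic extension $\psi\colon \sfT_\Sigma(Z) \to \sfA/\!\rho$; the goal is $\psi(\ell) = \psi(r)$. The key step is to lift $\psi$ along $\pi$: since every $\rho$-class is nonempty, for each $z \in Z$ choose some $a_z \in A$ with $[a_z]_\rho = \psi(z)$, and let $\varphi\colon Z \to A$ be the assignment $z \mapsto a_z$, with unique homomorphic extension $\varphi\colon \sfT_\Sigma(Z) \to \sfA$. Then $\pi\circ\varphi$ and $\psi$ are both $\Sigma$-algebra homomorphisms $\sfT_\Sigma(Z) \to \sfA/\!\rho$ that agree on the generating set $Z$, so — since $\sfT_\Sigma(Z)$ is freely generated by $Z$ (Theorem~\ref{thm-free-algebra-theorem}) — they coincide on all of $\T_\Sigma(Z)$; in particular $\psi(t) = [\varphi(t)]_\rho$ for every $t\in\T_\Sigma(Z)$.

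Finally I would conclude via the chain $\psi(\ell) = [\varphi(\ell)]_\rho = [\varphi(r)]_\rho = \psi(r)$, where the middle equality holds because $\sfA$ satisfies $(\ell,r)$ and hence $\varphi(\ell) = \varphi(r)$. As $\psi$ was arbitrary, $\sfA/\!\rho$ satisfies $(\ell,r)$. I do not expect a genuine obstacle here; the only point requiring a moment's care is the lifting of $\psi$ through $\pi$ together with the appeal to uniqueness of homomorphic extensions from a free algebra, which is what forces $\pi\circ\varphi = \psi$ to hold on all terms rather than merely on $Z$. Everything else is routine bookkeeping (and, if one prefers, the lifting can be restricted to the finitely many variables actually occurring in $\ell$ and $r$).
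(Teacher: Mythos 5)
Your proof is correct. The paper gives no proof of this lemma, citing it from Burris--Sankappanavar, and your argument --- lifting an assignment into $\sfA$ along the canonical projection $\pi$ and using uniqueness of homomorphic extensions from the free term algebra to get $\psi(t)=[\varphi(t)]_\rho$ --- is exactly the standard textbook proof that identities are preserved under homomorphic images, specialized to $\pi\colon \sfA \to \sfA/\!\rho$.
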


Let $E$ be a set of identities. The \emph{congruence (relation)  on $\sfA$ induced by $E$}, denoted by $=_E$, is the smallest congruence on $\sfA$ which contains the set
\begin{equation}\label{eq:identity-in-term-algebra}
E(\sfA)=\{(\varphi(\ell), \varphi(r))\mid (\ell, r)\in E, \varphi: Z \to A\}.
\end{equation}

The following lemma is well-known and can be proven similarly to \cite[p.~176, Lm.~24]{wec92}.

\begin{lemma}\label{lm:free-algebra-quotient} \rm Let  $E$ be a set of $\Sigma$-identities. Then $\sfA/\!{=_E}$ satisfies all identities in $E$. \hfill$\Box$
\end{lemma}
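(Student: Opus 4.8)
The plan is to derive the statement directly from the definition of $=_E$ together with one structural observation: every assignment into the quotient algebra factors, on the level of term evaluation, through an assignment into $\sfA$ via the canonical projection. Since by construction $E(\sfA)\subseteq{=_E}$, each instance $(\varphi(\ell),\varphi(r))$ of an identity of $E$ already collapses to a single $=_E$-class, and the factorization then transfers this collapse to the quotient $\sfA/\!{=_E}$.

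Concretely, I would fix an identity $(\ell,r)\in E$ and an arbitrary assignment $\psi\colon Z\to A/\!{=_E}$, writing $\psi$ also for its unique homomorphic extension $\sfT_\Sigma(Z)\to\sfA/\!{=_E}$; the goal is to show $\psi(\ell)=\psi(r)$. Let $\pi\colon A\to A/\!{=_E}$, $a\mapsto[a]_{=_E}$, be the canonical projection, which is a surjective $\Sigma$-algebra homomorphism from $\sfA$ onto $\sfA/\!{=_E}$. Using surjectivity of $\pi$, choose for each $z\in Z$ some $a_z\in A$ with $\pi(a_z)=\psi(z)$ and set $\varphi(z)=a_z$, so that $\pi\circ\varphi=\psi$ as maps on $Z$. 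Now $\pi\circ\varphi$ (the composite of the homomorphic extension of $\varphi$ with $\pi$) and the homomorphic extension of $\psi$ are both $\Sigma$-algebra homomorphisms $\sfT_\Sigma(Z)\to\sfA/\!{=_E}$ that agree on the generating set $Z$; by the freeness of $\sfT_\Sigma(Z)$ (uniqueness of homomorphic extensions, Theorem~\ref{thm-free-algebra-theorem}) they agree on all of $\T_\Sigma(Z)$. Hence $\psi(\ell)=\pi(\varphi(\ell))=[\varphi(\ell)]_{=_E}$ and $\psi(r)=[\varphi(r)]_{=_E}$. Finally, $(\varphi(\ell),\varphi(r))$ belongs to the set $E(\sfA)$ of \eqref{eq:identity-in-term-algebra}, hence to $=_E$, so that $[\varphi(\ell)]_{=_E}=[\varphi(r)]_{=_E}$, i.e. $\psi(\ell)=\psi(r)$. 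Since $(\ell,r)\in E$ and $\psi$ were arbitrary, $\sfA/\!{=_E}$ satisfies every identity in $E$.

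I do not expect a genuine obstacle here. The one point deserving care is that evaluation of terms commutes with the projection $\pi$, i.e. that the two homomorphisms $\psi$ and $\pi\circ\varphi$, manufactured to coincide on the variables, in fact coincide on all terms; this is exactly where the universal property of the term algebra $\sfT_\Sigma(Z)$ enters, via uniqueness of the homomorphic extension. A minor remark: the choice of representatives $a_z$ amounts to a set-theoretic section of $\pi$; since any single identity involves only finitely many variables, one may restrict to those and avoid invoking the axiom of choice if one wishes.
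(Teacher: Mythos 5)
Your proof is correct and is exactly the standard argument the paper alludes to by citing the literature (the paper itself gives no proof, only a reference): lift an assignment into the quotient along the canonical projection $\pi$, use uniqueness of homomorphic extensions from $\sfT_\Sigma(Z)$ to get $\psi=\pi\circ\varphi$ on all terms, and then invoke $E(\sfA)\subseteq{=_E}$. All the ingredients you use ($\pi$ being a homomorphism, uniqueness of extensions, the definition of $=_E$ as the smallest congruence containing $E(\sfA)$) are explicitly available in the paper's preliminaries, so there is nothing to object to.
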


Next we extend the well-known syntactic characterization of the congruence on $\sfT_\Sigma(Z)$ induced by a set $E \subseteq \T_\Sigma(Z) \times \T_\Sigma(Z)$ of $\Sigma$-identities, 
cf. \cite[Thm.~3.1.12]{baanip98} and \cite[Thms.~II.14.17, II.14.19]{bursan81},
to a characterization of the congruence on $\sfA$ induced by  $E$. In fact, this is closely related to a general description of a congruence generated by a binary relation on  $\A$,
cf. \cite[Sect.~2.1.2]{wec92}. 

Each element $c\in \T_\Sigma(Z)$ in which the variable $z_1$ occurs exactly once  is called a {\em $\Sigma Z$-context}. We let $\C_{\Sigma,Z}$  be the set of all $\Sigma Z$-contexts.
Given a $\Sigma Z$-context  $c$  and a term  $t \in \T_\Sigma(Z)$, we let
$c[t]\in \T_\Sigma(Z)$  be the term obtained from  $c$  by replacing the variable  $z_1$  by  $t$.
That is, if  $\varphi: Z \to \T_\Sigma(Z)$  is given by  $\varphi(z_1) = t$
and  $\varphi(z_i) = z_i$  for each  $i \geq 2$, then  $c[t] = \varphi(c)$.

Let $E$ be a set of $\Sigma$-identities. The \emph{reduction relation induced by $E$ on $A$}, denoted  by  $\Rightarrow_E$, is the binary relation on $A$ defined as follows:
for every $a,b \in A$, we let $a \Rightarrow_E b$ if there exist a $\Sigma Z$-context $c\in \C_{\Sigma,Z}$, an identity $(\ell, r)$ in $E$,  and an assignment $\varphi: Z \to A$ such that
$a = \varphi(c[\ell])$ and $b = \varphi(c[r])$.
In this  case  we say that $b$ is obtained from $a$  in a reduction step (using the identity $(\ell, r)$).

For an identity $e=(\ell,r)$ we define $e^{-1}=(r,\ell)$ and we let $E^{-1}=\{ e^{-1} \mid e\in E\}$.
Moreover, we abbreviate $\Rightarrow_{E\cup E^{-1}}$ by $\Leftrightarrow_E$.

The subsequent characterization says that, for any two elements $a,b\in A$,
we have $a =_E b$ if and only if, there is a finite sequence of elements $a=a_0,a_1,\ldots,a_n=b$ of $A$ for some $n\in \mathbb{N}$ such that for each $i\in [n]$, the element $a_i$ can be obtained from $a_{i-1}$ in a reduction step using an identity in $E$  or the inverse of an identity. For a proof we can follow the proof of \cite[Lm.~2.3]{drofultepvog24}, stated for the case that $\Sigma$ is finite. 
However, in that proof we do not use the finiteness of $\Sigma$.

\begin{lemma}\rm \label{lm:approx-characterization}\  \cite[p.~98, Thm.~6]{wec92} Let  $E$ be a set of $\Sigma$-identities and $=_E$ the congruence on $\sfA$ induced by $E$. Then $=_E \, =\, \Leftrightarrow^*_E$. \hfill$\Box$
\end{lemma}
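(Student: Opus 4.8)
The plan is to prove the two inclusions $\Leftrightarrow^*_E \,\subseteq\, =_E$ and $=_E \,\subseteq\, \Leftrightarrow^*_E$ separately, in each direction using the defining property of $=_E$ as the \emph{least} congruence on $\sfA$ containing $E(\sfA)$.

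For $\Leftrightarrow^*_E \,\subseteq\, =_E$, since $=_E$ is reflexive, symmetric and transitive it suffices to show that a single $\Leftrightarrow_E$-step stays within $=_E$; closing under reflexivity and transitivity then gives the inclusion. A $\Leftrightarrow_E$-step is a $\Rightarrow_E$-step for some identity $(\ell,r)\in E$ applied either left-to-right or right-to-left, so suppose $a=\varphi(c[\ell])$ and $b=\varphi(c[r])$ for a context $c\in\C_{\Sigma,Z}$ and an assignment $\varphi\colon Z\to A$. Fixing $\varphi$ on the variables $z_2,z_3,\dots$ turns $c$ into a unary derived operation $u\mapsto\varphi(c[u/z_1])$ on $A$ (formally by structural induction on $c$), and every congruence, in particular $=_E$, is compatible with this derived operation. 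Since $(\varphi(\ell),\varphi(r))\in E(\sfA)\subseteq\,=_E$ (and hence also $(\varphi(r),\varphi(\ell))\in\,=_E$ by symmetry, which covers the right-to-left direction), applying this derived operation yields $a=\varphi(c[\ell])\,=_E\,\varphi(c[r])=b$, as required.

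For $=_E \,\subseteq\, \Leftrightarrow^*_E$, it is enough to check that $\Leftrightarrow^*_E$ is itself a congruence on $\sfA$ containing $E(\sfA)$. Containment of $E(\sfA)$ is witnessed by the trivial context $c=z_1$: for $(\ell,r)\in E$ and any $\varphi$ we have $\varphi(\ell)=\varphi(c[\ell])\Rightarrow_E\varphi(c[r])=\varphi(r)$, so $(\varphi(\ell),\varphi(r))\in\,\Rightarrow_E\,\subseteq\,\Leftrightarrow^*_E$. That $\Leftrightarrow^*_E$ is an equivalence relation is immediate: it is reflexive and transitive as a reflexive-transitive closure, and symmetric because $\Leftrightarrow_E=\Rightarrow_{E\cup E^{-1}}$ is symmetric (as $(E\cup E^{-1})^{-1}=E\cup E^{-1}$) and the reflexive-transitive closure of a symmetric relation is symmetric. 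The one substantial point is compatibility with each operation $\theta(\sigma)$, $\sigma\in\Sigma^{(k)}$: given $a_i\Leftrightarrow^*_E b_i$ for $i\in[k]$, we must produce $\theta(\sigma)(a_1,\dots,a_k)\Leftrightarrow^*_E\theta(\sigma)(b_1,\dots,b_k)$. By transitivity of $\Leftrightarrow^*_E$ we may change the arguments one coordinate at a time, and may further reduce, again by transitivity, to lifting a single step $a\Leftrightarrow_E b$ in coordinate $i$ with the other arguments held fixed at values $c_1,\dots,c_{i-1},c_{i+1},\dots,c_k\in A$. If that step is witnessed by $c'\in\C_{\Sigma,Z}$ with variables among $z_1,\dots,z_m$, an identity in $E\cup E^{-1}$, and an assignment $\varphi$, we form the new context
\[
c'' \;=\; \sigma\big(z_{m+1},\dots,z_{m+i-1},\,c',\,z_{m+i},\dots,z_{m+k-1}\big),
\]
still a $\Sigma Z$-context since $z_1$ occurs in it exactly once, together with the extension $\varphi'$ of $\varphi$ sending the fresh variables to $c_1,\dots,c_{i-1},c_{i+1},\dots,c_k$ in the matching positions. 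Then $\varphi'(c''[\ell])=\theta(\sigma)(c_1,\dots,c_{i-1},a,c_{i+1},\dots,c_k)$ and similarly for $b$, so the step lifts through $\sigma$; chaining such lifted steps and then composing over the $k$ coordinates gives the result. Hence $\Leftrightarrow^*_E$ is a congruence containing $E(\sfA)$, so $=_E\,\subseteq\,\Leftrightarrow^*_E$, and combining the two inclusions proves the claim.

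The main obstacle is precisely the wrapping step: one must keep the variable bookkeeping honest so that $c''$ genuinely lies in $\C_{\Sigma,Z}$ (exactly one occurrence of $z_1$, the unchanged arguments realized by fresh variables) and so that $\varphi'$ consistently realizes those fixed arguments; once this is set up, verifying that $\varphi'(c''[\ell])$ and $\varphi'(c''[r])$ have the claimed shape is a routine structural induction, and all the remaining parts — the two containments, the equivalence-relation properties, the reductions to single steps — are purely formal.
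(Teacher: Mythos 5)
Your proof is correct. The paper does not prove Lemma \ref{lm:approx-characterization} itself but delegates it to \cite[p.~98, Thm.~6]{wec92} (and to the proof of \cite[Lm.~2.3]{drofultepvog24}); your argument is exactly the standard one behind those references --- one inclusion by compatibility of the congruence $=_E$ with the unary polynomial operation induced by a context, the other by verifying that $\Leftrightarrow^*_E$ is itself a congruence containing $E(\sfA)$, with the context-wrapping construction handling compatibility with the operations --- and the variable bookkeeping you flag is handled correctly.
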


\section{Free strong bimonoids}\label{sect:free-strong-bimonoids}

In this section, we will shortly consider the free strong bimonoid, the free right-distributive strong bimonoid, and the free idempotent right-distributive strong bimonoid, 
each freely generated by a nonempty set $X$ of variables.

For this we consider the particular signature 
\begin{align*}
\Sigma_{\mathrm{sb}} = \{\hat{+}, \hat{\times}, \hat{0}, \hat{1}\} \text{ with } \rk(\hat{+})=\rk(\hat{\times})=2 \text{ and } \rk(\hat{0})=\rk(\hat{1})=0.
\end{align*}
(We write $\;\hat{}\;$ over the symbols of $\Sigma$ because later the symbols $+, \times, 0$, and $1$ will denote addition, multiplication, zero and one of polynomials in certain algebras, respectively.) We call $\Sigma_{\mathrm{sb}}$ the \emph{strong bimonoid signature}. 

Let $X$ be a nonempty set, which we keep fixed throughout the paper. 

We consider the 
$\Sigma_{\mathrm{sb}}$-term algebra  
$\sfT_{\Sigma_{\mathrm{sb}}}(X)$ over $X$.
Since  we use the signature $\Sigma_{\mathrm{sb}}$ in almost all of the rest of the paper, we drop it from the notation, i.e
\begin{quote}
we abbreviate  \ $\sfT_{\Sigma_{\mathrm{sb}}}(X)=(\T_{\Sigma_{\mathrm{sb}}}(X),\theta_{\Sigma_{\mathrm{sb}}})$ \ by \
$\sfT(X)=(\T(X),\theta).$
\end{quote}

We write elements of $\T(X)$ in infix form, e.g. we write $(\hat{1}\hat{+}\hat{1})\hat{\times} x$ for $\hat{\times}(\hat{+}(\hat{1},\hat{1}),x)$ where $x\in X$.
Moreover, we denote the binary operations $\theta(\hat{+})$ and  $\theta(\hat{\times})$ on  $\T(X)$  by  $+'$  and  $\times'$, respectively, and we write them in infix form. That is, for every  $t_1, t_2 \in \T(X)$,  we have
\[t_1 +' t_2 = \theta(\hat{+})(t_1, t_2) = t_1 \hat{+} t_2 \ \text{ and } \ 
t_1 \times' t_2 = \theta(\hat{\times})(t_1, t_2) = t_1\hat{\times} t_2 \enspace.\]
Also, we have the constants  $0' = \theta(\hat{0}) = \hat{0}$ and  $1' = \theta(\hat{1}) = \hat{1}$. Hence, we may write $\sfT(X)$ in the form
\[\sfT(X) = (\T(X), +', \times', 0',1').\]

Subsequently, we will consider the following identities:

\begin{center}
\begin{tabular}{ll}
$e_1: \big(z_1 \hat{+} (z_2 \hat{+} z_3) \ , \ (z_1 \hat{+} z_2) \hat{+} z_3\big)$ & \\[2mm]
$e_2: \big(z_1 \hat{+} z_2 \ , \ z_2 \hat{+} z_1\big)$ &  $e_3: \big(z \hat{+} \hat{0} \ , \  z\big)$ \\[2mm]
$e_4: \big(z_1 \hat{\times} (z_2 \hat{\times} z_3)  \ , \  (z_1 \hat{\times} z_2) \hat{\times} z_3\big)$   & \\[2mm]
$e_5: \big(\hat{1}\hat{\times} z \ , \ z\big)$ & $e_6:  \big(z \hat{\times} \hat{1} \ , \ z\big)$ \\[2mm]
$e_7: \big(z \hat{\times} \hat{0}  \ , \  \hat{0}\big) $ & $e_8: \big(\hat{0}\hat{\times} z \ , \ \hat{0}\big)$  \\[2mm]
$e_9: \big((z_1 \hat{+} z_2) \hat{\times} z_3 \ , \ (z_1 \hat{\times} z_3) \hat{+} (z_2 \hat{\times} z_3)\big)$\\[2mm]
 $e_{10} : \big( z_1 \hat{\times} (z_2 \hat{+} z_3) \ , \ (z_1 \hat{\times} z_2) \hat{+} (z_1 \hat{\times} z_3)\big)$\\[2mm]
 $e_{11} : \big((z \hat{+} z) \ , \ z\big)$
\end{tabular}
\end{center}

First, for motivation of the subsequent development in this section, we recall well-known fundamental facts on the free semiring and on polynomials in the present notation.

Let  $E_S = \{e_1,...,e_{10}\}$ (the set of ``semiring axioms'') , and let $=_{E_S}$ denote the congruence relation on $\sfT(X)$ generated by $E_S$. Thus, the quotient algebra of $\sfT(X)$ by $=_{E_S}$ is the algebra
\[\sfT(X)/\!{=_{E_S}} = (\T(X)/\!{=_{E_S}}, +'/\!{=_{E_S}}, \times'/\!{=_{E_S}}, [0']_{=_{E_S}},[1']_{=_{E_S}})\enspace.\]
We abbreviate $\sfT(X)/\!{=_{E_S}}$ by $\mathsf{FS}(X)$ and abbreviate also the components of $\sfT(X)/\!{=_{E_S}}$ (i.e. of $\mathsf{FS}(X)$) and write the quotient algebra in the form
\[\mathsf{FS}(X)=(\mathrm{FS}(X), \oplus, \otimes, \0, \1). \]
Moreover, for $t \in \T(X)$, we abbreviate the notation  $[t]_{=_{E_S}}$ by~$[t]_{E_S}$, and we abbreviate $X/\!{=_{E_S}}$ by $X/\!{E_S}$.

Then, e.g., for every  $t_1, t_2 \in \T(X)$,  we have
  \begin{align*}
    &[t_1]_{E_S} \oplus [t_2]_{E_S} = [t_1 +' t_2]_{E_S}  \ \text{ and } \  [t_1]_{E_S} \otimes [t_2]_{E_S} = [t_1 \times' t_2]_{E_S}\\
    &[t_1]_{E_S} \oplus \0 = [t_1]_{E_S}   \ \text{ and } [t_1]_{E_S} \otimes \0 =  \0\\
    &[t_1]_{E_S} \otimes \1 = \1 \otimes [t_1]_{E_S} = [t_1]_{E_S}
    \enspace.
  \end{align*}

By Lemma \ref{lm:free-algebra-quotient} it follows that the algebra
$\sfFS(X)$ satisfies all identities in $E_S$.  In particular, 
      \begin{compactitem}
      \item identities $e_1 - e_{3}$ assure that $(\rmFS(X),\oplus,\0)$ is a commutative monoid,
      \item identities $e_4 - e_{6}$ assure that $(\rmFS(X),\otimes,\1)$ is a monoid,
      \item identities $e_7 - e_{8}$ assure that $\0$ is annihilating with respect to $\otimes$, and
      \item identities $e_9 - e_{10}$ assure that $\otimes$ distributes  over $\oplus$.
      \end{compactitem}
      Hence $\sfFS(X)$ is a  semiring.  In fact, it is well-known that  $\mathsf{FS}(X)$  is free with generating set $X/\!{E_S}$ in the class of all semirings. Therefore it is called \emph{the free semiring over} $X$.

Expressions of the form $\sum_{w \in F} n_w w$  where $F$  is a finite subset of  $X^*$  and  $n_w \in \mathbb{N}$  for each  $w \in F$ are called \emph{polynomials} over $X$. We consider polynomials as terms in $\T(X)$ in the natural way. For instance, the polynomial $2xy +2$ corresponds to the term $\big(((\hat{1} \hat{+} \hat{1}) \hat{\times} x) \hat{\times}y\big)\hat{+}(\hat{1}\hat{+} \hat{1})$. Let $\mathbb{N}[X]$ be the set of all these polynomials. Then $\mathsf{N}[X]=(\mathbb{N}[X], +, \cdot, 0, 1)$, with the usual addition and multiplication of polynomials, is a semiring. Moreover, it is free with generating set $X$ in the class of all semirings, cf., e.g., \cite[Sec.~3.2.2, Ex.~3]{wec92}. Then the following result is folklore.

\begin{theorem}\label{thm:free-semiring}  The free semiring $\mathsf{FS}(X)$ over $X$ is isomorphic to the semiring $\mathsf{N}[X]$ of polynomials over $X$. Moreover, the polynomials form a representation of the semiring terms over $X$ in the sense that for each term $t \in \T(X)$ there is a polynomial $p \in \mathbb{N}[X]$ which is semiring-equivalent to $t$, i.e.,  $t =_{E_S} p$.\hfill$\Box$
\end{theorem}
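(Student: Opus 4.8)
The plan is to prove Theorem~\ref{thm:free-semiring} by exhibiting the isomorphism concretely and deducing the representation statement from it together with the universal property of free algebras. Since $\mathsf{FS}(X)$ is free with generating set $X/\!{E_S}$ and $\mathsf{N}[X]$ is free with generating set $X$ in the class of all semirings, and the two generating sets have the same cardinality (the map $x \mapsto [x]_{E_S}$ is a bijection $X \to X/\!{E_S}$, as no two distinct elements of $X$ are identified by $=_{E_S}$), Lemma~\ref{lm:bijective-generating-sets} applies directly and gives that $\mathsf{FS}(X)$ and $\mathsf{N}[X]$ are isomorphic. So the first half of the theorem is essentially immediate from the universal-algebraic machinery already set up.

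For the ``moreover'' part, I would make the isomorphism explicit. The inclusion $X \hookrightarrow \mathbb{N}[X]$ (sending $x$ to the monomial $x$) extends, by Theorem~\ref{thm-free-algebra-theorem}, to a unique $\Sigma_{\mathrm{sb}}$-algebra homomorphism $h' \colon \sfT(X) \to \mathsf{N}[X]$, which is simply evaluation of a term as a polynomial. Since $\mathsf{N}[X]$ is a semiring, it satisfies all identities in $E_S$, hence $h'$ factors through the congruence $=_{E_S}$, yielding a semiring homomorphism $h \colon \mathsf{FS}(X) \to \mathsf{N}[X]$ with $h([x]_{E_S}) = x$ for all $x \in X$. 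Conversely, the map $X \to \mathrm{FS}(X)$, $x \mapsto [x]_{E_S}$, extends (by freeness of $\mathsf{N}[X]$) to a semiring homomorphism $g \colon \mathsf{N}[X] \to \mathsf{FS}(X)$. The composites $g \circ h$ and $h \circ g$ fix the respective generating sets, so by uniqueness of the extension they are the identity maps; hence $h$ is an isomorphism. Now, given $t \in \T(X)$, set $p = h'(t) \in \mathbb{N}[X]$, viewed as a term in $\T(X)$ in the natural way. Then $[p]_{E_S} = g(h'(t)) \cdot \ldots$ — more directly: $h([t]_{E_S}) = h'(t) = p$ and also $h([p]_{E_S}) = p$ (evaluating a polynomial-term gives back the same polynomial), so since $h$ is injective we get $[t]_{E_S} = [p]_{E_S}$, i.e.\ $t =_{E_S} p$.

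I expect the only genuine subtlety to be the bookkeeping that the map $\mathbb{N}[X] \to \T(X)$ ``view a polynomial as a term'' is a well-defined section and that evaluating the resulting term back as a polynomial is the identity on $\mathbb{N}[X]$ — this is where one must fix a canonical term form $\sum_{w \in F} n_w w$ (say, with a fixed ordering of $F$ and of the letters in each $w$, and $n_w$ encoded as $\hat1 \hat+ \cdots \hat+ \hat1$) and check that $h'$ undoes it. Everything else is a routine application of the universal property; there is no combinatorial obstacle, since the theorem is explicitly flagged as folklore and serves only as motivation for the strong-bimonoid analogue developed afterwards. Accordingly I would keep the argument brief, citing \cite{wec92} for the freeness of $\mathsf{N}[X]$ and invoking Lemmas~\ref{lm:bijective-generating-sets} and~\ref{lm:free-algebra-quotient} and Theorem~\ref{thm-free-algebra-theorem} for the rest.
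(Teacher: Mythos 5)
The paper gives no proof of this theorem --- it is stated as folklore and closed with a box --- so there is nothing to compare against; your argument is correct and is the standard one. Both halves check out: the isomorphism follows from Lemma~\ref{lm:bijective-generating-sets} once one knows $|X/\!E_S|=|X|$ (which your explicit homomorphism $h$ with $h([x]_{E_S})=x$ delivers, since it separates the classes $[x]_{E_S}$), and the representation claim follows from injectivity of $h$ together with the observation that evaluating the canonical term encoding of a polynomial returns that polynomial. The only bookkeeping point you flag --- fixing a section $\mathbb{N}[X]\to\T(X)$ and checking $h'$ undoes it --- is indeed the only nontrivial detail, and it is routine for the encoding $\sum_{w\in F} n_w w$ described in the paper.
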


The goal of this paper is to derive analogous results for strong bimonoids, right-distributive strong bimonoids, and idempotent right-distributive strong bimonoids. 
In the subsequent notation, let rd and id,  indicate  right-distributivity and idempotence, respectively.

\begin{definition}\rm\label{def:rd-quotient-id-quotient} (a) Let $E = \{e_1,...,e_8\}$ (``strong bimonoid axioms'') and let $\sfFB(X) = (\rmFB(X),\oplus,\otimes,\0,\1)$ be the quotient algebra of $\sfT(X)$ by $=_E$.

(b) Let $E_{\mrd} = E\cup \{e_9\}$ (axiom of right-distributivity added), 
and let $\sfFB_\mrd(X)=(\rmFB_\mrd(X),\oplus_\mrd,\otimes_\mrd,\0,\1)$
be the quotient algebra of $\sfT(X)$ by $=_{E_{\mrd}}$.

(c) Let $E_{\midrd} = E_\mrd\cup \{e_{11}\}$ (axiom of idempotency added),
and let $\sfFB_\midrd(X)=(\rmFB_\midrd(X),\oplus_\midrd,\otimes_\midrd,\0,\1)$
be the quotient algebra of $\sfT(X)$ by $=_{E_{\midrd}}$.\hfill $\Box$
\end{definition}

For easier subsequent use, the definition is summarized by the following table. 

\

 \begin{tabular}{l|l|l}
set of identities & quotient algebra & short notation \\\hline
& & \\
 $E = \{e_1,...,e_8\}$ & $\sfT(X)/\!{=_E}$ & $\sfFB(X) = (\rmFB(X),\oplus,\otimes,\0,\1)$ \\
 \hline
 & & \\
 $E_{\mrd} = E\cup \{e_9\}$ &  $\sfT(X)/\!{=_{E_\mrd}}$ &  $\sfFB_\mrd(X)=(\rmFB_\mrd(X),\oplus_\mrd,\otimes_\mrd,\0,\1)$ \\
 \hline 
 & & \\
 $E_{\midrd} = E_\mrd\cup \{e_{11}\}$ & $\sfT(X)/\!{=_{E_\midrd}}$ & 
 $\sfFB_\midrd(X)=(\rmFB_\midrd(X),\oplus_\midrd,\otimes_\midrd,\0,\1)$
 \end{tabular}

\


By Lemma  \ref{lm:free-algebra-quotient}, $\sfFB(X)$, $\sfFB_{\mrd}(X)$, and $\sfFB_\midrd(X)$  satisfy all identities in $E$, $E_\mrd$, and $E_{\midrd}$, respectively. Hence  $\sfFB(X)$ is a strong bimonoid, $\sfFB_\mrd(X)$  is a right-distributive strong bimonoid and
$\sfFB_{\midrd}(X)$  is an idempotent right-distributive strong bimonoid.

Given  $t \in \T(X)$, we abbreviate the congruence class  $[t]_{=_E}$ in  
$\rmFB(X)$ by $[t]_E$. Similarly, we denote  $[t]_{=_{E_{\mrd}}}$ and $[t]_{=_{E_\midrd}}$ by $[t]_{\mrd}$ and  $[t]_{\midrd}$, respectively.

Moreover, let  $X/\!{E} = \{ [x]_{E} \mid x \in X \}$, a generating set for $\sfFB(X)$,
$X/{\mrd} = \{ [x]_{\mrd} \mid x \in X \}$, a generating set for $\sfFB_{\mrd}(X)$,
and  $X/{\midrd} = \{ [x]_{\midrd} \mid x \in X \}$, a generating set for $\sfFB_{\mrd}(X)$.

The following result is immediate by \cite[Sec.~3.2.4, Cor.~2]{wec92} or by \cite[Cor. 3.5.8, Thm. 3.5.14]{baanip98} combined with Lemma \ref{lm:approx-characterization}.

\begin{proposition}\label{prop:three-free-sb}\rm$\,$
\begin{compactitem}
\item[(a)]  $\sfFB(X)$  is a free strong bimonoid, freely generated by the set  $X/\!{E}$.
\item[(b)]    $\sfFB_{\mrd}(X)$  is a free right-distributive strong bimonoid, freely generated by the set  $X/{\mrd}$.
\item[(c)]    $\sfFB_{\midrd}(X)$  is a free idempotent right-distributive strong bimonoids, freely generated
by the set  $X/{\midrd}$. \hfill$\Box$
\end{compactitem}
\end{proposition}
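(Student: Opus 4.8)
The statement is that the three quotient algebras $\sfFB(X)$, $\sfFB_\mrd(X)$, $\sfFB_\midrd(X)$ are free—in the classes of strong bimonoids, right-distributive strong bimonoids, and idempotent right-distributive strong bimonoids respectively—freely generated by the images of $X$ under the relevant quotient maps. The approach is the standard ``varieties are defined by identities, and the free algebra of a variety is the term algebra modulo the induced congruence'' argument, which is exactly the content of the cited results \cite[Sec.~3.2.4, Cor.~2]{wec92} and \cite[Cor.~3.5.8, Thm.~3.5.14]{baanip98}. Since the paper promises to use these facts, the proof is mostly a matter of checking that our three classes of algebras are genuinely equational classes over the signature $\Sigma_\mathrm{sb}$, and that our sets of identities $E$, $E_\mrd$, $E_\midrd$ axiomatize them.

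First I would verify that each of the three classes is closed under the relevant operations and is in fact the model class of the corresponding identity set. For (a): an algebra $(B,\oplus,\otimes,\0,\1)$ of type $\Sigma_\mathrm{sb}$ is a strong bimonoid if and only if it satisfies $e_1$--$e_8$; this is immediate by unwinding the definition of strong bimonoid (commutative monoid under $\oplus$ with unit $\0$, monoid under $\otimes$ with unit $\1$, and $\0$ annihilating)—each clause of the definition corresponds bijectively to one of the identities, as already spelled out in the excerpt for $\sfFS(X)$. For (b) one adds $e_9$, which is literally the right-distributivity law. For (c) one adds $e_{11}$, which is literally idempotence of $\oplus$. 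So the model class of $E$ (resp.\ $E_\mrd$, resp.\ $E_\midrd$) is exactly the class of strong bimonoids (resp.\ right-distributive strong bimonoids, resp.\ idempotent right-distributive strong bimonoids).

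Second, I would invoke the general universal-algebra theorem: for any signature $\Sigma$ and any set $E$ of $\Sigma$-identities, the quotient $\sfT_\Sigma(X)/{=_E}$ is free in the variety defined by $E$, freely generated by $X/{=_E}$. By Lemma~\ref{lm:free-algebra-quotient} the quotient satisfies all identities of $E$, hence lies in the variety; and the universal property follows because, given any algebra $\A$ in the variety and any map $f\colon X \to A$, Theorem~\ref{thm-free-algebra-theorem} extends $f$ to a homomorphism $\bar f\colon \sfT_\Sigma(X)\to\A$, which then factors through $=_E$ precisely because $\A$ satisfies $E$ and hence $=_E \subseteq \ker\bar f$ (here one uses the characterization of $=_E$ via $\Leftrightarrow^*_E$ from Lemma~\ref{lm:approx-characterization}, together with the fact that reduction steps preserve the value of $\bar f$ when $\A \models E$). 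Uniqueness of the extension is automatic since $X/{=_E}$ generates the quotient. Applying this with $\Sigma = \Sigma_\mathrm{sb}$ and $E$, $E_\mrd$, $E_\midrd$ in turn, and combining with the first step identifying the model classes, yields (a), (b), (c).

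\textbf{Main obstacle.} There is no deep obstacle: the content is entirely the translation between the syntactic identities $e_i$ and the semantic definitions of the three classes of bimonoids, plus an appeal to off-the-shelf results. The one point requiring a little care is the precise matching of ``satisfies $E$'' with ``is a strong bimonoid''—one must check both directions, i.e.\ that no strong bimonoid axiom is missing from $E$ and that $E$ imposes nothing extra—and, for part (c), that adding $e_{11}$ to $E_\mrd$ does not accidentally force left-distributivity or any other unwanted law (it does not: idempotent right-distributive strong bimonoids are a genuine proper subclass, witnessed e.g.\ by the examples in the introduction). Given that the paper explicitly cites \cite{wec92} and \cite{baanip98} for the free-algebra-as-quotient fact, the write-up can be kept short, essentially: ``the model class of $E$ (resp.\ $E_\mrd$, resp.\ $E_\midrd$) is the class in question, so the claim follows from the cited characterization of free algebras in equationally defined classes, together with Lemma~\ref{lm:approx-characterization}.''
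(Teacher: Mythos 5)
Your proposal is correct and takes exactly the route the paper does: the paper states the result as immediate from the cited universal-algebra facts (the quotient of the term algebra by the congruence induced by a set of identities is free in the corresponding equational class), combined with the observation that $E$, $E_\mrd$, $E_\midrd$ axiomatize precisely the three classes of strong bimonoids. Your additional unpacking of the universal property via Lemma~\ref{lm:approx-characterization} matches the paper's intent; nothing is missing.
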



\section{Strong bimonoids of polynomials}\label{sect:strong-bimonoids-polynomials}

The goal of this section is to develop our new concept of polynomials together with their operations of addition and multiplication. In particular, the aim is that for our polynomials,
the multiplication is right-distributive over addition, but otherwise, ``as free as possible" (in particular, in general not commutative or left-distributive).
For this, we define congruence classes of particular terms, a class of polynomials, and a class of idempotency-reduced polynomials. For each of the three classes,  we define an addition and a multiplication, and we show that they form strong bimonoids, right-distributive strong bimonoids, and idempotent right-distributive strong bimonoids, respectively. We also show that the multiplication in the right-distributive strong bimonoids is cancellative both from the right and the left. Then in the next section we will show that our strong bimonoids constitute the free objects in the classes of strong bimonoids, right-distributive strong bimonoids, respectively idempotent right-distributive strong bimonoids.

We start with the definition of particular terms, called simple terms, and the $\Sigma_{\mathrm{sb}}$-algebra $\sfST(X)$ (cf. \cite[Sect.~3]{drofultepvog24}). 

We call a term  $s  \in \T(X)$  \emph{simple}, if  
\begin{compactitem}
\item $s = \hat{0}$  or
\item $s\ne \hat{0}$  and it contains neither   $\hat{0}$, nor a subterm of the form  $\hat{1} \hat{\times} s$,  nor a subterm of the form $s \hat{\times} \hat{1}$.
\end{compactitem}

Let $\rmST(X)$ denote the set of all simple terms in $\T(X)$. Note that $\hat{1}$ is simple, hence
e.g., $\hat{1}\hat{+}s\in\rmST(X) $ for each $s\in \rmST(X)$.

Next we define the $\Sigma_{\mathrm{sb}}$-algebra $\sfST(X)=(\rmST(X),+_\mathsf{ST},\times_\mathsf{ST},\hat{0},\hat{1})$ as follows:
\begin{compactitem}
\item for each $s\in \rmST(X)$, let $s+_\mathsf{ST} \hat{0}= \hat{0} +_\mathsf{ST} s=s$ and   $s \times_\mathsf{ST} \hat{0}= \hat{0} \times_\mathsf{ST} s=\hat{0}$,
\item for each $s\in \rmST(X)$, let $s\times_\mathsf{ST} \hat{1}= \hat{1} \times_\mathsf{ST} s=s$,
\item for every $s,t\in \rmST(X)\setminus\{\hat{0},\hat{1}\}$, let $s +_\mathsf{ST} t = s\hat{+}t$ and $s \times_\mathsf{ST} t = s\hat{\times} t$.
\end{compactitem}

Thus, on  $\rmST(X)\setminus\{\hat{0},\hat{1}\}$, the operations  $+_\mathsf{ST}$  and  $\times_\mathsf{ST}$ are the restrictions of  $\hat{+}$  respectively $\hat{\times}$,
but the rules for  $\hat{0}$  and  $\hat{1}$ are simplified to satisfy the usual algebraic laws.

Then we consider the set $\rmAC= \{e_1,e_2,e_4\}$ of identities and the quotient algebra 
\[\sfST(X)/\!{=_\rmAC}=\big(\rmST(X)/\!{=_\rmAC},+_\mathsf{ST}/\!{=_\rmAC},\times_\mathsf{ST}/\!{=_\rmAC},[\hat{0}]_{=_\rmAC},[\hat{1}]_{=_\rmAC}\big).\]
Lastly, we abbreviate the latter notation by $\sfNsb[X]=(\bbNsb[X],+,\times,0,1)$.

The following is immediate by Lemmas \ref{identitiestofactoralgebras} and \ref{lm:free-algebra-quotient}.

\begin{proposition}\rm \label{prop:SB-strong-bimonoid} The algebra $\sfNsb[X]=(\bbNsb[X],+,\times,0,1)$ is a strong bimonoid. \hfill$\Box$
\end{proposition}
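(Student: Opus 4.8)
The plan is to verify that $\sfNsb[X]$ inherits the strong bimonoid laws from $\sfST(X)$ via the quotient construction, using the two general lemmas already at hand. First I would observe that it suffices, by Lemma~\ref{lm:free-algebra-quotient}, to show that the congruence $=_{\rmAC}$ on $\sfST(X)$ induced by $\rmAC = \{e_1, e_2, e_4\}$ makes the quotient satisfy those three identities automatically; but that alone does not give us a strong bimonoid, since we also need identities $e_3, e_5, e_6, e_7, e_8$ (neutrality of $\hat 0$ for $+$, neutrality of $\hat 1$ for $\times$ on both sides, and annihilation of $\hat 0$ for $\times$ on both sides). So the real content is: $\sfST(X)$ itself satisfies $e_3, e_5, e_6, e_7, e_8$, and then Lemma~\ref{identitiestofactoralgebras} propagates each of these to $\sfST(X)/\!{=_\rmAC} = \sfNsb[X]$, while Lemma~\ref{lm:free-algebra-quotient} delivers $e_1, e_2, e_4$ in the quotient directly.

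Next I would check that $\sfST(X)$ satisfies $e_3, e_5, e_6, e_7, e_8$. This is immediate from the definition of the operations $+_\mathsf{ST}$ and $\times_\mathsf{ST}$: the bullet points defining $\sfST(X)$ were chosen precisely to force $s +_\mathsf{ST} \hat 0 = \hat 0 +_\mathsf{ST} s = s$ (giving $e_3$), $s \times_\mathsf{ST} \hat 1 = \hat 1 \times_\mathsf{ST} s = s$ (giving $e_5, e_6$), and $s \times_\mathsf{ST} \hat 0 = \hat 0 \times_\mathsf{ST} s = \hat 0$ (giving $e_7, e_8$), for every $s \in \rmST(X)$. Since these hold for all simple terms, and every assignment $Z \to \rmST(X)$ sends variables to simple terms, $\sfST(X)$ satisfies those five identities. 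Note that $\sfST(X)$ need \emph{not} satisfy $e_1, e_2, e_4$ on the nose — associativity and commutativity of the formal operations $\hat +, \hat\times$ fail at the term level — which is exactly why those three go into $\rmAC$ and are recovered only after passing to the quotient.

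Putting the pieces together: by Lemma~\ref{identitiestofactoralgebras}, since $\sfST(X)$ satisfies each of $e_3, e_5, e_6, e_7, e_8$ and $=_\rmAC$ is a congruence on $\sfST(X)$, the quotient $\sfNsb[X]$ also satisfies $e_3, e_5, e_6, e_7, e_8$. By Lemma~\ref{lm:free-algebra-quotient}, $\sfNsb[X] = \sfST(X)/\!{=_\rmAC}$ satisfies all identities in $\rmAC$, i.e.\ $e_1, e_2, e_4$. Hence $\sfNsb[X]$ satisfies all of $E = \{e_1, \dots, e_8\}$, which says precisely that $(\bbNsb[X], +, 0)$ is a commutative monoid ($e_1$--$e_3$), $(\bbNsb[X], \times, 1)$ is a monoid ($e_4$--$e_6$), and $0$ is annihilating for $\times$ ($e_7$--$e_8$). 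Therefore $\sfNsb[X]$ is a strong bimonoid.

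I do not expect any genuine obstacle here; the proposition is a bookkeeping consequence of the two cited lemmas together with the deliberately arranged definition of $\sfST(X)$. The only point requiring a moment's care is distinguishing which identities come from which lemma — $e_1, e_2, e_4$ are \emph{created} by the quotient (they fail in $\sfST(X)$), whereas $e_3, e_5, e_6, e_7, e_8$ already hold in $\sfST(X)$ and are merely \emph{preserved} by it — but this is the content of the phrase ``immediate by Lemmas~\ref{identitiestofactoralgebras} and~\ref{lm:free-algebra-quotient}'' in the statement.
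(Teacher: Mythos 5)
Your proof is correct and is exactly the argument the paper intends: the paper's ``proof'' is just the phrase ``immediate by Lemmas~\ref{identitiestofactoralgebras} and~\ref{lm:free-algebra-quotient}'', and you have unpacked it in precisely the intended way --- $e_3, e_5$--$e_8$ hold in $\sfST(X)$ by construction and are preserved in the quotient by Lemma~\ref{identitiestofactoralgebras}, while $e_1, e_2, e_4$ are supplied by Lemma~\ref{lm:free-algebra-quotient}. Your closing remark distinguishing which identities are created by the quotient versus merely preserved is a helpful clarification of the same argument, not a departure from it.
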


To investigate the structure of the elements of the strong bimonoid $\sfNsb[X]$, we need to describe structural properties of their representing simple terms. For this, next we consider sum terms and product terms, defined as follows. 

Let $s\in \rmST(X)$. We say that $s$ is a \emph{sum term}  if there are  $s_1, s_2 \in  \rmST(X) \setminus \{ \hat{0} \} $ such that
$s=s_1 \hat{+} s_2$ and, we say that $s$ is a \emph{product term}  if there are
$s_1, s_2 \in  \rmST(X) \setminus \{ \hat{0}, \hat{1} \} $ such that $s=s_1 \hat{\times} s_2$.

Subsequently, to describe the structure of sum terms and product terms, the following simplification of notation will be useful.

Let $n\ge 2$ and $s, s_1, \ldots, s_n\in \rmST(X)$. 
We will regard the expressions $s_1 \hat{+} \ldots \hat{+} s_n$ and $s_1 \hat{\times} \ldots \hat{\times} s_n$  
as terms where,  for notational ease in our denotation, as usual, we left out some parentheses. 
Moreover, we will write  $s = s_1 \hat{+} \ldots \hat{+} s_n$  (respectively, $s=s_1 \hat{\times} \ldots \hat{\times} s_n$), if  $s_1 \hat{+} \ldots \hat{+} s_n$  
(respectively, $s_1 \hat{\times} \ldots \hat{\times} s_n$) is obtained from  $s$ in that way.
Formally, $s = s_1 \hat{+} \ldots \hat{+} s_n$  means that 
there exists a term  $c\in \T_{\{\hat{+}\}}(Z_n)$  
in which each of the variables  in $Z_n$  occurs exactly once and the order of these variables is $z_1,\ldots,z_n$,  and we obtain $s$ by replacing  $z_i$ in $c$   by  $s_i$, for each $i\in[n]$.
If this is the case, then we write 
$s = c[s_1,\ldots,s_n]$. We define  $s = s_1 \hat{\times} \ldots \hat{\times} s_n$ analogously with $c\in \T_{\{\hat{\times}\}}(Z_n)$. In fact, 
$s_1 \hat{+} \ldots \hat{+} s_n$   and $s_1 \hat{\times} \ldots \hat{\times} s_n$
are similar to the usual flattening of the term $s$ (cf. \cite[Sect.~5]{bacpla85}).

If $s = s_1 \hat{+} \ldots \hat{+} s_n$ and, for each  $i \in [n]$, the term $s_i$ is a product term or  $s_i\in X\cup \{\hat{1}\}$, then  we call  $s_1 \hat{+} \ldots \hat{+} s_n$ a \emph{sum-product decomposition} of  $s$.
Analogously, if $s = s_1 \hat{\times} \ldots \hat{\times} s_n$ and, for each  $i \in [n]$, the term $s_i$ is a sum term or  $s_i\in X$, then we call $s_1 \hat{\times} \ldots \hat{\times} s_n$ a \emph{product-sum decomposition} of  $s$ .

For instance, let $s=((x\hat{\times} y)\hat{+}\hat{1})\hat{+}(z\hat{\times}x)$. Then $(x\hat{\times} y)\hat{+}\hat{1}\hat{+}(z\hat{\times}x)$ is a sum-product decomposition of $s$ 
with $c=(z_1\hat{+}z_2)\hat{+}z_3$, $s_1=x\hat{\times} y$, $s_2=\hat{1}$, and $s_3=z\hat{\times}x$. Hence we also write $s=(x\hat{\times} y)\hat{+}\hat{1}\hat{+}(z\hat{\times}x)$.
Analogously, let $s=(x\hat{\times}(\hat{1}\hat{+}y))\hat{\times}(x\hat{+}z)$. Then $x\hat{\times}(\hat{1}\hat{+}y)\hat{\times}(x\hat{+}z)$ is a product-sum decomposition of $s$ with $c=(z_1\hat{\times}z_2)\hat{\times}z_3$,
$s_1=x$, $s_2=\hat{1}\hat{+}y$, and $s_3=x\hat{+}z$, i.e.,
we write $s=x\hat{\times}(\hat{1}\hat{+}y)\hat{\times}(x\hat{+}z)$.

By considering the structure of sum  terms and product terms, the following is immediate.

\begin{observation}\label{obs:unique-decomposition}\rm $\,$
\begin{compactitem}

\item[(a)] Each sum term $s \in \rmST(X)$  has a unique sum-product 
 decomposition $s = s_1 \hat{+} \ldots \hat{+} s_n$.  \hfill
 
\item[(b)] Each product term $s \in \rmST(X)$  has a unique product-sum decomposition
$s = s_1 \hat{\times} \ldots \hat{\times} s_n$. 

 \hfill$\Box$
\end{compactitem}    
\end{observation}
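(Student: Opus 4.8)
The plan is to prove both parts simultaneously by structural induction on the simple term $s$, using the syntactic shape of sum terms and product terms together with the fact that every simple term is either $\hat 0$, $\hat 1$, an element of $X$, a sum term, or a product term (and these cases are mutually exclusive, since a sum term has root $\hat{+}$, a product term root $\hat{\times}$, and $\hat 0,\hat 1,x$ are leaves). For part (a), suppose $s$ is a sum term, so $s = s_1' \hat{+} s_2'$ with $s_1', s_2' \in \rmST(X)\setminus\{\hat 0\}$; I would recurse on $s_1'$ and $s_2'$. Since $s$ contains no occurrence of $\hat 0$ (because $s\ne\hat 0$ and $s$ is simple), neither $s_1'$ nor $s_2'$ contains $\hat 0$, so each is again $\hat 1$, an element of $X$, a sum term, or a product term. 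Concatenating the sum-product decompositions obtained inductively for $s_1'$ and $s_2'$ (where for a summand that is itself a product term or lies in $X\cup\{\hat 1\}$ the ``decomposition'' is just that single term) yields a sum-product decomposition $s = s_1 \hat{+}\ldots\hat{+} s_n$ of $s$. Part (b) is entirely analogous, recursing through the binary $\hat{\times}$-structure; here one uses that a product term $s_1'\hat{\times} s_2'$ has both factors in $\rmST(X)\setminus\{\hat 0,\hat 1\}$, so no factor is $\hat 0$ or $\hat 1$, and each factor is therefore $x\in X$ or a sum term or a product term, allowing the recursion to proceed and the product-sum decompositions to be concatenated.

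For uniqueness in (a): suppose $s = s_1 \hat{+}\ldots\hat{+} s_n = t_1 \hat{+}\ldots\hat{+} t_m$ are two sum-product decompositions. Each of these is obtained by filling a context $c, c' \in \T_{\{\hat{+}\}}(Z_n)$ (resp. $Z_m$) in which variables occur once and in increasing order. The point is that the tree shape of $s$ over the symbol $\hat{+}$ is completely determined down to the first node whose operand is not headed by $\hat{+}$: along the ``spine'' of $\hat{+}$-nodes of $s$, a summand $s_i$ (resp. $t_j$) is exactly a maximal subterm of $s$ whose root symbol is \emph{not} $\hat{+}$. Formally, I would argue that the multiset — in fact the left-to-right sequence — of maximal non-$\hat{+}$-rooted subterms of $s$ is an invariant of $s$ and coincides with $(s_1,\ldots,s_n)$ and with $(t_1,\ldots,t_m)$: each $s_i$ must be non-$\hat{+}$-rooted because it is a product term or in $X\cup\{\hat 1\}$, none of which has root $\hat{+}$, and conversely any summand that were $\hat{+}$-rooted would contradict its being a product term or in $X\cup\{\hat 1\}$. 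Hence $n = m$ and $s_i = t_i$ for all $i$. For (b) one replaces $\hat{+}$ by $\hat{\times}$ throughout and uses that each factor in a product-sum decomposition is a sum term or in $X$ — again never $\hat{\times}$-rooted — so the factors are precisely the maximal non-$\hat{\times}$-rooted subterms of $s$ read left to right.

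The main obstacle, and the only place that needs genuine care rather than routine bookkeeping, is making the uniqueness argument precise: one must verify that ``read off the maximal subterms whose root is not $\hat{+}$ (resp. $\hat{\times}$), left to right'' is well-defined and that a sum-product (resp. product-sum) decomposition is forced to list exactly these subterms in exactly this order. This reduces to the observation that the component terms are, by the definitions of sum term and product term, of a root symbol disjoint from the connecting operation, together with the standard fact (the flattening of \cite[Sect.~5]{bacpla85}) that an associative binary symbol's spine in a term is uniquely determined. Existence, by contrast, is a direct induction with no subtleties beyond checking that the $\hat 0$-freeness (for (a)) and $\hat 0,\hat 1$-freeness (for (b)) of $s$ propagate to the subterms, which is immediate from $s$ being simple. $\hfill\Box$
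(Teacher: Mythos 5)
Your proof is correct; the paper gives no proof of this observation (it is declared ``immediate by considering the structure of sum terms and product terms''), and your argument -- existence by recursing through the $\hat{+}$- resp.\ $\hat{\times}$-spine using that simplicity rules out $\hat{0}$ (and $\hat{1}$ as a factor), and uniqueness by identifying the summands/factors with the left-to-right sequence of maximal subterms not rooted in the connecting symbol -- is exactly the standard flattening argument the paper is implicitly relying on. The only point worth making explicit in part (a) is that a $\hat{\times}$-rooted maximal subterm of a simple sum term really is a product term (i.e.\ neither factor is $\hat{0}$ or $\hat{1}$), which again follows from simplicity, just as you note for part (b).
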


\begin{lemma}\label{lm:AC-equiv-characterization} \rm For every $s,t \in  \rmST(X)\setminus \{ \hat{0}, \hat{1} \}$, we have $s =_{\rmAC} t$ if and only if one of the following two conditions hold:
\begin{compactenum}
\item[(a)] both $s$ and $t$ are sum terms, and if $s = s_1 \hat{+}\ldots \hat{+} s_n$  and $t = t_1 \hat{+}\ldots \hat{+} t_k$ are their sum-product decompositions, then we have  $k = n$ and there is a permutation $\varphi :[n] \to [n]$ 
such that $s_i =_{\rmAC} t_{\varphi(i)}$ for each $i\in [n]$.
\item[(b)] both $s$ and $t$ are product terms, and if $s =s_1 \hat{\times} \ldots \hat{\times} s_n$ and 
$t = t_1 \hat{\times} \ldots \hat{\times} t_k$ are their product-sum decompositions, then we have  $k = n$ and $s_i =_{\rmAC} t_i$ for each $i\in [n]$.
\end{compactenum}
\end{lemma}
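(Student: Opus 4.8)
The plan is to prove both directions of the characterization, and in each direction to treat the sum-term case and the product-term case separately, relying on the unique decomposition result of Observation~\ref{obs:unique-decomposition} and on the general characterization of $=_{\rmAC}$ as $\Leftrightarrow^*_{\rmAC}$ from Lemma~\ref{lm:approx-characterization}. Since $\rmAC = \{e_1, e_2, e_4\}$ consists only of associativity of $\hat+$, commutativity of $\hat+$, and associativity of $\hat\times$, a single reduction step $s \Leftrightarrow_{\rmAC} t$ rewrites some subterm of $s$ using one of these laws; the crucial structural fact (which I would isolate as a preliminary observation) is that such a step preserves whether the top operator is $\hat+$ or $\hat\times$ and, more importantly, preserves the \emph{multiset} of summands in the sum-product decomposition (for $e_1, e_2$ applied at or below the top $\hat+$-layer, and trivially for steps strictly inside a summand) and the \emph{sequence} of factors in the product-sum decomposition (for $e_4$; note $e_2$ is not available on $\hat\times$, which is exactly why products stay rigid in order).

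\textbf{Sufficiency (if).} This is the easier direction. For (a), suppose $s = s_1 \hat+ \cdots \hat+ s_n$ and $t = t_1 \hat+ \cdots \hat+ t_n$ with $s_i =_{\rmAC} t_{\varphi(i)}$ for a permutation $\varphi$. First, using $e_1$ and $e_2$ only on the top-level $\hat+$-structure, one shows $s_1 \hat+ \cdots \hat+ s_n =_{\rmAC} s_{\varphi^{-1}(1)} \hat+ \cdots \hat+ s_{\varphi^{-1}(n)}$; this is the standard fact that any bracketing/ordering of an $\hat+$-sum is $\rmAC$-equivalent to any other (provable by induction on $n$). Then, since $=_{\rmAC}$ is a congruence, from $s_{\varphi^{-1}(i)} =_{\rmAC} t_i$ for each $i$ one gets $s_{\varphi^{-1}(1)} \hat+ \cdots \hat+ s_{\varphi^{-1}(n)} =_{\rmAC} t_1 \hat+ \cdots \hat+ t_n = t$, and transitivity finishes it. For (b) it is the same but simpler: no permutation is involved, so one only needs that all bracketings of a $\hat\times$-product are $=_{\rmAC}$ (via $e_4$, again by induction on $n$), combined with congruence applied to $s_i =_{\rmAC} t_i$.

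\textbf{Necessity (only if).} This is where the work lies. Assume $s =_{\rmAC} t$ with $s, t \in \rmST(X) \setminus \{\hat0,\hat1\}$. I would first argue that $s$ and $t$ are ``of the same kind'': a sum term cannot be $=_{\rmAC}$ to a product term, to a variable, or to $\hat1$. The cleanest way is to exhibit an invariant preserved by each reduction step in $\Leftrightarrow_{\rmAC}$ — for instance, the top operator symbol of the term (unchanged by $e_1, e_2, e_4$ when applied strictly below the root, and when applied at the root $e_1$ and $e_2$ keep the root $\hat+$ while $e_4$ keeps it $\hat\times$); since $s \Leftrightarrow^*_{\rmAC} t$, the root symbols agree, and a variable or $\hat1$ has no root operator. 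Hence either both are sum terms or both are product terms. Now define, for a sum term, its \emph{summand multiset} $M(s) = \lM s_1, \ldots, s_n \rM$ where $s = s_1 \hat+ \cdots \hat+ s_n$ is the unique sum-product decomposition, but take the summands up to $=_{\rmAC}$ — more precisely, I would set up the whole argument by induction on $\size(s)$, so that ``$=_{\rmAC}$ on summands'' is already understood recursively. The key lemma is: if $s \Leftrightarrow_{\rmAC} s'$ and $s$ is a sum term, then $s'$ is a sum term and the two summand multisets coincide up to $=_{\rmAC}$ componentwise (i.e.\ there is a bijection matching $\rmAC$-equivalent summands). One checks this by cases on where the step occurs: a step strictly inside one summand $s_i$ replaces $s_i$ by an $=_{\rmAC}$-equivalent simple term of the same kind (so it is still a product term or in $X \cup \{\hat1\}$, preserving the decomposition shape); a step using $e_1$ or $e_2$ in the top $\hat+$-layer merely re-associates or transposes two summands, leaving the multiset literally unchanged; and $e_4$ cannot apply at the top $\hat+$-layer. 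Iterating along the chain $s \Leftrightarrow^*_{\rmAC} t$ gives the matching of summands, which is exactly condition~(a). For product terms the argument is parallel, using the \emph{sequence} $s_1, \ldots, s_n$ of factors from the unique product-sum decomposition; the point is that $e_4$ only re-associates (never permutes) and $e_2$ does not act on $\hat\times$, so the factor sequence is preserved up to componentwise $=_{\rmAC}$, and a step inside a factor $s_i$ keeps it a sum term or a variable. This yields condition~(b).

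\textbf{Main obstacle.} The delicate point is the case analysis in the key single-step lemma, specifically handling a reduction step that straddles the boundary between the top operator layer and a summand/factor — e.g.\ an application of $e_1$ whose matched subterm $(z_1 \hat+ z_2) \hat+ z_3$ sits partly in the top $\hat+$-layer. One must verify that after such a step the sum-product decomposition still has the same summands; this requires being careful that the ``top $\hat+$-layer'' of a simple term is well-defined and that $e_1, e_2$ steps within it are exactly the moves generating all re-bracketings and reorderings, while steps that dip below it land entirely inside a single summand (because a summand, being a product term or in $X \cup \{\hat1\}$, has no $\hat+$ at its own root). Once this structural bookkeeping is nailed down, the induction on $\size(s)$ closes routinely.
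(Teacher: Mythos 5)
Your proposal is correct and follows essentially the same route as the paper's proof: reduce $=_{\rmAC}$ to $\Leftrightarrow^*_{\rmAC}$ via Lemma~\ref{lm:approx-characterization}, observe that a single reduction step cannot change the root symbol (so $s$ and $t$ are of the same kind), and then analyse where each step applies — entirely inside a summand/factor (preserving its kind) or within the top operator layer, where $e_1,e_2$ only re-bracket and permute summands while $e_4$ only re-brackets factors. The single-step "multiset/sequence invariant" lemma you isolate, including the observation that summands have no $\hat{+}$ at their root so no step can straddle the boundary, is exactly the bookkeeping the paper carries out (somewhat more informally) in its case distinction.
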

\begin{proof}  By Lemma  \ref{lm:approx-characterization}, $s =_{\rmAC} t$  is equivalent to $s \Leftrightarrow^*_{\rmAC} t$. We will use this fact in the rest of the proof.

It is clear that any of the conditions (a) and (b) implies $s \Leftrightarrow^*_{\rmAC} t$  and hence  $s =_{\rmAC} t$. 

Now assume that $s \Leftrightarrow^*_{\rmAC} t$. First we claim that either both $s$ and $t$ are sum terms or they are both product terms. 
For this, let $s_1,s_2,t_1,t_2 \in  \rmST(X)$ such that $s=s_1\hat{+} s_2$ and $t=t_1\hat{\times} t_2$. Then $s_1\hat{+} s_2  \Leftrightarrow^*_{\rmAC} t_1\hat{\times} t_2$
is impossible because by using identities of $\rmAC$, we cannot change the root symbol $\hat{+}$ of $s_1\hat{+} s_2$ to $\hat{\times}$. Our claim follows.

We continue the proof with case distinction. First we consider the case that both $s$ and $t$ are sum terms. Consider their sum-product decompositions $s = s_1 \hat{+}\ldots \hat{+} s_n$  and 
$t = t_1 \hat{+} \ldots \hat{+} t_k$. By our assumption, we have
\begin{equation}\label{eq:AC-transformation-new}
s=  s_1 \hat{+} \ldots \hat{+} s_n \Leftrightarrow^*_{\rmAC} t_1 \hat{+} \ldots \hat{+} t_k = t\enspace.
\end{equation}

Each $s_i$ is a product term or is in $X\cup \{\hat{1}\}$. 
Hence, if we apply the identities of $\rmAC$ inside some term $s_i$, then this cannot change the root of $s_i$, hence the result is again a product term (or the same element of  $X \cup \{\hat{1}\}$). 
The only other way to apply an identity of $\rmAC$ to the term $s_1 \hat{+}\ldots \hat{+} s_n$ 
is to use identity $e_1$ (associativity of sum) to change the parenthesizing of this sum or to use identity $e_2$  (commutativity of sum) to interchange some summands  $s_i, s_j$.
In each case, we obtain again a sum-product decomposition with the same number of summands. Thus $k=n$, and the identities  $e_1, e_2$ can change the order of the summands.
Therefore, \eqref{eq:AC-transformation-new} implies also that there is a permutation $\varphi :[n] \to [n]$ 
such that $s_i \Leftrightarrow^*_{\rmAC} t_{\varphi(i)}$ for each $i\in [n]$. Hence condition  (a) holds. 

Now we consider the case that both $s$ and $t$ are product terms. Similarly as in the previous case, consider their product-sum decompositions $s = s_1 \hat{\times} \ldots \hat{\times} s_n$  and 
$t = t_1 \hat{\times} \ldots \hat{\times} t_k$. 

If we apply the identities of $\rmAC$ inside some term $s_i$, then, by what we noted above, the result is again a sum term (or the same element of  $X$). The only other way to apply an identity of $\rmAC$ to the term 
$s_1 \hat{\times}\ldots \hat{\times} s_n$ is to use identity $e_4$ (associativity of product) to change the parenthesizing of the product.
In each case, we obtain again a product-sum decomposition with the same number of factors. Thus $k=n$, the identity  $e_4$ can change the parenthesizing of the product-sum decomposition, and $s_i \Leftrightarrow^*_{\rmAC} t_i$ 
for each $i\in [n]$. Hence condition  (b) holds. 
\end{proof}

Since in all of the following,  $\rmAC$-equivalence of simple terms will be very important for us,
we include a further graph-theoretic characterization, also for later use for decision algorithms (see Lemma \ref{lm:AC-equiv-decidable} and Corollaries \ref{cor:polynom-term-equiv-is-decidable} and \ref{cor:equiv-is-decidable}).
We will represent simple terms as labeled trees 
(i.e., particular labeled and directed graphs, cf. \cite{ahohopull74}) as follows. 

Let  $\boxplus$ and $\boxtimes$ be two symbols with 
$\boxplus,\boxtimes \notin X \cup \{\hat{0},\hat{1}\}$. For each simple term $s \in  \rmST(X)$, 
we define the labeled tree \emph{$\overline{s}$ 
 corresponding to $s$} \label{page:overline-s} by case distinction and induction on $\size(s)$ as follows.
\begin{compactitem}
\item[(a)] If $s\in X\cup \{\hat{0},\hat{1}\}$, then $\overline{s}$ is the tree with one node labeled by $s$.
\item[(b)] Let $s$ be a sum term with sum-product decomposition $s=s_1\hat{+}\ldots \hat{+}s_n$ and let $\overline{s_1},\ldots, \overline{s_n}$  the trees which correspond to $s_1,\ldots,s_n$, respectively. Then $\overline{s}$
is the tree whose root is labeled by $\boxplus$
and for each $i\in[n]$ there is an edge from this root to the root of $\overline{s_i}$. 
\item[(c)] Let $s$ be a product term with product-sum decomposition $s=s_1\hat{\times}\ldots \hat{\times}s_n$ and let $\overline{s_1},\ldots, \overline{s_n}$  the trees which correspond to $s_1,\ldots,s_n$, respectively. Moreover, for each $i\in[n]$, let 
$s'_i$ be the tree obtained from $\overline{s_i}$ by replacing its root label $y\in X\cup \{\hat{+}\}$ by the pair $(i,y)$. Then $\overline{s}$ is the tree consisting of its root and the trees  $s'_1, \ldots, s'_n$, where the root is labeled by $\boxtimes$
and for each $i\in[n]$ there is an edge from this root to the root of $s'_i$. 
\end{compactitem}

Clearly, for each $s \in  \rmST(X)$, we can construct $\overline{s}$ in $O(n)$ time, where $n=\size(s)$.

An \emph{isomorphism} of two labeled trees is, as usual, a bijection preserving the labeling and the edge relation \cite{ahohopull74}.
Above, in (c), the particular relabeling of the roots will ensure that isomorphisms of trees have to preserve the order of the factors in product-sum decompositions.

The following result is well-known in the area of term rewriting (cf., e.g., \cite{benkapnar87,akujantaktam17}). We include a short proof
for completeness and ease of the reader.

\begin{proposition}\label{prop:AC-equiv-graphs} \rm
Let $s,t \in  \rmST(X)$  be simple terms. Then $s =_{\rmAC} t$ if and only if  
$\overline{s}$  is isomorphic to  $\overline{t}$.
\end{proposition}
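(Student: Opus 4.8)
The plan is to prove both directions by induction on $\size(s)$, closely mirroring the recursive definition of $\overline{s}$ and leaning on the structural characterization of $\rmAC$-equivalence established in Lemma~\ref{lm:AC-equiv-characterization} together with the unique decompositions of Observation~\ref{obs:unique-decomposition}. The base case ($s \in X \cup \{\hat 0,\hat 1\}$) is immediate: by case~(a) of the construction, $\overline{s}$ is a single node labeled $s$, and a single-node tree labeled $s$ is isomorphic to a single-node tree labeled $t$ iff $s = t$; on the other side, for $s,t \in X \cup \{\hat 0,\hat 1\}$ one checks that $s =_{\rmAC} t$ holds iff $s = t$ (no $\rmAC$-rule applies nontrivially to such a term, which follows from Lemma~\ref{lm:approx-characterization} since $\Leftrightarrow^*_{\rmAC}$ cannot leave the set $X\cup\{\hat0,\hat1\}$).

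For the inductive step I would first note that neither the relation $=_{\rmAC}$ nor tree-isomorphism can mix the three ``shapes'' of a simple term (element of $X\cup\{\hat0,\hat1\}$, sum term, product term): on the algebraic side this is exactly the first claim in the proof of Lemma~\ref{lm:AC-equiv-characterization} (no $\rmAC$-rule changes the root symbol), and on the graph side the root of $\overline s$ is labeled by an element of $X\cup\{\hat0,\hat1\}$, by $\boxplus$, or by $\boxtimes$ respectively, and an isomorphism preserves root labels. So it suffices to treat the sum-term case and the product-term case. In the sum-term case, let $s = s_1 \hat+\ldots\hat+ s_n$ and $t = t_1\hat+\ldots\hat+ t_k$ be the (unique, by Observation~\ref{obs:unique-decomposition}(a)) sum-product decompositions. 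By Lemma~\ref{lm:AC-equiv-characterization}(a), $s =_{\rmAC} t$ iff $n = k$ and there is a permutation $\varphi$ of $[n]$ with $s_i =_{\rmAC} t_{\varphi(i)}$; by the induction hypothesis (each $s_i$ has strictly smaller size) this is iff $n=k$ and $\overline{s_i} \cong \overline{t_{\varphi(i)}}$ for all $i$. Since $\overline s$ is the tree with a $\boxplus$-root and children $\overline{s_1},\dots,\overline{s_n}$ (and similarly for $\overline t$), and the children of a root carry no ordering constraint here (they are just an unordered multiset of subtrees hanging off a $\boxplus$-node), this last condition is precisely the statement that $\overline s \cong \overline t$. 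One should spell out briefly why an isomorphism $\overline s \to \overline t$ must send the root to the root (it is the unique $\boxplus$-labeled node that is an ancestor of everything — or simply: the root is the unique node of in-degree $0$, and its label is $\boxplus$) and hence restricts to a bijection between the child-subtrees, giving the permutation $\varphi$.

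The product-term case is the one requiring the most care, and I expect the handling of the positional relabeling in clause~(c) of the construction to be the main obstacle. Let $s = s_1\hat\times\ldots\hat\times s_n$ and $t = t_1\hat\times\ldots\hat\times t_k$ be the unique product-sum decompositions (Observation~\ref{obs:unique-decomposition}(b)). By Lemma~\ref{lm:AC-equiv-characterization}(b), $s=_{\rmAC} t$ iff $n=k$ and $s_i =_{\rmAC} t_i$ for \emph{every} $i\in[n]$ — crucially, with the \emph{identity} permutation, since $e_4$ only reparenthesizes and cannot reorder factors. By induction, $s_i =_{\rmAC} t_i$ iff $\overline{s_i} \cong \overline{t_i}$. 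I must translate ``$\overline{s_i}\cong\overline{t_i}$ for all $i$, in order'' into ``$\overline s \cong \overline t$''. The point of replacing the root label $y$ of $\overline{s_i}$ by the pair $(i,y)$ to form $s'_i$ is exactly that this tags each child-subtree of the $\boxtimes$-root with its position, so that any tree-isomorphism is forced to respect the order: an isomorphism $\overline s \to \overline t$ sends root to root (the unique in-degree-$0$ node), hence induces a bijection $\psi$ of $[n]$ on the children with $s'_i \cong t'_{\psi(i)}$; comparing the root labels of $s'_i$ and $t'_{\psi(i)}$, which are $(i,\cdot)$ and $(\psi(i),\cdot)$ respectively, forces $\psi(i) = i$. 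Stripping the tag shows $\overline{s_i}\cong\overline{t_i}$ for each $i$, and conversely, given order-matching isomorphisms $\overline{s_i}\cong\overline{t_i}$ one re-tags and assembles them with the identity on the $\boxtimes$-root into an isomorphism $\overline s\cong\overline t$. There is a small technical wrinkle to address: clause~(c) only relabels roots whose label lies in $X\cup\{\hat+\}$ (equivalently $X \cup \{\boxplus\}$ after the recursive construction), never a $\boxtimes$-root, because a factor $s_i$ in a product-sum decomposition is a sum term or lies in $X$ — so $\overline{s_i}$ has root label in $X\cup\{\boxplus\}$ and the relabeling is always well-defined and does not interfere with nested $\boxtimes$-nodes; I would remark on this to keep the tagging argument clean. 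Assembling the two directions in all cases completes the induction.
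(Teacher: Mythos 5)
Your proposal is correct and follows essentially the same route as the paper's proof: induction on $\size(s)$ with case distinction on the shape of the term, using Lemma~\ref{lm:AC-equiv-characterization} and the unique sum-product/product-sum decompositions, and exploiting the positional relabeling in clause~(c) to force the identity permutation on factors. The extra details you supply (root is the unique in-degree-$0$ node; the relabeling never touches a $\boxtimes$-root) are correct refinements of the same argument.
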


\begin{proof}  We proceed by induction on $\size(s)$ and case distinction. If  $s \in X \cup \{ \hat{0}, \hat{1} \})$, the result is immediate.

Therefore assume now that  $s$  is a sum term with sum-product decomposition  $s=s_1\hat{+}\ldots \hat{+}s_n$. First assume that $s =_{\rmAC} t$. 
Then, by Lemma \ref{lm:AC-equiv-characterization}(a), $t$  is a sum term with a sum-product decomposition  
$t = t_1 \hat{+}\ldots \hat{+} t_n$ and there is a bijection $\varphi :[n] \to [n]$ 
such that $s_i =_{\rmAC} t_{\varphi(i)}$ for each $i\in [n]$. By induction hypothesis, for each  $i \in [n]$, there is an isomorphism  
$\psi_i$  from $\overline{s_i}$  to  $\overline{t_i}$. Let  $\psi$  be the mapping which maps the root of $\overline{s}$  to the root of  $\overline{t}$ and is the joint extension of the mappings  $\psi_i$ $(i \in [n])$. Then  $\psi$  is an isomorphism from  $\overline{s}$  to  $\overline{t}$. 

Second, assume there is an isomorphism $\psi$  from  $\overline{s}$  to  $\overline{t}$. 
Then  $\psi$  maps the root of $\overline{s}$  to  $\overline{t}$. Hence these two roots are both labeled with $\boxplus$,
and  $t$  is a sum term with a sum-product decomposition $t = t_1 \hat{+}\ldots \hat{+} t_k$. 
Since $\psi$  maps  $\overline{s}$  isomorphically to  $\overline{t}$, we obtain $k = n$,
and  $\psi$  induces a bijection  $\varphi :[n] \to [n]$  
such that for each  $i \in [n]$, $\psi$  maps  $\overline{s_i}$  to  $\overline{t_{\varphi(i)}}$.
By induction hypothesis, we obtain  $s_i =_{\rmAC} t_{\varphi(i)}$  for each  $i \in [n]$. 
Then Lemma \ref{lm:AC-equiv-characterization}(a) implies  $s =_{\rmAC} t$.

Next, let  $s$  be a product term with product-sum decomposition $s =s_1 \hat{\times} \ldots \hat{\times} s_n$. 
If  $s =_{\rmAC} t$, we proceed similarly as above, using Lemma \ref{lm:AC-equiv-characterization}(b),
and we obtain that $\overline{s}$ is isomorphic to  $\overline{t}$.

Conversely, assume there is an isomorphism $\psi$  from  $\overline{s}$  to  $\overline{t}$.
Now the roots of  $\overline{s}$  and  $\overline{t}$  are both labeled with $\boxtimes$ and correspond to each other by $\psi$. 
Hence  $t$  is a product term with a product-sum decomposition 
$t = t_1 \hat{\times}\ldots \hat{\times} t_k$. 

Consider the trees  $\overline{s_i}$, $s'_i$ $(i \in [n])$
and the trees  $\overline{t_j}$, $t'_j$ $(j \in [k])$  occurring in the constructions of  $\overline{s}$  and  $\overline{t}$. 
Since $\psi$  maps  $\overline{s}$  isomorphically to  $\overline{t}$,
we obtain  $k = n$ and that  
there is a bijection  $\varphi :[n] \to [n]$  
such that for each  $i \in [n]$, $\psi$  maps  $s'_i$  isomorphically to  $t'_{\varphi(i)}$.
Since  $\psi$  preserves the labelings, it follows that  $i = \varphi(i)$, for each  $i \in [n]$.
Moreover, $\overline{s_i}$  is isomorphic to  $\overline{t_i}$, 
and by induction hypothesis we obtain that $s_i =_{\rmAC} t_i$, for each  $i \in [n]$. 
Then Lemma \ref{lm:AC-equiv-characterization}(b) implies  $s =_{\rmAC} t$.
\end{proof}

Next we turn to our investigation of the strong bimonoid  $\sfNsb[X]$.
Since in  $\sfNsb[X]$  the addition is associative and commutative and the multiplication is associative,
in the following as usual we will write sums and (ordered) products of several elements of  $\bbNsb[X]$  often without parentheses,
 where multiplication binds stronger than addition.

In the following, we abbreviate $X/\!{=_{\rmAC}}$ by $\XAC$ and $[s]_{=_{\rmAC}}$ by $[s]_{\rmAC}$ for each $s\in \rmST(X)$.  Hence $\XAC=\{ [x]_{\rmAC} \mid x \in X \}$, where $[x]_{\rmAC}=\{x\}$ for each $x \in X$.

The elements of $\sfNsb[X]$ are AC-congruence classes of simple terms. Therefore we name them
\emph{simple term classes}.
For the following, it will be important to have uniqueness results
for the representation of these simple term classes. 
We will employ sum terms and product terms and their congruence classes, called, respectively, sum classes  and product classes: A simple term class $p \in  \bbNsb[X]$ is a \emph{sum class} (\emph{product class}) if there is a
sum term (product term) $u \in \rmST(X)$ such that $p=[u]_{\rmAC}$.
Clearly, for each $p \in  \bbNsb[X]$, the following holds: if $p$ is a sum class (product class), then  there are $q, r \in  \bbNsb[X] \setminus \{ 0 \} $
($q, r \in  \bbNsb[X] \setminus \{ 0, 1\} $) such that $p=q + r$ ($p=q \times r$).

Let  $n \geq 2$  and  $p, p_1,\ldots, p_n \in \bbNsb[X]$.  
If  $p = p_1 + \ldots + p_n$, and, for each  $i\in [n]$, $p_i$  is a product class or  $p_i\in \XAC \cup \{1\}$,
then we call $p_1 + \ldots + p_n$ a \emph{sum-product decomposition} of  $p$.\\
If $p = p_1 \times \ldots \times p_n$, and, for each  $i\in [n]$, $p_i$  is a sum class or  $p_i\in \XAC$,
then we call $p_1 \times \ldots \times p_n$ a \emph{product-sum decomposition} of  $p$.
Now we can show the following.

\begin{lemma}\label{lm:uniqueness-lemma} \rm (a)  Each simple term class  $p \in \bbNsb[X] \setminus (\XAC\cup \{ 0, 1 \})$  is either
a sum class or a product class (but not both). 

(b)  Each sum class $p\in \bbNsb[X]$  has a sum-product decomposition
$p = p_1 + \ldots + p_n$.
Moreover, for every sum-product decomposition  $p = q_1 + \ldots + q_k$  of  $p$  we have  $k = n$
and the sequence $q_1,\ldots, q_n$  constitutes a permutation of  $p_1,\ldots,p_n$.

(c)  Each  product class $p \in \bbNsb[X]$  has a unique product-sum decomposition
$p = p_1 \times \ldots \times p_n$.
\end{lemma}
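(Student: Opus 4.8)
The plan is to reduce all three claims to the corresponding statements about simple \emph{terms}, namely Observation~\ref{obs:unique-decomposition} and Lemma~\ref{lm:AC-equiv-characterization}, by passing back and forth between a simple term class and its representatives. The key bridge is the following elementary fact: if $a$ and $b$ are simple terms with $a,b\ne\hat{0}$, then by definition of $+_\mathsf{ST}$ one has $a+_\mathsf{ST} b = a\hat{+}b$, hence $[a]_{\rmAC}+[b]_{\rmAC}=[a\hat{+}b]_{\rmAC}$ in $\sfNsb[X]$; iterating this and using associativity of $+$ in $\sfNsb[X]$, a class-level expression $p_1+\dots+p_n$ whose $p_i$ are represented by simple terms $s_i\ne\hat{0}$ equals $[\,s_1\hat{+}\dots\hat{+}s_n\,]_{\rmAC}$ for a suitably parenthesized term on the right, and that term is a sum term (it is simple, distinct from $\hat{0}$, has $\hat{+}$ at its root, and $n\ge 2$) whose sum-product decomposition, by uniqueness in Observation~\ref{obs:unique-decomposition}(a), is exactly $s_1\hat{+}\dots\hat{+}s_n$ provided each $s_i$ is a product term or lies in $X\cup\{\hat{1}\}$. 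The analogous statement holds with $\hat{+}$, sum term, $X\cup\{\hat{1}\}$ replaced by $\hat{\times}$, product term, $X$ (now the $s_i$ satisfy $s_i\ne\hat{0},\hat{1}$, so $s_i\times_\mathsf{ST} s_{i+1}=s_i\hat{\times}s_{i+1}$), using Observation~\ref{obs:unique-decomposition}(b).

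For (a), write $p=[s]_{\rmAC}$ with $s$ simple. By Proposition~\ref{prop:AC-equiv-graphs}, $p\in\XAC\cup\{0,1\}$ iff $\overline{s}$ is a single node, i.e.\ iff $s\in X\cup\{\hat{0},\hat{1}\}$; so by hypothesis $s\notin X\cup\{\hat{0},\hat{1}\}$. Since $s$ is simple and $s\ne\hat{0}$, it contains no occurrence of $\hat{0}$; being non-atomic, it has the form $s_1\hat{+}s_2$ or $s_1\hat{\times}s_2$ with $s_1,s_2$ simple and distinct from $\hat{0}$. In the first case $s$ is a sum term, so $p$ is a sum class; in the second, simplicity of $s$ forbids the subterms $\hat{1}\hat{\times}s_2$ and $s_1\hat{\times}\hat{1}$, so $s_1,s_2\ne\hat{1}$, $s$ is a product term, and $p$ is a product class. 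Finally, $p$ cannot be both: a sum class has a sum-term representative $u$ and a product class a product-term representative $v$, and $u=_{\rmAC}v$ would force $\overline{u}\cong\overline{v}$ by Proposition~\ref{prop:AC-equiv-graphs}, which is impossible since their roots carry the distinct labels $\boxplus$ and $\boxtimes$.

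For (b), existence: take a sum-term representative $s$ of $p$, let $s=s_1\hat{+}\dots\hat{+}s_n$ be its sum-product decomposition (Observation~\ref{obs:unique-decomposition}(a)), and put $p_i=[s_i]_{\rmAC}$; each $p_i$ is a product class or lies in $\XAC\cup\{1\}$, and $p=p_1+\dots+p_n$ by the bridge above. Uniqueness up to permutation: given another sum-product decomposition $p=q_1+\dots+q_k$, choose for each $q_j$ a representative $t_j$ that is a product term or lies in $X\cup\{\hat{1}\}$; then $t:=t_1\hat{+}\dots\hat{+}t_k$ is a sum term with sum-product decomposition $t_1\hat{+}\dots\hat{+}t_k$ and $[t]_{\rmAC}=q_1+\dots+q_k=p=[s]_{\rmAC}$, so $s=_{\rmAC}t$. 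Applying Lemma~\ref{lm:AC-equiv-characterization}(a) to these two sum-product decompositions gives $k=n$ and a permutation $\varphi$ of $[n]$ with $s_i=_{\rmAC}t_{\varphi(i)}$, i.e.\ $p_i=q_{\varphi(i)}$; hence $q_1,\dots,q_n$ is a permutation of $p_1,\dots,p_n$. Part (c) is the same argument for products, using Observation~\ref{obs:unique-decomposition}(b) and Lemma~\ref{lm:AC-equiv-characterization}(b); since the latter delivers $k=n$ and $s_i=_{\rmAC}t_i$ for every $i$ \emph{without} any permutation, one obtains $p_i=q_i$ for all $i$, so the product-sum decomposition is unique on the nose.

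The main obstacle is making the bridge paragraph fully rigorous: one must check that replacing a class-level decomposition by a term-level one interacts correctly with the special rules for $\hat{0}$ and $\hat{1}$ in the definition of $+_\mathsf{ST}$ and $\times_\mathsf{ST}$, and with the parenthesization that is suppressed in the flattened notation $s_1\hat{+}\dots\hat{+}s_n$; this is a routine but slightly fiddly induction on the parenthesizing context $c$, using associativity (and, for sums, commutativity) of the operations of $\sfNsb[X]$ together with the fact that every subterm of a product term (respectively sum term) occurring in the relevant position is again a product term, sum term, or variable, hence distinct from $\hat{0}$ (and, where needed, from $\hat{1}$). Once this is in place, (a)--(c) are direct translations of Observation~\ref{obs:unique-decomposition} and Lemma~\ref{lm:AC-equiv-characterization}.
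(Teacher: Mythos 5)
Your proposal is correct and follows essentially the same route as the paper: pass to sum-term/product-term representatives, invoke Observation~\ref{obs:unique-decomposition} for existence of the decompositions, and invoke Lemma~\ref{lm:AC-equiv-characterization} to transfer uniqueness (up to permutation for sums, on the nose for products) back to the classes. The only cosmetic differences are that you make explicit the class-to-term ``bridge'' and the case analysis showing $p$ is a sum or product class (which the paper treats as clear), and that you derive the ``not both'' part of (a) from Proposition~\ref{prop:AC-equiv-graphs} rather than directly from Lemma~\ref{lm:AC-equiv-characterization}; both are immediate consequences of the same fact.
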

\begin{proof} Let  $p  \in \bbNsb[X] \setminus (\XAC \cup \{ 0, 1 \})$.

(a) Clearly, $p$ is sum class or a product class. Assume that it is both a sum class and a product class.
Then there is a sum term $s\hat{+}t$ and a product term $s'\hat{\times} t'$ in $\rmST(X)$ such that $[s\hat{+}t]_{\rmAC}=p=[s'\hat{\times} t']_{\rmAC}$. 
By Lemma \ref{lm:AC-equiv-characterization}, this is a contradiction. 

(b) Let $p$  be a sum class, i.e., let $p=[s]_{\rmAC}$ for some sum term $s\in \rmST(X)$. Let  
$s = s_1 \hat{+} \ldots \hat{+} s_n$
be the sum-product decomposition of  $s$  and, for each $i \in [n]$, let $p_i = [s_i]_{\rmAC}$. 
Then we have $p = p_1 + \ldots + p_n$, and this sum constitutes a sum-product decomposition of  $p$.

We show that $n$ is unique and the sequence $p_1, \ldots,  p_n$ is also unique up to a permutation.

For this, assume there $p = q_1 + \ldots + q_k$  is a further sum-product decomposition of  $p$, i.e., $q_i$   is a product class or  $q_i\in \XAC \cup \{1\}$ for each $i\in[k]$. Choose terms  $t_1,\ldots, t_k \in \rmST(X)$ such that,
for each  $i \in [k]$, $q_i = [t_i]_{\rmAC}$ and $t_i$  is a product term or $t_i \in X \cup \{\hat{1}\}$.
Let $t \in \rmST(X)$ be a term with sum-product decomposition 
$t=t_1 \hat{+} \ldots \hat{+} t_k$. Then  $p = [t]_{\rmAC}$.
Hence $s =_{\rmAC} t$. Since $s$ and $t$ sum terms, by  Lemma  \ref{lm:AC-equiv-characterization}(a) we obtain that $k=n$ and  $t_1,\ldots, t_n$  is, up to ${\rmAC}$-equivalence of the summands, a permutation of  $s_1,\ldots,s_n$. Consequently, $q_1,\ldots, q_n$  is a permutation of  $p_1,\ldots,p_n$. This proves  (b). 

(c) Now assume that  $p$  is a product class, i.e., there exists a product term $s\in \rmST(X)$ with $p=[s]_{\rmAC}$. 
Let $s = s_1 \hat{\times} \ldots \hat{\times} s_n$
be the product-sum decomposition of  $s$ and, for each $i \in [n]$, let $p_i = [s_i]_{\rmAC}$. Then we have $p=p_1 \times \ldots \times  p_n$, and this constitutes a product-sum decomposition of  $p$.

We show that $n$ is unique and also the sequence $p_1, \ldots,  p_n$ is unique.

For this, let $p = q_1 \times \ldots \times q_k$  be a further product-sum decomposition of  $p$. 
Choose terms  $t_1,\ldots, t_k \in \rmST(X)$  such that, for each $i \in [k]$,
$q_i = [t_i]_{\rmAC}$ and the term $t_i$ is a sum term or  $t_i\in X$. 
Let $t \in \rmST(X)$ be a term with product-sum decomposition 
$t=t_1 \hat{\times} \ldots \hat{\times} t_k$. Then  $p = [t]_{\rmAC}$.
Hence $s=_{\rmAC} t$. So, since  $s$ and $t$ are product terms, by Lemma \ref{lm:AC-equiv-characterization}(b), we obtain that $k=n$ and $s_i =_{\rmAC} t_i$ for each $i\in[n]$. 
Hence, also  $p_i =q_i$ for each $i\in[n]$.
This proves  (c). 
\end{proof}

Clearly, if  $s, t$  are simple terms and  $s =_{\rmAC} t$, then by Lemma \ref{lm:approx-characterization}, we have
$\size(s) = \size(t)$. Hence for each $p \in \bbNsb[X]$  we define \emph{the size of $p$}  by $\size(p) = \size(s)$, where
$s$ is a term in $\rmST(X)$  with  $p = [s]_{\rmAC}$. Subsequently, in our proofs for a simple class $p \in \bbNsb[X]$ 
we will often proceed by an induction over  $\size(p)$.

Next, we will define the subclass of polynomial terms in  $\rmST(X)$
and then the corresponding subclass of polynomials in  $\bbNsb[X]$. This is motivated by the usual definition of polynomials in the free ring or semiring over  $X$.
More precisely, we want to capture those terms which we obtain from
arbitrary simple terms by applying repeatedly right-distributivity, but not left-distributivity, of multiplication over addition
(combined with associativity of multiplication).
 
First we define monomial terms and monomials. For this, we recall that the elements of  $\T_{\{\hat{\times}\}}(X)$  can be viewed as products of elements only from  $X$,
with any kind of parenthesizing. 

\begin{definition}\label{def:monomial-terms}\rm The set of \emph{monomial terms} is the set 
$\T_{\{\hat{\times}\}}(X)$.

For each monomial term $t \in \T_{\{\hat{\times}\}}(X)$, we call its
congruence class  $[t]_{\rmAC}$  a \emph{monomial}. The set of all monomials is the set 
\[\{[x_1]_{\rmAC} \times\ldots\times [x_n]_{\rmAC} \mid n \in \mathbb{N}_+ \text{ and } x_1,\ldots, x_n\in X \}.\]
\hfill$\Box$
\end{definition}

\begin{definition}\label{def:polynomial-terms}\rm 
The set of \emph{polynomial terms}, denoted by $\rmPT(X)$, is the smallest subset $U$ of $\rmST(X)$ satisfying the following conditions:
\begin{compactitem}
\item[(a)]  $\hat{0}$, $\hat{1}$, and all monomial terms are in $U$.
\item[(b)]  If $s,t\in U$ with $s\ne \hat{0} \ne t$, then $s \hat{+} t$ is in $U$.  
\item[(c)]  If  $s$  is a monomial term and  $t\in U$ with $t\not\in \{\hat{0},\hat{1}\}$,
then  $s \hat{\times} t$ is in $U$. \hfill$\Box$
\end{compactitem}
\end{definition}

For every $s,t\in \rmPT(X)$ with $s\ne \hat{0} \ne t$, the term $s \hat{+} t$ is called a \emph{sum polynomial term}, and 
for every monomial term $s$ and $t\in \rmPT(X)$ with $t\not\in \{\hat{0},\hat{1}\}$, the term $s \hat{\times} t$ is called a \emph{product polynomial term}.

For each polynomial term (respectively, sum polynomial term, product polynomial term)
$t \in \rmPT(X)$, we call its  congruence class  $[t]_{\rmAC}$  
a \emph{polynomial} (respectively, a \emph{sum polynomial}, a \emph{product polynomial}).  

Now, we let  \[\bbNrd[X] = \rmPT(X)/\!{=_{\rmAC}}\,=\{ [t]_{\rmAC} \mid t \in \rmPT(X) \},\]
and call $\bbNrd[X]$ the set of all polynomials in  $\bbNsb[X]$. In particular, $0, 1\in  \bbNrd[X]$. 
Subsequently, our goal is to obtain a right-distributive strong bimonoid structure on $\bbNrd[X]$; this explains the subscript $\mrd$.

\begin{observation}\label{obs:polynomials} \rm The set $\bbNrd[X]$ is the smallest subset $U$ of $\bbNsb[X]$ satisfying the following conditions:
\begin{compactitem}
\item[(a)]  $0$, $1$, and all monomials are in $U$.
\item[(b)]  If $p,q\in U$ with $p\ne 0 \ne q$, then $p + q$ is in $U$.  
\item[(c)]  If  $m$  is a monomial  and  $q\in U$ with $q\not\in \{0,1\}$,
then  $m \times q$ is in $U$.  \hfill$\Box$
\end{compactitem}
\end{observation}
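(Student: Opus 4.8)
The plan is to prove the characterization of $\bbNrd[X]$ as the smallest subset $U$ of $\bbNsb[X]$ closed under conditions (a)--(c) by a two-way inclusion, transporting the inductive definition of $\rmPT(X)$ (Definition \ref{def:polynomial-terms}) through the quotient map $s \mapsto [s]_{\rmAC}$.

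\medskip

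\noindent\textbf{Proof sketch.}
Let $U$ denote the smallest subset of $\bbNsb[X]$ satisfying (a)--(c) of the statement; such a smallest set exists since the family of subsets satisfying (a)--(c) is closed under intersection and $\bbNsb[X]$ itself satisfies them. We show $\bbNrd[X] = U$.

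For the inclusion $U \subseteq \bbNrd[X]$, it suffices to check that $\bbNrd[X]$ itself satisfies conditions (a)--(c); minimality of $U$ then gives the inclusion. Condition (a) holds since $[\hat 0]_{\rmAC} = 0$, $[\hat 1]_{\rmAC} = 1$, and every monomial is $[t]_{\rmAC}$ for a monomial term $t \in \T_{\{\hat\times\}}(X) \subseteq \rmPT(X)$ by Definition \ref{def:polynomial-terms}(a). For (b), if $p = [s]_{\rmAC}$ and $q = [t]_{\rmAC}$ with $s, t \in \rmPT(X)$ and $p, q \neq 0$, then $s \neq_{\rmAC} \hat 0$ and $t \neq_{\rmAC} \hat 0$, so $s \ne \hat 0 \ne t$ (as $\hat 0$ is the only simple term $\rmAC$-equivalent to $\hat 0$, by Lemma \ref{lm:approx-characterization} applied to sizes, or simply since $e_1,e_2,e_4$ cannot rewrite $\hat 0$); hence $s \hat + t \in \rmPT(X)$ by Definition \ref{def:polynomial-terms}(b), and $p + q = [s \hat + t]_{\rmAC} \in \bbNrd[X]$. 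For (c), if $m = [s]_{\rmAC}$ is a monomial with $s \in \T_{\{\hat\times\}}(X)$ a monomial term, and $q = [t]_{\rmAC} \in \bbNrd[X] \setminus \{0,1\}$ with $t \in \rmPT(X)$, then again $t \notin \{\hat 0, \hat 1\}$, so $s \hat\times t \in \rmPT(X)$ by Definition \ref{def:polynomial-terms}(c), whence $m \times q = [s \hat\times t]_{\rmAC} \in \bbNrd[X]$. A small point to be spelled out is the choice of representative in (c): one must pick a \emph{monomial term} representative of the monomial $m$, which is possible by Definition \ref{def:monomial-terms}.

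For the reverse inclusion $\bbNrd[X] \subseteq U$, let $p \in \bbNrd[X]$, say $p = [t]_{\rmAC}$ with $t \in \rmPT(X)$. We argue by structural induction on the derivation of $t \in \rmPT(X)$ according to Definition \ref{def:polynomial-terms}, showing $[t]_{\rmAC} \in U$ in each case. If $t = \hat 0$, $t = \hat 1$, or $t$ is a monomial term, then $[t]_{\rmAC}$ is $0$, $1$, or a monomial, so lies in $U$ by condition (a) of the definition of $U$. If $t = s \hat + t'$ with $s, t' \in \rmPT(X)$ and $s \ne \hat 0 \ne t'$, then by induction $[s]_{\rmAC}, [t']_{\rmAC} \in U$, and since $[s]_{\rmAC} \ne 0 \ne [t']_{\rmAC}$ (as $s, t'$ are simple and not $\hat 0$), condition (b) for $U$ gives $[t]_{\rmAC} = [s]_{\rmAC} + [t']_{\rmAC} \in U$. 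If $t = s \hat\times t'$ with $s$ a monomial term and $t' \in \rmPT(X) \setminus \{\hat 0, \hat 1\}$, then $[s]_{\rmAC}$ is a monomial, $[t']_{\rmAC} \in U$ by induction, and $[t']_{\rmAC} \notin \{0, 1\}$ (since $t'$ is simple and distinct from $\hat 0, \hat 1$, it is not $\rmAC$-equivalent to them), so condition (c) for $U$ yields $[t]_{\rmAC} = [s]_{\rmAC} \times [t']_{\rmAC} \in U$. This completes the induction, hence $\bbNrd[X] \subseteq U$, and together with the first inclusion, $\bbNrd[X] = U$. \hfill$\Box$

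\medskip

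\noindent\textbf{Expected main obstacle.} The genuinely non-routine point is ensuring that the side-conditions in Definition \ref{def:polynomial-terms} (namely $s \ne \hat 0 \ne t$ and $t \notin \{\hat 0, \hat 1\}$) translate correctly between terms and their $\rmAC$-classes in \emph{both} directions: passing from $[t]_{\rmAC} \ne 0$ back to a representative $t \ne \hat 0$ requires knowing $\hat 0$ is $\rmAC$-isolated (and likewise for $\hat 1$, and for monomials one needs a monomial-term representative). These facts are immediate from Lemma \ref{lm:approx-characterization} together with the observation that the identities $e_1, e_2, e_4$ preserve the root symbol and cannot introduce or eliminate $\hat 0$ or $\hat 1$; I would state this explicitly as a preliminary remark rather than leave it implicit. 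Everything else is a direct structural-induction bookkeeping argument.
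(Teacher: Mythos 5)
Your proof is correct, and it is exactly the routine quotient-transport argument that the paper has in mind when it states this as an Observation without proof (the $\Box$ appears directly in the statement). The only points needing care — that $\hat 0$ and $\hat 1$ are $\rmAC$-isolated and that one must choose a monomial-term representative in case (c) — are precisely the ones you flag and handle correctly.
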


Clearly, for each $p \in  \bbNrd[X]$, the following holds: if $p$ is a sum polynomial (product polynomial), then  there are $q, r \in  \bbNrd[X] \setminus \{ 0 \} $
(a monomial $m$ and $q \in  \bbNrd[X] \setminus \{ 0, 1 \} $) such that $p=q + r$ ($p=m \times q$, respectively).

The next lemma is analogous to Lemma \ref{lm:uniqueness-lemma} and states existence and uniqueness results
for the representation of polynomials of $\bbNrd[X]$.

\begin{lemma}\label{lm:uniqueness-lemma-polynomials} \rm (a)  Each polynomial  $p \in \bbNrd[X] \setminus (\XAC\cup \{ 0, 1 \})$  is either a sum polynomial or a product polynomial (but not both). 

(b)  Each sum polynomial $p \in \bbNrd[X]$  has a sum-product decomposition,
$p = p_1 + \ldots + p_n$, such that, for each  $i\in [n]$, $p_i$  is a product polynomial or  $p_i\in \XAC \cup \{1\}$.
Moreover, for every sum-product decomposition
$p = q_1 + \ldots + q_k$, we have  $k = n$ and 
$q_1,\ldots, q_n$  constitutes a permutation of  $p_1,\ldots,p_n$.

(c)  For each  product polynomial $p \in \bbNrd[X]$ either of the following two conditions holds:
$p$ is a monomial or there are a unique monomial $m$ and a unique sum polynomial $q$ such that $p=m\times q$.
\end{lemma}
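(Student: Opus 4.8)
The plan is to follow the proof of Lemma~\ref{lm:uniqueness-lemma}, but carrying along the additional structure imposed by Definition~\ref{def:polynomial-terms}. The key preliminary step is a structural analysis of polynomial terms: by induction on $\size(t)$ (equivalently, on the derivation of $t \in \rmPT(X)$) I would show that for every $t \in \rmPT(X)$: (i) if $t$ is $\hat{+}$-rooted then $t$ is a sum polynomial term; (ii) if $t$ is $\hat{\times}$-rooted then $t = a \hat{\times} t'$ for some monomial term $a$ and some $t' \in \rmPT(X)\setminus\{\hat{0},\hat{1}\}$, so $t$ is a product polynomial term (hence in particular a product term); and (iii) if $t \neq \hat{0}$ and $t$ is not $\hat{+}$-rooted, then $t$ is a product polynomial term or $t \in X \cup \{\hat{1}\}$. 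These follow because the only rule of Definition~\ref{def:polynomial-terms} producing a $\hat{+}$-rooted term is (b), and the only ones producing a $\hat{\times}$-rooted term are (a) (where a monomial term $a \hat{\times} a'$ is also a product polynomial term) and (c). From this I would deduce the central claim: if $t$ is a sum polynomial term, then the unique sum-product decomposition $t = t_1 \hat{+} \ldots \hat{+} t_n$ of $t$ as a sum term (Observation~\ref{obs:unique-decomposition}(a)) has each $t_i$ a product polynomial term or an element of $X \cup \{\hat{1}\}$; this again follows by induction, since $t = s \hat{+} t'$ with $s, t' \in \rmPT(X)\setminus\{\hat{0}\}$ and the summands of $t$ are exactly those contributed by $s$ and by $t'$, each of which has the required form by (i)--(iii) and the induction hypothesis.

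Granting this, part (a) is immediate: writing $p = [t]_{\rmAC}$ with $t \in \rmPT(X)$, the hypothesis $p \notin \XAC \cup \{0,1\}$ forces $t \notin X \cup \{\hat{0},\hat{1}\}$, so by (i)--(iii) either $t$ is a sum polynomial term and $p$ is a sum polynomial, or $t$ is $\hat{\times}$-rooted, hence a product polynomial term, and $p$ is a product polynomial; that $p$ cannot be both follows from Lemma~\ref{lm:uniqueness-lemma}(a), since sum polynomials are sum classes and product polynomials are product classes of $\bbNsb[X]$. For part (b), a sum polynomial $p = [t]_{\rmAC}$ with $t$ a sum polynomial term yields, via the central claim and with $p_i := [t_i]_{\rmAC}$, the sum-product decomposition $p = p_1 + \ldots + p_n$ in the sense of the lemma. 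For uniqueness up to permutation, any sum-product decomposition $p = q_1 + \ldots + q_k$ lifts --- choosing representatives that are product polynomial terms or elements of $X \cup \{\hat{1}\}$ --- to a sum term $u = u_1 \hat{+}\ldots\hat{+} u_k$ whose own sum-product decomposition is $u_1 \hat{+}\ldots\hat{+} u_k$, with $u =_{\rmAC} t$ because $[u]_{\rmAC} = p = [t]_{\rmAC}$; Lemma~\ref{lm:AC-equiv-characterization}(a) then gives $k = n$ and a permutation matching the $u_j$ to the $t_i$ up to $\rmAC$, hence the $q_j$ to the $p_i$.

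For part (c), existence proceeds by induction on $\size(p)$. Write $p = [a \hat{\times} t]_{\rmAC}$ with $a$ a monomial term and $t \in \rmPT(X)\setminus\{\hat{0},\hat{1}\}$, and split according to whether $t$ is a monomial term ($a \hat{\times} t$ is then a monomial term, so $p$ is a monomial), a sum polynomial term ($p = [a]_{\rmAC}\times[t]_{\rmAC}$ with $[a]_{\rmAC}$ a monomial and $[t]_{\rmAC}$ a sum polynomial), or a product polynomial term (apply the induction hypothesis to $[t]_{\rmAC}$; using associativity of $\times$ and that a product of two monomials is a monomial, $p$ again has one of the two required forms). For uniqueness when $p$ is not a monomial, suppose $p = m_1 \times q_1 = m_2 \times q_2$ with $m_i$ monomials and $q_i$ sum polynomials; lifting gives product terms $a_i \hat{\times} b_i$ with $a_i$ monomial terms and $b_i$ sum polynomial terms (in particular $\hat{+}$-rooted), so the product-sum decomposition of $a_i \hat{\times} b_i$ is $x^{(i)}_1 \hat{\times} \ldots \hat{\times} x^{(i)}_{k_i} \hat{\times} b_i$, where $x^{(i)}_1 \hat{\times} \ldots \hat{\times} x^{(i)}_{k_i}$ is the product-sum decomposition of the monomial term $a_i$ (all of whose factors are variables). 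From $a_1 \hat{\times} b_1 =_{\rmAC} a_2 \hat{\times} b_2$, Lemma~\ref{lm:AC-equiv-characterization}(b) gives $k_1 = k_2$, $x^{(1)}_j = x^{(2)}_j$ for all $j$, and $b_1 =_{\rmAC} b_2$; hence $a_1 =_{\rmAC} a_2$ and $b_1 =_{\rmAC} b_2$, i.e. $m_1 = m_2$ and $q_1 = q_2$. The same product-sum analysis shows a monomial is never of the form $m \times q$ with $q$ a sum polynomial, so the two cases are exclusive.

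I expect the main obstacle to be the bookkeeping in the preliminary structural analysis: polynomial terms are asymmetric --- the right factor of a product may be an arbitrary polynomial term, but the left factor must be a monomial term --- and one must check carefully that this shape is inherited by every summand of a sum-product decomposition, and that in the product-sum decomposition of a product polynomial term $a \hat{\times} t$ the monomial-term prefix $a$ contributes only single-variable factors while $t$, being $\hat{+}$-rooted, remains a single factor. Once these facts are in place, the three parts follow by combining Lemma~\ref{lm:uniqueness-lemma}, Lemma~\ref{lm:AC-equiv-characterization} and Observation~\ref{obs:unique-decomposition} essentially as in the proof of Lemma~\ref{lm:uniqueness-lemma}.
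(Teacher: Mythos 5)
Your proposal is correct and follows essentially the same route as the paper: induction on size with a case distinction driven by the closure description of polynomial terms, reducing everything to Lemma~\ref{lm:uniqueness-lemma} and Lemma~\ref{lm:AC-equiv-characterization} (the paper dispatches (a) and (b) in one line this way, and its proof of (c) is exactly your existence argument, with uniqueness delegated to Lemma~\ref{lm:uniqueness-lemma} rather than re-derived at the term level). The extra detail you supply --- the preliminary structural analysis of polynomial terms and the explicit lifting arguments --- is just an inlining of what the paper leaves implicit, not a different method.
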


\begin{proof} (a), (b) These parts are straightforward by induction on  $\size(p)$, using Lemma \ref{lm:uniqueness-lemma}.

(c)  By Observation \ref{obs:polynomials}, the set $\bbNrd$ of polynomials can be obtained 
by closing the set containing  $0, 1$ and all monomials
under sums and under products from the left with monomials. 
Let  $p \in \bbNrd[X]$  be a product polynomial. 
By the above and Lemma \ref{lm:uniqueness-lemma}, we have  $p = m \times q$  for some monomial  $m$  and a polynomial  $q$.
We proceed by case distinction and induction on  $\size(p)$. 
If  $q$ is a monomial, we are done.  If $q$  is a sum polynomial, then the uniqueness of $m$ and $q$ follows from Lemma \ref{lm:uniqueness-lemma}. 
Now assume that  $q$  is a product polynomial.
By induction hypothesis, we have $q = m' \times q'$
for some monomial $m'$ and a sum polynomial $q'$. Hence  $p = (m \times m') \times q'$  as required.
The uniqueness part follows again by Lemma \ref{lm:uniqueness-lemma}.    
\end{proof}

Next we show that the set $\rmPT(X)$ of polynomial terms is closed under $\rmAC$-equivalence.

\begin{lemma}\label{lm:pol-classes-subset-simple-classes}\rm
For every  $s \in \rmPT(X)$ and $t \in \rmST(X)$, if  $s =_\rmAC t$, then also  $t \in \rmPT(X)$. 
\end{lemma}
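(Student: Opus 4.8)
The plan is to prove the statement by induction on $\size(s)$, transferring the shape of $s$ to $t$ via the characterization of $\rmAC$-equivalence in Lemma~\ref{lm:AC-equiv-characterization} and then reassembling $t$ inside $\rmPT(X)$ by means of Definition~\ref{def:polynomial-terms}. Since $s=_{\rmAC}t$ forces $\size(s)=\size(t)$, the base case $\size(s)=1$ is immediate: then $s\in X\cup\{\hat{0},\hat{1}\}$, no $\rmAC$-reduction step applies to a single symbol, so $t=s\in\rmPT(X)$. Throughout I would invoke Lemma~\ref{lm:AC-equiv-characterization} only when $s$ (hence $t$) is a sum term or a product term, handling leaves separately by the size argument.

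Before the induction I would record three routine structural facts, each proved by a short induction on $\size$ directly from Definition~\ref{def:polynomial-terms}: (i) if $s\in\rmPT(X)$ is a sum polynomial term with sum-product decomposition $s=s_1\hat{+}\ldots\hat{+}s_n$, then each $s_i$ lies in $\rmPT(X)\setminus\{\hat{0}\}$; (ii) if $s\in\rmPT(X)$ is a product polynomial term that is not a monomial term, then its product-sum decomposition has the form $s=s_1\hat{\times}\ldots\hat{\times}s_{n-1}\hat{\times}s_n$ with $s_1,\ldots,s_{n-1}\in X$ and $s_n$ a sum polynomial term, while the product-sum factors of a monomial term all lie in $X$; and, conversely, (iii) $\rmPT(X)$ is closed under the reparenthesizations implicit in these decompositions — any simple term that equals $t_1\hat{+}\ldots\hat{+}t_n$ with all $t_j\in\rmPT(X)\setminus\{\hat{0}\}$, and any simple term that equals $x_1\hat{\times}\ldots\hat{\times}x_{n-1}\hat{\times}q$ with $x_1,\ldots,x_{n-1}\in X$ and $q\in\rmPT(X)\setminus\{\hat{0},\hat{1}\}$, belongs to $\rmPT(X)$. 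Fact (iii) holds because an iterated application of clause~(b), resp.\ clause~(c), of Definition~\ref{def:polynomial-terms} can realise any bracketing, using that a parenthesised product of variables is a monomial term.

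For the inductive step, assume $\size(s)\ge 2$; then by Definition~\ref{def:polynomial-terms} the term $s$ is a monomial term, a sum polynomial term, or a product polynomial term that is not a monomial term. If $s$ is a monomial term, it is a product term whose product-sum factors all lie in $X$; by Lemma~\ref{lm:AC-equiv-characterization}(b), $t$ is a product term with a product-sum decomposition of the same length whose $i$-th factor is $\rmAC$-equivalent — hence, having size $1$, equal — to the $i$-th factor of $s$, so $t\in\T_{\{\hat{\times}\}}(X)$ is a monomial term. If $s$ is a sum polynomial term, then by (i) the summands $s_i$ of its sum-product decomposition are proper subterms of $s$ lying in $\rmPT(X)\setminus\{\hat{0}\}$; Lemma~\ref{lm:AC-equiv-characterization}(a) gives that $t$ is a sum term whose sum-product decomposition $t=t_1\hat{+}\ldots\hat{+}t_n$ satisfies $s_i=_{\rmAC}t_{\varphi(i)}$ for a permutation $\varphi$, so by the induction hypothesis each $t_j\in\rmPT(X)$, and $t_j\ne\hat{0}$ since $t_j=_{\rmAC}s_i\ne\hat{0}$; fact (iii) then yields $t\in\rmPT(X)$. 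Finally, if $s$ is a product polynomial term that is not a monomial term, then by (ii) its product-sum decomposition is $s_1\hat{\times}\ldots\hat{\times}s_{n-1}\hat{\times}s_n$ with $s_1,\ldots,s_{n-1}\in X$ and $s_n$ a sum polynomial term; by Lemma~\ref{lm:AC-equiv-characterization}(b), $t$ has a product-sum decomposition $t_1\hat{\times}\ldots\hat{\times}t_n$ of the same length with $t_i=_{\rmAC}s_i$ in order, so $t_i=s_i\in X$ for $i<n$, while $t_n=_{\rmAC}s_n$ forces, by the induction hypothesis together with Lemma~\ref{lm:AC-equiv-characterization}(a) (which rules out that $t_n$ is a product term), that $t_n$ is a sum term in $\rmPT(X)$, i.e.\ a sum polynomial term; fact (iii) again gives $t\in\rmPT(X)$.

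The main obstacle is the bookkeeping in the preliminary facts (i)--(iii) rather than any individual case of the induction: one must use that clause~(c) of Definition~\ref{def:polynomial-terms} permits only a \emph{monomial} term as left factor of a product, so that no polynomial term contains a subterm of the shape $(\text{sum term})\hat{\times}(\cdots)$ other than — after flattening — as the unique rightmost factor of a product-sum decomposition, and dually that $\rmPT(X)$ really is stable under the associativity and commutativity reparenthesizations hidden in the decomposition notation. Once (i)--(iii) are in place, the three cases are direct applications of Lemma~\ref{lm:AC-equiv-characterization} and the induction hypothesis.
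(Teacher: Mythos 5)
Your proof is correct and follows essentially the same route as the paper's: induction on $\size(s)$ with a case distinction into leaves, monomial terms, sum polynomial terms and non-monomial product polynomial terms, using Lemma~\ref{lm:AC-equiv-characterization} to transfer the decomposition structure from $s$ to $t$ and the induction hypothesis on the pieces. The only difference is one of bookkeeping: where the paper invokes Lemma~\ref{lm:uniqueness-lemma-polynomials}(c) at the level of congruence classes in the product case, you work directly with the product-sum decomposition at the term level and make explicit (your fact~(iii)) the closure of $\rmPT(X)$ under the reparenthesizations hidden in the decomposition notation, a point the paper's proof leaves implicit.
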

\begin{proof}
We proceed by case distinction and induction on  $\size(s)$. For this, assume that  $t \in \rmST(X)$  with  $s =_{\rmAC} t$.

The statement obviously holds for $s=\hat{0}$ ($s=\hat{1}$) because in this case $t=\hat{0}$ ($t=\hat{1}$).

If  $s$  is a monomial term, then by Lemma  \ref{lm:AC-equiv-characterization}, $t$  is also a monomial term. 

Now assume that  $s$ is a sum polynomial term. Then $s$  has a sum-product decomposition $s = s_1 \hat{+} \ldots \hat{+} s_n$
such that each $s_i$  $(i \in [n])$ is an element of $X\cup\{\hat{1}\}$ or a product polynomial term. By Lemma \ref{lm:AC-equiv-characterization}, there is a sum-product decomposition $t = t_1 \hat{+} \ldots \hat{+} t_n$
and, moreover, for each $t_i$ there is an $s_j$ with $s_j =_{\rmAC} t_i$.
By the induction hypothesis, each $t_i$ is a polynomial term, hence $t$ is a sum polynomial term.

Lastly, assume that  $s$ is a product polynomial term which is not a monomial term. By Lemma \ref{lm:uniqueness-lemma-polynomials}(c), there are a unique monomial $m$  and a sum polynomial $q$ with
$[s]_{\rmAC}=m\times q$. Hence there are a monomial term $s'$ and a sum polynomial term $s''$ such that $m=[s']_{\rmAC}$ and $q=[s'']_{\rmAC}$. Then
$t =_{\rmAC} s =_{\rmAC} s' \hat{\times} s''$. 
By Lemma \ref{lm:AC-equiv-characterization}(b), $t$ is a product term and it  has a product-sum decomposition
$t = t' \hat{\times} t''$  with  $s' =_{\rmAC } t'$  and $s'' =_{\rmAC } t''$.
Hence, $t'$  is a monomial term and $t''$  is a sum term and by induction hypothesis a polynomial term. Thus  $t \in \rmPT(X)$. 
\end{proof}

In an equivalent formulation, Lemma \ref{lm:pol-classes-subset-simple-classes} says that if  $s \in \rmPT(X)$, 
then  $[s]_{\rmAC} \subseteq \rmPT(X)$.

We note that $\bbNrd[X]$ is closed under the addition $+$. However,  it  is not closed under the multiplication $\times$,
since, e.g., for each  $x \in X$, $p = 1 + [x]_\rmAC$  and  $q = [x]_\rmAC$  are polynomials,
but  $p \times q  =  (1 + [x]_\rmAC) \times [x]_\rmAC$  is not a polynomial. Therefore, to equip $\bbNrd[X]$ with a strong bimonoid structure, we 
 can take the restriction of  $+$  to   $\bbNrd[X]$, but we define a new multiplication  $\times_\mrd$ on $\bbNrd[X]$ so that, together with the $+$, we will obtain a right-distributive strong bimonoid.
We define the operation  $\times_\mrd$  as follows.

\begin{definition}\rm\label{def:operations} Let  $q, r \in \bbNrd[X]$. We define  $q \times_\mrd r$ by case distinction and induction on $\size(q)$ as follows. We put  $q \times_\mrd 0 = 0 = 0 \times_\mrd q$  and
$q \times_\mrd 1 = q = 1 \times_\mrd q$. Now let  $q, r$  be different from  $0$  and  $1$.

\begin{compactitem}
\item[(a)]  If  $q$  is a monomial, we let  $q \times_\mrd r = q \times r$, a product polynomial.

\item[(b)] Let $q$ be a sum polynomial. By Lemma \ref{lm:uniqueness-lemma-polynomials}(b), we can write
 $q = q_1 + \ldots + q_n$ where $n \geq 2$ and
each $q_i$ is $1$, a monomial, or a product polynomial. By induction hypothesis, $q_i \times_\mrd r $ is already defined for each $i\in[n]$.
Then we define $q \times_\mrd r = q_1 \times_\mrd r + \ldots + q_n  \times_\mrd r$. 
Note that the description of  $q$  as a sum of  $n$  number of $1$'s, monomials, or product polynomials is unique in the sense of Lemma \ref{lm:uniqueness-lemma-polynomials}(b).
Therefore    $q \times_\mrd r$  is well-defined, and  it is a sum polynomial.

\item[(c)]  Let $q$ be  product polynomial which is not a monomial. By Lemma \ref{lm:uniqueness-lemma-polynomials}(c), there are a unique monomial $m$ and a sum polynomial $q'$
with $q = m \times q'$. By induction hypothesis, $q' \times_\mrd r$  is already defined. We define $q \times_\mrd r = m \times (q' \times_\mrd r)$.
Since $m$ and $q'$ are unique, the product $q \times_\mrd r$  is well-defined and it is a product polynomial.
 \hfill$\Box$
\end{compactitem}
\end{definition}

Given  $q, r \in \bbNrd[X]$  and polynomial terms  $s, t \in \rmPT(X)$  with  $q = [s]_\rmAC$ and $ r = [t]_\rmAC$, we have  $q \times_\mrd r\in \bbNrd[X]$. Hence, by the definition of $\bbNrd[X]$, there is a polynomial term  $u \in \rmPT(X)$  such that  $q \times_\mrd r = [u]_\rmAC$. 
We could find such a polynomial term  $u$  by following the inductive procedure 
described above in Definition \ref{def:operations}.

Next we wish to give a \emph{direct} one-step construction for finding this polynomial term  $u$ from $s$ and $t$. We call an element of  $X \cup \{\hat{1}\}$ which occurs as a subterm of  $s$  also a \emph{leaf of} $s$.
Intuitively, we will perform the multiplication with the term  $t$  at all occurrences of the leaf  $\hat{1}$  in $s$ and at particular occurrences of leaves from  $X$  in  $s$.

Formally, for any polynomial terms  $s, t \in \rmPT(X)\setminus\{\hat{0}\}$, we  define the term  $s\langle\hti t\rangle$. 

For this, first we define two auxiliary concepts. A subterm $s'$ of $s$ is called a \emph{sum subterm of $s$} if $s'$ is a sum term. Next, let  $v$  be a monomial term. We say that \emph{$v$  occurs as a summand in $s$}, if  $s = v$  or $v$ is a summand in a sum subterm of $s$, i.e.,  $v$  is a child of a  $\hp$-symbol of  $s$. We note that if $\hat{1}$ occurs in $s$, then it occurs as a summand in $s$ in the above sense.

Now we define $s\langle\hti t\rangle$ as follows. We put  $\hat{1}\langle\hti t\rangle = t$  and  $s\langle\hti \hat{1}\rangle = s$. 
 If  $s \in X$ and $t\ne \hat{1}$, then we put  $s\langle\hti t\rangle = s \hti t$. 

If  $\size(s) \geq 2$, then we let  $s\langle\hti t\rangle$  
be the term obtained from  $s$  as follows:

\begin{compactitem}
\item whenever $\hat{1}$ occurs in $s$, 
then we replace this leaf $\hat{1}$  by $t$,
\item whenever  $x\in X$ is the rightmost factor of a monomial term which occurs as a summand in $s$, 
then we replace this leaf $x$  by  $x \hti t$.
\end{compactitem}

We give some examples:
\begin{compactitem}
\item[-] $(x\hp y)\langle\hti t\rangle=(x\hti t)\hp (y\hti t)$  and $(x\hti y)\langle\hti t\rangle=x\hti (y\hti t)$,
\item[-] $\Big((x\hti y) \hti\big(\hat{1}\hp (y\hti z)\big)\Big)\langle\hti t\rangle=(x\hti y) \hti\big(t\hp (y\hti (z\hti t))\big)$,
\item[-] $\Big(\big(x\hti (y\hp z)\big)\hp \big(y\hp (x\hti y)\big)\Big)\langle\hti t\rangle=\big(x\hti ((y\hti t)\hp (z\hti t))\big)\hp \big((y\hti t)\hp (x\hti (y\hti t))\big)$,
\item[-] $\Big(x\hti \Big((y\hp\hat{1})\hp \big( \hat{1}\hp (x\hti y)\big)\Big)\Big)\langle\hti t\rangle=x\hti \Big(((y\hti t)\hp t)\hp \big( t\hp (x\hti (y\hti t))\big)\Big)$.
\end{compactitem}    

Note that in the construction of  $s\langle\hti t\rangle$, the structure of  $s$  determines which occurrences of leaves of  $s$  get replaced; in particular, this does not depend on  $t$. Clearly,  $s\langle\times t\rangle$  is a simple term.

Observe that in case  $s$  is a sum term with sum-product decomposition  $s = s_1 \hp \ldots \hp s_n$, 
then  $s\langle\hti t\rangle = s_1\langle\hti t\rangle \hp \ldots \hp s_n\langle\hti t\rangle$. 

In case  $s$  is a product term such that  $s = s_1 \hti s_2$  with a monomial term  $s_1$  and a polynomial sum term $s_2$, the product-sum decomposition of  $s = s_1 \hti s_2$  has the sum term  $s_2$  as its rightmost factor.
Consequently, all leaves of  $s$  which are rightmost factors of some monomial term occurring as a summand in $s$, 
are leaves of  $s_2$. Hence $s\langle\hti t\rangle = s_1 \hti (s_2\langle\hti t\rangle)$.

Now we can show that in the setting above, we obtain our goal with the polynomial term  $u = s\langle\hti t\rangle$:

\begin{lemma}\label{lm:rd-pol-terms-for-products} \rm 
Let $s, t \in \rmPT(X) \setminus \{\hat{0}\}$.  Then $s\langle\hti t\rangle \in \rmPT(X)$  and $[s]_\rmAC \times_\mrd [t]_\rmAC = [s\langle\hti t\rangle]_\rmAC$.
\end{lemma}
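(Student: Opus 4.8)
The plan is to prove both assertions simultaneously by induction on $\size(s)$, following exactly the case distinction used in Definition~\ref{def:operations} for $\times_\mrd$ and in the construction of $s\langle\hti t\rangle$. The base cases handle $s = \hat 0$ (excluded), $s = \hat 1$, $s \in X$, and $t = \hat 1$: here $\hat 1\langle\hti t\rangle = t \in \rmPT(X)$ and $[\hat 1]_\rmAC \times_\mrd [t]_\rmAC = [t]_\rmAC$ by the convention $1 \times_\mrd r = r$; similarly $s\langle\hti \hat 1\rangle = s$ matches $q \times_\mrd 1 = q$; and for $s \in X$ with $t \neq \hat 1$ we have $s\langle\hti t\rangle = s \hti t$, which is a product polynomial term (as $s$ is a monomial term, $t \in \rmPT(X)\setminus\{\hat 0,\hat 1\}$, using Definition~\ref{def:polynomial-terms}(c)), and $[s]_\rmAC \times_\mrd [t]_\rmAC = [s]_\rmAC \times [t]_\rmAC = [s\hti t]_\rmAC$ by Definition~\ref{def:operations}(a).

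For the inductive step with $\size(s) \geq 2$, I would split according to Lemma~\ref{lm:uniqueness-lemma-polynomials}(a): either $s$ is a sum polynomial term or a product polynomial term. If $s$ is a sum term with sum-product decomposition $s = s_1 \hp \ldots \hp s_n$, then the excerpt already records the key structural identity $s\langle\hti t\rangle = s_1\langle\hti t\rangle \hp \ldots \hp s_n\langle\hti t\rangle$; each $s_i\langle\hti t\rangle$ lies in $\rmPT(X)$ by the induction hypothesis (each $s_i$ is a monomial term or $\hat 1$ or a product polynomial term, hence of strictly smaller size), and since each $s_i \neq \hat 0$ forces $s_i\langle\hti t\rangle \neq \hat 0$, Definition~\ref{def:polynomial-terms}(b) gives $s\langle\hti t\rangle \in \rmPT(X)$. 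On the algebraic side, passing to $\rmAC$-classes, $[s]_\rmAC = [s_1]_\rmAC + \ldots + [s_n]_\rmAC$ is a sum-product decomposition, so by Definition~\ref{def:operations}(b) and the induction hypothesis, $[s]_\rmAC \times_\mrd [t]_\rmAC = \sum_i [s_i]_\rmAC \times_\mrd [t]_\rmAC = \sum_i [s_i\langle\hti t\rangle]_\rmAC = [s\langle\hti t\rangle]_\rmAC$. If instead $s$ is a product term which is not a monomial term, then by Lemma~\ref{lm:uniqueness-lemma-polynomials}(c) (combined with Lemma~\ref{lm:uniqueness-lemma}) I can write $s = s_1 \hti s_2$ with $s_1$ a monomial term and $s_2$ a polynomial sum term; the excerpt records $s\langle\hti t\rangle = s_1 \hti (s_2\langle\hti t\rangle)$. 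By induction $s_2\langle\hti t\rangle \in \rmPT(X)$, and since it is a sum polynomial term it is not in $\{\hat 0,\hat 1\}$, so Definition~\ref{def:polynomial-terms}(c) gives $s\langle\hti t\rangle \in \rmPT(X)$. Passing to classes, $[s]_\rmAC = [s_1]_\rmAC \times [s_2]_\rmAC$ with $[s_1]_\rmAC$ a monomial and $[s_2]_\rmAC$ a sum polynomial, so Definition~\ref{def:operations}(c) and the induction hypothesis yield $[s]_\rmAC \times_\mrd [t]_\rmAC = [s_1]_\rmAC \times ([s_2]_\rmAC \times_\mrd [t]_\rmAC) = [s_1]_\rmAC \times [s_2\langle\hti t\rangle]_\rmAC = [s_1 \hti (s_2\langle\hti t\rangle)]_\rmAC = [s\langle\hti t\rangle]_\rmAC$.

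The main obstacle I anticipate is making the reduction step fully rigorous in the product-term case: one must be careful that the decomposition $s = s_1 \hti s_2$ furnished by Lemma~\ref{lm:uniqueness-lemma-polynomials}(c) really does have $s_2$ a \emph{sum} polynomial term (and not a monomial), so that the ``rightmost factor is a sum term'' observation from the excerpt applies and $s\langle\hti t\rangle = s_1 \hti (s_2\langle\hti t\rangle)$ holds on the nose at the level of terms, not just up to $\rmAC$. When $s$ is a product polynomial term that equals a bare monomial term this already fell under the base/monomial case, so the remaining product case indeed has $s_2$ a sum polynomial term; I would state this explicitly. A secondary point worth a sentence is well-definedness: $s\langle\hti t\rangle$ depends only on the term $s$, not on its $\rmAC$-class, which is harmless here since the lemma is stated for a fixed representative $s$, but one should remark that the claimed equality $[s]_\rmAC \times_\mrd [t]_\rmAC = [s\langle\hti t\rangle]_\rmAC$ together with well-definedness of $\times_\mrd$ (Definition~\ref{def:operations}) shows a posteriori that $[s\langle\hti t\rangle]_\rmAC$ is independent of the chosen representatives $s,t$. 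Everything else is a routine bookkeeping exercise matching the two parallel recursions.
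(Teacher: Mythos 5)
Your proof follows essentially the same route as the paper's: induction on $\size(s)$, with the cases $s=\hat 1$, $t=\hat 1$, $s\in X$, $s$ a sum term (using $s\langle\hti t\rangle = s_1\langle\hti t\rangle \hp \ldots \hp s_n\langle\hti t\rangle$), and $s=s_1\hti s_2$ with $s_1$ a monomial term and $s_2$ a sum polynomial term (using $s\langle\hti t\rangle = s_1\hti(s_2\langle\hti t\rangle)$), matching Definition~\ref{def:operations} case by case. The computations in each case and the appeals to Lemma~\ref{lm:uniqueness-lemma-polynomials} are the same as in the paper, and your closing remarks on well-definedness are sound.

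One small case slips through your analysis: a monomial term $s = x_1\hti\ldots\hti x_n$ with $n\ge 2$. Your base cases only cover $s\in X$, and your product case explicitly excludes monomial terms, yet your final paragraph asserts that a bare monomial term ``already fell under the base/monomial case'' --- it did not. Such an $s$ cannot be written as $s_1\hti s_2$ with $s_2$ a sum polynomial term (by Lemma~\ref{lm:uniqueness-lemma-polynomials}(c) a monomial is precisely the product polynomial that admits no such decomposition), so it genuinely needs its own two-line argument: since $s$ itself occurs as a summand in $s$, the construction replaces the rightmost leaf $x_n$ by $x_n\hti t$, giving $s\langle\hti t\rangle = x_1\hti\ldots\hti x_{n-1}\hti(x_n\hti t) =_{\rmAC} s\hti t$, while Definition~\ref{def:operations}(a) gives $[s]_\rmAC\times_\mrd[t]_\rmAC = [s]_\rmAC\times[t]_\rmAC = [s\hti t]_\rmAC$. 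The paper treats this as a separate explicit case; adding it makes your proof complete.
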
 

\begin{proof} We proceed by induction on the size of  $s$.
If  $s = \hat{1}$  or $t = \hat{1}$, the result is clear. 
Now we assume that  $s \neq \hat{1} \neq t$.
If  $s \in X$, we have  $[s]_\rmAC \times_\mrd [t]_\rmAC = [s]_\rmAC \times [t]_\rmAC
= [s \hti t]_\rmAC = [s\langle\hti t\rangle]_\rmAC$. Hence let  $\size(s) \geq 2$.

First, assume that  $s$  is a monomial term, so  $s = x_1 \hti \ldots \hti x_n$  for some  $n \geq 2$
and  $x_i \in X$  for each  $i \in [n]$. Then
$[s]_\rmAC \times_\mrd [t]_\rmAC = [s]_\rmAC \times [t]_\rmAC 
= [s \hti t]_\rmAC = [x_1 \hti \ldots \hti x_{n-1}\hti(x_n \hti t)]_\rmAC
= [s\langle\hti t\rangle]_\rmAC$, as claimed.

Next, assume that $s$  is a sum term with sum-product decomposition $s = s_1 \hp \ldots \hp s_n$  and polynomial terms  $s_1, \ldots, s_n$. Then  $s\langle\hti t\rangle = s_1\langle\hti t\rangle \hp \ldots \hp s_n\langle\hti t\rangle$  and by our induction assumption we obtain
$s_i\langle\hti t\rangle \in \rmPT(X)$  and $[s_i]_\rmAC \times_\mrd [t]_\rmAC = [s_i\langle\hti t\rangle]_\rmAC$  for each  $i \in [n]$.
So, $s\langle\hti t\rangle \in \rmPT(X)$  and  
\begin{align*}
[s]_\rmAC \times_\mrd [t]_\rmAC 
= & [s_1]_\rmAC \times_\mrd [t]_\rmAC + \ldots + [s_n]_\rmAC \times_\mrd [t]_\rmAC
= [s_1\langle\hti t\rangle]_\rmAC + \ldots + [s_n\langle\hti t\rangle]_\rmAC
\\ = & [s_1\langle\hti t\rangle \hp \ldots \hp s_n\langle\hti t\rangle]_\rmAC
= [s\langle\hti t\rangle]_\rmAC.
\end{align*}

Finally, let  $s = s_1 \hti s_2$ with a monomial term  $s_1$  and a polynomial sum term  $s_2$. By applying our induction hypothesis to  $s_2$, we obtain
$[s]_\rmAC \times_\mrd [t]_\rmAC 
= [s_1]_\rmAC \times ([s_2]_\rmAC \times_\mrd [t]_\rmAC)
= [s_1]_\rmAC \times [s_2\langle\hti t\rangle]_\rmAC$.
On the other hand, as noted above we have
$s\langle\hti t\rangle = (s_1 \hti s_2)\langle\hti t\rangle = s_1 \hti (s_2\langle\hti t\rangle)$.
Thus  $[s\langle\hti t\rangle]_\rmAC = [s_1 \hti (s_2\langle\hti t\rangle)]_\rmAC 
= [s_1]_\rmAC \times [s_2\langle\hti t\rangle]_\rmAC$, and the result follows.    
\end{proof}

Next we show that the  direct one-step  construction $..\langle\hti..\rangle$, as a binary operation, is associative.

\begin{lemma}\label{lm:associativity}\rm
For every $s, t, u \in \rmPT(X)\setminus\{\hat{0}\}$, we have 
$s\langle\hti (t\langle\hti u\rangle)\rangle = (s\langle\hti t\rangle)\langle\hti u\rangle$. 
\end{lemma}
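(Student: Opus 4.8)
The statement asserts associativity of the one-step operation $\langle\hti\cdot\rangle$. The cleanest route is to prove it directly by structural induction on $s$, mirroring the three cases (monomial term, sum term, product term) used to define $\langle\hti\cdot\rangle$ and already exploited in Lemma~\ref{lm:rd-pol-terms-for-products}. Crucially, I would \emph{not} try to argue via the algebraic operation $\times_\mrd$: although Lemma~\ref{lm:rd-pol-terms-for-products} gives $[s\langle\hti t\rangle]_\rmAC = [s]_\rmAC\times_\mrd[t]_\rmAC$, and associativity of $\times_\mrd$ would yield $\rmAC$-equality of the two sides, the present lemma claims \emph{syntactic} equality of the two terms, which is strictly stronger. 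So the proof must stay at the level of terms. The base cases are: if $s=\hat1$ then both sides equal $t\langle\hti u\rangle$; if $t=\hat1$ then $t\langle\hti u\rangle=u$ and the left side is $s\langle\hti u\rangle$ while the right side is $(s\langle\hti\hat1\rangle)\langle\hti u\rangle=s\langle\hti u\rangle$; if $u=\hat1$ both sides reduce to $s\langle\hti t\rangle$; and if $s\in X$ with $t,u\neq\hat1$, the left side is $s\hti(t\langle\hti u\rangle)$ and the right side is $(s\hti t)\langle\hti u\rangle$, which by the product-term rule (with monomial-term factor $s$ and polynomial $t$) equals $s\hti(t\langle\hti u\rangle)$ — here I use the displayed identity $s\langle\hti t\rangle = s_1\hti(s_2\langle\hti t\rangle)$ for a product $s_1\hti s_2$ with monomial-term $s_1$. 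One should check the sub-case $t\in X$ separately but it falls out the same way.

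\textbf{Inductive step.} Assume $\size(s)\ge 2$ and $s,t,u\notin\{\hat0,\hat1\}$. Split on the two structural forms of $s$ guaranteed by Observation~\ref{obs:unique-decomposition}.

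First, if $s$ is a sum term with sum-product decomposition $s = s_1\hp\ldots\hp s_n$, then by the displayed distributivity-over-sum property of $\langle\hti\cdot\rangle$ we have $s\langle\hti v\rangle = s_1\langle\hti v\rangle\hp\ldots\hp s_n\langle\hti v\rangle$ for any $v\in\rmPT(X)\setminus\{\hat0\}$. Applying this twice on each side and invoking the induction hypothesis on each $s_i$ (each has strictly smaller size) gives
\begin{align*}
s\langle\hti(t\langle\hti u\rangle)\rangle
 &= s_1\langle\hti(t\langle\hti u\rangle)\rangle\hp\ldots\hp s_n\langle\hti(t\langle\hti u\rangle)\rangle\\
 &= (s_1\langle\hti t\rangle)\langle\hti u\rangle\hp\ldots\hp (s_n\langle\hti t\rangle)\langle\hti u\rangle
  = (s\langle\hti t\rangle)\langle\hti u\rangle,
\end{align*}
where the last equality again uses the sum property (note $s_i\langle\hti t\rangle\neq\hat0$ since each $s_i$ is a monomial term or in $X\cup\{\hat1\}$, and $\langle\hti\cdot\rangle$ never produces $\hat0$ from a nonzero argument).

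Second, if $s$ is a product term, then since $s$ is a polynomial term it has the form $s = s_1\hti s_2$ with $s_1$ a monomial term and $s_2$ a polynomial sum term (the observation right before Lemma~\ref{lm:rd-pol-terms-for-products}, via Lemma~\ref{lm:uniqueness-lemma-polynomials}(c)). Then $s\langle\hti v\rangle = s_1\hti(s_2\langle\hti v\rangle)$ for any such $v$. Applying this on the left gives $s\langle\hti(t\langle\hti u\rangle)\rangle = s_1\hti\big(s_2\langle\hti(t\langle\hti u\rangle)\rangle\big)$; applying the induction hypothesis to $s_2$ rewrites the inner term as $s_1\hti\big((s_2\langle\hti t\rangle)\langle\hti u\rangle\big)$. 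On the right, $s\langle\hti t\rangle = s_1\hti(s_2\langle\hti t\rangle)$, and this is again a product of the monomial term $s_1$ with the polynomial sum term $s_2\langle\hti t\rangle$ (by Lemma~\ref{lm:rd-pol-terms-for-products}, $s_2\langle\hti t\rangle\in\rmPT(X)$, and it is a sum term because $s_2$ is), so $(s\langle\hti t\rangle)\langle\hti u\rangle = s_1\hti\big((s_2\langle\hti t\rangle)\langle\hti u\rangle\big)$. The two sides coincide, completing the induction.

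\textbf{Main obstacle.} The only delicate point is making sure all the structural invariants line up at each recursive call: specifically, that in the product case $s_2\langle\hti t\rangle$ really is again a \emph{polynomial sum term} (so that the displayed product rule applies to it), and that in the sum case the intermediate terms remain nonzero polynomial terms with a genuine sum-product decomposition of the same length. Both are supplied by Lemma~\ref{lm:rd-pol-terms-for-products} (closure of $\rmPT(X)$ under $\langle\hti\cdot\rangle$) together with the remark that $\langle\hti\cdot\rangle$ preserves the root symbol when the argument is a sum or product term — so no new case analysis on the shape of $t$ or $u$ is needed beyond the $\hat1$ base cases. Once these invariants are observed, the induction is bookkeeping.
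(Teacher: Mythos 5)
Your overall strategy --- structural induction on $s$, pushing $\langle\hti\cdot\rangle$ through the sum-product and product decompositions --- is a legitimate route, but it is not the one the paper takes: the paper's proof is a short, non-inductive argument that tracks leaf occurrences. It observes that the set of leaves of $s$ that get replaced in building $s\langle\hti v\rangle$ depends only on $s$ and not on $v$, so the same leaves of $s$ are replaced on both sides, and that the leaves of the inserted copies of $t$ in $s\langle\hti t\rangle$ that get replaced in the second round are exactly those replaced in forming $t\langle\hti u\rangle$. That global argument avoids all the case bookkeeping; your induction buys nothing extra here and is where the trouble creeps in.

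The concrete gap is in your case analysis of the inductive step. First, a monomial term $s$ with $\size(s)\ge 2$ is a product term but is neither ``$s\in X$'' nor of the form (monomial term)$\,\hti\,$(polynomial sum term), so it is covered by none of your cases; it needs its own (easy) computation via ``replace the rightmost factor.'' Second, and more importantly, your claim that every non-monomial product polynomial term has the syntactic form $s_1\hti s_2$ with $s_1$ a monomial term and $s_2$ a polynomial \emph{sum} term is false: Lemma~\ref{lm:uniqueness-lemma-polynomials}(c) is a statement about $\rmAC$-classes, where one may re-associate, whereas the present lemma asserts equality of \emph{terms}. For example, $x\hti\bigl(y\hti(z\hp w)\bigr)$ is a product polynomial term (built by Definition~\ref{def:polynomial-terms}(c) with right factor a product polynomial term), and its root-level right factor is not a sum term; it cannot be syntactically rewritten as $(x\hti y)\hti(z\hp w)$. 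The same oversight affects your base case $s\in X$, where you invoke the product rule for $(s\hti t)\langle\hti u\rangle$ with $t$ an arbitrary polynomial term. The repair is routine but must be stated: the identity $s\langle\hti v\rangle=s_1\hti(s_2\langle\hti v\rangle)$ holds for the root decomposition $s=s_1\hti s_2$ whenever $s_1$ is a monomial term and $s_2$ is any polynomial term that is not a monomial term and not $\hat1$ (all replaced leaves then lie in $s_2$ and coincide with those replaced in $s_2\langle\hti v\rangle$), which together with the separate monomial case restores exhaustiveness. With those two cases added, your induction goes through.
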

\begin{proof} The statement is obvious if some of $s,t$ and $u$ are equal to $\hat{1}$. 

Therefore assume that $s, t, u \in \rmPT(X)\setminus\{\hat{0}, \hat{1}\}$. We observe that the leaves of  $s$  which get replaced in the construction of
$s\langle\hti t\rangle$  are the same as those leaves of $s$  which get replaced in the construction of
$s\langle\hti (t\langle\hti u\rangle)\rangle$.  
Moreover, the leaves of  $t$ which get replaced in the construction of
$t\langle\hti u\rangle$ correspond to those leaves of copies of  $t$  in  $s\langle\hti t\rangle$
which get replaced in the construction of  $(s\langle\hti t\rangle)\langle\hti u\rangle$.
This implies the statement.
\end{proof}

Now we can show one of our main results.

\begin{theorem}  \label{thm:strong-bimonoid-Pol} $\sfNrd[X]=(\bbNrd[X], +, \times_\mrd, 0, 1)$  is a right-distributive strong bimonoid.
\end{theorem}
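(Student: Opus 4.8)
The plan is to verify each of the defining axioms of a right-distributive strong bimonoid for $\sfNrd[X]=(\bbNrd[X], +, \times_\mrd, 0, 1)$, using the tools already established: the uniqueness of sum-product and product-sum decompositions (Lemma~\ref{lm:uniqueness-lemma-polynomials}), the description of $\times_\mrd$ via the direct one-step construction $..\langle\hti..\rangle$ (Lemma~\ref{lm:rd-pol-terms-for-products}), and the associativity of that construction (Lemma~\ref{lm:associativity}). First I would note that $(\bbNrd[X],+,0)$ is a commutative monoid: this is inherited directly from $\sfNsb[X]$, which is a strong bimonoid by Proposition~\ref{prop:SB-strong-bimonoid}, since $\bbNrd[X]$ is closed under $+$ and contains $0$ (the associativity and commutativity of $+$ come from the $\rmAC$-identities $e_1,e_2$, and $0$ is neutral by construction of $\sfST(X)$). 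Likewise $0$ is annihilating and $1$ is a two-sided unit for $\times_\mrd$ directly from the base cases $q\times_\mrd 0=0=0\times_\mrd q$ and $q\times_\mrd 1=q=1\times_\mrd q$ in Definition~\ref{def:operations}.

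The substantive points are associativity of $\times_\mrd$ and right-distributivity. For \textbf{associativity}, I would show $p\times_\mrd(q\times_\mrd r)=(p\times_\mrd q)\times_\mrd r$ for all $p,q,r\in\bbNrd[X]$. The cases where any of $p,q,r$ equals $0$ or $1$ are immediate from the base cases. Otherwise pick polynomial terms $s,t,u\in\rmPT(X)\setminus\{\hat0,\hat1\}$ with $p=[s]_\rmAC$, $q=[t]_\rmAC$, $r=[u]_\rmAC$. By Lemma~\ref{lm:rd-pol-terms-for-products} (applied repeatedly, together with the fact from Lemma~\ref{lm:pol-classes-subset-simple-classes} that $\rmPT(X)$ is closed under $\rmAC$-equivalence so the representatives can be chosen coherently), we get
\[
p\times_\mrd(q\times_\mrd r)=[s\langle\hti(t\langle\hti u\rangle)\rangle]_\rmAC,\qquad
(p\times_\mrd q)\times_\mrd r=[(s\langle\hti t\rangle)\langle\hti u\rangle]_\rmAC,
\]
and these are equal by Lemma~\ref{lm:associativity}. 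Hence $(\bbNrd[X],\times_\mrd,1)$ is a monoid.

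For \textbf{right-distributivity}, I must show $(p+q)\times_\mrd r=(p\times_\mrd r)+(q\times_\mrd r)$ for all $p,q,r$. If $r=0$ both sides are $0$; if $r=1$ both sides are $p+q$; and if $p=0$ or $q=0$ the claim is trivial using neutrality of $0$ for $+$. So assume $p,q$ are nonzero and $r\notin\{0,1\}$. Write out sum-product decompositions $p=p_1+\ldots+p_m$ and $q=q_1+\ldots+q_n$ (Lemma~\ref{lm:uniqueness-lemma-polynomials}(b)), so that $p+q=p_1+\ldots+p_m+q_1+\ldots+q_n$ is again a sum-product decomposition of $p+q$ --- this is the key structural observation, and it is exactly why case~(b) of Definition~\ref{def:operations} is well-defined. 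Applying Definition~\ref{def:operations}(b) to $p+q$ gives $(p+q)\times_\mrd r=p_1\times_\mrd r+\ldots+p_m\times_\mrd r+q_1\times_\mrd r+\ldots+q_n\times_\mrd r$, which by applying Definition~\ref{def:operations}(b) again to each of $p$ and $q$ equals $(p\times_\mrd r)+(q\times_\mrd r)$. (When $p$ or $q$ is itself not a sum polynomial, i.e.\ $m=1$ or $n=1$, the same identity holds trivially since then $p\times_\mrd r$ is just the single summand's contribution.)

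The main obstacle I anticipate is keeping the bookkeeping of representatives and decompositions rigorous: one must be careful that in the associativity argument the polynomial terms produced by $\langle\hti\cdot\rangle$ are again in $\rmPT(X)$ (guaranteed by Lemma~\ref{lm:rd-pol-terms-for-products}) and that the chains of equalities of $\rmAC$-classes are legitimate, which rests on Lemma~\ref{lm:pol-classes-subset-simple-classes}. Everything else is a routine case check against Definition~\ref{def:operations}, and no genuinely new idea is needed beyond the lemmas already proved; in particular the inductive content has been fully absorbed into Lemmas~\ref{lm:rd-pol-terms-for-products} and~\ref{lm:associativity}.
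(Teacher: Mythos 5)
Your proposal is correct. The verification of the commutative-monoid structure of $(\bbNrd[X],+,0)$, the unit and annihilation laws from the base cases of Definition~\ref{def:operations}, and the associativity of $\times_\mrd$ (passing to representatives and invoking Lemmas~\ref{lm:rd-pol-terms-for-products} and~\ref{lm:associativity}) coincides with the paper's proof. Where you diverge is the right-distributivity step: the paper again descends to polynomial terms and uses the identity $(s \hp t)\langle\hti u\rangle = s\langle\hti u\rangle \hp t\langle\hti u\rangle$ together with Lemma~\ref{lm:rd-pol-terms-for-products}, whereas you stay at the level of congruence classes, concatenate the sum-product decompositions of $p$ and $q$ into one of $p+q$ (legitimate by Lemma~\ref{lm:uniqueness-lemma-polynomials}(b), which also underwrites the well-definedness of clause (b) of Definition~\ref{def:operations}) and read the identity off that clause directly. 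Both arguments are sound; yours makes explicit that right-distributivity is essentially built into the inductive definition of $\times_\mrd$ and avoids a second appeal to the one-step construction, while the paper's version treats the two substantive axioms uniformly in terms of $\langle\hti\cdot\rangle$. Your parenthetical handling of the degenerate cases $m=1$ or $n=1$ is necessary and correctly resolved, since a single summand does not formally possess a sum-product decomposition.
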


\begin{proof}   Since  $\bbNrd[X]$  is closed under addition by  $+$, $(\bbNrd[X], +, 0)$  is a commutative monoid.

Now let  $p, q, r \in \bbNrd[X]$. We claim that  $p \times_\mrd (q \times_\mrd r) = (p \times_\mrd q) \times_\mrd r$.
We may assume that  $p, q, r \neq 0$ because otherwise the statement is trivial.
Choose polynomial terms  $s, t, u \in \rmPT(X)$  with  $p = [s]_\rmAC, q = [t]_\rmAC$  and  $r = [u]_\rmAC$. 
By Lemmas \ref{lm:rd-pol-terms-for-products} and \ref{lm:associativity}, we obtain
\begin{align*}
p \times_\mrd (q \times_\mrd r) = & [s]_\rmAC \times_\mrd [t\langle\hti u\rangle]_\rmAC
=  [s\langle\hti(t\langle\hti u\rangle)\rangle]_\rmAC = [(s\langle\hti t\rangle)\langle\hti u\rangle]_\rmAC 
= [s\langle\hti t\rangle]_\rmAC \times_\mrd r \\
= & (p \times_\mrd q) \times_\mrd r,
\end{align*}
as needed.
Thus $(\bbNrd[X], \times_\mrd, 1)$  is a monoid.

For right-distributivity, we have to show that  
$(p + q) \times_\mrd r = p \times_\mrd r + q \times_\mrd r$.
We may assume that  $p, q, r \neq 0$.
Choose polynomial terms  $s, t, u \in \rmPT(X)$  with  $p = [s]_\rmAC, q = [t]_\rmAC$  and  $r = [u]_\rmAC$. By Lemma \ref{lm:rd-pol-terms-for-products}, we obtain
\begin{align*}
(p + q) \times_\mrd r = & ([s \hp t]_\rmAC) \times_\mrd [u]_\rmAC
= [(s \hp t)\langle\hti u\rangle]_\rmAC = [s\langle\hti u\rangle \hp t\langle\hti u\rangle]_\rmAC 
= [s\langle\hti u\rangle]_\rmAC + [t\langle\hti u\rangle]_\rmAC \\
= & (p \times_\mrd r) + (q \times_\mrd r), 
\end{align*}
as needed. Hence  $\sfNrd[X]$  is a right-distributive strong bimonoid.
\end{proof}

In the following, we will prove a general cancellativity result for the 
multiplication $\times_\mrd$ of the right-distributive strong bimonoid $\sfNrd[X]$. 
As preparation, we begin with a natural particular case.
For a simple term class  $p \in \bbNsb[X]$ and  $n \in \mathbb{N}$, we let
$n \cdot p = p + \ldots + p$ ($n$  summands). First we show:

\begin{lemma}\rm\label{lm:cancellation-for-simple-classes}  
Let  $p, q \in \bbNsb[X]$ and  $n \in \mathbb{N}_+$.
Then  $n \cdot p = n \cdot q$  implies  $p = q$.
\end{lemma}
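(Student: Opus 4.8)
The plan is to reduce the statement to the uniqueness results for sum-product decompositions, namely Lemmas~\ref{lm:uniqueness-lemma} and~\ref{lm:AC-equiv-characterization}. First I would dispose of trivial cases: if $p = 0$ then $n\cdot p = 0$, so $n \cdot q = 0$ forces every summand of $q$ to be $0$, hence $q = 0 = p$; symmetrically if $q = 0$. So assume $p, q \neq 0$. The key observation is that $n \cdot p$ and $n \cdot q$ are sum classes (since $n \ge 1$ and, if $n \ge 2$, they are explicitly written as sums of more than one nonzero term; if $n = 1$ there is nothing to prove). Now I would take sum-product decompositions $p = p_1 + \ldots + p_m$ and $q = q_1 + \ldots + q_\ell$ of $p$ and $q$ (using Lemma~\ref{lm:uniqueness-lemma}(b); if $p$ or $q$ is itself a monomial, $1$, or a product class, the decomposition is just the single term, which is fine). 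Then
\[
n\cdot p \;=\; \underbrace{p_1 + \ldots + p_m + \ldots + p_1 + \ldots + p_m}_{n\text{ copies}}
\]
is a sum-product decomposition of $n\cdot p$ with $nm$ summands, and likewise $n\cdot q$ has a sum-product decomposition with $n\ell$ summands.

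Next, since $n\cdot p = n\cdot q$, Lemma~\ref{lm:uniqueness-lemma}(b) applied to the sum class $n\cdot p = n\cdot q$ gives that these two sum-product decompositions have the same length, so $nm = n\ell$, hence $m = \ell$ (as $n \ge 1$), and moreover the multiset $\{p_1,\ldots,p_m\}$ each appearing $n$ times equals, as a multiset, the multiset $\{q_1,\ldots,q_m\}$ each appearing $n$ times. That is, the multiset $n\{p_1,\ldots,p_m\}$ (with multiplicities multiplied by $n$) coincides with $n\{q_1,\ldots,q_m\}$. Dividing all multiplicities by $n$ — which is legitimate because in a multiset equality $n A = n B$ of finite multisets with $n \ge 1$ we necessarily have $A = B$ — we conclude $\{p_1,\ldots,p_m\} = \{q_1,\ldots,q_m\}$ as multisets. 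Hence $q_1,\ldots,q_m$ is a permutation of $p_1,\ldots,p_m$, and therefore
\[
q \;=\; q_1 + \ldots + q_m \;=\; p_1 + \ldots + p_m \;=\; p,
\]
using commutativity and associativity of $+$ in $\bbNsb[X]$.

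The only mild subtlety — and the step I would be most careful about — is the "divide the multiplicities by $n$" argument: one must make sure that Lemma~\ref{lm:uniqueness-lemma}(b) really delivers an \emph{equality of multisets} (i.e.\ a bijection of index sets respecting the $=$-relation on the $\bbNsb[X]$-elements), not merely a set equality, so that the counting argument on multiplicities goes through. Since the $p_i$'s among themselves (and the $q_j$'s) may well repeat, this is exactly what "$q_1,\ldots,q_n$ constitutes a permutation of $p_1,\ldots,p_n$" in Lemma~\ref{lm:uniqueness-lemma}(b) means, so the argument is sound; I would simply phrase it as: the length-$nm$ sequence obtained by concatenating $n$ copies of $(p_1,\ldots,p_m)$ is a permutation of the length-$nm$ sequence obtained by concatenating $n$ copies of $(q_1,\ldots,q_m)$, whence each element of $\bbNsb[X]$ occurs equally often in $(p_1,\ldots,p_m)$ and in $(q_1,\ldots,q_m)$, and these two sequences are permutations of each other. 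Everything else is bookkeeping with associativity and commutativity of addition.
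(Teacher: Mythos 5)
Your proof is correct and follows essentially the same route as the paper's: both reduce to the uniqueness of sum-product decompositions (Lemma~\ref{lm:uniqueness-lemma}(b)) applied to $n\cdot p = n\cdot q$, compare the numbers of summands to get $m=\ell$, and then divide the summand multiplicities by $n$. The only cosmetic difference is that the paper treats the case where $p$ is not a sum class separately (via a $2n$-versus-$n$ summand count), whereas you absorb it by allowing a degenerate length-one decomposition; your handling of the trivial cases $p=0$, $q=0$ and $n=1$ is fine.
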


\begin{proof} Assume  $n \cdot p = n \cdot q$.
First, consider that  $p \in X$  or  $p$ is a product class.
Then  $n \cdot p = p + \ldots + p$ is the sum-product decomposition of  $n \cdot p$.
If  $q$  was a sum class, the sum-product decomposition of  $n \cdot q$  would contain
at least $2n$  summands. This contradicts Lemma \ref{lm:uniqueness-lemma}(b).
Hence also  $q \in X$  or  $q$ is a product class.
Then  $n \cdot q = q + \ldots + q$ is the sum-product decomposition of  $n \cdot q$.
Then Lemma \ref{lm:uniqueness-lemma}(b) implies  $p = q$.

Therefore we may assume that  $p$  and  $q$  are sum classes. 
Let  $p = p_1 + \ldots + p_k$ and  $q = q_1 + \ldots + q_{k'}$  
be their sum-product decompositions.
Then  $n \cdot p$  and  $n \cdot q$  have sum-product decompositions
with  $n \cdot k$  resp.  $n \cdot k'$  summands.
Then by Lemma \ref{lm:uniqueness-lemma}(b), we have  $k = k'$  and
there is a bijection $\pi$  from the summands of the sum-product decomposition
$n\cdot p = n \cdot p_1 + \ldots + n\cdot p_k$  to the 
summands of the sum-product decomposition
$n\cdot q = n \cdot q_1 + \ldots + n\cdot q_k$  preserving and reflecting 
equality of the summands.
Let  $I \subseteq \{1,\ldots,k\}$ such that the classes  $p_i$ $(i \in I)$
are pairwise different and  $\{p_i \mid i \in I\} = \{p_1,\ldots,p_k\}$.
Then also the elements  $\pi(p_i)$ $(i \in I)$  are pairwise different and
$\{\pi(p_i) \mid i \in I \} = \{q_1,\ldots,q_k\}$.
It follows that for each  $i \in I$ we have
$|\{ j \in \{1,\ldots,k\} \mid p_j = p_i \}| = |\{ j \in \{1,\ldots,k\} \mid q_j = \pi(p_i) \}|$. Thus  $p = p_1 + \ldots + p_k = q_1 + \ldots + q_k = q$, as claimed.    
\end{proof}

Next, we prove the following general cancellation result for the strong bimonoid of polynomials. 
Note that usually in such cancellativity results one considers two products 
where either the two left factors or the two right factors are equal;
here we only require the much weaker property that the two right factors  $r, r'$  have the same size. 
In the semiring  $\mathsf{N}[X]=(\mathbb{N}[X], +, \cdot, 0, 1)$,
the result corresponding to Theorem \ref{thm:cancellation-strong-in-Pol} clearly does not hold,
since, e.g., $(x+1)(x+x) = xx + xx + x + x = (x+x)(x+1)$  and  $\size(x+x) = \size(x+1)$. Note that,
due to the left-distributivity, the multiplication and the equality in $\mathsf{N}[X]$ are different from the one in $\bbNrd[X]$; this explains why the stronger result of 
Theorem \ref{thm:cancellation-strong-in-Pol} does not hold in $\mathsf{N}[X]$.  

\begin{theorem}\label{thm:cancellation-strong-in-Pol}  Let $p,q,r,r'\in \bbNrd[X] \setminus \{0\}$  with  $r \neq 1 \neq r'$
such that  $p \times_\mrd r = q \times_\mrd r'$  and  $\size(r) = \size(r')$. Then  $p = q$  and  $r = r'$. 
\end{theorem}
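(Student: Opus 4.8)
I would argue entirely at the level of polynomials, using the uniqueness of sum--product and product--sum decompositions (Lemma~\ref{lm:uniqueness-lemma-polynomials}), the size function $\size$, and the special cancellation result Lemma~\ref{lm:cancellation-for-simple-classes}, by induction on $\size(p)$. As a preliminary I would record the easy fact, proved by a separate induction on $\size(p)$ and case analysis along Definition~\ref{def:operations}, that $\size(p\times_\mrd r)\ge \size(r)$ whenever $r\neq 0$, with equality exactly when $p=1$. This immediately settles the degenerate cases: if $p=1$ then $\size(r)=\size(p\times_\mrd r)=\size(q\times_\mrd r')$ forces $q=1$ (else the right side exceeds $\size(r')=\size(r)$), whence $r=p\times_\mrd r=q\times_\mrd r'=r'$; symmetrically if $q=1$. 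So from now on $p,q\notin\{0,1\}$. Next, inspecting the three clauses of Definition~\ref{def:operations} shows that for $p,r\notin\{0,1\}$ the product $p\times_\mrd r$ is a sum polynomial if and only if $p$ is; since $p\times_\mrd r=q\times_\mrd r'$, this means $p$ and $q$ are simultaneously sum polynomials or simultaneously \emph{product-type} (a monomial or a non-monomial product polynomial). I then split into these two cases.

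\textbf{Case 1: $p$ and $q$ are sum polynomials.} Write the sum--product decompositions $p=p_1+\dots+p_n$, $q=q_1+\dots+q_m$; by Definition~\ref{def:operations}(b), $p\times_\mrd r=\sum_i p_i\times_\mrd r$ and $q\times_\mrd r'=\sum_j q_j\times_\mrd r'$. The key is a \emph{size separation}. For a summand $p_i\neq 1$ (necessarily a monomial, a variable, or a non-monomial product polynomial) one checks $\size(p_i\times_\mrd r)\ge\size(r)+2$, and $p_i\times_\mrd r$ is product-type, hence a single summand of the sum--product decomposition of $p\times_\mrd r$; for $p_i=1$ the term $p_i\times_\mrd r=r$ contributes the summands of the sum--product decomposition of $r$ (or $r$ itself, if $r$ is not a sum polynomial), all of size $\le\size(r)$. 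Thus, inside the unique sum--product decomposition of $p\times_\mrd r$, the summands of size $>\size(r)$ are exactly $\{p_i\times_\mrd r: p_i\neq 1\}$, and the rest are $a$ copies of the summands of $r$, where $a=|\{i:p_i=1\}|$; the same holds for $q\times_\mrd r'$ with the threshold $\size(r')=\size(r)$. Matching the two decompositions (Lemma~\ref{lm:uniqueness-lemma-polynomials}(b)) yields a bijection $\sigma$ between $\{i:p_i\neq 1\}$ and $\{j:q_j\neq 1\}$ with $p_i\times_\mrd r=q_{\sigma(i)}\times_\mrd r'$; since $\size(p_i)<\size(p)$ and $p_i,q_{\sigma(i)},r,r'\notin\{0,1\}$, the induction hypothesis gives $p_i=q_{\sigma(i)}$ and $r=r'$. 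Then the ``small'' parts force $a=a'$, hence $n=m$, hence $p=q$. If instead no summand $p_i$ differs from $1$, then $p=n\cdot 1$, $q=m\cdot 1$ and $n\cdot r=m\cdot r'$; here $\size(n\cdot r)=n\,\size(r)+(n-1)$ together with $\size(r)=\size(r')\ge 1$ gives $n=m$, and Lemma~\ref{lm:cancellation-for-simple-classes} gives $r=r'$, so again $p=q$ and $r=r'$.

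\textbf{Case 2: $p$ and $q$ are product-type.} By Lemma~\ref{lm:uniqueness-lemma-polynomials}(c) every product-type polynomial $\rho\notin\{0,1\}$ is uniquely a monomial or a product $\mu\times\pi$ of a monomial and a sum polynomial; write $\mathsf{m}(\rho),\mathsf{s}(\rho)$ for its ``monomial head'' and ``sum tail'' (one of which may be empty). A direct computation gives $\mathsf{m}(p\times_\mrd r)=\mathsf{m}(p)\cdot\mathsf{m}(r)$ and $\mathsf{s}(p\times_\mrd r)=\mathsf{s}(r)$ when $p=\mathsf{m}(p)$ is a monomial, while $\mathsf{m}(p\times_\mrd r)=\mu$ and $\mathsf{s}(p\times_\mrd r)=\pi\times_\mrd r$ when $p=\mu\times\pi$. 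From $p\times_\mrd r=q\times_\mrd r'$ these heads and tails coincide, say to $\nu$ and $\tau$. Comparing $\size(\tau)$ with $\size(r)=\size(r')$ determines the situation: if $\size(\tau)>\size(r)$ then necessarily $p=\mu\times\pi$ and $q=\mu'\times\pi'$ with $\mu=\nu=\mu'$ and $\pi\times_\mrd r=\tau=\pi'\times_\mrd r'$, and the induction hypothesis applied to $(\pi,\pi',r,r')$ (valid since $\size(\pi)<\size(p)$) gives $\pi=\pi'$, $r=r'$, hence $p=q$; if $\size(\tau)=\size(r)$ then $p,q$ are monomials and $r=\tau=r'$ with $\nu=p=q$; and if $\tau$ is shorter or empty then $p,q$ are monomials, $\size(\mathsf{m}(r))$ is pinned down by $\size(r)-\size(\tau)$, and comparing the variable sequences (the cut in $\nu$ being located by $\size(r)=\size(r')$) yields $p=q$ and $r=r'$.

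\textbf{Main obstacle.} The delicate point is the bookkeeping in Case~1 when a summand of $p$ equals $1$ and $r$ is itself a sum polynomial, so that $1\times_\mrd r=r$ ``dissolves'' into the surrounding sum and the naive expression $\sum_i p_i\times_\mrd r$ ceases to be a sum--product decomposition. This is exactly what the threshold $\size(r)+2$ is designed to circumvent; the proof thus hinges on verifying that all the size inequalities involved (in particular $\size(p_i\times_\mrd r)\ge\size(r)+2$ for $p_i\neq 0,1$, and $\size(\pi\times_\mrd r)>\size(r)$ for $\pi$ a sum polynomial) are strict enough to separate the ``$r$-part'' from the ``$p$-part'' of each product, after which the uniqueness lemmas and the induction hypothesis do the rest.
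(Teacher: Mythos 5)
Your proposal is correct, and its sum-polynomial case is essentially the paper's argument: the same size separation between the summands $p_i \times_\mrd r$ with $p_i \neq 1$ (which are product polynomials of size exceeding $\size(r)$) and the summands contributed by $1 \times_\mrd r = r$, the same matching via Lemma~\ref{lm:uniqueness-lemma-polynomials}(b), and the same fallback to Lemma~\ref{lm:cancellation-for-simple-classes} when all summands equal $1$. Where you genuinely diverge is the product case. The paper writes $p = [x]_\rmAC \times p'$, peels off a \emph{single} variable, and invokes the labeled-tree characterization of $\rmAC$-equivalence (Proposition~\ref{prop:AC-equiv-graphs}) to argue that an isomorphism of $\overline{x \hti u}$ and $\overline{y \hti v}$ must match the first children, giving $x = y$ before recursing. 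You instead peel off the \emph{entire} monomial head at once, using the uniqueness of the factorization $p = m \times q$ from Lemma~\ref{lm:uniqueness-lemma-polynomials}(c), and then distinguish by the size of the common sum tail $\tau$ whether the head of $p \times_\mrd r$ splits as $\mathsf{m}(p)\cdot\mathsf{m}(r)$ or stays $\mathsf{m}(p)$; the residual case reduces to cancellation in the free monoid of monomials (which follows from the uniqueness of product-sum decompositions, and which the paper itself observes after the theorem). Your route stays purely algebraic and avoids the graph-isomorphism machinery, at the cost of a three-way sub-case analysis on $\size(\tau)$ versus $\size(r)$ and of having to justify, as you should state explicitly, that monomials form a free monoid under $\times$; the paper's route is more uniform (one variable per induction step) but imports Proposition~\ref{prop:AC-equiv-graphs}. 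Both arguments are sound, and your preliminary observations ($\size(p \times_\mrd r) > \size(r)$ unless $p = 1$, and $p \times_\mrd r$ is a sum polynomial iff $p$ is) are exactly the reductions the paper also makes.
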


\begin{proof} We proceed by induction on  $p$.
If  $p \neq 1$, by  $r \neq 1$ we obtain  $\size(p \times_\mrd r) > \size(r)$. 
Together with the assumption that  $\size(r) = \size(r')$, this implies that  $p = 1$  if and only if  $q = 1$.
Hence we may assume that  $p \neq 1 \neq q$.

First assume that  $p \in X/{\rmAC}$  or  $p$  is a product polynomial.
Then  $p \times_\mrd r$, hence also $q \times_\mrd r'$, is a product polynomial.
Thus also  $q \in X/{\rmAC}$  or  $q$  is a product polynomial.
We write  $p = [x]_\rmAC \times p'$  and  $q = [y]_\rmAC \times q'$  with  $x, y \in X$  and 
$p', q' \in \bbNrd[X] \setminus \{0\}$ (possibly, $p' = 1$  or  $q' = 1$).
Then  $p \times_\mrd r = [x]_\rmAC \times (p' \times_\mrd r)$  and  
$q \times_\mrd r' = [y]_\rmAC \times (q' \times_\mrd r')$.
Choose  $u, v \in \rmPT(X)$  with  $p' \times_\mrd r = [u]_\rmAC$  and  $q' \times_\mrd r' = [v]_\rmAC$.
We have  $p \times_\mrd r = [x \hti u]_\rmAC$  and  $q \times_\mrd r' = [y \hti v]_\rmAC$.
By Proposition \ref{prop:AC-equiv-graphs}, there is an isomorphism  $\varphi$
from $\overline{x \hti u}$  onto   $\overline{y \hti v}$.
The labeled graph $\overline{x \hti u}$  has  $\boxtimes$  as a root and children depending on the structure of  $u$. 
If  $u$  is a sum term, the children of  $\boxtimes$  in   $\overline{x \hti u}$  are labeled with  $(1,x)$  and  $(2,\boxplus)$.
If  $u \in X$, the children of  $\boxtimes$  are labeled with   $(1,x)$  and  $(2,u)$.  
If  $u$  is a product polynomial term with product-sum decomposition $u=u_1\hti \ldots \hti u_n$, then $\boxtimes$ has $n+1$ children.
The first child is labeled with  $(1,x)$. If $u$ is a monomial, then the further $n$ children are labeled with $(2,u_1),\ldots, (n+1,u_n)$, otherwise, $u_n$  is a sum term and the children  are labeled with $(2,u_1),\ldots, (n,u_{n-1}),(n+1,\boxplus)$.

We have a similar description of the labeled graph  $\overline{y \hti v}$. 
It follows that  $\varphi$  maps the child of the root of $\overline{x \hti u}$ labeled with
$(1,x)$  onto the child of the root of $\overline{y \hti v}$ labeled with
$(1,y)$  and that  
$\varphi$  induces an isomorphism from  $\overline{u}$  onto  $\overline{v}$.
Thus  $x = y$, and by Proposition \ref{prop:AC-equiv-graphs}, we obtain  $[u]_\rmAC = [v]_\rmAC$.
Hence  $p' \times_\mrd r = [u]_\rmAC = [v]_\rmAC = q' \times_\mrd r'$.
By our induction hypothesis, we obtain  $p' = q'$  and  $r = r'$, showing also  $p = q$, as claimed.

Second, assume that  $p$, and by the above hence also  $q$,  is a sum polynomial. Write
$p = p_1 + \ldots + p_k$  and  $q = q_1 + \ldots + q_n$  where $k, n \geq 2$  and
for each  $i \in [k]$  and  $j \in [n]$,  $p_i$  and  $q_j$  are product polynomials or elements of
$X/{\rmAC} \cup \{ 1 \}$. Then
$p \times_\mrd r = p_1 \times_\mrd r + \ldots + p_k \times_\mrd r$  and
$q \times_\mrd r' = q_1 \times_\mrd r' + \ldots + q_n \times_\mrd r'$.

Now consider the sum-product decomposition of  $p_1 \times_\mrd r + \ldots + p_k \times_\mrd r$
and the sum-product decomposition of  $q_1 \times_\mrd r' + \ldots + q_n \times_\mrd r'$. 
Note that if  $i \in [k]$  and  $p_i = 1$, then  $p_i \times_\mrd r = r$.
Moreover, if  $i \in [k]$  and  $p_i \neq 1$, then  $p_i \in X/{\rmAC}$  or  $p_i$  is a product polynomial, 
hence, using $r \neq 1$ in case $p_i \in X/{\rmAC}$, 
we obtain that $p_i \times_\mrd r$  is a product polynomial and 
thus appears as a summand in the sum-product decomposition of  
$p_1 \times_\mrd r + \ldots + p_k \times_\mrd r$
and that  $\size(p_i \times_\mrd r) > \size(r)$.
A similar observation holds for the sum-product decomposition of  
$q_1 \times_\mrd r' + \ldots + q_n \times_\mrd r'$. 

By Lemma \ref{lm:uniqueness-lemma-polynomials}(b), there is a bijection  $\varphi$  which maps the summands
of the first sum-product decomposition onto equal summands of the second sum-product decomposition.
By the above size considerations and  $\size(r) = \size(r')$, 
it follows that  $\varphi$  maps the set  $\{ p_i \times_\mrd r \mid p_i \neq 1, i \in [k] \}$ 
bijectively onto the set  $\{ q_j \times_\mrd r' \mid q_j \neq 1, j \in [n] \}$ (preserving the possible multiplicities of the summands contained in these sets). Moreover, if
$i \in [k]$, $j \in [n]$  with  $p_i \neq 1 \neq q_j$ and  $\varphi(p_i \times_\mrd r) = q_j \times_\mrd r'$, then $p_i \times_\mrd r = q_j \times_\mrd r'$, so  $p_i = q_j$  by our induction hypothesis, and, provided that there are such  $i \in [k]$  and  $j \in [n]$  as described,
we also obtain $r = r'$  by our induction hypothesis.
Consequently,
$\sum_{i \in [k], p_i \neq 1} p_i = \sum_{j \in [n], q_j \neq 1} q_j $.

Now let  $k_1 = |\{ i \in [k] \mid p_i = 1 \}|$  and  $n_1 = |\{ j \in [n] \mid q_j = 1 \}|$.
Then  $\varphi$  also maps the sum-product decomposition of  $k_1 \cdot r$  
onto the sum-product decomposition of  $n_1 \cdot r'$.
Thus  $k_1 \cdot r = n_1 \cdot r'$.
Hence  $k_1 \cdot \size(r) + (k_1 - 1) = \size(k_1 \cdot r) = \size(n_1 \cdot r') = n_1 \cdot \size(r') + (n_1 - 1)$,
showing  $k_1 = n_1$  since $\size(r) = \size(r')$.
Thus  $k_1 \cdot r = n_1 \cdot r' = k_1 \cdot r'$.
Then, provided that  $k_1 \neq 0$, 
Lemma \ref{lm:cancellation-for-simple-classes}  implies  $r = r'$.
Hence, in each case we have  $r = r'$.
As shown above, we have  
$|\{ i \in [k] \mid p_i \neq 1 \}| = |\{ j \in [n] \mid q_j \neq 1 \}|$.
Together with  $k_1 = n_1$, this implies  $k = n$.
Consequently,  $p = p_1 + \ldots + p_k = q_1 + \ldots + q_k =  q$, as claimed.
\end{proof}

\

Clearly, Theorem \ref{thm:cancellation-strong-in-Pol} does not hold if $r=1$ and $r'\ne 1$, or vice versa, because  $[x] \times_\mrd 1 = 1 \times_\mrd [x]$  and $\size(1)=\size([x])$, but $1\ne [x]$.

We note that Theorem \ref{thm:cancellation-strong-in-Pol} generalizes the following (almost trivial) observation on non-commutative free monoids:

If $p,q,r,r'\in X^*$  such that  $pr = qr'$  and  $\size(r) = \size(r')$, 
then  $p = q$  and  $r = r'$.

Indeed, the above observation is contained in Theorem \ref{thm:cancellation-strong-in-Pol}, because the free monoid $X^*$ is isomorphic to the monoid generated by $X/\rmAC$ in $\sfNrd[X]$  with $\times_\mrd$.

Now we can show that the multiplication $\times_\mrd$ in $\sfNrd[X]$ is cancellative both from the left and the right. We will use right cancellativity later in the proof of Lemma \ref{lm:Pid-closed-product} and Theorem \ref{thm:strong-bimonoid-Pol-id}, and then also in Lemma \ref{lm:p-q-r-large}.

\begin{corollary}\label{cor:cancellation-in-Pol} \rm Let $p,q,r,r' \in \bbNrd[X] \setminus \{0\}$.
\begin{compactitem}
\item[(a)] If  $p \times_\mrd r = p \times_\mrd r'$, then $r = r'$. 
\item[(b)] If  $p \times_\mrd r = q \times_\mrd r$, then $p = q$.
\end{compactitem}
\end{corollary}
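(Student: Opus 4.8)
The plan is to derive both parts from Theorem~\ref{thm:cancellation-strong-in-Pol} by disposing of its two extra hypotheses ($r\neq 1\neq r'$ and $\size(r)=\size(r')$) using the data available here. Part~(b) is almost immediate: if $r=1$ then $p=p\times_\mrd 1=q\times_\mrd 1=q$, and if $r\neq 1$ we apply Theorem~\ref{thm:cancellation-strong-in-Pol} with $r':=r$, for which $r\neq 1\neq r'$ and $\size(r)=\size(r')$ hold trivially, obtaining $p=q$.

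For part~(a) the key step is the auxiliary fact that for all $p,r'\in\bbNrd[X]\setminus\{0\}$, the equation $p\times_\mrd r'=p$ forces $r'=1$. Choose polynomial terms $s,t$ with $p=[s]_{\rmAC}$ and $r'=[t]_{\rmAC}$; by Lemma~\ref{lm:rd-pol-terms-for-products}, $p\times_\mrd r'=[s\langle\hti t\rangle]_{\rmAC}$, so it suffices to show $s\langle\hti t\rangle\neq_{\rmAC} s$ whenever $t\neq\hat{1}$. A short structural induction on $s$ shows that, $s$ being a polynomial term different from $\hat{0}$, at least one leaf of $s$ gets replaced in forming $s\langle\hti t\rangle$ (for instance every occurrence of $\hat{1}$, and otherwise the rightmost factor of a monomial occurring as a summand of $s$; in each structural case at least one such leaf is present). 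If $\size(t)\geq 2$, each replacement strictly increases the size, so $\size(s\langle\hti t\rangle)>\size(s)$ and the terms are not $\rmAC$-equivalent. If $\size(t)=1$, i.e.\ $t=x\in X$, then either some replaced leaf lies in $X$ (a leaf $y$ becoming $y\hti x$), which again strictly increases the size, or every replaced leaf is an occurrence of $\hat{1}$ (and there is at least one), so $s\langle\hti x\rangle$ has strictly fewer leaves labelled $\hat{1}$ than $s$; since $\rmAC$-equivalence preserves the multiset of leaf labels of a term (it is generated by reductions with the identities $e_1,e_2,e_4$, none of which touches leaves; cf.\ also Proposition~\ref{prop:AC-equiv-graphs}), again $s\langle\hti x\rangle\neq_{\rmAC} s$. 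In all cases $s\langle\hti t\rangle\neq_{\rmAC} s$, which proves the auxiliary fact.

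Now for~(a), assume $p\times_\mrd r=p\times_\mrd r'$. If $r=1$ then $p\times_\mrd r'=p$, hence $r'=1$ by the auxiliary fact, and symmetrically if $r'=1$ then $r=1$; thus either $r=r'=1$, in which case we are done, or $r\neq 1\neq r'$. In the latter case it remains to check $\size(r)=\size(r')$ in order to invoke Theorem~\ref{thm:cancellation-strong-in-Pol} with $q:=p$. Writing $p=[s]_{\rmAC}$ and using that which leaves of $s$ get replaced in $s\langle\hti t\rangle$ depends only on $s$ — say $a$ occurrences of $\hat{1}$ get replaced by $t$ and $b$ further leaves by terms of the form $y\hti t$, with $a+b\geq 1$ — we have $\size(s\langle\hti t\rangle)=\size(s)+a(\size(t)-1)+b(\size(t)+1)$, which is strictly increasing in $\size(t)$. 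Hence $p\times_\mrd r=p\times_\mrd r'$ forces $\size(r)=\size(r')$, and Theorem~\ref{thm:cancellation-strong-in-Pol} yields $r=r'$.

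The step I expect to be most delicate is the auxiliary fact, equivalently the case $r=1$ of~(a): size bookkeeping alone does not settle it, since e.g.\ $(1+1)\times_\mrd x=x+x$ has the same size as $1+1$, so one genuinely needs the invariance of the leaf-label multiset under $\rmAC$-equivalence in order to exclude $p\times_\mrd x=p$.
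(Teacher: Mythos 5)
Your proof is correct and follows essentially the same route as the paper's: reduce to Theorem~\ref{thm:cancellation-strong-in-Pol} by passing to representatives, using the size formula $\size(s\langle\hti t\rangle)=\size(s)+m_1(\size(t)+1)+m_2(\size(t)-1)$ to force $\size(r)=\size(r')$, and handling part~(b) as an immediate consequence of the theorem. The one point where you diverge is the exclusion of the case $r=1$, $r'\neq 1$: the paper claims this follows from the size equality alone, which is precisely the subtlety you flag (for a term like $\hat{1}\hp\hat{1}$ one has $m_1=0$, so replacing its $\hat{1}$-leaves by $x$ leaves the size unchanged), and your auxiliary fact --- that $p\times_\mrd r'=p$ forces $r'=1$, proved via invariance of the leaf-label multiset under $=_{\rmAC}$ --- supplies the justification that the paper's one-line assertion actually requires.
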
 

\begin{proof} (a) Assume that  $p \times_\mrd r = p \times_\mrd r'$.
Choose $s, u, v \in \rmPT(X)$  with $p = [s]_\rmAC$, $r = [u]_\rmAC$, and $r' = [v]_\rmAC$. 
By Lemma \ref{lm:rd-pol-terms-for-products}, we obtain $s\langle\hti u\rangle =_\rmAC s\langle\hti v\rangle$, 
so  $\size(s\langle\hti u\rangle) = \size(s\langle\hti v\rangle)$.
It follows that  $u = \hat{1}$  if and only if  $v = \hat{1}$. Therefore we may assume that  $u \neq \hat{1} \neq v$.

Let  $m_1$  be the number of the occurrences of leaves  $x\in X$ of  $s$  which, in the construction of  $s\langle\hti u\rangle$, 
we replace by  $x \hti u$, and let $m_2$ be the number of the occurrences of the leaf $\hat{1}$ in $s$. These occurrences of the leaves in $X\cup\{\hat{1}\}$ of $s$ play the same role in the construction of $s\langle\hti v\rangle$.

Hence we have  
$\size(s\langle\hti u\rangle) = \size(s) + m_1 \cdot (\size(u) + 1) + m_2 \cdot (\size(u) - 1)$  and similarly
$\size(s\langle\hti v\rangle) = \size(s) + m_1 \cdot (\size(v) + 1) + m_2 \cdot (\size(v) - 1)$. 
Hence  $\size(u) = \size(v)$, so $\size(r) = \size(r')$.
Then Theorem \ref{thm:cancellation-strong-in-Pol} implies  $r = r'$.

(b) Immediate by Theorem \ref{thm:cancellation-strong-in-Pol}.
\end{proof}

Next we wish to obtain a right-distributive strong bimonoid of polynomials which is idempotent.
The main difference as compared to $\sfNrd[X]$ is that in all sums of the form
$p_1 + \ldots + p_n$  where for every  $i \in [n]$, $p_i$  is a product polynomial or an element of $X/{\rmAC}\cup\{1\}$,
each of these polynomials  $p_i$ occurs only once, i.e., the polynomials  $p_1,\ldots,p_n$  are pairwise different.

We begin with defining, slightly more general also for later purposes, 
\emph{idempotency-reduced} (for short \emph{id-reduced}) \emph{terms}  as follows.

\begin{definition}\label{def:id-reduced-terms-1}\rm
The set of \emph{id-reduced terms}, denoted by $\rmSTid(X)$ is the smallest subset $U$  of $\rmST(X)$ satisfying the following conditions: 
\begin{compactitem}
\item[(a)] $\hat{0}$, $\hat{1}$, and all monomial terms are in $U$,
\item[(b)] if  $t \in \rmST(X)$  is a sum term and it has a sum-product decomposition $t = t_1 \hat{+} \ldots \hat{+} t_n$
such that $t_1,\ldots,t_n \in U$ and $t_i \ne_{\rmAC} t_j$
for every $1\le i < j \le n$, then $t \in U$,
\item[(c)] if $s, t\in U \setminus \{\hat{0}, \hat{1}\}$, then the product term $s \hat{\times} t$ is in $U$.
\end{compactitem}

We put  $\rmPTid(X)= \rmPT(X) \cap \rmSTid(X)$, the set of all polynomial terms which are id-reduced.

\hfill$\Box$
\end{definition}

Now we show that we obtain id-reduced polynomial terms in the following way.

\begin{lemma}\label{lm:id-reduced-polynomial-terms}\rm
The set $\rmPTid(X)$ of id-reduced polynomial terms is the smallest subset $U$  of $\rmST(X)$ satisfying the following conditions: 
\begin{compactitem}
\item[(a)] $\hat{0}$, $\hat{1}$, and all monomial terms are in $U$,
\item[(b)] if  $t \in \rmST(X)$  is a sum term and it has a sum-product decomposition $t = t_1 \hat{+} \ldots \hat{+} t_n$
such that $t_1,\ldots,t_n \in U$ and $t_i \ne_{\rmAC} t_j$
for every $1\le i < j \le n$, then $t \in U$,
\item[(c)] if $s$ is a monomial term and $t\in U \setminus \{\hat{0}, \hat{1}\}$, then the product
term $s \hat{\times} t$ is in $U$.
\end{compactitem}
\end{lemma}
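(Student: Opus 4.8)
The plan is to prove the two inclusions between $\rmPTid(X)$ and the set $U$ asserted to be smallest; concretely, I would show (i) that $\rmPTid(X)$ itself satisfies conditions (a)--(c), and (ii) that $\rmPTid(X)$ is contained in \emph{every} subset of $\rmST(X)$ satisfying (a)--(c). Since the conditions (a)--(c) are of closure type, a smallest such set exists, and (i) forces it to be $\subseteq \rmPTid(X)$ while (ii) forces it to be $\supseteq \rmPTid(X)$, giving equality. Throughout I would use that $\rmPTid(X) = \rmPT(X) \cap \rmSTid(X)$ by definition, that every monomial term lies in both $\rmPT(X)$ and $\rmSTid(X)$ and is distinct from $\hat{0}$ and $\hat{1}$, and that in any sum-product decomposition $t = t_1 \hp \ldots \hp t_n$ each summand is a product term or lies in $X \cup \{\hat{1}\}$, so in particular $t_i \neq \hat{0}$.

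For (i): Condition (a) for $\rmPTid(X)$ is clause (a) of Definitions \ref{def:polynomial-terms} and \ref{def:id-reduced-terms-1}. For (b), let $t$ be a sum term with sum-product decomposition $t = t_1 \hp \ldots \hp t_n$, the $t_i$ in $\rmPTid(X)$ and pairwise $\rmAC$-inequivalent; then $t \in \rmSTid(X)$ by clause (b) of Definition \ref{def:id-reduced-terms-1}, and $t \in \rmPT(X)$ by repeatedly applying clause (b) of Definition \ref{def:polynomial-terms} to the (nonzero) summands and then invoking Lemma \ref{lm:pol-classes-subset-simple-classes} to handle the parenthesization of $t$. For (c), if $s$ is a monomial term and $t \in \rmPTid(X) \setminus \{\hat{0},\hat{1}\}$, then $s \hti t \in \rmPT(X)$ by clause (c) of Definition \ref{def:polynomial-terms} and $s \hti t \in \rmSTid(X)$ by clause (c) of Definition \ref{def:id-reduced-terms-1} (using that $s \in \rmSTid(X) \setminus \{\hat{0},\hat{1}\}$). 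Hence $s \hti t \in \rmPTid(X)$.

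For (ii): Fix a subset $U$ of $\rmST(X)$ satisfying (a)--(c), and show $t \in U$ for every $t \in \rmPTid(X)$ by induction on $\size(t)$, distinguishing cases on the root of $t$. If $t \in X \cup \{\hat{0},\hat{1}\}$ or $t$ is a monomial term, then $t \in U$ by (a). If $t$ is a sum term, let $t = t_1 \hp \ldots \hp t_n$ be its sum-product decomposition; this decomposition is unique by Observation \ref{obs:unique-decomposition}(a), so the membership $t \in \rmSTid(X)$ --- which must come from clause (b) of Definition \ref{def:id-reduced-terms-1}, as $t$ is a sum term --- tells us that the $t_i$ lie in $\rmSTid(X)$ and are pairwise $\rmAC$-inequivalent, while an easy induction on size using clause (b) of Definition \ref{def:polynomial-terms} shows that $t \in \rmPT(X)$ forces each $t_i \in \rmPT(X)$. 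Thus each $t_i \in \rmPTid(X)$ with $\size(t_i) < \size(t)$, so $t_i \in U$ by the induction hypothesis, and clause (b) for $U$ yields $t \in U$. Finally, if $t$ is a product term that is not a monomial term, then $t \in \rmPT(X)$ must come from clause (c) of Definition \ref{def:polynomial-terms}, so $t = s \hti t'$ with $s$ a monomial term and $t' \in \rmPT(X) \setminus \{\hat{0},\hat{1}\}$, while $t \in \rmSTid(X)$ must come from clause (c) of Definition \ref{def:id-reduced-terms-1}, so $t = s_1 \hti s_2$ with $s_1, s_2 \in \rmSTid(X) \setminus \{\hat{0},\hat{1}\}$; since the decomposition of a term at its root $\hti$ is unique, $s_1 = s$ and $s_2 = t'$, whence $t' \in \rmPTid(X)$ with $\size(t') < \size(t)$, so $t' \in U$ by induction and $t = s \hti t' \in U$ by clause (c) for $U$.

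The step I expect to be the crux is the product-term case of (ii): one must reconcile the \emph{unrestricted} product clause of $\rmSTid(X)$ (clause (c) of Definition \ref{def:id-reduced-terms-1}, which allows an arbitrary id-reduced left factor) with the \emph{monomial-left-factor} product clause of $\rmPT(X)$ (clause (c) of Definition \ref{def:polynomial-terms}). The point is that inside the intersection $\rmPT(X) \cap \rmSTid(X)$ the left factor of a product term is already forced to be a monomial term by the $\rmPT$-side, and the right factor inherits id-reducedness from the $\rmSTid$-side via uniqueness of the root decomposition --- so restricting the product clause of the id-reduced definition to monomial left factors loses nothing, which is exactly what the lemma asserts. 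The sum-term case is routine once one uses uniqueness of the sum-product decomposition (Observation \ref{obs:unique-decomposition}(a)) to turn the existential quantifier in Definition \ref{def:id-reduced-terms-1}(b) into a statement about \emph{the} sum-product decomposition of $t$.
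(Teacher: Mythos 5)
Your proposal is correct and follows essentially the same route as the paper: one inclusion because $\rmPTid(X)=\rmPT(X)\cap\rmSTid(X)$ satisfies the lemma's (weaker) closure conditions, and the other by induction on $\size(t)$ with a case distinction on the root, using uniqueness of the sum-product (resp. root $\hti$) decomposition to transfer membership to the immediate constituents. Your treatment of the product-term case via the root decomposition $t=s\hti t'$ with $t'\in\rmPTid(X)\setminus\{\hat{0},\hat{1}\}$ is, if anything, slightly more careful than the paper's wording, which additionally asserts that the right factor is a sum polynomial term.
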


\begin{proof} Let  $U$  be defined as described above. The closure condition  (c) of the 
Lemma is weaker than (c) of Definition \ref{def:id-reduced-terms-1}, 
and the closure condition (b) of the Lemma is weaker than  (b) of  Definition \ref{def:polynomial-terms}. This shows that
$U \subseteq \rmSTid(X) \cap \rmPT(X)$.

To prove the converse, let  $s \in \rmSTid(X) \cap \rmPT(X)$. We show by induction on  $\size(s)$  that
$s \in U$. This is trivial if  $s \in \{\hat{0}, \hat{1}\}$ or if  $s$  is a monomial term.

Now let  $s$  be a sum term. By  $s \in \rmSTid(X) \cap \rmPT(X)$  and Lemma \ref{lm:AC-equiv-characterization}(a), it follows that $s$  has a sum-product decomposition 
$s = t_1 \hat{+} \ldots \hat{+} t_n$  where
$t_1,\ldots,t_n$ are id-reduced and polynomial terms and  $t_i \ne_{\rmAC} t_j$
for every $1\le i < j \le n$. By the induction hypothesis, we have $t_1,\ldots,t_n \in U$.
Then, by condition (b), we obtain that $s \in U$. 

Finally, let  $s$  be a product term, but not a monomial term. By  $s \in \rmSTid(X) \cap \rmPT(X)$  and Lemma \ref{lm:AC-equiv-characterization}(b), it follows that  $s = s'\hat{\times} t$, where $s'$ is a monomial term and $t$ is id-reduced and a sum polynomial term. 
By induction hypothesis, then  $t \in U$ and hence  $s \in U$ by condition (c).
\end{proof}

Next we show an analogous result as Lemma \ref{lm:pol-classes-subset-simple-classes}:

\begin{lemma}\label{lm:id-reduced-classes-subset-simple-classes}\rm
For every  $s \in \rmPTid(X)$ and $t \in \rmST(X)$, if  $s =_\rmAC t$, then also  $t \in \rmPTid(X)$. 
\end{lemma}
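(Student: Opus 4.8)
The plan is to follow the structure of the proof of Lemma~\ref{lm:pol-classes-subset-simple-classes}, now additionally tracking idempotency-reducedness. By Lemma~\ref{lm:pol-classes-subset-simple-classes} we already know $t\in\rmPT(X)$, so it remains to prove $t\in\rmSTid(X)$; in fact the induction below establishes the stronger statement $t\in\rmPTid(X)$ directly, using the characterization of $\rmPTid(X)$ given in Lemma~\ref{lm:id-reduced-polynomial-terms}. The induction is on $\size(s)$, with a case distinction on the shape of $s$; by Lemma~\ref{lm:AC-equiv-characterization} (and, for $s\in\{\hat{0},\hat{1}\}$, a trivial observation on $\Leftrightarrow^*_{\rmAC}$), $t$ has the same shape as $s$.

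For the base cases, if $s\in\{\hat{0},\hat{1}\}$ then $t=s$, and if $s$ is a monomial term then $t$ is again a monomial term; in each case $t\in\rmPTid(X)$ by clause~(a) of Lemma~\ref{lm:id-reduced-polynomial-terms}. If $s$ is a sum term, then by Lemma~\ref{lm:id-reduced-polynomial-terms}(b) together with the uniqueness of sum-product decompositions (Observation~\ref{obs:unique-decomposition}(a)), its sum-product decomposition is $s=s_1\hat{+}\ldots\hat{+}s_n$ with $s_1,\dots,s_n\in\rmPTid(X)$ and $s_i\ne_{\rmAC}s_j$ for $i\ne j$. By Lemma~\ref{lm:AC-equiv-characterization}(a), $t$ is a sum term with sum-product decomposition $t=t_1\hat{+}\ldots\hat{+}t_n$ and there is a permutation $\varphi$ of $[n]$ with $s_i=_{\rmAC}t_{\varphi(i)}$ for each $i\in[n]$. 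The induction hypothesis gives $t_1,\dots,t_n\in\rmPTid(X)$, and since the $s_i$ are pairwise $\ne_{\rmAC}$ and $=_{\rmAC}$ is an equivalence relation, the $t_j$ are pairwise $\ne_{\rmAC}$ as well; hence $t\in\rmPTid(X)$ by Lemma~\ref{lm:id-reduced-polynomial-terms}(b).

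The remaining case is that $s$ is a product term that is not a monomial term. Here I would argue, as in the corresponding step of the proof of Lemma~\ref{lm:id-reduced-polynomial-terms}, that the product-sum decomposition of $s$ has the form $s=x_1\hat{\times}\ldots\hat{\times}x_{n-1}\hat{\times}s_n$ with $x_1,\dots,x_{n-1}\in X$ and $s_n$ a sum polynomial term, and that, because $s\in\rmPTid(X)$, this factor $s_n$ again lies in $\rmPTid(X)$. By Lemma~\ref{lm:AC-equiv-characterization}(b), $t$ is a product term whose product-sum decomposition $t=t_1\hat{\times}\ldots\hat{\times}t_n$ satisfies $t_i=_{\rmAC}s_i$ for all $i$; thus $t_1,\dots,t_{n-1}\in X$ and $t_n=_{\rmAC}s_n$ is a sum term lying in $\rmST(X)$. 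Since $\size(s_n)<\size(s)$, the induction hypothesis yields $t_n\in\rmPTid(X)$, and then $t=t_1\hat{\times}\ldots\hat{\times}t_{n-1}\hat{\times}t_n\in\rmPTid(X)$ by repeated use of clause~(c) of Lemma~\ref{lm:id-reduced-polynomial-terms}.

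The main obstacle I anticipate is the bookkeeping in this last case, namely two routine stability facts that have to be spelled out: (i)~if $s\in\rmSTid(X)$ is a product term, then the factors of its product-sum decomposition are again in $\rmSTid(X)$; and (ii)~conversely, if $t\in\rmST(X)$ has product-sum decomposition $t=x_1\hat{\times}\ldots\hat{\times}x_{n-1}\hat{\times}t_n$ with $x_i\in X$ and $t_n\in\rmPTid(X)\setminus\{\hat{0},\hat{1}\}$, then $t\in\rmPTid(X)$, regardless of how the product in $t$ is parenthesized. Both follow by a short induction from clause~(c) of Definition~\ref{def:id-reduced-terms-1} and Lemma~\ref{lm:id-reduced-polynomial-terms}, but they are the one place where the argument goes slightly beyond a transcription of the proof of Lemma~\ref{lm:pol-classes-subset-simple-classes}.
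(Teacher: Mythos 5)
Your proof is correct and follows essentially the same route as the paper, whose own proof simply says to repeat the argument of Lemma~\ref{lm:pol-classes-subset-simple-classes} verbatim while additionally tracking the pairwise $\ne_{\rmAC}$ condition from Definition~\ref{def:id-reduced-terms-1}(b); your handling of the sum case (permutations preserve pairwise AC-inequivalence) and of the product case (via the characterization in Lemma~\ref{lm:id-reduced-polynomial-terms}) is exactly the intended elaboration. The two "stability facts" you flag at the end are indeed routine and are implicitly subsumed in the paper's appeal to Lemma~\ref{lm:AC-equiv-characterization}(b) and Lemma~\ref{lm:uniqueness-lemma-polynomials}(c).
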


\begin{proof} We can follow the proof of Lemma \ref{lm:pol-classes-subset-simple-classes} almost verbatim,
observing the additional property of  $s$ given by Definition \ref{def:id-reduced-terms-1}(b).  
\end{proof}

In an equivalent formulation, Lemma \ref{lm:id-reduced-classes-subset-simple-classes} says that if  $s \in \rmPTid(X)$, 
then  $[s]_{\rmAC} \subseteq \rmPTid(X)$.

For each id-reduced polynomial term $t \in \rmPTid(X)$ (respectively,  id-reduced sum polynomial term, id-reduced product polynomial term), we call its
congruence class  $[t]_{\rmAC}$    an
\emph{id-reduced polynomial} (respectively, an \emph{id-reduced sum polynomial}, an \emph{id-reduced product polynomial}).  

Now, we let  \[\bbBidrd[X] = \rmPTid(X)/\!{=_{\rmAC}} \, = \{ [t]_{\rmAC} \mid t \in \rmPTid(X) \},\]
so $\bbBidrd[X]$ is the set of all id-reduced polynomials in  $\bbNrd[X]$. In particular, $0, 1\in  \bbBidrd[X]$.
Our next goal is to obtain an idempotent and  right-distributive strong bimonoid structure on $\bbBidrd[X]$, this explains the index id. 
The following is straightforward.

\begin{observation}\label{obs:id-reduced-polynomials}\rm The set $\bbBidrd[X]$  of \emph{id-reduced polynomials} is the smallest subset $U$ 
of $\bbNsb[X]$ satisfying the following conditions:
\begin{compactitem}
\item[(a)]  $0, 1$,  and all monomials (defined above) are in $U$. 

\item[(b)] If  $p \in\bbNsb[X]$  is a sum class and it has a sum-product decomposition $p = p_1+ \ldots + p_n$
with pairwise different $p_1,\ldots,p_n\in U$, then  $p \in U$.

\item[(c)]  If  $m$  is a monomial and  $q\in U$  is a  polynomial, 
then the product polynomial  $m \times q$  is in $U$.

\hfill$\Box$
\end{compactitem}
\end{observation}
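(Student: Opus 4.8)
The plan is to argue exactly as for Observation~\ref{obs:polynomials}, now using the inductive description of $\rmPTid(X)$ provided by Lemma~\ref{lm:id-reduced-polynomial-terms} together with the closure property of Lemma~\ref{lm:id-reduced-classes-subset-simple-classes}. Write $U$ for the smallest subset of $\bbNsb[X]$ satisfying conditions (a)--(c). I would prove $U = \bbBidrd[X]$ by establishing the two inclusions separately, the first by induction on term size, the second by checking that $\bbBidrd[X]$ satisfies (a)--(c) and invoking minimality of $U$.

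For $\bbBidrd[X] \subseteq U$, I would show by induction on $\size(t)$ that $[t]_{\rmAC} \in U$ for every $t \in \rmPTid(X)$. If $t \in \{\hat{0},\hat{1}\}$ or $t$ is a monomial term, then $[t]_{\rmAC}$ is $0$, $1$, or a monomial, hence in $U$ by~(a). If $t$ is a sum term, then by Lemma~\ref{lm:id-reduced-polynomial-terms} it must have entered via clause~(b) there, so its (unique, by Observation~\ref{obs:unique-decomposition}(a)) sum-product decomposition $t = t_1 \hat{+} \ldots \hat{+} t_n$ has $t_1,\ldots,t_n \in \rmPTid(X)$ with $t_i \ne_{\rmAC} t_j$ for $i \ne j$; by the induction hypothesis $[t_1]_{\rmAC},\ldots,[t_n]_{\rmAC} \in U$, these are pairwise different, and $[t]_{\rmAC} = [t_1]_{\rmAC} + \ldots + [t_n]_{\rmAC}$ is a sum-product decomposition of the sum class $[t]_{\rmAC}$, so~(b) applies. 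If $t$ is a product term but not a monomial term, then by Lemma~\ref{lm:id-reduced-polynomial-terms} it entered via clause~(c), so $t = s \hat{\times} t'$ with $s$ a monomial term and $t' \in \rmPTid(X)\setminus\{\hat{0},\hat{1}\}$; by the induction hypothesis $[t']_{\rmAC} \in U$, and since $[t]_{\rmAC} = [s]_{\rmAC}\times[t']_{\rmAC}$ with $[s]_{\rmAC}$ a monomial, clause~(c) yields $[t]_{\rmAC} \in U$.

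For $U \subseteq \bbBidrd[X]$ it suffices, by minimality of $U$, to verify that $\bbBidrd[X]$ itself satisfies (a)--(c). Item~(a) is immediate, since $\hat{0}$, $\hat{1}$, and every monomial term lie in $\rmPTid(X)$. For~(b), let $p = p_1 + \ldots + p_n$ be a sum-product decomposition of a sum class with the $p_i$ pairwise different and $p_i \in \bbBidrd[X]$; for each $i$ I would pick a representative $t_i$ which is a product term or lies in $X\cup\{\hat{1}\}$ (possible because $p_i$, being a summand of a sum-product decomposition, is a product class or in $\XAC\cup\{1\}$), note that $t_i \in \rmPTid(X)$ by Lemma~\ref{lm:id-reduced-classes-subset-simple-classes}, check that the term $t = t_1 \hat{+} \ldots \hat{+} t_n$ is simple with sum-product decomposition $t_1 \hat{+} \ldots \hat{+} t_n$, and observe that the $p_i$ being pairwise different forces $t_i \ne_{\rmAC} t_j$ for $i \ne j$; hence $t \in \rmPTid(X)$ by Lemma~\ref{lm:id-reduced-polynomial-terms}(b), so $p = [t]_{\rmAC} \in \bbBidrd[X]$. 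For~(c), let $m$ be a monomial and $q \in \bbBidrd[X]$: if $q \in \{0,1\}$ then $m\times q$ is $0$ or $m$, both in $\bbBidrd[X]$; otherwise pick a monomial term $s$ with $[s]_{\rmAC} = m$ and a term $u \in \rmPTid(X)\setminus\{\hat{0},\hat{1}\}$ with $[u]_{\rmAC} = q$, note that $s\hat{\times} u$ is simple, conclude $s\hat{\times} u \in \rmPTid(X)$ by Lemma~\ref{lm:id-reduced-polynomial-terms}(c), and obtain $m\times q = [s\hat{\times} u]_{\rmAC} \in \bbBidrd[X]$.

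The argument is essentially bookkeeping, and I do not expect any genuine obstacle. The one step I would write out with some care is the passage in~(b) of the inclusion $U \subseteq \bbBidrd[X]$: one has to confirm that the sum $t_1 \hat{+} \ldots \hat{+} t_n$ of the chosen representatives is indeed a simple term whose sum-product decomposition is exactly $t_1 \hat{+} \ldots \hat{+} t_n$ and whose summands are pairwise $\rmAC$-inequivalent, so that Lemma~\ref{lm:id-reduced-polynomial-terms}(b) applies verbatim; the analogous (simpler) check for $s\hat{\times} u$ in part~(c) is routine.
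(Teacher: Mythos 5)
Your proof is correct: the paper states this Observation without proof (``The following is straightforward''), and your double-inclusion argument---lifting the term-level characterization of Lemma~\ref{lm:id-reduced-polynomial-terms} to congruence classes, using Lemma~\ref{lm:id-reduced-classes-subset-simple-classes} to choose representatives, and handling the $q\in\{0,1\}$ edge cases in~(c)---is exactly the intended elaboration. No gaps.
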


We show a decomposition of the elements of $\bbBidrd[X]\setminus\{0\}$  that we will use later without any reference.

\begin{observation}\label{obs:id-reduced-polynomials-decomposition}\rm For each  $p\in \bbBidrd[X]\setminus\{0\}$ we have  $p = p_1 + \ldots + p_n$ for some $n\ge 1$, where each of
$p_1, \ldots, p_n$  is $1$, a monomial, or an id-reduced product polynomial. Moreover, if $n\ge 2$, then $p_1, \ldots, p_n$ are pairwise different.
\end{observation}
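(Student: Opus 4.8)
The statement is essentially an unfolding of the inductive description of $\bbBidrd[X]$, so only a short case analysis is needed. Fix $p \in \bbBidrd[X] \setminus \{0\}$. By Lemma~\ref{lm:uniqueness-lemma-polynomials}(a), inside $\bbNrd[X] \supseteq \bbBidrd[X]$ the element $p$ is $1$, an element of $\XAC$, a product polynomial, or a sum polynomial; equivalently, one may fix an id-reduced polynomial term $s \in \rmPTid(X)$ with $p = [s]_{\rmAC}$ and distinguish whether $s$ equals $\hat{1}$, is a monomial term, is a product polynomial term that is not a monomial term, or is a sum polynomial term, using the characterization of $\rmPTid(X)$ in Lemma~\ref{lm:id-reduced-polynomial-terms}.

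In the first three cases I would take $n = 1$ and $p_1 = p$. Then $p_1$ is, respectively, $1$, a monomial, or an id-reduced product polynomial; for the last of these, note that a product term in $\rmPTid(X)$ is either a monomial term or, by condition~(c) of Lemma~\ref{lm:id-reduced-polynomial-terms}, of the form (monomial term)~$\hat{\times}$~(id-reduced polynomial term), hence an id-reduced product polynomial term. No distinctness condition is relevant here.

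In the remaining case, $p$ is a sum polynomial, that is, a sum class lying in $\bbNrd[X]$. Conditions~(a) and~(c) of Observation~\ref{obs:id-reduced-polynomials} produce only $0$, $1$, monomials, and product polynomials, and none of these is a sum class (for product classes this is Lemma~\ref{lm:uniqueness-lemma}(a)); hence $p$ must have entered $\bbBidrd[X]$ via condition~(b). Therefore $p$ admits a sum-product decomposition $p = p_1 + \ldots + p_n$, necessarily with $n \geq 2$, in which $p_1, \ldots, p_n \in \bbBidrd[X]$ are pairwise different. By the definition of a sum-product decomposition each $p_i$ is a product class or lies in $\XAC \cup \{1\}$; since moreover $p_i \in \bbBidrd[X]$, each $p_i$ is therefore $1$, a monomial, or an id-reduced product polynomial, which is exactly the asserted form.

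I do not expect a genuine obstacle. The two points deserving a little care are: first, the appeal to Lemma~\ref{lm:uniqueness-lemma}(a) to argue that a sum class in $\bbBidrd[X]$ can only be created by condition~(b), which is what makes the pairwise distinctness of the summands available; and second, the identification of the categories ``product class'' and ``element of $\XAC$'' occurring in a sum-product decomposition with the categories ``id-reduced product polynomial'' and ``monomial'' in the statement --- this is immediate once $p_i \in \bbBidrd[X]$ is used.
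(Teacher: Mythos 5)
Your proof is correct and follows essentially the same route as the paper: both arguments unfold the closure description of $\bbBidrd[X]$ in Observation~\ref{obs:id-reduced-polynomials}, with the pairwise distinctness of the summands coming from condition~(b) and the uniqueness of sum-product decompositions (Lemma~\ref{lm:uniqueness-lemma}(b)). The only cosmetic difference is that the paper phrases this as an induction over the generation of $p$, whereas you organize it as a case analysis on the classification of $p$ from Lemma~\ref{lm:uniqueness-lemma-polynomials}(a) and then identify the last generation step in the sum case; the substance is identical.
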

\begin{proof} By Observation \ref{obs:id-reduced-polynomials}, the elements of $\bbBidrd[X]$  can be obtained by starting with the elements described in Observation \ref{obs:id-reduced-polynomials}(a) and then closing this set by the sum and product operations described in (b) and (c), respectively. We show by induction that our statement holds for all the elements  $p\in \bbBidrd[X]\setminus\{0\}$ obtained in this way.

If $p$ has the form described in Observation \ref{obs:id-reduced-polynomials}(a), then the statement is immediate with $n=1$ and $p_1=p$. If $p$ is obtained by applying Observation \ref{obs:id-reduced-polynomials}(b), then $p = p_1 + \ldots + p_n$ is a sum-product decomposition, and each $p_i$ is an id-reduced polynomial by the construction hypothesis. Hence  $p$  has the required form (with  $n \geq 2$). If $p$ is obtained by applying Observation \ref{obs:id-reduced-polynomials}(c), then it is an id-reduced product polynomial. Hence the statement holds again with $n=1$ and $p_1=p$.
\end{proof}

Next we will define an addition operation and a multiplication operation on $\bbBidrd[X]$. As preparation, we show the following.
 
\begin{lemma}\label{lm:Pid-closed-product}\rm $\bbBidrd[X]$ is closed under $\times_\mrd$.
\end{lemma}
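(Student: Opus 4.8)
The plan is to prove the lemma by induction along the inductive description of $\bbBidrd[X]$ furnished by Observation~\ref{obs:id-reduced-polynomials}. Concretely, I would let $U_0$ be the set of all $p \in \bbBidrd[X]$ with the property that $p \times_\mrd q \in \bbBidrd[X]$ for every $q \in \bbBidrd[X]$, and show that $U_0$ is closed under clauses~(a), (b), (c) of Observation~\ref{obs:id-reduced-polynomials}; by minimality of $\bbBidrd[X]$ this gives $U_0 = \bbBidrd[X]$, which is exactly the claim. In all three clauses the cases $q \in \{0,1\}$ are immediate, since then $p \times_\mrd q \in \{0, p\} \subseteq \bbBidrd[X]$; so one may assume $q \notin \{0,1\}$ throughout.

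Clauses~(a) and~(c) are routine. For~(a): if $p \in \{0,1\}$ then $p \times_\mrd q \in \{0, q\} \subseteq \bbBidrd[X]$, and if $p$ is a monomial then $p \times_\mrd q = p \times q$ by Definition~\ref{def:operations}(a), which lies in $\bbBidrd[X]$ by Observation~\ref{obs:id-reduced-polynomials}(c). For~(c): let $p = m \times q_0$ with $m$ a monomial and $q_0 \in U_0$. Since $\times$ and $\times_\mrd$ agree whenever the left factor is a monomial, $p = m \times_\mrd q_0$, so by associativity of $\times_\mrd$ (Theorem~\ref{thm:strong-bimonoid-Pol}) we get $p \times_\mrd q = (m \times_\mrd q_0) \times_\mrd q = m \times_\mrd (q_0 \times_\mrd q)$; here $q_0 \times_\mrd q \in \bbBidrd[X]$ because $q_0 \in U_0$, and then $m \times_\mrd (q_0 \times_\mrd q)$ equals $0$, or $m$, or $m \times (q_0 \times_\mrd q) \in \bbBidrd[X]$ by Observation~\ref{obs:id-reduced-polynomials}(c), according to whether $q_0 \times_\mrd q$ is $0$, $1$, or neither.

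Clause~(b) is the crux, and its distinctness bookkeeping is the step I expect to be the main obstacle. Here $p = p_1 + \dots + p_n$ is a sum-product decomposition with $n \ge 2$, the $p_i$ pairwise different and in $U_0$, each $p_i$ being $1$, a monomial, or a product polynomial. By Definition~\ref{def:operations}(b), $p \times_\mrd q = p_1 \times_\mrd q + \dots + p_n \times_\mrd q$, and every summand lies in $\bbBidrd[X]$ since $p_i \in U_0$. To apply Observation~\ref{obs:id-reduced-polynomials}(b) I would then identify the sum-product decomposition of $p \times_\mrd q$: whenever $p_i \neq 1$, Definition~\ref{def:operations}(a),(c) shows $p_i \times_\mrd q$ is a product polynomial, hence contributes a single summand; at most one $p_i$ equals $1$, and for it $p_i \times_\mrd q = q$ contributes the summands of the sum-product decomposition of $q$ provided by Observation~\ref{obs:id-reduced-polynomials-decomposition}. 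So the summands of $p \times_\mrd q$ are the $p_i \times_\mrd q$ with $p_i \neq 1$, together with (if some $p_{i_0} = 1$) the summands of $q$; there are at least two of them and each lies in $\bbBidrd[X]$. The remaining, and decisive, point is pairwise distinctness, and this is exactly where right cancellativity enters: for $i \neq j$ with $p_i, p_j \neq 1$ we have $p_i \times_\mrd q \neq p_j \times_\mrd q$ by Corollary~\ref{cor:cancellation-in-Pol}(b) (since $p_i \neq p_j$); the summands of $q$ are pairwise different by Observation~\ref{obs:id-reduced-polynomials-decomposition}; and no $p_i \times_\mrd q$ with $p_i \neq 1$ can coincide with a summand $q_j$ of $q$, because $\size(p_i \times_\mrd q) > \size(q) \ge \size(q_j)$, the strict inequality holding as $p_i \neq 1 \neq q$ (as observed in the proof of Theorem~\ref{thm:cancellation-strong-in-Pol}). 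Hence Observation~\ref{obs:id-reduced-polynomials}(b) yields $p \times_\mrd q \in \bbBidrd[X]$, completing the induction.
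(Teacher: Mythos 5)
Your proof is correct and follows essentially the same route as the paper: a structural induction on the left factor, with the cases $\{0,1\}$, monomial, sum, and product, and with right cancellativity (Corollary~\ref{cor:cancellation-in-Pol}(b)) supplying the pairwise distinctness needed to invoke Observation~\ref{obs:id-reduced-polynomials}(b) in the sum case. The only real difference is at the point you correctly identify as delicate: the paper's proof simply declares all the summands $q_i \times_\mrd r$ to be pairwise different product polynomials and applies Observation~\ref{obs:id-reduced-polynomials}(b) directly, silently passing over the summand with $q_i = 1$, whose contribution $1 \times_\mrd r = r$ may itself be a sum and must be merged into the sum-product decomposition. Your explicit treatment of that summand --- listing the decomposition of $p \times_\mrd q$ as the products $p_i \times_\mrd q$ with $p_i \neq 1$ together with the summands of $q$, and separating the three distinctness checks (cancellativity among the products, Observation~\ref{obs:id-reduced-polynomials-decomposition} within $q$, and the size bound $\size(p_i \times_\mrd q) > \size(q)$ across the two groups) --- is a small but genuine repair of an imprecision in the paper's argument rather than a detour from it.
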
 
\begin{proof} Let $q,r\in \bbBidrd[X]$. We show that $q\times_\mrd r\in \bbBidrd[X]$ by induction on the structure of $q$. We proceed by case distinction and induction on $\size(q)$.

If $q\in \{0,1\}$, the statement is obvious.

Next let $q$ be a monomial. Then  $q \times_\mrd r = q \times r$, 
and  $q \times r$  is id-reduced by Observation \ref{obs:id-reduced-polynomials}(c).
  
Secondly, let
$q = q_1 + \ldots + q_n$ where  $n \ge 2$, and each  $q_i$  is $1$, a monomial, or an id-reduced  product polynomial,  
and $q_1, \ldots , q_n$  are pairwise different. 
Then, by Corollary  \ref{cor:cancellation-in-Pol}(b), the product polynomials
$q_ 1 \times_{\mrd} r, \ldots, q_n  \times_{\mrd} r$  are also pairwise different and, by the induction hypothesis, they are id-reduced polynomials. 
Hence, by Observation \ref{obs:id-reduced-polynomials}(b), 
$q \times_{\mrd} r = q_1 \times_{\mrd} r + \ldots + q_n \times_{\mrd} r$ 
is an id-reduced polynomial.

Finally, let $q=m \times q'$, where $m$ is a monomial and $q'$ is an id-reduced sum polynomial. Then, by Definition \ref{def:operations}(b),  $q' \times_\mrd r$ is a sum polynomial and, by induction hypothesis,
it is id-reduced. By Definition \ref{def:operations}(c), we have $q \times_\mrd r = m \times (q' \times_\mrd r)$, where by Observation  \ref{obs:id-reduced-polynomials}(c)
the product polynomial in the right-hand side is id-reduced.
\end{proof}

As a consequence, we deduce that  $\rmPTid(X)$ is closed under the product operation $..\langle\hti ..\rangle$
and hence we may compute the product of idempotent polynomial terms in one step precisely as for polynomial terms.

\begin{corollary}\rm\label{cor:product-idempotent-polynomial-terms}
Let  $s,t \in \rmPTid(X)$. Then  $s\langle\hti t\rangle \in \rmPTid(X)$ and
$[s]_\rmAC \times_\mrd [t]_\rmAC = [s\langle\hti t\rangle]_\rmAC$.    
\end{corollary}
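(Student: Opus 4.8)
The plan is to combine three results already in hand: Lemma~\ref{lm:rd-pol-terms-for-products}, which computes $\times_\mrd$ on classes of polynomial terms via the one-step operation $..\langle\hti..\rangle$; Lemma~\ref{lm:Pid-closed-product}, which says $\bbBidrd[X]$ is closed under $\times_\mrd$; and Lemma~\ref{lm:id-reduced-classes-subset-simple-classes}, which says the $\rmAC$-class of an id-reduced polynomial term stays inside $\rmPTid(X)$. As in Lemma~\ref{lm:rd-pol-terms-for-products}, I would tacitly assume $s,t\neq\hat{0}$, so that $s\langle\hti t\rangle$ is defined; the case where one of them is $\hat{0}$ is excluded exactly as there.

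First I would observe that $\rmPTid(X)=\rmPT(X)\cap\rmSTid(X)$, so from $s,t\in\rmPTid(X)$ we get $s,t\in\rmPT(X)\setminus\{\hat{0}\}$. Lemma~\ref{lm:rd-pol-terms-for-products} then applies verbatim and yields both $s\langle\hti t\rangle\in\rmPT(X)$ and the asserted identity $[s]_\rmAC\times_\mrd[t]_\rmAC=[s\langle\hti t\rangle]_\rmAC$. In particular $s\langle\hti t\rangle$ is a simple term, i.e.\ $s\langle\hti t\rangle\in\rmST(X)$. So the equational half of the corollary is immediate, and what remains is only to strengthen $s\langle\hti t\rangle\in\rmPT(X)$ to $s\langle\hti t\rangle\in\rmPTid(X)$.

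For that step I would argue through the classes. Since $s,t\in\rmPTid(X)$, their classes $[s]_\rmAC,[t]_\rmAC$ lie in $\bbBidrd[X]$, so by Lemma~\ref{lm:Pid-closed-product} the product $[s]_\rmAC\times_\mrd[t]_\rmAC$ lies in $\bbBidrd[X]$ as well. By the displayed identity this means $[s\langle\hti t\rangle]_\rmAC\in\bbBidrd[X]=\rmPTid(X)/\!{=_\rmAC}$, hence there is an id-reduced polynomial term $u\in\rmPTid(X)$ with $[u]_\rmAC=[s\langle\hti t\rangle]_\rmAC$, that is, $u=_\rmAC s\langle\hti t\rangle$. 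Since $s\langle\hti t\rangle\in\rmST(X)$ and $u\in\rmPTid(X)$ with $u=_\rmAC s\langle\hti t\rangle$, Lemma~\ref{lm:id-reduced-classes-subset-simple-classes} (in its equivalent formulation $[u]_\rmAC\subseteq\rmPTid(X)$) gives $s\langle\hti t\rangle\in\rmPTid(X)$, as required.

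I do not expect any genuine obstacle: the corollary is essentially a bookkeeping consequence of the three cited lemmas, and the only care needed is to keep straight the types of the objects involved — polynomial terms versus id-reduced polynomial terms versus their $\rmAC$-classes — and to handle the degenerate arguments $s=\hat{0}$, $t=\hat{0}$ the same way Lemma~\ref{lm:rd-pol-terms-for-products} does. (An alternative would be a direct induction on $\size(s)$ paralleling the proof of Lemma~\ref{lm:rd-pol-terms-for-products} while additionally tracking the id-reducedness clause of Definition~\ref{def:id-reduced-terms-1}(b) via Corollary~\ref{cor:cancellation-in-Pol}(b); but the route through Lemma~\ref{lm:Pid-closed-product} is shorter and reuses work already done.)
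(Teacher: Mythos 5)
Your proposal is correct and follows essentially the same route as the paper's own proof: apply Lemma~\ref{lm:rd-pol-terms-for-products} for the identity and membership in $\rmPT(X)$, Lemma~\ref{lm:Pid-closed-product} to place the product class in $\bbBidrd[X]$, and then Lemma~\ref{lm:id-reduced-classes-subset-simple-classes} to transfer id-reducedness from a representative back to $s\langle\hti t\rangle$. Your explicit remark about the degenerate case $\hat{0}$ is a harmless (and slightly more careful) addition.
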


\begin{proof} Clearly, we have  $s\langle\hti t\rangle \in \rmPT(X)$.
By Lemma \ref{lm:rd-pol-terms-for-products} and Lemma \ref{lm:Pid-closed-product}, 
we get  $[s\langle\hti t\rangle]_\rmAC = [s]_\rmAC \times_\mrd [t]_\rmAC$
and  $[s]_\rmAC \times_\mrd [t_\rmAC] \in \bbBidrd[X]$.
Hence, $[s]_\rmAC \times_\mrd [t_\rmAC] = [t']_\rmAC$  for some  $t' \in \rmPTid(X)$.
Then  $t' =_\rmAC s\langle\hti t\rangle$, so by Lemma \ref{lm:id-reduced-classes-subset-simple-classes}
we obtain  $s\langle\hti t\rangle \in \rmPTid(X)$  as needed.   
\end{proof}

Recall that when passing from  $\sfNsb[X]$  to  $\sfNrd[X]$, we could keep the addition but had to adjust the multiplication operation.
Now, when passing from  $\sfNrd[X]$  to  $\sfBidrd[X]$, by Lemma \ref{lm:Pid-closed-product} we can take as multiplication the restriction of $\times_{\mrd}$ to $\bbBidrd[X]$, but we have to adjust the addition  $+$ in order to obtain its idempotency. We define the operation addition $+_{\mrid}$ on $\bbBidrd[X]$ as follows.

\begin{definition}\rm\label{def:id-operations} Let  $q, r \in \bbBidrd[X]$.
We put  $q +_{\mrid} 0 = q = 0 +_{\mrid} q$.

Now let us assume that $q \neq  0 \neq r$.
Then $q = q_1 + \ldots + q_k$, where $k\ge 1$, $q_1, \ldots, q_k$ are pairwise different and each of them is 
$1$, a monomial, or an id-reduced product polynomial, and $r = r_1 + \ldots + r_n$ with 
$n\ge 1$ and the corresponding conditions for  $r_1,\ldots, r_n$.  Then we define
$q +_{\mrid} r = q_1 + \ldots + q_k + r_{i_1} + \ldots + r_{i_\ell}$,  where $\ell\ge 0$ and
$\{ i_1,\ldots,i_\ell \}$ is the maximal subset of  $\{ 1,\ldots,n \}$  (with respect to inclusion) such that $\{q_1,\ldots,q_k\}\cap \{r_{i_1},\ldots,r_{i_\ell}\} =\emptyset$.\hfill$\Box$
\end{definition}

\begin{figure}
  \begin{tabular}{l|l}
    strong bimonoid   & carrier set\\\hline
    \\
    $\sfNsb[X] = (\bbNsb[X],+,\times,0,1)$ & $\bbNsb[X]=\rmST(X)/\!{=_{\rmAC}}$\\
    strong bimonoid  & $\rmST(X)$ is the set of simple terms\\
    (cf. Proposition~\ref{prop:SB-strong-bimonoid}) & $\rmST(X)\subseteq \T(X)$\\
    \\\hline
    \\
    $\sfNrd[X]=(\bbNrd[X], +, \times_\mrd, 0, 1)$ & $\bbNrd[X] = \rmPT(X)/\!{=_{\rmAC}}$\\
    right-distributive strong bimonoid  & $\rmPT(X)$ is the set of polynomial terms\\
    (cf. Theorem~\ref{thm:strong-bimonoid-Pol})& $\rmPT(X)\subseteq \rmST(X)$\\
    \\\hline
    \\
    $\sfBidrd[X]=(\bbBidrd[X], +_{\mrid}, \times_{\mrd}, 0, 1)$ & $\bbBidrd[X] = \rmPTid(X)/\!{=_{\rmAC}}$\\
    idempotent right-distr. strong bimonoid  & $\rmPTid(X)$ is the set of id-reduced terms\\
    (cf. Theorem~\ref{thm:strong-bimonoid-Pol-id}) & $\rmPTid(X) \subseteq \rmPT(X)$
  \end{tabular}

  \

  \
  
  \hspace*{6mm}$\rmAC = \{e_1,e_2,e_4\}$ is the set of  identities for associativity of $+$ and $\times$,  and commutativity of $+$.\\
  \caption{\label{fig:overview} An overview of the free strong bimonoids described in Section \ref{sect:strong-bimonoids-polynomials}.}
 \end{figure}


Now we can show the following result.

\begin{theorem}\label{thm:strong-bimonoid-Pol-id} $\sfBidrd[X]=(\bbBidrd[X], +_{\mrid}, \times_{\mrd}, 0, 1)$  is an idempotent right-distributive strong bimonoid.
\end{theorem}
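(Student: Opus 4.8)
The plan is to verify, clause by clause, the three requirements in the definition of an idempotent right-distributive strong bimonoid, noting that only the additive part genuinely differs from $\sfNrd[X]$. Throughout I would work with the ``atom'' description of $\bbBidrd[X]$: by Observation~\ref{obs:id-reduced-polynomials-decomposition} every nonzero $p\in\bbBidrd[X]$ can be written $p=p_1+\ldots+p_n$ with $p_1,\ldots,p_n$ pairwise distinct, each being $1$, a monomial, or an id-reduced product polynomial; I would call these the \emph{atomic summands} of $p$, write $\mathrm{At}(p)=\{p_1,\ldots,p_n\}$, and set $\mathrm{At}(0)=\emptyset$. Since $p_1+\ldots+p_n$ is in particular a sum-product decomposition of $p$ inside $\bbNrd[X]$, Lemma~\ref{lm:uniqueness-lemma-polynomials}(b) shows that $\mathrm{At}(p)$ is well defined, and combining this with Observation~\ref{obs:id-reduced-polynomials} one gets that $p\mapsto\mathrm{At}(p)$ is a bijection from $\bbBidrd[X]$ onto the collection of all finite sets of atoms.

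For the additive commutative monoid and idempotency: Definition~\ref{def:id-operations} says exactly that $\mathrm{At}(p+_{\mrid}q)=\mathrm{At}(p)\cup\mathrm{At}(q)$, so under the above bijection $(\bbBidrd[X],+_{\mrid},0)$ is isomorphic to the monoid of all finite sets of atoms with union as operation and $\emptyset$ as neutral element; hence $+_{\mrid}$ is associative and commutative with neutral element $0$, and $p+_{\mrid}p=p$ for all $p$. For the multiplicative monoid and annihilation by $0$: by Lemma~\ref{lm:Pid-closed-product} the set $\bbBidrd[X]$ is closed under $\times_\mrd$ and contains $0$ and $1$, and by Theorem~\ref{thm:strong-bimonoid-Pol} the identities making $(\bbNrd[X],\times_\mrd,1)$ a monoid with $0$ annihilating hold in all of $\bbNrd[X]$; restricting them to $\bbBidrd[X]$ gives the claim.

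The only clause with real content is right-distributivity, $(p+_{\mrid}q)\times_\mrd r=(p\times_\mrd r)+_{\mrid}(q\times_\mrd r)$; both sides equal $0$ when $r=0$, so I would assume $r\neq0$. The key step is the identity
\[\mathrm{At}(x\times_\mrd r)=\bigcup_{a\in\mathrm{At}(x)}\mathrm{At}(a\times_\mrd r)\qquad\text{for all }x\in\bbBidrd[X].\]
This is trivial when $x=0$ or $|\mathrm{At}(x)|=1$ (i.e.\ $x$ is its own unique atomic summand), so assume $x=x_1+\ldots+x_m$ with $m\ge2$ pairwise distinct atoms $x_i$. Then Definition~\ref{def:operations}(b) gives $x\times_\mrd r=x_1\times_\mrd r+\ldots+x_m\times_\mrd r$, computed in $\bbNrd[X]$, so by uniqueness of sum-product decompositions (Lemma~\ref{lm:uniqueness-lemma-polynomials}(b)) the multiset of atomic summands of $x\times_\mrd r$ is the multiset union of those of the $x_i\times_\mrd r$. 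Since $x\times_\mrd r$ and every $x_i\times_\mrd r$ lie in $\bbBidrd[X]$ by Lemma~\ref{lm:Pid-closed-product}, all these multisets are actually sets, which forces the sets $\mathrm{At}(x_i\times_\mrd r)$ to be pairwise disjoint and yields $\mathrm{At}(x\times_\mrd r)=\bigcup_i\mathrm{At}(x_i\times_\mrd r)$. Granting this identity and using $\mathrm{At}(p+_{\mrid}q)=\mathrm{At}(p)\cup\mathrm{At}(q)$ together with injectivity of $\mathrm{At}$, right-distributivity follows from the one-line computation $\mathrm{At}\big((p+_{\mrid}q)\times_\mrd r\big)=\bigcup_{a\in\mathrm{At}(p)\cup\mathrm{At}(q)}\mathrm{At}(a\times_\mrd r)=\mathrm{At}(p\times_\mrd r)\cup\mathrm{At}(q\times_\mrd r)=\mathrm{At}\big((p\times_\mrd r)+_{\mrid}(q\times_\mrd r)\big)$.

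The main obstacle is exactly the pairwise disjointness of the sets $\mathrm{At}(x_i\times_\mrd r)$ in the displayed identity: a priori summands arising from $x_i\times_\mrd r$ and $x_j\times_\mrd r$ could coincide (note that $x_i=1$ makes $x_i\times_\mrd r=r$, which is itself a sum), and a direct argument would have to combine Corollary~\ref{cor:cancellation-in-Pol}(b) with size comparisons. However, once Lemma~\ref{lm:Pid-closed-product} is available this disjointness is automatic from the id-reducedness of $x\times_\mrd r$, so the remaining work is routine bookkeeping with the $\mathrm{At}$-viewpoint.
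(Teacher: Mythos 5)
Your proposal is correct and follows essentially the same route as the paper's proof: both reduce the additive clauses to the pairwise-distinct atomic summands of Observation~\ref{obs:id-reduced-polynomials-decomposition} (your $\mathrm{At}$-map is a tidy formalization of the paper's verbal bookkeeping), import associativity of $\times_\mrd$ and annihilation from Theorem~\ref{thm:strong-bimonoid-Pol} via the closure Lemma~\ref{lm:Pid-closed-product}, and reduce right-distributivity to the fact that multiplying distinct atoms by $r$ produces distinct summands. The only (cosmetic) difference is in how that last distinctness is obtained: you deduce it from the id-reducedness of $x\times_\mrd r$ guaranteed by Lemma~\ref{lm:Pid-closed-product}, while the paper appeals to Corollary~\ref{cor:cancellation-in-Pol}(b) directly --- the same ingredient, since Lemma~\ref{lm:Pid-closed-product} itself rests on that corollary.
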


\begin{proof} It is immediate from the definition that the operation $+_{\mrid}$ is idempotent.
To check associativity, let  $p, q, r \in \bbBidrd[X] \setminus \{0\}$.
We claim that  $p +_{\mrid} (q +_{\mrid} r) = (p +_{\mrid} q) +_{\mrid} r$.
Each of the three polynomials is either $1$, a monomial, an id-reduced product polynomial  or it is
a sum of pairwise different monomials, id-reduced product polynomials, or $1$’s.
On the right-hand side of the equation we obtain the sum of all these monomials, id-reduced product polynomials or $1$  of  $p$,
then those ones of  $q$  added which are different from the ones of  $p$,
and then those of  $r$  added which are different from the ones of both  $p$  and  $q$.
This equals the left-hand side of the equation.
This proves our claim.

Similarly, it is easy to see that the addition operation  $+_{\mrid}$  is commutative.

By Lemma \ref{lm:Pid-closed-product} and Theorem \ref{thm:strong-bimonoid-Pol}, the operation  $\times_{\mrd}$ is associative.
To show that  $\times_{\mrd}$  is right-distributive over $+_{\mrid}$, 
we can follow the argument for the corresponding statements of Theorem \ref{thm:strong-bimonoid-Pol},
but using id-reduced polynomials and the operations  $+_{\mrid}$  and  $\times_{\mrd}$;
all corresponding sums of monomials, id-reduced product polynomials, or  $1$s  have each summand occurring only once.
Here we use Corollary \ref{cor:cancellation-in-Pol}(b) again.

This shows that  $\sfBidrd[X]$  is an idempotent right-distributive strong bimonoid. 
\end{proof}

We just note that the strong bimonoid  $\sfBidrd[X]$  has the same right- and 
left-cancellation properties as shown in Theorem \ref{thm:cancellation-strong-in-Pol} and Corollary \ref{cor:cancellation-in-Pol} for $\sfNrd[X]$;
this is immediate from the two mentioned results, since the multiplication $\times_\mrd$ in $\sfBidrd[X]$ is the restriction of the multiplication of $\sfNrd[X]$.

In Figure \ref{fig:overview} we show the three strong bimonoids $\sfNsb[X]$, $\sfNrd[X]$, and $\sfBidrd[X]$ defined in this section and the definition of their carrier sets. In the following section we will show that they are free in their respective classes of strong bimonoids. 
The reader interested only in the construction of a right-distributive and weakly locally finite strong bimonoid that is not locally finite 
 may directly proceed to Section \ref{sect:wlc-strong-bimonoids}.


\section{The polynomial strong bimonoids are free}\label{sect:free-strong-bimonoids-polynomials}

In this section we will first prove the result indicated in the title, i.e.,
we will to show that the strong bimonoids of polynomials  $\sfNsb[X], \sfNrd[X]$, and $\sfBidrd[X]$  are,
respectively, a free strong bimonoid, a free right-distributive strong bimonoid, and
a free idempotent right-distributive strong bimonoid.
This is analogous to Theorem \ref{thm:free-semiring} that the semiring $\mathsf{N}[X]$ of all polynomials in non-commuting variables with coefficients from $\mathbb{N}$ is free in the class of all semirings. 
Then, we deduce consequences of this result on representing arbitrary terms by simple terms, 
by polynomial terms and by id-reduced polynomial terms.
First we show the following result.

\begin{theorem}\label{thm:S-is free}
$\sfNsb[X]$ is a free strong bimonoid, freely generated by $X/{\rmAC}$.
\end{theorem}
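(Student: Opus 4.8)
The plan is to verify directly that $\sfNsb[X]$ satisfies the universal property of the free strong bimonoid over $X/{\rmAC}$. Recall that $\sfNsb[X]$ is a strong bimonoid by Proposition~\ref{prop:SB-strong-bimonoid} and that $x\mapsto[x]_{\rmAC}$ is a bijection of $X$ onto $X/{\rmAC}$ (each $[x]_{\rmAC}$ being the singleton $\{x\}$). The key auxiliary object is the ``simplification'' homomorphism: by Theorem~\ref{thm-free-algebra-theorem} the inclusion $X\hookrightarrow\rmST(X)$ extends uniquely to a $\Sigma_{\mathrm{sb}}$-homomorphism $\pi\colon\sfT(X)\to\sfST(X)$, and composing with the quotient map $q\colon\sfST(X)\to\sfST(X)/\!{=_{\rmAC}}=\sfNsb[X]$ gives a $\Sigma_{\mathrm{sb}}$-homomorphism $h=q\circ\pi\colon\sfT(X)\to\sfNsb[X]$ with $h(x)=[x]_{\rmAC}$. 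Since $\pi$ restricts to the identity on $\rmST(X)$ (a routine induction on size), $h(s)=[s]_{\rmAC}$ for every simple term $s$, so $h$ is surjective; consequently $h$ carries the generating set $X$ of $\sfT(X)$ onto a generating set $X/{\rmAC}$ of $\sfNsb[X]$.

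The heart of the proof consists of two facts, stated for an arbitrary $\Sigma_{\mathrm{sb}}$-homomorphism $g\colon\sfT(X)\to\sfB$ into a strong bimonoid $\sfB$. \emph{Fact A}: $g(s)=g(\pi(s))$ for every $s\in\T(X)$. \emph{Fact B}: if $s,t\in\rmST(X)$ and $s=_{\rmAC}t$, then $g(s)=g(t)$. Together these give $\ker h\subseteq\ker g$: if $h(s)=h(t)$, that is $\pi(s)=_{\rmAC}\pi(t)$, then $g(s)\overset{A}{=}g(\pi(s))\overset{B}{=}g(\pi(t))\overset{A}{=}g(t)$.

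For Fact A I would induct on $\size(s)$; the base case ($s\in X\cup\{\hat 0,\hat 1\}$, where $\pi(s)=s$) is trivial, and in the step $s=s_1\hat{+}s_2$ or $s=s_1\hat{\times}s_2$ the only delicate point is when the definition of $\sfST(X)$ replaces $\pi(s_1)\,\hat{+}\,\pi(s_2)$ (resp.\ $\pi(s_1)\,\hat{\times}\,\pi(s_2)$) by a simplified value because one of $\pi(s_1),\pi(s_2)$ equals $\hat 0$ (resp.\ $\hat 0$ or $\hat 1$); this replacement is compatible with $g$ precisely because $\sfB$ is a strong bimonoid, i.e.\ its $\0$ is additively neutral and annihilating and its $\1$ is multiplicatively neutral. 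For Fact B I would first check that $a\mapsto g(a)$ is a $\Sigma_{\mathrm{sb}}$-homomorphism $\sfST(X)\to\sfB$: on arguments outside $\{\hat 0,\hat 1\}$ the operations $+_\mathsf{ST},\times_\mathsf{ST}$ coincide with $\hat{+},\hat{\times}$, and the special clauses for $\hat 0$ and $\hat 1$ are again matched by the strong-bimonoid axioms of $\sfB$. Since $\sfB$ satisfies every identity of $E$, in particular the associativity and commutativity identities $\rmAC=\{e_1,e_2,e_4\}$, it follows that ${=_{\rmAC}}$, being the smallest congruence on $\sfST(X)$ with the appropriate generators, is contained in the kernel of this homomorphism, which is Fact B.

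Finally, given a strong bimonoid $\sfB$ and a map $f\colon X/{\rmAC}\to B$, extend $x\mapsto f([x]_{\rmAC})$ to a $\Sigma_{\mathrm{sb}}$-homomorphism $g\colon\sfT(X)\to\sfB$ (freeness of $\sfT(X)$, Theorem~\ref{thm-free-algebra-theorem}). By the two facts $\ker h\subseteq\ker g$, and since $h$ is surjective there is a (unique) $\Sigma_{\mathrm{sb}}$-homomorphism $\hat f\colon\sfNsb[X]\to\sfB$ with $\hat f\circ h=g$; then $\hat f([x]_{\rmAC})=g(x)=f([x]_{\rmAC})$, so $\hat f$ extends $f$. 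Hence $\sfNsb[X]$ is freely generated by $X/{\rmAC}$ in the class of all strong bimonoids. (Equivalently, $\ker h$ turns out to be exactly the congruence $=_E$, so $\sfNsb[X]\cong\sfT(X)/\!{=_E}=\sfFB(X)$ of Proposition~\ref{prop:three-free-sb}(a) via an isomorphism matching $X/{E}$ with $X/{\rmAC}$.) I expect the main obstacle to be the bookkeeping in Fact~B --- checking that $\sfST(X)$, whose operations handle $\hat 0$ and $\hat 1$ by special case distinctions, still maps homomorphically into every strong bimonoid; once that is in hand the rest of the argument is purely formal.
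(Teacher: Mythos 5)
Your proof is correct and follows essentially the same route as the paper's: both construct a $\Sigma_{\mathrm{sb}}$-homomorphism on simple terms into the target strong bimonoid (your Facts A and B correspond to the paper's homomorphism $h'$ from $\sfST(X)$ and its compatibility with $=_{\rmAC}$) and then descend to the AC-quotient. The only difference is cosmetic: you factor the extension of $f$ through the surjection $\sfT(X)\to\sfNsb[X]$ via the homomorphism theorem, whereas the paper defines the induced map on congruence classes directly and checks well-definedness using Lemma~\ref{lm:approx-characterization}.
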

\begin{proof}
By Proposition  \ref{prop:SB-strong-bimonoid}, $\sfNsb[X]$ is a strong bimonoid. Clearly,
$\sfNsb[X]$ is generated by $X/{\rmAC}$.
Let  $\sfA=(A, \oplus, \otimes, 0_\sfA, 1_\sfA)$  be any strong bimonoid, and let  $h: X/{\rmAC} \rightarrow A$
be a mapping. We have to show that  
$h$  extends to a strong bimonoid homomorphism from  $\sfNsb[X]$  to  $\sfA$. 

First, recall the  $\Sigma_{\mathrm{sb}}$-algebra  $\sfST(X)=(\rmST(X),+_\mathsf{ST},\times_\mathsf{ST},\hat{0},\hat{1})$ from Section \ref{sect:strong-bimonoids-polynomials}.
By a straightforward induction, we obtain a  $\Sigma_{\mathrm{sb}}$-algebra homomorphism  $h'$  from  $\sfST(X)$
to the strong bimonoid  $\A$  satisfying  $h'(x) = h([x]_{\rmAC})$  for each  $x \in X$.
We define a mapping  $h'': \bbNsb[X] \to A$  by putting  $h''([t]_{\rmAC}) = h'(t)$
for each  $t \in \rmST(X)$. 
We claim that  $h''$  is well-defined and a strong bimonoid  homomorphism;
then our result follows.

To show that  $h''$  is well defined, let  $t, t' \in \rmST(X)$  with  $t =_{\rmAC} t'$.
We have to show that  $h'(t) = h'(t')$. By Lemma \ref{lm:approx-characterization}, we have  $t \Leftrightarrow^*_{\rmAC} t'$.
By symmetry and transitivity of equality, it suffices to consider the case that  $t \Rightarrow_{\rmAC} t'$. So assume that $t'$  is obtained from  $t$ in a reduction step using one of the identities  $e_1, e_2$  or  $e_4$. Then $t'$  is obtained from  $t$  by replacing a subterm  $u$ of  $t$  by a term  $u'$ such that $h'(u) = h'(u')$. For instance, if we use $e_2$, then $u = u_1 \hat{+} u_2$ and $u' = u_2 \hat{+} u_1$. Clearly, $h'(u) = h'(u_1) \oplus h'(u_2) = h'(u_2) \oplus h'(u_1) = h'(u')$ because  $h'$ is a $\Sigma_{\mathrm{sb}}$-algebra homomorphism and $\oplus$ is commutative. We obtain $h'(u) = h'(u')$ in the other two cases similarly. Using again that  $h'$  is a   homomorphism, it follows that  $h'(t) = h'(t')$. Hence  $h''$  is well-defined.

It remains to show that  $h''$  is a strong bimonoid homomorphism.
Clearly,  $h''([\hat{0}]_{\rmAC}) = h'(\hat{0}) = 0_\sfA$  and  $h''([\hat{1}]_{\rmAC}) = h'(\hat{1}) = 1_\sfA$.
If  $t, t' \in \rmST(X)$, we have  
$h''([t]_{\rmAC} + [t']_{\rmAC}) = h''([t +_{\sfST} t']_{\rmAC}) = h'(t +_{\sfST} t')
= h'(t) \oplus h'(t') = h''([t]_{\rmAC}) \oplus h''([t']_{\rmAC})$, 
as needed; the case of multiplication is analogous.
The result follows.
\end{proof}

Next, we can prove the corresponding result for right-distributive resp. idempotent right-distributive strong bimonoids.

\begin{theorem}\label{thm:Pol-Polid-are-free}$\,$ 
\begin{compactitem}
\item[(a)] $\sfNrd[X]$ is a free right-distributive strong bimonoid, freely generated by $X/{\rmAC}$.
\item[(b)] $\sfBidrd[X]$ is a free idempotent right-distributive strong bimonoid, freely generated by $X/{\rmAC}$.
\end{compactitem}
\end{theorem}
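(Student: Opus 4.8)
The plan is to mirror, in both parts, the proof of Theorem~\ref{thm:S-is free}. For part~(a), $\sfNrd[X]$ is a right-distributive strong bimonoid by Theorem~\ref{thm:strong-bimonoid-Pol}, and it is generated by $X/{\rmAC}$ by Observation~\ref{obs:polynomials} (every monomial is a $\times_\mrd$-product of elements of $X/{\rmAC}$, and every polynomial is obtained from $0,1$ and monomials by $+$ and by left multiplication with monomials, which for monomial left factors agrees with $\times_\mrd$). So it suffices to show that every map $h\colon X/{\rmAC}\to A$ into a right-distributive strong bimonoid $\sfA=(A,\oplus,\otimes,0_\sfA,1_\sfA)$ extends to a homomorphism; uniqueness is then automatic since $X/{\rmAC}$ generates $\sfNrd[X]$. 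Exactly as in Theorem~\ref{thm:S-is free} I would take a $\Sigma_{\mathrm{sb}}$-algebra homomorphism $h'\colon\sfST(X)\to\sfA$ with $h'(x)=h([x]_{\rmAC})$ and define $\bar h([t]_{\rmAC})=h'(t)$ for $t\in\rmPT(X)$ (legitimate as $\rmPT(X)\subseteq\rmST(X)$). Well-definedness of $\bar h$ follows verbatim from Theorem~\ref{thm:S-is free}, using only that $\sfA$ satisfies the identities in $\rmAC$; moreover $\bar h$ restricts to $h$ on $X/{\rmAC}$, sends $0,1$ to $0_\sfA,1_\sfA$, and preserves $+$ because $[t_1]_{\rmAC}+[t_2]_{\rmAC}=[t_1\hp t_2]_{\rmAC}$ for $t_1\neq\hat{0}\neq t_2$ and $h'$ is a homomorphism; in particular $\bar h(p_1+\dots+p_n)=\bar h(p_1)\oplus\dots\oplus\bar h(p_n)$ for $p_1,\dots,p_n\in\bbNrd[X]\setminus\{0\}$.

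The heart of part~(a) is that $\bar h$ preserves $\times_\mrd$, which I would prove by induction on $\size(p)$ following the case distinction of Definition~\ref{def:operations}. The cases $p\in\{0,1\}$ or $q\in\{0,1\}$ are trivial. If $p$ is a monomial, then $p\times_\mrd q=p\times q$, and choosing a monomial term $v$ and $u\in\rmPT(X)$ with $p=[v]_{\rmAC}$, $q=[u]_{\rmAC}$ we get $v\hti u\in\rmPT(X)$ and $\bar h([v\hti u]_{\rmAC})=h'(v\hti u)=h'(v)\otimes h'(u)=\bar h(p)\otimes\bar h(q)$; the same computation gives $\bar h(m\times q')=\bar h(m)\otimes\bar h(q')$ for every monomial $m$ and polynomial $q'$. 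If $p$ is a sum polynomial, write $p=p_1+\dots+p_n$ as in Lemma~\ref{lm:uniqueness-lemma-polynomials}(b); then $p\times_\mrd q=p_1\times_\mrd q+\dots+p_n\times_\mrd q$, and combining that $\bar h$ preserves $+$, the induction hypothesis $\bar h(p_i\times_\mrd q)=\bar h(p_i)\otimes\bar h(q)$, and \emph{right-distributivity in $\sfA$} yields $\bar h(p\times_\mrd q)=\big(\bar h(p_1)\oplus\dots\oplus\bar h(p_n)\big)\otimes\bar h(q)=\bar h(p)\otimes\bar h(q)$. Finally, if $p$ is a product polynomial but not a monomial, write $p=m\times q'$ with a monomial $m$ and a sum polynomial $q'$ (Lemma~\ref{lm:uniqueness-lemma-polynomials}(c)); then $p\times_\mrd q=m\times(q'\times_\mrd q)$, and the induction hypothesis for $q'$, the displayed fact about left multiplication by $m$, and associativity of $\otimes$ in $\sfA$ give $\bar h(p\times_\mrd q)=\bar h(m)\otimes(\bar h(q')\otimes\bar h(q))=\big(\bar h(m)\otimes\bar h(q')\big)\otimes\bar h(q)=\bar h(m\times q')\otimes\bar h(q)=\bar h(p)\otimes\bar h(q)$. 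This completes part~(a). I expect this multiplication step to be the main obstacle: one must keep the recursion of Definition~\ref{def:operations} in step with the iterated right-distributive and associative laws of $\sfA$, and be careful with the edge cases where a factor equals $\hat{0}$ or $\hat{1}$.

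For part~(b), let $\sfA$ now be an \emph{idempotent} right-distributive strong bimonoid and $h\colon X/{\rmAC}\to A$. I would define $\bar h([t]_{\rmAC})=h'(t)$ for $t\in\rmPTid(X)$ exactly as above (using $\rmPTid(X)\subseteq\rmST(X)$), with the same proof of well-definedness. Since $\bbBidrd[X]\subseteq\bbNrd[X]$ is closed under $\times_\mrd$ by Lemma~\ref{lm:Pid-closed-product}, the identity $\bar h(p\times_\mrd q)=\bar h(p)\otimes\bar h(q)$ for $p,q\in\bbBidrd[X]$ is inherited directly from part~(a) applied to $\sfA$ regarded as a right-distributive strong bimonoid. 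The only genuinely new point is that $\bar h$ preserves $+_{\mrid}$. Here I would use Observation~\ref{obs:id-reduced-polynomials-decomposition}: any nonzero $p\in\bbBidrd[X]$ is $p=p_1+\dots+p_n$ with pairwise different ``atoms'' $p_i$ (each $1$, a monomial, or an id-reduced product polynomial), whence $\bar h(p)=\bar h(p_1)\oplus\dots\oplus\bar h(p_n)$ as in part~(a); then for $q,r$ with the decompositions of Definition~\ref{def:id-operations}, $\bar h(q+_{\mrid}r)$ is the $\oplus$-sum over the atoms of $q$ together with those atoms of $r$ not among the atoms of $q$, while $\bar h(q)\oplus\bar h(r)$ is the $\oplus$-sum over all of these atoms with the shared ones counted twice, and these two values coincide by idempotency (with commutativity and associativity) of $\oplus$ in $\sfA$; the cases $q=0$ or $r=0$ are trivial. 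Together with $X/{\rmAC}$ generating $\sfBidrd[X]$ (Observation~\ref{obs:id-reduced-polynomials}, argued as for $\sfNrd[X]$), this shows that $\sfBidrd[X]$ is freely generated by $X/{\rmAC}$ in the class of idempotent right-distributive strong bimonoids.
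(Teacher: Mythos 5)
Your proposal is correct and follows essentially the same route as the paper: the paper extends $h$ to a homomorphism on $\sfNsb[X]$ via Theorem~\ref{thm:S-is free} and restricts it to $\bbNrd[X]$ (resp.\ $\bbBidrd[X]$), then verifies preservation of $\times_\mrd$ by exactly your induction on $\size(p)$ following Definition~\ref{def:operations} (monomial / sum / product cases, using right-distributivity and associativity of $\sfA$), and preservation of $+_{\mrid}$ by exactly your idempotency argument on the atom decompositions. Your direct definition of $\bar h$ from $h'$ on $\sfST(X)$ rather than via the restriction of the Theorem~\ref{thm:S-is free} homomorphism is only a cosmetic difference.
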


\begin{proof} (a)  By Theorem \ref{thm:strong-bimonoid-Pol}, $\sfNrd[X]$  is a right-distributive strong bimonoid. 
Clearly, $\sfNrd[X]$ is generated by $X/{\rmAC}$.
Let  $\sfA=(A, \oplus, \otimes, 0_\sfA, 1_\sfA)$  be a right-distributive strong bimonoid and  $h: X/{\rmAC} \rightarrow A$  a mapping.
By Theorem \ref{thm:S-is free}, $h$  extends to a strong bimonoid homomorphism  $h'$
from  $\sfNsb[X]$  to   $\sfA$. Note that  $\bbNrd[X] \subseteq \bbNsb[X]$.
We claim that  $h'$, restricted to  $\bbNrd[X]$, also constitutes a strong bimonoid homomorphism
from  $\sfNrd[X]$  to   $\sfA$. Then the result follows. 

Let  $q, r \in \bbNrd[X] \setminus \{ 0 \}$.
Clearly, we have  $h'(q + r) = h'(q) \oplus h'(r)$ because $h'$
is a homomorphism from  $\sfNsb[X]$  to   $\sfA$.

Next, we claim that  $h'(q \times_\mrd r) = h'(q) \otimes h'(r)$.
Clearly, we may assume that  $q, r \neq 1$.

We proceed by case distinction and induction on the size of  $q$.

First, assume  $q$  is a monomial. Then  $q \times_\mrd r = q \times r$  and we obtain
$h'(q \times_\mrd r) = h'(q \times r) = h'(q) \otimes h'(r)$  as claimed.

Secondly, let  $q$  be a sum polynomial, with sum-product decomposition  $q = q_1 + \ldots + q_n$, say.  In this case we have  $q \times_\mrd r = q_1 \times_\mrd r + \ldots + q_n \times_\mrd r$, a sum polynomial.
By induction hypothesis, we can assume that  $h'(q_i \times_\mrd r) = h'(q_i) \otimes h'(r)$  for each  $i\in[n]$.
Then we obtain
\begin{align*}
 & h'(q \times_\mrd r) = h'(q_1 \times_\mrd r + \ldots + q_n  \times_\mrd r) \\
= \; & h'(q_1 \times_\mrd r) \oplus \ldots \oplus h'(q_n  \times_\mrd r) \tag{since $h'$ is a homomorphism from $\sfNsb[X]$  to   $\sfA$} \\
= \; & h'(q_1) \otimes h'(r) \oplus \ldots \oplus h'(q_n) \otimes h'(r)  \tag{by induction hypothesis} \\
= \; & (h'(q_1) \oplus \ldots \oplus h'(q_n)) \otimes h'(r)  \tag{since  $\sfA$  is right-distributive}\\
= \; & h'(q_1 + \ldots + q_n) \otimes h'(r)   \tag{since $h'$ is a homomorphism from $\sfNsb[X]$  to   $\sfA$}\\
= \; & h'(q) \otimes h'(r). 
\end{align*}

Thirdly, let  $q$  be a product polynomial of the form  $q = m \times s$
with a monomial  $m$  and a sum polynomial  $s$ (cf. Lemma \ref{lm:uniqueness-lemma-polynomials}).
In this case, we have  $q \times_\mrd r = m \times (s \times_\mrd r)$.
By induction hypothesis, we have  $h'(s \times_\mrd r) = h'(s) \otimes h'(r)$.
Then
\begin{align*}
& h'(q \times_\mrd r) = h'(m \times (s \times_\mrd r))\\
= \; & h'(m) \otimes h'(s \times_\mrd r) \tag{since $h'$ is a homomorphism from $\sfNsb[X]$  to   $\sfA$}\\
= \; & h'(m) \otimes (h'(s) \otimes h'(r))  \tag{by induction hypothesis}\\
= \; & (h'(m) \otimes h'(s)) \otimes h'(r)\\
= \; & h'(m \times s) \otimes h'(r)   \tag{since $h'$ is a homomorphism from $\sfNsb[X]$  to   $\sfA$}\\
= \; & h'(q) \otimes h'(r). 
\end{align*}

Hence  $h'$  is a strong bimonoid homomorphism as claimed, and the result follows. 

(b)  By Theorem \ref{thm:strong-bimonoid-Pol-id}, $\sfBidrd[X]$  is an idempotent right-distributive strong bimonoid. 
Clearly, $\sfBidrd[X]$ is generated by $X/{\rmAC}$.
We consider a mapping  $h: X/{\rmAC}\to A$, where
 $\sfA=(A, \oplus, \otimes, 0_\sfA, 1_\sfA)$ is  an idempotent right-distributive strong bimonoid. By (a), we can extend  $h$  to a homomorphism  $h'$  from
$\sfNrd[X]$  to $\sfA$.
We claim  $h'$, restricted to the subset $\bbBidrd[X]$  of $\bbNrd[X]$,
constitutes a strong bimonoid homomorphism from
$\sfBidrd[X]$  to $\sfA$.

Let  $q, r \in \bbBidrd[X] \setminus \{ 0 \}$.
First we show that  $h'(q +_{\mrid} r) = h'(q) \oplus h'(r)$.

We have  $q = q_1 + \ldots + q_k$  and  $r = r_1 + \ldots + r_n$
where  $k,n \geq 1$  and  $q_1,\ldots, q_k, r_1,\ldots, r_n$ are $1$'s, monomials, or id-reduced product polynomials. Moreover,
if $k \ge 2$ ($n \ge 2$), then the polynomials  $q_1,\ldots, q_k$  
($r_1,\ldots, r_n$, respectively) are pairwise different.
Then  $q +_{\mrid} r$  is the sum of all these polynomials, but with each polynomial occurring only once.
So,  $q +_{\mrid} r = q_1 + \ldots + q_k + r_{i_1} + \ldots + r_{i_\ell}$,
where $\ell\ge 0$ and $\{ i_1,\ldots, i_\ell \}$ is the maximal subset of  $\{ 1,\ldots,n \}$ such that $\{ q_1,\ldots, q_k \}\cap\{r_{i_1}, \ldots, r_{i_\ell}\}=\emptyset$. Thus
\begin{equation}\label{eq:idemponetcy}
\{ q_1,\ldots, q_k, r_{i_1}, \ldots, r_{i_\ell} \} = \{ q_1,\ldots, q_k, r_1,\ldots, r_n \}\enspace.
\end{equation}

Then  $h'(q) = h'(q_1) \oplus \ldots \oplus h'(q_\ell)$,
similarly  $h'(r) = h'(r_1) \oplus \ldots \oplus h'(r_n)$, and
\begin{align*}
& h'(q +_{\mrid} r) 
= h'(q_1) \oplus \ldots \oplus h'(q_k) \oplus h'(r_{i_1}) \oplus \ldots \oplus h'(r_{i_\ell}) \\
= \; & h'(q_1) \oplus \ldots \oplus h'(q_k) \oplus h'(r_1) \oplus \ldots \oplus h'(r_n)   \tag{since $\sfA$  is idempotent and by \eqref{eq:idemponetcy}}\\
= \; & h'(q) \oplus h'(r).
\end{align*}

Second, by (a) we have  $h'(q \times_{\mrd} r) = h'(q) \otimes h'(r)$.
Hence  $h'$  is a strong bimonoid homomorphism as claimed, and the result follows.
\end{proof}

Now we obtain the following further description of our strong bimonoids of polynomials.
Its proof is immediate by Theorems \ref{thm:S-is free}, \ref{thm:Pol-Polid-are-free}, Proposition \ref{prop:three-free-sb} and the uniqueness of free structures (cf. Lemma \ref{lm:bijective-generating-sets}).

\begin{corollary}\label{cor:three-sb-isomorphic}\rm$\,$
\begin{compactitem}
\item[(a)]   The strong bimonoids  $\sfFB(X)$  and   $\sfNsb[X]$ are isomorphic.
\item[(b)]   The strong bimonoid $\sfFB_{\mrd}(X)$ and the strong bimonoid $\sfNrd[X]$ of polynomials  are isomorphic.
\item[(c)]   The strong bimonoid $\sfFB_{\midrd}(X)$  and the strong bimonoid $\sfBidrd[X]$ of idempotent polynomials 
 are isomorphic.\hfill$\Box$
\end{compactitem}
\end{corollary}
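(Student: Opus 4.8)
The plan is to read off all three isomorphisms directly from the ``two free objects over equinumerous generating sets are isomorphic'' principle, i.e.\ Lemma~\ref{lm:bijective-generating-sets}. For part (a) I would pair the two relevant facts: $\sfNsb[X]$ is a free strong bimonoid, freely generated by $X/{\rmAC}$ (Theorem~\ref{thm:S-is free}), while $\sfFB(X)$ is a free strong bimonoid, freely generated by $X/{E}$ (Proposition~\ref{prop:three-free-sb}(a)). For part (b) I would pair: $\sfNrd[X]$ is a free right-distributive strong bimonoid generated by $X/{\rmAC}$ (Theorem~\ref{thm:Pol-Polid-are-free}(a)) with $\sfFB_{\mrd}(X)$, a free right-distributive strong bimonoid generated by $X/{\mrd}$ (Proposition~\ref{prop:three-free-sb}(b)). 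For part (c) I would pair: $\sfBidrd[X]$, a free idempotent right-distributive strong bimonoid generated by $X/{\rmAC}$ (Theorem~\ref{thm:Pol-Polid-are-free}(b)) with $\sfFB_{\midrd}(X)$, a free idempotent right-distributive strong bimonoid generated by $X/{\midrd}$ (Proposition~\ref{prop:three-free-sb}(c)). In each of the three cases both algebras live in the same class, so Lemma~\ref{lm:bijective-generating-sets} will deliver an isomorphism as soon as the two generating sets are known to have the same cardinality.

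The only point requiring a word is that all four generating sets are in bijection with $X$. For $X/{\rmAC}$ this is already recorded in the text, since $[x]_{\rmAC} = \{x\}$ for each $x \in X$, whence $|X/{\rmAC}| = |X|$. For $X/{E}$, $X/{\mrd}$ and $X/{\midrd}$ I would argue that distinct variables remain distinct: by Lemma~\ref{lm:approx-characterization}, identifying $x$ and $y$ would force $x \Leftrightarrow^{*}_{E'} y$ for the appropriate axiom set $E' \in \{E, E_{\mrd}, E_{\midrd}\}$, but a single reduction step applied to a term changes its root symbol only if the identity is applied at the root, and every left- and right-hand side of $e_1,\dots,e_{11}$ that is not itself a single variable has a binary root symbol; hence no reduction step transforms the term $x$ into the term $y$ when $y \neq x$. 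Thus $x \mapsto [x]$ is injective, so it is a bijection onto the respective generating set, and $|X/{E}| = |X/{\mrd}| = |X/{\midrd}| = |X| = |X/{\rmAC}|$.

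With these cardinality equalities in hand, I would invoke Lemma~\ref{lm:bijective-generating-sets} three times — once in the class of all strong bimonoids, once in the class of right-distributive strong bimonoids, and once in the class of idempotent right-distributive strong bimonoids — to obtain (a), (b), (c) respectively. I do not expect any genuine obstacle here; the only care needed is the bookkeeping of making sure that in each application both algebras are free \emph{within the same class}, which is precisely what the pairing above arranges.
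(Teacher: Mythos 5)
Your route is exactly the paper's: the corollary is obtained by pairing Theorem \ref{thm:S-is free} with Proposition \ref{prop:three-free-sb}(a), Theorem \ref{thm:Pol-Polid-are-free}(a) with Proposition \ref{prop:three-free-sb}(b), and Theorem \ref{thm:Pol-Polid-are-free}(b) with Proposition \ref{prop:three-free-sb}(c), and then invoking uniqueness of free algebras over equinumerous generating sets (Lemma \ref{lm:bijective-generating-sets}); making the cardinality bookkeeping explicit is a sensible addition, since that lemma does require $|X/\!E|=|X/{\mrd}|=|X/{\midrd}|=|X/{\rmAC}|$, which the paper leaves implicit.

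The one step that does not hold up as written is your injectivity argument for $x \mapsto [x]_{E'}$, $E' \in \{E, E_{\mrd}, E_{\midrd}\}$. By Lemma \ref{lm:approx-characterization}, $[x]_{E'}=[y]_{E'}$ means $x \Leftrightarrow^*_{E'} y$, i.e.\ a finite chain of reduction steps, whereas your root-symbol observation only excludes a \emph{single} step carrying the term $x$ directly to the term $y$. It does not exclude a longer chain through compound terms, and such chains can genuinely leave the variable and even introduce $y$, e.g.\ $x \Leftrightarrow_{E} x \hat{+} \hat{0} \Leftrightarrow_{E} x \hat{+} (y \hat{\times} \hat{0}) \Leftrightarrow_{E} \cdots$ using $e_3$ and $e_7$ backwards; ruling out that some such chain ends in $y$ needs an invariant, not a one-step analysis. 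The quickest correct argument is semantic: given $x \neq y$ in $X$, choose a strong bimonoid in the relevant class with two distinguishable elements --- the Boolean semiring $(\mathbb{B},\vee,\wedge,\0,\1)$ works for all three classes, being an idempotent distributive strong bimonoid --- and an assignment $X \to \mathbb{B}$ with $x \mapsto \1$, $y \mapsto \0$; its homomorphic extension $\varphi$ on $\sfT(X)$ has as kernel a congruence containing $E'(\sfT(X))$ (the target satisfies all identities of $E'$), hence containing $=_{E'}$, so $[x]_{E'} \neq [y]_{E'}$. With that repaired, all four generating sets are in bijection with $X$ and your three applications of Lemma \ref{lm:bijective-generating-sets} go through; this is precisely the paper's (one-line) proof.
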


Next we investigate the different congruence relations and their relationships for simple terms, for polynomial terms and for id-reduced polynomial terms.

\begin{theorem}\label{thm:three-implications}$\,$
\begin{compactitem}
\item[(a)] Let  $t, t' \in \rmST(X)$. Then  $[t]_{E} = [t']_{E}$  in $\sfFB(X)$  implies  $t =_{\rmAC} t'$  in $\sfST(X)$. 
\item[(b)] Let  $t, t' \in \rmPT(X)$. Then $[t]_{\mrd} = [t']_{\mrd}$  in $\sfFB_{\mrd}(X)$  implies
$t =_{\rmAC} t'$  in  $\sfST(X)$. 
\item[(c)]  Let  $t, t' \in \rmPTid(X)$. Then $[t]_{\midrd} = [t']_{\midrd}$  in $\sfFB_{\midrd}(X)$  implies
$t =_{\rmAC} t'$  in  $\sfST(X)$. 
\end{compactitem}
\end{theorem}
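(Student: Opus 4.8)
The plan is to treat the three implications by a single uniform method. In each case I will use that $\sfT(X)$ is the free $\Sigma_{\mathrm{sb}}$-algebra over $X$ to obtain a $\Sigma_{\mathrm{sb}}$-algebra homomorphism from $\sfT(X)$ onto the relevant polynomial strong bimonoid that sends each variable $x$ to $[x]_{\rmAC}$; since that polynomial strong bimonoid satisfies the appropriate set of identities, this homomorphism identifies any two $=_E$- (resp.\ $=_{E_\mrd}$-, $=_{E_\midrd}$-) equivalent terms. The heart of the argument is then a structural induction showing that this homomorphism sends a simple term (resp.\ polynomial term, id-reduced polynomial term) $t$ to its own $\rmAC$-class $[t]_{\rmAC}$. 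Once this is established, $[t]=[t']$ forces $[t]_{\rmAC}=[t']_{\rmAC}$, i.e.\ $t=_{\rmAC}t'$, which is the assertion.

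For part (a): $\sfNsb[X]=(\bbNsb[X],+,\times,0,1)$ is a $\Sigma_{\mathrm{sb}}$-algebra, so the map $x\mapsto[x]_{\rmAC}$ extends uniquely to a homomorphism $\mu\colon\sfT(X)\to\sfNsb[X]$. Because $\sfNsb[X]$ is a strong bimonoid (Proposition~\ref{prop:SB-strong-bimonoid}) it satisfies all identities in $E$, hence the kernel of $\mu$ contains $E(\sfT(X))$ and therefore contains $=_E$. I would then prove by induction on $\size(t)$, with the exhaustive cases $t\in X$, $t=\hat{0}$, $t=\hat{1}$, $t$ a sum term, $t$ a product term, that $\mu(t)=[t]_{\rmAC}$ for every $t\in\rmST(X)$: the base cases are immediate, and for $t=s_1\hat{+}s_2$ resp.\ $t=s_1\hat{\times}s_2$ the immediate subterms $s_1,s_2$ are again simple, so the induction hypothesis gives $\mu(t)=[s_1]_{\rmAC}+[s_2]_{\rmAC}$ resp.\ $[s_1]_{\rmAC}\times[s_2]_{\rmAC}$, which equals $[t]_{\rmAC}$ since on simple terms different from $\hat{0},\hat{1}$ the operations $+_{\mathsf{ST}},\times_{\mathsf{ST}}$ coincide with $\hat{+},\hat{\times}$. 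Finally $[t]_E=[t']_E$ yields $t=_E t'$, hence $\mu(t)=\mu(t')$, i.e.\ $[t]_{\rmAC}=[t']_{\rmAC}$.

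For parts (b) and (c) I would run the same scheme with $\sfNrd[X]$ resp.\ $\sfBidrd[X]$ in place of $\sfNsb[X]$; both are $\Sigma_{\mathrm{sb}}$-algebras ($\bbNrd[X]$ is closed under $+$ and $\times_\mrd$, and $\bbBidrd[X]$ is closed under $+_\mrid$ and, by Lemma~\ref{lm:Pid-closed-product}, under $\times_\mrd$) and they satisfy $E_\mrd$ resp.\ $E_\midrd$ by Theorems~\ref{thm:strong-bimonoid-Pol} and~\ref{thm:strong-bimonoid-Pol-id}. This gives homomorphisms $\mu_\mrd\colon\sfT(X)\to\sfNrd[X]$ and $\mu_\midrd\colon\sfT(X)\to\sfBidrd[X]$ whose kernels contain $=_{E_\mrd}$ resp.\ $=_{E_\midrd}$, and the inductive claim becomes $\mu_\mrd(t)=[t]_{\rmAC}$ for $t\in\rmPT(X)$ resp.\ $\mu_\midrd(t)=[t]_{\rmAC}$ for $t\in\rmPTid(X)$. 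The two structural facts that make the induction go through are: a product polynomial term has a \emph{monomial term} as its left factor (Definition~\ref{def:polynomial-terms}(c), Lemma~\ref{lm:id-reduced-polynomial-terms}(c)), so that on its image $\times_\mrd$ reduces, by Definition~\ref{def:operations}(a), to the ordinary multiplication $\times$ of $\bbNsb[X]$ and one recovers $[s_1\hat{\times}s_2]_{\rmAC}$; and, for (c), an id-reduced sum polynomial term has pairwise $\rmAC$-distinct summands, so that on their images $+_\mrid$ reduces, by Definition~\ref{def:id-operations}, to the ordinary addition $+$ and one recovers $[s_1\hat{+}\ldots\hat{+}s_n]_{\rmAC}$. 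Together with the observation that the immediate subterms of a term in $\rmPT(X)$ (resp.\ $\rmPTid(X)$) again lie in $\rmPT(X)$ (resp.\ $\rmPTid(X)$), so the induction hypothesis applies, this yields $\mu_*(t)=[t]_{\rmAC}$, and the implications follow exactly as in (a).

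The main obstacle is bookkeeping rather than a conceptual difficulty: one has to match $\times_\mrd$ with $\times$ and $+_\mrid$ with $+$ precisely in the configurations produced by Definition~\ref{def:polynomial-terms} and Lemma~\ref{lm:id-reduced-polynomial-terms}, watch the $\hat{0}$- and $\hat{1}$-special cases in the operations of $\sfST(X)$, and make sure the recursion always descends into subterms of the same kind. This is exactly the ``algebraic'' route to the AC-reduction result announced in the introduction, replacing the usual critical-pair analysis by the structure of the polynomial strong bimonoids; as a variant, the homomorphisms above can be obtained directly from the isomorphisms of Corollary~\ref{cor:three-sb-isomorphic}.
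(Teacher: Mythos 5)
Your proposal is correct, and it is the same ``algebraic'' strategy the paper uses (homomorphisms into the polynomial strong bimonoids plus a structural induction matching terms with their $\rmAC$-classes), but packaged differently. The paper builds a \emph{pair} of homomorphisms $f'\colon \bbNsb[X]\to\rmFB(X)$ and $g'\colon\rmFB(X)\to\bbNsb[X]$ extending the identity on generators (which requires the freeness results, Theorem~\ref{thm:S-is free} and Theorem~\ref{thm:Pol-Polid-are-free}), proves $f'([t]_{\rmAC})=[t]_E$ by induction, deduces that $g'\circ f'$ is the identity and hence that $f'$ is injective, and concludes from injectivity. You instead use the single evaluation homomorphism $\mu\colon\sfT(X)\to\sfNsb[X]$ (resp.\ $\sfNrd[X]$, $\sfBidrd[X]$) guaranteed by the absolute freeness of the term algebra, note that its kernel contains $=_E$ (resp.\ $=_{E_\mrd}$, $=_{E_\midrd}$) because the target satisfies the identities, and prove $\mu(t)=[t]_{\rmAC}$ on the relevant class of terms. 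Your inductive computation is the mirror image of the paper's equations \eqref{eq:f-for-terms}--\eqref{eq:f-for-id-polynomial-terms} and of comparable difficulty, and you correctly isolate the two places where the polynomial operations must be matched with the syntactic ones (a product polynomial term has a monomial left factor, so $\times_\mrd$ collapses to $\times$ via Definition~\ref{def:operations}(a); id-reduced sums have pairwise distinct summands, so $+_\mrid$ collapses to $+$ via Definition~\ref{def:id-operations}). What your variant buys is a shorter dependency chain: it needs only Proposition~\ref{prop:SB-strong-bimonoid} and Theorems~\ref{thm:strong-bimonoid-Pol} and~\ref{thm:strong-bimonoid-Pol-id} (the polynomial algebras are strong bimonoids of the right kind), not the freeness theorems, and it dispenses with the injectivity argument. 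The cost is negligible; both proofs hinge on the same uniqueness facts (Lemma~\ref{lm:uniqueness-lemma} and its relatives) implicitly through the well-definedness of the operations.
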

\begin{proof} First, observe that for  $x \in X$  we have  $[x]_{\rmAC} = \{x\}$, but  $[x]_E$  is infinite,
as it contains  $x$, $x \hat{+} \hat{0}$, $x \hat{\times} \hat{1}$, etc.

(a)  We define two mappings
$f: X/{\rmAC} \to X/\!E$  and  $g: X/\!E \to X/{\rmAC}$  by putting
$f([x]_{\rmAC}) = [x]_E$  and   $g([x]_E) = [x]_{\rmAC}$  for each  $x \in X$;
note that  $g$  is well-defined as  $[x]_E \cap X=\{ x \}$.

By Theorem \ref{thm:S-is free} and Proposition \ref{prop:three-free-sb}(a), $f$  and  $g$  extend to homomorphisms
$f' : \bbNsb[X] \to \rmFB(X)$  and  $g' : \rmFB(X) \to  \bbNsb[X]$, respectively. 

Moreover,  we can  show  that
\begin{equation}\label{eq:f-for-terms}
\text{$f'([t]_{\rmAC}) = [t]_{E}$  for each  $t \in \rmST(X)$}
\end{equation}
by induction on the size of  $t$.

Then  $g' \circ f': \bbNsb[X] \rightarrow \bbNsb[X]$  is a homomorphism which acts like the identity on  $X/{\rmAC}$.
Since  $X/{\rmAC}$  generates $\bbNsb[X]$, we obtain  that $g' \circ f'$ is the identity on $\bbNsb[X]$.
Then  $f'$  is injective, and together with \eqref{eq:f-for-terms} this implies the result.

(b)  We define the mappings  $f: X/{\rmAC} \to X/{\mrd}$  and  $g: X/{\mrd} \to X/{\rmAC}$
analogously to (a).
By Theorem \ref{thm:Pol-Polid-are-free}(a) and Proposition \ref{prop:three-free-sb}(b),  $f$  and  $g$  extend to homomorphisms
$f' : \bbNrd[X] \to \rmFB_{\mrd}(X)$  and  $g' : \rmFB_{\mrd}(X) \to  \bbNrd[X]$, respectively.

Now we claim that
\begin{equation}\label{eq:f-for-polynomial-terms}
\text{$f'([t]_{\rmAC}) = [t]_{\mrd}$  for each  $t \in \rmPT(X)$.}
\end{equation}
We proceed by case distinction according to Definition \ref{def:polynomial-terms} and induction on the size of $t$. We only consider case (c) of that definition, as the cases (a) and (b) are similar and easier.
Assume that  $t=s \hat{\times} t'$, where $s$  is a monomial term for which we have proved the claim
and  $t' \in \rmPT(X)$  for which the claim holds by induction hypothesis.
Then, using Definition \ref{def:operations}(a) and that  $f'$  is a homomorphism, we obtain
\begin{align*}
& f'([s \hat{\times} t']_{\rmAC}) = f'([s]_{\rmAC} \times [t']_{\rmAC}) = f'([s]_{\rmAC} \times_\mrd [t']_{\rmAC})
= \\ & f'([s]_{\rmAC}) \otimes_{\mrd} f'([t']_{\rmAC}) = [s]_{\mrd} \otimes_{\mrd} [t']_{\mrd} = [s \hat{\times} t']_{\mrd},
\end{align*}
as claimed. Then we can proceed as for the proof of (a) of our theorem.

(c)  Here we proceed analogously to (a) and (b), but using Theorem \ref{thm:Pol-Polid-are-free}(b) and Proposition \ref{prop:three-free-sb}(c), with the homomorphisms
$f' : \rmB_{\midrd}[X] \to \rmFB_{\midrd}(X)$  and  $g' : \rmFB_{\midrd}(X) \to  \rmB_{\midrd}[X]$.
Now we claim that
\begin{equation}\label{eq:f-for-id-polynomial-terms}
\text{$f'([t]_{\rmAC}) = [t]_{\midrd}$  for each  $t \in \rmPTid(X)$.}
\end{equation}
For the proof we follow Lemma \ref{lm:id-reduced-polynomial-terms} and Definition \ref{def:id-operations}.
\end{proof}

Here, for instance, Theorem \ref{thm:three-implications}(b) says the following.  If a polynomial term $t$  can be transformed into a polynomial term $t'$ via the identities given in $E_{\mrd}$, then we can transform  $t$  into  $t'$ by just using  the identities in $\rmAC$, i.e.,
the identities for associativity and commutativity of addition and associativity of multiplication.
In the term rewriting literature, this is regarded as an AC-reduction result,
cf. \cite{bacpla85,gnales86},  cf. also \cite[Sec. 11.1]{baanip98}.
This is usually achieved by an involved analysis of critical pairs; here we obtained it by algebraic means. 

An immediate consequence of Theorem \ref{thm:three-implications} is the following. If a term  $t \in \T(X)$
is "represented" by a polynomial term $t' \in \rmPT(X)$ meaning that $t =_{E_{\mrd}} t'$, then $t'$ is unique up to $\rmAC$-equivalence. 
Now we will see that such representations exist. 
This provides the analogy of the second statement in Theorem \ref{thm:free-semiring} for representations of terms here by simple terms, by polynomial terms and by id-reduced polynomial terms.

\begin{theorem}\label{thm:three-representations}$\,$
Given  $t \in \T(X)$, there are  $t_1 \in \rmST(X), t_2 \in \rmPT(X)$,  and  $t_3 \in \rmPTid(X)$  with  $t =_E t_1, t =_{E_{\mrd}} t_2$,  and  $t =_{E_{\midrd}} t_3$. 
\end{theorem}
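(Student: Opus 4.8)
The plan is to prove the three representation statements simultaneously by a single structural induction on the term $t \in \T(X)$, following the recursive definition of $\T(X)$ from $\Sigma_{\mathrm{sb}}$. For the base cases, if $t = \hat 0$ we take $t_1 = t_2 = t_3 = \hat 0$; if $t = \hat 1$ we take $t_1 = t_2 = t_3 = \hat 1$; and if $t = x \in X$ we take $t_1 = t_2 = t_3 = x$ (note $x$ is a monomial term, hence lies in $\rmPT(X)$ and in $\rmPTid(X)$, and $=_E$, $=_{E_{\mrd}}$, $=_{E_{\midrd}}$ are reflexive). For the inductive step, suppose $t = t' \hat + t''$ or $t = t' \hat\times t''$, and by the induction hypothesis we already have representing terms $t'_1, t'_2, t'_3$ for $t'$ and $t''_1, t''_2, t''_3$ for $t''$. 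Since $=_E$, $=_{E_{\mrd}}$, $=_{E_{\midrd}}$ are congruences on $\sfT(X)$ (they contain $E$, $E_{\mrd}$, $E_{\midrd}$ respectively and are stable under the operations), we immediately get $t \;=_E\; t'_1 \hat{+} t''_1$ (resp. $t'_1 \hat{\times} t''_1$), and similarly for the other two relations. So the issue is purely that $t'_1 \hat + t''_1$ need not itself be a simple term, $t'_2 \hat + t''_2$ need not be a polynomial term, etc.; we must replace these by genuine elements of the respective syntactic classes without changing the congruence class.

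The cleanest way to organize the fix is to invoke the algebraic structure already built up. For the simple-term case: the $\Sigma_{\mathrm{sb}}$-algebra $\sfST(X) = (\rmST(X), +_{\sfST}, \times_{\sfST}, \hat 0, \hat 1)$ has operations which, on $\rmST(X) \setminus \{\hat 0, \hat 1\}$, agree with $\hat +$ and $\hat\times$, and which differ only by simplifying the rules for $\hat 0, \hat 1$. One checks (a one-line computation using the identities $e_1$–$e_8$) that for simple terms $s, s'$ the element $s +_{\sfST} s'$ satisfies $s \hat + s' =_E s +_{\sfST} s'$ and likewise $s \hat\times s' =_E s \times_{\sfST} s'$; the point is exactly that the discrepancies — e.g. $s \hat + \hat 0$ versus $s$, or $s \hat\times \hat 1$ versus $s$ — are instances of $e_3, e_5, e_6, e_7, e_8$. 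Hence if $t = t' \hat + t''$ we set $t_1 = t'_1 +_{\sfST} t''_1 \in \rmST(X)$, and by transitivity of $=_E$, $t =_E t_1$; the multiplication case is identical with $\times_{\sfST}$. For the polynomial-term case one argues the same way using the operation $\times_{\mrd}$ on $\bbNrd[X]$ together with its one-step realization $s\langle\hti t\rangle$ on polynomial terms from Lemma \ref{lm:rd-pol-terms-for-products}: given $t = t' \hat\times t''$, the terms $t'_2, t''_2 \in \rmPT(X)$ give, via the inductive construction behind Definition \ref{def:operations} (equivalently, via $s\langle\hti t\rangle$ when both factors are $\neq \hat 0$), a polynomial term $t_2$ with $[t_2]_\rmAC = [t'_2]_\rmAC \times_{\mrd} [t''_2]_\rmAC$, and one verifies $t'_2 \hat\times t''_2 =_{E_{\mrd}} t_2$ because the rewriting steps used to compute $\times_{\mrd}$ are exactly applications of right-distributivity $e_9$ and associativity $e_4$ (plus the $\hat 0, \hat 1$ rules $e_5$–$e_8$); for $t = t' \hat + t''$ take $t_2 = t'_2 \hat + t''_2$ after first discarding any $\hat 0$ summand via $e_3$. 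The id-reduced case adds one more reduction: after forming the sum, collapse repeated summands using $e_{11}$ (via $+_{\mrid}$ from Definition \ref{def:id-operations}), landing in $\rmPTid(X)$, and $=_{E_{\midrd}}$ absorbs exactly these idempotency steps.

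The main obstacle, and the place needing genuine care rather than bookkeeping, is justifying that the operations $+_{\sfST}, \times_{\sfST}$ on simple terms, the operation $\times_{\mrd}$ (via $..\langle\hti..\rangle$) on polynomial terms, and the operation $+_{\mrid}$ on id-reduced terms produce results that are congruent to the naive terms $t'_i \hat\circ t''_i$ \emph{in the appropriate congruence}. This is where the precise choice of identities matters: one must check that each simplification step performed when passing to the simple/polynomial/id-reduced normal form is licensed by an identity (or its inverse) from $E$, $E_{\mrd}$, or $E_{\midrd}$ respectively — for instance, that the repeated right-distribution defining $\times_{\mrd}$ uses only $e_4, e_9$ and the zero/one axioms and never left-distributivity $e_{10}$, which is exactly why the statement for $t_2$ uses $=_{E_{\mrd}}$ and not $=_{E_S}$. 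I would phrase this as a small auxiliary claim, say: for $s, s' \in \rmST(X)$ one has $s \hat + s' =_E s +_{\sfST} s'$ and $s \hat\times s' =_E s \times_{\sfST} s'$; for $s, s' \in \rmPT(X)$ one has $s \hat + s' =_{E_{\mrd}} (\text{the }\rmPT\text{-sum})$ and $s \hat\times s' =_{E_{\mrd}} s\langle\hti s'\rangle$ (when $s' \neq \hat 0$; handle $\hat 0$ separately via $e_7$); and for $s, s' \in \rmPTid(X)$ the analogous statements with $=_{E_{\midrd}}$. Granting these, the induction closes immediately, and the three representing terms $t_1, t_2, t_3$ are produced effectively, which is the form needed for the complexity corollaries that follow.
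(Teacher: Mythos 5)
Your argument is correct, but it is a genuinely different route from the paper's proof of this theorem. The paper obtains $t_1$ by the same direct elimination of $\hat 0$/$\hat 1$-subterms you use, but for $t_2$ and $t_3$ it argues abstractly: using the freeness results (Theorem \ref{thm:Pol-Polid-are-free}, Proposition \ref{prop:three-free-sb}) it builds homomorphisms $f'\colon \bbNrd[X]\to\rmFB_\mrd(X)$ and $g'\colon \rmFB_\mrd(X)\to\bbNrd[X]$ with $f'\circ g'=\mathrm{id}$ on $\rmFB_\mrd(X)$, takes $t_2$ to be any $\rmPT(X)$-representative of $g'([t]_\mrd)$, and concludes $[t]_\mrd=[t_2]_\mrd$ via equation~\eqref{eq:f-for-polynomial-terms} (analogously for the idempotent case). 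Your structural induction with the term-level operations $+_{\sfST},\times_{\sfST}$, the one-step product $s\langle\hti t\rangle$, and $+_{\mrid}$ is instead essentially the constructive argument the paper gives only later, in Lemma \ref{lm:rd-reduction} and Theorem \ref{thm:three-effective-representations}; it buys effectivity (and size bounds) directly, at the cost of having to verify your auxiliary claims by hand, while the paper's route is shorter given the freeness machinery. One point to be precise about: your key claim $s\hat\times s' =_{E_\mrd} s\langle\hti s'\rangle$ does \emph{not} follow from Lemma \ref{lm:rd-pol-terms-for-products} as stated, since that lemma is an identity of $\rmAC$-classes about $\times_\mrd$, not a congruence statement about the syntactic terms; it needs its own induction on the structure of $s$ (monomial case via $e_4$, sum case via $e_9$, product case via $e_4$ and the inductive hypothesis, plus $e_5$--$e_8$ for $\hat 0,\hat 1$), which is exactly the content of Lemma \ref{lm:rd-reduction} stripped of the complexity bookkeeping — you flag this, and the verification goes through using only identities of $E_\mrd$. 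For the id-reduced product case you should cite Corollary \ref{cor:product-idempotent-polynomial-terms} to guarantee $t'_3\langle\hti t''_3\rangle\in\rmPTid(X)$; with that, your induction closes and yields the theorem (indeed in the stronger, effective form).
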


\begin{proof} The simple term  $t_1$  is easy to obtain. In $t$, we replace all occurrences of subterms
$t' \hat{+} \hat{0}$ or  $\hat{0} \hat{+} t'$ by  $t'$, of $t' \hat{\times} \hat{0}$ or  $\hat{0} \hat{\times} t'$ by  $\hat{0}$, 
and of $t' \hat{\times} \hat{1}$ or  $\hat{1} \hat{\times} t'$ by  $t'$, in any order, continuing until all such subterms are eliminated. The resulting term $t_1$ is simple, has size at most  $\size(t)$ and satisfies $t =_E t_1$. 

For the second assertion concerning the existence of $t_2$, we follow the proof of Theorem \ref{thm:three-implications}(b). We consider the two mappings
$f: X/\rmAC \to X/\mrd$  and  $g: X/\mrd \to X/\rmAC$  given by 
$f([x]_\rmAC) = [x]_\mrd$  and   $g([x]_\mrd) = [x]_\rmAC$  for each  $x \in X$.
They extend to homomorphisms
$f' : \bbNrd[X] \to \rmFB_{\mrd}(X)$  and  $g' : \rmFB_{\mrd}(X) \to  \bbNrd[X]$, respectively, 
and  $f'$ satisfies equation (\ref{eq:f-for-polynomial-terms}).
Then $f' \circ g': \rmFB_\mrd(X) \rightarrow \rmFB_\mrd(X)$  is a homomorphism which acts like the identity on  $X/\mrd$. Since  $X/\mrd$  generates $\rmFB_\mrd(X)$, we obtain  that $f' \circ g'$ is the identity on $\rmFB_\mrd(X)$.
Given $t \in \T(X)$, choose  $t_2 \in \rmPT(X)$  with  $g'([t]_\mrd) = [t_2]_\rmAC$.
Then  $[t]_\mrd = (f' \circ g')([t]_\mrd) = f'([t_2]_{\rmAC}) = [t_2]_\mrd$  by equation (\ref{eq:f-for-polynomial-terms}). Thus  $t =_{E_\mrd} t_2$  as claimed.

For the third claim, we start with mappings $f: X/{\rmAC} \to X/\midrd$  and  $g: X/\midrd \to X/{\rmAC}$ as in the proof of Theorem \ref{thm:three-implications}(c) and follow the above argument to obtain homomorphisms 
$f' : \rmB_{\midrd}[X] \to \rmFB_\midrd(X)$  and  $g' : \rmFB_\midrd(X) \to  \rmB_{\midrd}[X]$  such that
$f' \circ g'$  is the identity on  $\rmFB_\midrd(X)$. Given $t \in \T(X)$, choose  $t_3 \in \rmPTid(X)$  with  $g'([t]_\midrd) = [t_3]_{\rmAC}$.
Then  $[t]_\midrd = (f'\circ g')([t]_\midrd) = f'([t_3]_{\rmAC}) = [t_3]_\midrd$  by equation (\ref{eq:f-for-id-polynomial-terms}). Thus  $t =_{E_\midrd} t_3$  as claimed.
\end{proof}


\section{Decision and construction procedures for simple and polynomial terms}
\label{sect:decision-procedures}

In this section, we turn to algorithms for deciding the equivalence of terms modulo our congruences, as indicated by Theorem \ref{thm:three-implications}, and for constructing representations of terms as stated in Theorem \ref{thm:three-representations}. 

First, we give an algorithm for deciding the equivalence of simple terms modulo $\rmAC$-equivalence. For the decision algorithm we will represent simple terms as labeled trees (cf. Section \ref{sect:free-strong-bimonoids}) and employ the well-known result that isomorphism of labeled trees is decidable in linear time (cf. \cite[Sect.~3.2]{ahohopull74}).

\begin{lemma} \label{lm:AC-equiv-decidable} \rm  For every $s,t \in  \rmST(X)$ it is decidable in linear time $O(n)$, 
where  $n = \max\{\size(s), \size(t)\}$, 
whether  $s \Leftrightarrow^*_{\rmAC} t$.
\end{lemma}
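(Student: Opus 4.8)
The plan is to turn the question into an isomorphism test between two finite labeled rooted trees and then appeal to the classical linear-time tree-isomorphism algorithm.

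First I would invoke the characterizations already available: by Lemma~\ref{lm:approx-characterization}, $s \Leftrightarrow^*_{\rmAC} t$ holds if and only if $s =_{\rmAC} t$, and by Proposition~\ref{prop:AC-equiv-graphs} this holds if and only if the labeled trees $\overline{s}$ and $\overline{t}$ are isomorphic. So it suffices to decide $\overline{s} \cong \overline{t}$ in time $O(n)$.

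Next I would account for building these trees: as observed right after the definition of $\overline{s}$, the tree $\overline{u}$ is obtained from a simple term $u$ in time $O(\size(u))$, so $\overline{s}$ and $\overline{t}$ are produced in $O(n)$ time and have $O(n)$ nodes in total. The labels occurring in them are the symbols of $X$ actually present in $s$ and $t$ (at most $n$ of them), the symbols $\hat{0},\hat{1},\boxplus,\boxtimes$, and pairs $(i,y)$ whose first component is bounded by the out-degree of a node; in any case at most $O(n)$ distinct labels actually occur, so one linear pass renames all of them onto an initial segment $\{1,\dots,cn\}$ of $\mathbb{N}$ (for a suitable constant $c$), preserving and reflecting equality, in $O(n)$ time.

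Finally I would apply the algorithm of \cite[Sect.~3.2]{ahohopull74}, which decides isomorphism of rooted labeled trees on $O(n)$ nodes with integer labels from a range of size $O(n)$ in time $O(n)$: canonical integer names are assigned to subtrees level by level, with the node label included in the tuple that is bucket-sorted at each level. Chaining the three $O(n)$ steps gives the claimed bound. I would also note that, since the construction of $\overline{u}$ for a product term encodes the position $i$ of each factor into the label $(i,y)$ of the corresponding child, isomorphism of the \emph{unordered} labeled trees $\overline{s}$, $\overline{t}$ automatically respects the order of factors in product-sum decompositions, in accordance with Proposition~\ref{prop:AC-equiv-graphs}. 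The only genuinely delicate point is keeping the whole computation strictly linear rather than $O(n\log n)$ --- i.e., performing the relabeling and the subtree-name comparisons by bucket/radix sorting over ranges of size $O(n)$ rather than by comparison sorting; this is exactly what the cited algorithm is designed to do.
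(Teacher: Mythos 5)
Your proposal is correct and follows essentially the same route as the paper: reduce $\Leftrightarrow^*_{\rmAC}$ to $=_{\rmAC}$ and then, via Proposition~\ref{prop:AC-equiv-graphs}, to isomorphism of the labeled trees $\overline{s}$ and $\overline{t}$, which are built in $O(n)$ time and tested for isomorphism in $O(n)$ time by the algorithm of \cite[Sect.~3.2]{ahohopull74}. Your additional remarks on relabeling into an integer range and on bucket sorting only spell out details that the paper delegates to the cited reference.
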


\begin{proof} By Proposition \ref{prop:AC-equiv-graphs}, two simple terms  $s$  and  $t$  are  $\rmAC$-equivalent if and only if there is an isomorphism between the labeled trees 
$\overline{s}$ and $\overline{t}$. 
For $s$ and $t$ we can construct the labeled trees 
$\overline{s}$ and $\overline{t}$ in $O(n)$ time. 
Then it can be decided if $\overline{s}$ and $\overline{t}$  are isomorphic in $O(n)$ time by \cite[Cor.~p.86]{ahohopull74}. 
\end{proof}

We will also need the following algorithm for constructing id-reduced terms. This may be known in the literature, but we could not find a reference. We define
\[\rmAC_\mrid=\rmAC\cup\{e_{11}\}.\]

\begin{lemma} \label{lm:AC-equiv-id-reduced} \rm  For each simple term  $s$  with  $n = \size(s)$  we can find in linear time  $O(n)$
an id-reduced term  $t$  with  $s =_{\rmAC_\mrid} t$. 
\end{lemma}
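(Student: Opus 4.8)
The plan is to convert $s$ to an id-reduced term by a single bottom-up pass over the tree representation $\overline{s}$, at each $\boxplus$-node discarding duplicate summands (detected via the linear-time tree-isomorphism test of \cite{ahohopull74}), and to argue that the total work is linear by charging the isomorphism tests to the sizes of the subtrees they compare.

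\begin{proof} We may assume $s \neq \hat 0$ (else $t = s$ already works) and, recursing into subterms, we process $s$ by induction on its structure following Observation \ref{obs:unique-decomposition}. If $s \in X \cup \{\hat 1\}$ or $s$ is a monomial term, then $s$ is already id-reduced and we set $t = s$. If $s$ is a product term with product-sum decomposition $s = s_1 \hat{\times} \ldots \hat{\times} s_n$, we recursively compute id-reduced $t_i$ with $s_i =_{\rmAC_\mrid} t_i$ for each $i$ and put $t = t_1 \hat{\times} \ldots \hat{\times} t_n$; since $\rmAC_\mrid$-reductions inside the $s_i$ lift to the whole product, $s =_{\rmAC_\mrid} t$, and $t$ is id-reduced by Definition \ref{def:id-reduced-terms-1}(c) (note that, as $s$ is simple and a product term, each $s_i \notin \{\hat 0,\hat 1\}$, and this is preserved).

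The essential case is that $s$ is a sum term with sum-product decomposition $s = s_1 \hat{+} \ldots \hat{+} s_n$, where each $s_i$ is a monomial term or lies in $X \cup \{\hat 1\}$. We first recursively replace each $s_i$ by an $\rmAC_\mrid$-equivalent id-reduced term $t_i$ (here this changes nothing, since each $s_i$ is already id-reduced, but in the general recursive formulation a summand could have been, say, a product term whose sum-subterms need reduction --- in the present setting the recursion on product terms handles that). We then remove duplicates: we scan $t_1,\ldots,t_n$, maintaining a list of representatives kept so far, and for each new $t_i$ we test, using Lemma \ref{lm:AC-equiv-decidable}, whether $t_i =_{\rmAC} t_j$ for some already-kept $t_j$; if so we discard $t_i$ (which is sound since $t_i \hat{+} t_i =_{\rmAC_\mrid} t_i$ via $e_{11}$, applied under a context), otherwise we keep it. The resulting term $t = t_{i_1} \hat{+} \ldots \hat{+} t_{i_k}$ is a sum of pairwise $\rmAC$-inequivalent id-reduced terms, hence id-reduced by Definition \ref{def:id-reduced-terms-1}(b), and $s =_{\rmAC_\mrid} t$ by construction.

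\textbf{The main obstacle is the linear time bound}, since a naive duplicate-elimination at a single $\boxplus$-node costs $\Theta(n^2)$ comparisons. The standard fix is to not compare summands pairwise but to \emph{group} them: at each $\boxplus$-node we assign to every child subtree a canonical integer code, computed bottom-up, so that two subtrees get the same code iff they are $\rmAC$-isomorphic, using a hash table (or the classical AHU tree-canonicalization of \cite[Sect.~3.2]{ahohopull74}) to assign codes to a multiset of already-coded children in time linear in the number of children. Duplicate summands are then exactly those with equal codes, removable in one pass; and the code of the $\boxplus$-node itself is obtained from the sorted list of distinct child codes. Proceeding this way over the whole tree, each node does work linear in its number of children, so the total is $O(n)$; correctness of the codes is precisely Proposition \ref{prop:AC-equiv-graphs} together with the analogous isomorphism-invariance for the relabelled product nodes. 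Finally, from the reduced labeled tree we read off the term $t$, again in $O(n)$ time. \end{proof}
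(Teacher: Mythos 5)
Your proposal is correct and follows essentially the same route as the paper's proof: represent $s$ by the labeled tree $\overline{s}$, use the AHU canonical numbering of \cite[Sect.~3.2]{ahohopull74} so that equal codes capture $\rmAC$-equivalence via Proposition \ref{prop:AC-equiv-graphs}, delete duplicate children of $\boxplus$-nodes in a single pass, and read a term back off the reduced tree, all in $O(n)$. The differences are minor: you first give a recursive formulation with pairwise $\rmAC$-tests before switching to canonical codes, you compute the codes bottom-up from the already-deduplicated children (the paper assigns them once on the original tree and reuses them), and you briefly misstate the summands of a sum-product decomposition as monomial terms rather than product terms (which your own parenthetical and the recursion already accommodate) --- none of which changes the substance of the argument.
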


\begin{proof} Given $s$, we construct the labeled tree 
$\overline{s}$ in time $O(n)$. 
Then we traverse the labeled tree $\overline{s}$  as described in \cite[Example 3.2, ~pp. 85,86]{ahohopull74}  
and assign the  tuples and numbers to its nodes also as described, in time  $O(n)$. 

Then we traverse the labeled tree a second time, starting with vertices at level 1 and working up towards the root.  At each node $d$ labeled by  $\boxplus$, we consider the list of numbers $i_1, i_2, ..., i_k$ associated to the children $d_1,d_2,\ldots,d_k$ of $d$, and we check whether  $i_j = i_{j+1}$, for  $j = 1,\ldots,k-1$.
If $i_j = i_{j+1}$, i.e., the subtrees with root  $d_j$ and $d_{j+1}$ are isomorphic, then we delete the subtree having the vertex  $d_{j+1}$  as its root. It is important to note that if two subtrees with root $d$ and $d'$ at a level $i$ are isomorphic before the possible deletion of some of their subtrees, i.e., the same number is assigned to both, then they remain isomorphic after the deletion. So there is no need to change the number assigned to them. The second traversal can also be done in time  $O(n)$. 

Let $T$  be the labeled tree obtained by the above construction.
We construct a simple term  $t$  with  $\overline{t} = T$ by induction as follows. We show only the induction step. If the root of $T$ is $\boxplus$ or $\boxtimes$ with children $T_1,\ldots,T_k$, then by induction hypothesis simple terms $t_1,\ldots,t_k$ can be constructed such that  $\overline{t_i} = T_i$ for each $i\in[k]$.  If the root is $\boxplus$, then each $t_i$ is a product term or an element of $X\cup\{\hat{1}\}$. Then $t$ can be any sum term of which the sum-product decomposition is $t_1\hat{+}\ldots\hat{+}t_k$. If the root is $\boxtimes$, each $t_i$ is a sum term or an element of $X$. Then $t$ can be any product term of which the product-sum decomposition is $t_1\hat{\times}\ldots\hat{\times}t_k$.
This can also be done in time  $O(n)$, resulting in total time complexity $O(n)$ for the algorithm. 

 This algorithm has the effect that for each subterm  $s'$ of $s$  with a sum-product decomposition  $s' = s'_1 \hp \ldots \hp s'_m$, 
 say, we delete all summands  $s'_j$  occurring more than once. Therefore, $s =_{\rmAC_\mrid} t$, as can be shown again by induction on the size of  $s$.
\end{proof}

As a consequence, we obtain algorithms for deciding the equivalence of polynomial terms modulo our congruences   $=_{E_{\mrd}}$  and  $=_{E_{\midrd}}$.

\begin{corollary}\label{cor:polynom-term-equiv-is-decidable}\rm
It is decidable, given  $s, t \in \rmPT(X)$  and  $n = \max\{\size(s), \size(t)\}$, 
in linear time $O(n)$  whether  $s =_{E_{\mrd}} t$  and whether $s =_{E_{\midrd}} t$.
\end{corollary}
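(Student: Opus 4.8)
The plan is to reduce both decision problems to the linear-time $\rmAC$-equivalence test of Lemma \ref{lm:AC-equiv-decidable}, exploiting the AC-reduction results of Theorem \ref{thm:three-implications} together with the id-reduction algorithm of Lemma \ref{lm:AC-equiv-id-reduced}.

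For the $E_{\mrd}$-part I would argue that, for $s,t \in \rmPT(X)$, one has $s =_{E_{\mrd}} t$ if and only if $s =_{\rmAC} t$: the implication from left to right is exactly Theorem \ref{thm:three-implications}(b), and the converse is immediate since $\rmAC \subseteq E_{\mrd}$. By Lemma \ref{lm:AC-equiv-decidable} the relation $s =_{\rmAC} t$ is decidable in time $O(n)$ (build the labeled trees $\overline{s}$ and $\overline{t}$ in $O(n)$ and test them for isomorphism in $O(n)$), which settles this case.

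For the $E_{\midrd}$-part the given terms $s,t$ need not be id-reduced, so I would first run the algorithm of Lemma \ref{lm:AC-equiv-id-reduced} on $s$ and on $t$, obtaining in time $O(n)$ simple terms $s', t'$ with $s =_{\rmAC_{\mrid}} s'$ and $t =_{\rmAC_{\mrid}} t'$, where moreover $\size(s') \le \size(s)$ and $\size(t') \le \size(t)$. Since $\rmAC_{\mrid} = \rmAC \cup \{e_{11}\} \subseteq E_{\midrd}$, this yields $s =_{E_{\midrd}} s'$ and $t =_{E_{\midrd}} t'$, hence $s =_{E_{\midrd}} t$ iff $s' =_{E_{\midrd}} t'$. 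The point to check is that $s', t' \in \rmPTid(X)$: the reduction only deletes duplicated summands inside sum-product decompositions, so it preserves the closure conditions (a)--(c) of Definition \ref{def:polynomial-terms}, whence $s'$ (and likewise $t'$) is still a polynomial term, and by construction it is id-reduced. Then Theorem \ref{thm:three-implications}(c) applies to $s', t' \in \rmPTid(X)$ and gives $s' =_{E_{\midrd}} t'$ iff $s' =_{\rmAC} t'$ (the converse again being free from $\rmAC \subseteq E_{\midrd}$). Finally, Lemma \ref{lm:AC-equiv-decidable} decides $s' =_{\rmAC} t'$ in time $O(n)$, and the total running time, being the sum of two $O(n)$ phases, is $O(n)$.

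The step requiring the most care is the verification that applying the procedure of Lemma \ref{lm:AC-equiv-id-reduced} to a polynomial term produces an element of $\rmPTid(X) = \rmPT(X) \cap \rmSTid(X)$, rather than merely of $\rmSTid(X)$; this rests on the observation that removing a repeated summand $s'_j$ from a sum-product decomposition $s'_1 \hp \ldots \hp s'_m$ of a subterm leaves a term that is still generated by the rules of Definition \ref{def:polynomial-terms}, which I would confirm by a routine induction on $\size(s)$. Everything else is a direct invocation of the cited lemmas and of Theorem \ref{thm:three-implications}.
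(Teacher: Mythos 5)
Your proposal is correct and follows essentially the same route as the paper: Theorem \ref{thm:three-implications}(b) together with Lemma \ref{lm:AC-equiv-decidable} for the $E_{\mrd}$ case, and Lemma \ref{lm:AC-equiv-id-reduced} followed by Theorem \ref{thm:three-implications}(c) and Lemma \ref{lm:AC-equiv-decidable} for the $E_{\midrd}$ case. Your explicit remark that id-reduction of a polynomial term stays inside $\rmPTid(X)$ is a detail the paper leaves implicit, so it is a welcome (and correct) addition rather than a deviation.
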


\begin{proof} For the first claim, we apply Theorem \ref{thm:three-implications}(b) and Lemma \ref{lm:AC-equiv-decidable}. For the second claim, using Lemma \ref{lm:AC-equiv-id-reduced} we construct $s', t' \in \rmPTid(X)$, each of size at most  $n$, with  
$s =_{E_{\midrd}} s'$  and  $t =_{E_{\midrd}} t'$  in  $O(n)$  steps.
Then   we have  
$s =_{E_{\midrd}} t$ if and only if $s' =_{E_\midrd} t'$ if and only if $s' =_\rmAC t'$, where the latter equivalence follows from Theorem \ref{thm:three-implications}(c). By Lemma \ref{lm:AC-equiv-decidable}, this latter $\rmAC$-equivalence can be decided  with a number of steps linear in the sizes of  $s', t'$.
\end{proof}

Now we turn to algorithms for constructing representations of terms as described in Theorem \ref{thm:three-representations}. We will determine the total number of operations needed. But, since we consider an application of right-distributivity as the possibly 'most complex' operation, we also describe the possible number of uses of right-distributivity, i.e.,
the application of the identity  
\[e_9: \big((z_1 \hat{+} z_2) \hat{\times} z_3 \ , \ (z_1 \hat{\times} z_3) \hat{+} (z_2 \hat{\times} z_3)\big).\]
In fact, we will apply the identity  $e_9$  only by replacing a subterm of the form  
$(t_1 \hat{+} t_2) \hat{\times} t_3$ by $(t_1 \hat{\times} t_3) \hat{+} (t_2 \hat{\times} t_3)$.

First we prove the following auxiliary result.

\begin{lemma}\label{lm:rd-reduction} \rm There is an effective procedure for constructing, given  $s \in \rmST(X)$ and $t \in \rmPT(X)$, a polynomial term  $t' \in \rmPT(X)$  with  $s \hti t =_{E_{\mrd}} t'$ and $\size(t') \leq 2^{\size(s)} \cdot \size(t)$  in  $O(2^{\size(s)})$ steps, using at most  $\size(s)$ applications of right-distributivity.
\end{lemma}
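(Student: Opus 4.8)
The plan is to proceed by induction on $\size(s)$, distinguishing cases according to the structure of the simple term $s$, and in each case reducing the term $s \hti t$ to a polynomial term by applying right-distributivity only at the top, then recursing.

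\medskip

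First I would handle the base cases. If $s = \hat{0}$, then $s \hti t =_{E_{\mrd}} \hat{0}$ using identity $e_8$, and we take $t' = \hat{0}$ (no applications of $e_9$, one step). If $s = \hat{1}$, then $s \hti t =_{E_{\mrd}} t$ by $e_5$, and we take $t' = t$, which is already a polynomial term, with $\size(t') = \size(t) \leq 2^{1} \cdot \size(t)$. If $s \in X$, then $s \hti t$ is already a product polynomial term (since $t \neq \hat{0}$, and if $t = \hat{1}$ we reduce via $e_6$ to $s$); so $t' = s \hti t$, $\size(t') = 1 + \size(t) \leq 2 \cdot \size(t)$, zero applications of $e_9$.

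\medskip

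For the induction step, $s$ is either a sum term or a product term (by Observation~\ref{obs:unique-decomposition}, using that $s \in \rmST(X) \setminus (X \cup \{\hat{0},\hat{1}\})$).
\emph{Sum case:} let $s = s_1 \hp \ldots \hp s_n$ be the sum-product decomposition of $s$. We first apply right-distributivity (identity $e_9$, in the left-to-right direction, $n-1$ times, reassociating with $e_1$ as needed) to obtain $s \hti t =_{E_{\mrd}} (s_1 \hti t) \hp \ldots \hp (s_n \hti t)$. Each $s_i$ is either a monomial term or lies in $X \cup \{\hat{1}\}$, hence is in $\rmPT(X)$; moreover $\size(s_i) < \size(s)$. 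By the induction hypothesis applied to each pair $(s_i, t)$, we get polynomial terms $t'_i \in \rmPT(X)$ with $s_i \hti t =_{E_{\mrd}} t'_i$, $\size(t'_i) \leq 2^{\size(s_i)} \cdot \size(t)$, using at most $\size(s_i)$ applications of $e_9$ and $O(2^{\size(s_i)})$ steps. If some $t'_i = \hat{0}$ we drop that summand (which happens only when $s_i = \hat{0}$, which cannot occur in a sum-product decomposition, so in fact no summand is dropped). Set $t' = t'_1 \hp \ldots \hp t'_n$; this is in $\rmPT(X)$ by Definition~\ref{def:polynomial-terms}(b). Then $\size(t') = \sum_i \size(t'_i) \leq \sum_i 2^{\size(s_i)} \cdot \size(t) \leq 2^{\sum_i \size(s_i)} \cdot \size(t) \leq 2^{\size(s)} \cdot \size(t)$ (using $\size(s) = 1 + \sum_i \size(s_i)$ when $n \geq 2$, say; the inequality $\sum 2^{a_i} \le 2^{\sum a_i}$ holds for $a_i \ge 1$). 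The number of $e_9$-applications is $(n-1) + \sum_i \size(s_i) \leq \size(s)$, and the total number of steps is $O(n) + \sum_i O(2^{\size(s_i)}) = O(2^{\size(s)})$.
\emph{Product case:} let $s = s_1 \hti \ldots \hti s_n$ be the product-sum decomposition of $s$; its rightmost factor $s_n$ is a sum term or lies in $X$, and $s_1 \hti \ldots \hti s_{n-1}$ is a monomial term, say $m$. Then $s \hti t = m \hti (s_n \hti t)$ up to $e_4$. Apply the induction hypothesis to $(s_n, t)$ to get $t'_n \in \rmPT(X)$ with $s_n \hti t =_{E_{\mrd}} t'_n$, $\size(t'_n) \leq 2^{\size(s_n)} \cdot \size(t)$, at most $\size(s_n)$ applications of $e_9$. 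If $t'_n = \hat{0}$ (which happens only if $s_n \hti t =_{E_{\mrd}} \hat{0}$, impossible as $t \neq \hat{0}$ and $s_n \ne \hat 0$), there is nothing to worry about. If $t'_n = \hat{1}$, take $t' = m$. Otherwise set $t' = m \hti t'_n \in \rmPT(X)$ by Definition~\ref{def:polynomial-terms}(c). Then $\size(t') \leq \size(m) + \size(t'_n) \leq \size(m) + 2^{\size(s_n)} \cdot \size(t) \leq 2^{\size(s)} \cdot \size(t)$ (since $\size(s) = \size(m) + \size(s_n)$ and $2^{\size(m)} \cdot 2^{\size(s_n)} \ge \size(m) + 2^{\size(s_n)}$), with at most $\size(s_n) \le \size(s)$ applications of $e_9$, in $O(2^{\size(s_n)}) = O(2^{\size(s)})$ steps.

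\medskip

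I expect the main obstacle to be the careful bookkeeping of the size and step bounds: one must verify that in the sum case the exponents add correctly, i.e.\ that $\sum_i 2^{\size(s_i)} \le 2^{\sum_i \size(s_i)}$ (true since each $\size(s_i) \geq 1$ and $n \geq 2$), and that the number of reduction steps used to perform the initial $n-1$ applications of right-distributivity plus the reassociations stays within $O(2^{\size(s)})$ --- this is fine since $n-1 < \size(s) \le 2^{\size(s)}$. A secondary point requiring care is ensuring that when a recursively produced $t'_i$ equals $\hat{0}$ or $\hat{1}$ we handle the degenerate simplification so that the result is genuinely in $\rmPT(X)$ and not merely $=_{E_{\mrd}}$-equivalent to something in $\rmPT(X)$; as noted, $\hat 0$ cannot actually arise, and the $\hat 1$ case is absorbed as described. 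Everything else is a routine induction.
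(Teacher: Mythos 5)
Your sum case is essentially fine (apart from two harmless slips: the summands $s_i$ of the sum-product decomposition of an arbitrary simple term are product terms or elements of $X\cup\{\hat 1\}$, not necessarily monomial terms or polynomial terms --- which does not matter, since the induction hypothesis only needs $s_i\in\rmST(X)$ --- and the size bookkeeping should read $\size(t')=\sum_i\size(t'_i)+(n-1)$ and $\size(s)=\sum_i\size(s_i)+(n-1)$, after which the stated bounds still go through). The product case, however, has a genuine gap. You claim that in the product-sum decomposition $s=s_1\hti\ldots\hti s_n$ of a simple term the prefix $s_1\hti\ldots\hti s_{n-1}$ is a monomial term. That is true for \emph{polynomial} terms (Lemma \ref{lm:uniqueness-lemma-polynomials}(c)), but the lemma is about an arbitrary $s\in\rmST(X)$, and in a product-sum decomposition \emph{every} factor, not just the last, may be a sum term. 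For example, take $s=(x\hp y)\hti z$ and $t=z$: your procedure returns $t'=(x\hp y)\hti(z\hti z)$, which is not a polynomial term, because no right-distributivity is ever applied to the sums sitting inside the left factors.

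The paper avoids this by a different recursion in the product case: writing $s=s_1\hti s_2$, it first applies the induction hypothesis to $(s_2,t)$ to obtain a polynomial term $t''$ with $s_2\hti t=_{E_\mrd}t''$, and then applies the induction hypothesis \emph{again}, this time to the pair $(s_1,t'')$ --- this is precisely why the statement is formulated for an arbitrary second argument $t\in\rmPT(X)$. That second application is what distributes $t''$ through any sums occurring in $s_1$ and keeps the bounds $\size(t')\le 2^{\size(s_1)}\cdot\size(t'')\le 2^{\size(s)}\cdot\size(t)$ and at most $\size(s_1)+\size(s_2)\le\size(s)$ uses of $e_9$. To repair your proof you would have to replace the ``set $t'=m\hti t'_n$'' step by a recursive call on the left part of the product (e.g.\ handle $s=s_1\hti s_2$ binarily as the paper does, or iterate from the rightmost factor leftwards, each time invoking the induction hypothesis with the previously computed polynomial term as the new right argument); as written, the construction only works when all factors of $s$ except possibly the last are variables.
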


\begin{proof} We proceed by induction on  $\size(s)$. If  $s \in X$, the result is trivial, as $t' = s \hti t$  is already a polynomial term.

Let us first assume that  $s$  is a sum term, so  $s = s_1 \hp s_2$. 
Then  $s \hti t =_{E_\mrd} (s_1 \hti t) \hp (s_2 \hti t)$. 
By induction hypothesis, we can construct polynomial terms $t'_i \in \rmPT(X)$  
with  $s_i \hti t =_{E_{\mrd}} t'_i$ and $\size(t'_i) \leq 2^{\size(s_i)} \cdot \size(t)$  
in  $O(2^{\size(s_i)})$ steps, using at most  $\size(s_i)$ applications of right-distributivity, for  $i = 1,2$.
Put  $t' = t'_1 \hp t'_2$. Then  $t' \in \rmPT(X)$, 
and we have  $\size(t') = \size(t'_1) + \size(t'_2) + 1 \le (2^{\size(s_1)} + 2^{\size(s_2)}) \cdot \size(t) + 1 \le 2^{\size(s)} \cdot \size(t)$. 
Clearly, we needed at most  $O(2^{\size(s)})$ steps, and our construction involved at most
$1 + \size(s_1) + \size(s_2) = \size(s)$  applications of right-distributivity.

Second, assume that $s$  is a product term, say  $s = s_1 \hti s_2$.
By our induction hypothesis, we can construct a polynomial term $t'' \in \rmPT(X)$  
with  $s_2 \hti t =_{E_{\mrd}} t''$ and $\size(t'') \leq 2^{\size(s_2)} \cdot \size(t)$  
in  $O(2^{\size(s_2)})$ steps, using at most  $\size(s_2)$ applications of right-distributivity.
Then, again by our induction hypothesis, we can construct a polynomial term $t' \in \rmPT(X)$  
with  $s_1 \hti t'' =_{E_{\mrd}} t'$ and 
$\size(t') \leq 2^{\size(s_1)} \cdot \size(t'') 
\le 2^{\size(s_1)} \cdot 2^{\size(s_2)} \cdot \size(t) \le 2^{\size(s)} \cdot \size(t)$  
in  $O(2^{\size(s_1)})$ steps, using at most  $\size(s_1)$ applications of right-distributivity.
Then we have  $t' =_{E_{\mrd}} s_1 \hti t'' 
=_{E_{\mrd}} s_1 \hti (s_2 \hti t) =_\rmAC (s_1 \hti s_2) \hti t = s \hti t$, 
and we have obtained  $t'$ in at most  $O(2^{\size(s)})$ steps, using at most $\size(s)$ applications of right-distributivity.  
\end{proof}

Now we can show:

\begin{theorem}\label{thm:three-effective-representations}$\,$
There are effective procedures for constructing, given  $t \in \T(X)$, terms
\begin{compactitem}
\item[(a)] $t_1 \in \rmST(X)$ with $t =_E t_1$  and $\size(t_1) \leq \size(t)$  using at most  $O(\size(t))$  steps, 
\item[(b)] $t_2 \in \rmPT(X)$  with  $t =_{E_{\mrd}} t_2$ and $\size(t_2) \leq 2^{\size(t)}$  in  $O(2^{\size(t)})$ steps using at most  $\size(t)$ applications of right-distributivity, and
\item[(c)] $t_3 \in \rmPTid(X)$  with  $t =_{E_{\midrd}} t_3$ and  $\size(t_3) \leq 2^{\size(t)}$  
in $O(2^{\size(t)})$  steps using at most  $\size(t)$ applications of right-distributivity.
\end{compactitem}
\end{theorem}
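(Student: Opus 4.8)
The plan is to prove the three parts by structural induction on $t\in\T(X)$, in each case following the recursion on the term and invoking the appropriate one-step tools already established. For part~(a), I would simply use the rewriting procedure already described in the proof of Theorem~\ref{thm:three-representations}: repeatedly delete subterms of the form $t'\hp\hat 0$, $\hat 0\hp t'$, $t'\hti\hat 0$, $\hat 0\hti t'$, $t'\hti\hat 1$, $\hat 1\hti t'$. Each such deletion strictly decreases $\size$, so at most $\size(t)$ of them occur, each locatable and performable in $O(\size(t))$ time by a single bottom-up traversal; the result is simple, $E$-equivalent, and of size $\le\size(t)$. (One can organize this as a single postorder pass computing for each subterm its simple reduct, giving the linear bound cleanly.)

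For part~(b), I would induct on $\size(t)$. If $t\in X\cup\{\hat 0,\hat 1\}$, take $t_2=t$. If $t=t'\hp t''$, recursively obtain polynomial terms $s',s''\in\rmPT(X)$ with $t'=_{E_\mrd}s'$, $t''=_{E_\mrd}s''$ and the claimed size/step/right-distributivity bounds; then set $t_2=s'\hp s''$ (after deleting $s'$ or $s''$ if it equals $\hat 0$), which is a polynomial term by Definition~\ref{def:polynomial-terms}(b), of size $\le 2^{\size(t')}+2^{\size(t'')}+1\le 2^{\size(t)}$, obtained in $O(2^{\size(t)})$ steps, with $\le\size(t')+\size(t'')\le\size(t)$ uses of $e_9$ (none added at this node). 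If $t=t'\hti t''$, recursively obtain $s',s''\in\rmPT(X)$; now $s'$ is a polynomial term but $s'\hti s''$ need not be, so I apply Lemma~\ref{lm:rd-reduction} with the simple term $s'$ (which is in $\rmST(X)$) and the polynomial term $s''$, yielding $t_2\in\rmPT(X)$ with $s'\hti s''=_{E_\mrd}t_2$, $\size(t_2)\le 2^{\size(s')}\cdot\size(s'')\le 2^{\size(t')}\cdot 2^{\size(t'')}=2^{\size(t)}$, in $O(2^{\size(s')})$ additional steps using $\le\size(s')\le\size(t')$ further applications of right-distributivity. Summing the right-distributivity counts over the whole recursion gives at most $\size(t)$ in total, since Lemma~\ref{lm:rd-reduction} charges $\le\size(s')$ applications and $\size(s')\le\size(t')<\size(t)$, and the recursive calls are charged against disjoint parts of $t$; the total step count telescopes to $O(2^{\size(t)})$. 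Finally $t=_{E_\mrd}t'\hti t''=_{E_\mrd}s'\hti s''=_{E_\mrd}t_2$.

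For part~(c), I would first run the procedure of part~(b) to get $t_2\in\rmPT(X)$ with $t=_{E_\mrd}t_2$, $\size(t_2)\le 2^{\size(t)}$, within the stated step and right-distributivity budgets, and then apply Lemma~\ref{lm:AC-equiv-id-reduced} to $t_2$ to obtain an id-reduced term $t_3$ with $t_2=_{\rmAC_\mrid}t_3$ in $O(\size(t_2))=O(2^{\size(t)})$ further steps; note the id-reduction step uses only $e_{11}$ (no right-distributivity) and does not increase size. Since $\rmAC_\mrid\subseteq E_\midrd$ and $E_\mrd\subseteq E_\midrd$, we get $t=_{E_\midrd}t_3$, and since Lemma~\ref{lm:AC-equiv-id-reduced}'s output lies in the smallest set closed under the clauses of Lemma~\ref{lm:id-reduced-polynomial-terms} when the input is already a polynomial term (the deletion only removes repeated summands, preserving the polynomial-term structure), we have $t_3\in\rmPTid(X)$. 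The main obstacle I anticipate is the careful bookkeeping in part~(b): ensuring that the $\le\size(t)$ bound on the number of right-distributivity applications is genuinely additive over the recursion — this works because in the product case Lemma~\ref{lm:rd-reduction} is invoked on the \emph{left} factor's already-reduced simple term $s'$, whose size is bounded by $\size(t')$, and these bounds for sibling and nested calls partition $\size(t)$; one must also verify that passing $s'\in\rmPT(X)\subseteq\rmST(X)$ to Lemma~\ref{lm:rd-reduction} is legitimate and that the exponential size bounds compose correctly (product of exponentials becomes exponential of sum). Everything else is routine induction.
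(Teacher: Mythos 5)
Your part~(a) and the overall structure of part~(c) coincide with the paper's proof, but part~(b) contains a genuine gap. In the product case $t = t'\hti t''$ you recurse into \emph{both} factors and then apply Lemma~\ref{lm:rd-reduction} to the pair $(s',s'')$, where $s'$ is the polynomial reduct of the left factor. All three bounds you then claim rest on the inequality $\size(s')\le\size(t')$, which is false: passing to an $=_{E_\mrd}$-equivalent polynomial term can increase the size exponentially (already $(x\hp y)\hti z$ grows, and the paper's own example $t_n=(\hat1\hp\hat1)\hti t_{n-1}$ has size $O(n)$ while its polynomial reduct is a sum of $2^n$ copies of $\hat1$). Hence $2^{\size(s')}$ is only bounded by $2^{2^{\size(t')}}$, so your size bound $\size(t_2)\le 2^{\size(t')}\cdot2^{\size(t'')}$, your $O(2^{\size(t)})$ step count, and above all the bound of at most $\size(t)$ applications of right-distributivity do not follow. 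Concretely, for $t=t_n\hti x$ your scheme first expands $t_n$ into a sum of $2^n$ ones and then Lemma~\ref{lm:rd-reduction} applied to that sum genuinely needs $2^n-1$ applications of $e_9$, although $\size(t)=O(n)$; the claimed budget of $\size(t)$ distributivity steps is exceeded exponentially, and the step count becomes doubly exponential in bad cases.

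The paper's proof avoids exactly this: it first reduces $t$ to a simple term via part~(a), and in the product case $t=t_1\hti t_2$ it recurses \emph{only} into the right factor, obtaining a polynomial reduct $t''$ of $t_2$, and then applies Lemma~\ref{lm:rd-reduction} to $(t_1,t'')$ with the \emph{original} simple left factor $t_1$. The lemma then charges at most $\size(t_1)$ applications of right-distributivity and $O(2^{\size(t_1)})$ steps, and the totals compose additively and multiplicatively: $\size(t_1)+\size(t_2)\le\size(t)$ and $2^{\size(t_1)}\cdot2^{\size(t_2)}\le 2^{\size(t)}$; the left factor's internal structure is handled inside Lemma~\ref{lm:rd-reduction} itself, never via a precomputed polynomial reduct. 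To repair your argument, do not convert the left factor to a polynomial term at all (or you would need a strengthened lemma whose bounds are stated in terms of the original, unexpanded factor), and add the initial reduction to a simple term so that $\hat0$ and $\hat1$ need no ad hoc treatment; with part~(b) fixed this way, your part~(c) goes through as written.
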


\begin{proof} (a) For constructing $t_1$, we can proceed as described at the beginning of the proof of Theorem \ref{thm:three-representations}. This needs at most  $O(\size(t))$ steps.  

(b) Using (a), first we construct the simple term $t_1$ with $t =_E t_1$. 
Then we also have $t =_{E_\mrd} t_1$. Hence we may assume without loss of generality that 
$t \in \rmST(X)$.

Then we proceed by induction on the size of  $t$. If  $t \in X \cup \{\hat{0}, \hat{1} \}$, the result is trivial. Therefore we can assume that  $t$  is a sum term or a product term.

First, assume that  $t$  is a sum term, say,  $t = t_1 \hp t_2$. By induction hypothesis, we can construct polynomial terms $t'_i \in \rmPT(X)$  
with  $t_i =_{E_{\mrd}} t'_i$ and $\size(t'_i) \leq 2^{\size(t_i)}$  
in  $O(2^{\size(t_i)})$ steps, using at most  $\size(t_i)$ applications of right-distributivity, for  $i = 1,2$. Then with  $t' = t'_1 \hp t'_2 \in \rmPT(X)$ we obtain the result.

Second, assume that  $t$  is a product term, say,  $t = t_1 \hti t_2$.
By our induction hypothesis, we can construct a polynomial term $t'' \in \rmPT(X)$  
with  $t_2 =_{E_{\mrd}} t''$ and $\size(t'') \leq 2^{\size(t_2)}$  
in  $O(2^{\size(t_2)})$ steps, using at most  $\size(t_2)$ applications of right-distributivity.
Then, by Lemma \ref{lm:rd-reduction} (note that $t_1 \in \rmST(X)$), we can construct a polynomial term  $t' \in \rmPT(X)$ with  $t_1 \hti t'' =_{E_{\mrd}} t'$ and 
$\size(t') \leq 2^{\size(t_1)} \cdot \size(t'') 
\le 2^{\size(t_1)} \cdot 2^{\size(t_2)} \le 2^{\size(t)}$  in  $O(2^{\size(t_1)})$ steps, using at most  $\size(t_1)$ applications of right-distributivity.
In total, our construction of  $t'$  used at most $O(2^{\size(t)})$ steps with at most  $\size(t)$ applications of right-distributivity, and  $t =_{E_\mrd} t_1 \hti t'' =_{E_\mrd} t'$. 

(c) Similarly as in (b), we may assume without loss of generality that $t \in \rmST(X)$. First, as in (b), we construct  the polynomial term $t_2$ from $t$.
Then we apply Lemma \ref{lm:AC-equiv-id-reduced} to obtain in time  $O(\size(t_2))$ 
an id-reduced term  $t_3$  with  $t_2 =_{\rmAC_\mrid} t_3$  and  $\size(t_3) \le \size(t_2)$.
Then  $t_3$  is an id-reduced polynomial term. 
\end{proof}

Observe that the above proof immediately gives rise to an inductive algorithm for computing a polynomial term  $t' \in \rmPT(X)$ which is  $=_{E_\mrd}$-equivalent to a given term  $t \in \rmST(X)$,  as follows, by iterating (1) and (2):

(1) If  $t$  is a sum term with sum-product decomposition  $t = t_1 \hat{+} \ldots \hat{+} t_n$, apply the algorithm to each term $t_i$ separately and take the sum of the resulting polynomial terms.

(2)  If $t$  is a product term with product-sum decomposition  $t = t_1 \hti \ldots \hti t_n$, first find a polynomial term  $t'_n \in \rmPT_(X)$ with $t_n =_{E_\mrd} t'_n$. Then follow the proof of Lemma \ref{lm:rd-reduction} to find,
successively, polynomial terms  $t'_{n-1},\ldots,t'_1 \in \rmPT(X)$  with
$t'_i =_{E_\mrd} t_i \hti t'_{i+1}$, for  $i = n-1,\ldots,1$. Here, replace each possibly arising term of the form  $\hat{1} \hti s$  by  $s$. Then we have  $t =_{E_\mrd} t'_1$.

This algorithm will employ at most  $\size(t)$  applications of right-distributivity.

As a consequence, we obtain algorithms for deciding the equivalence of terms modulo our congruences, as indicated by Theorem \ref{thm:three-implications}. 
For deciding the $\rmAC$-equivalence of simple terms, we have already obtained in Lemma \ref{lm:AC-equiv-decidable} a linear time algorithm.

\begin{corollary}\label{cor:equiv-is-decidable}\rm
It is decidable, given  $s, t \in \T(X)$  and  $n = \max\{\size(s), \size(t)\}$, 
in linear time $O(n)$  whether $s =_E t$, and in exponential time $O(2^n)$  
whether $s =_{E_{\mrd}} t$  and whether $s =_{E_{\midrd}} t$.
\end{corollary}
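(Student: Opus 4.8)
The plan is to treat the three decidability claims by reducing the general term equivalence problems to the already-solved equivalence problems on polynomial terms, using the effective representation theorem (Theorem \ref{thm:three-effective-representations}) to pass from arbitrary terms to polynomial terms, and then invoking Lemma \ref{lm:AC-equiv-decidable} resp. Corollary \ref{cor:polynom-term-equiv-is-decidable} on those representatives. The key soundness ingredient is Theorem \ref{thm:three-implications}: once two terms are in $\rmST(X)$ (resp. $\rmPT(X)$, resp. $\rmPTid(X)$), their $=_E$- (resp. $=_{E_{\mrd}}$-, $=_{E_{\midrd}}$-)equivalence is \emph{equivalent} to $\rmAC$-equivalence, which is what the algorithms decide.

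\textbf{Main steps.} First, for the $=_E$-case: given $s, t \in \T(X)$ with $n = \max\{\size(s),\size(t)\}$, use Theorem \ref{thm:three-effective-representations}(a) to construct in $O(n)$ steps simple terms $s_1, t_1 \in \rmST(X)$ with $s =_E s_1$, $t =_E t_1$, and $\size(s_1), \size(t_1) \le n$. Since $=_E$ is an equivalence relation, $s =_E t$ iff $s_1 =_E t_1$; by Theorem \ref{thm:three-implications}(a) this holds iff $s_1 =_{\rmAC} t_1$ (the converse direction $s_1 =_{\rmAC} t_1 \Rightarrow s_1 =_E t_1$ is immediate since $\rmAC \subseteq E$, so the two are equivalent here). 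By Lemma \ref{lm:AC-equiv-decidable} this last question is decidable in $O(n)$ time; the overall cost is $O(n)$.

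Second, for the $=_{E_{\mrd}}$-case: use Theorem \ref{thm:three-effective-representations}(b) to construct, in $O(2^n)$ steps, polynomial terms $s_2, t_2 \in \rmPT(X)$ with $s =_{E_{\mrd}} s_2$, $t =_{E_{\mrd}} t_2$, and $\size(s_2), \size(t_2) \le 2^n$. Then $s =_{E_{\mrd}} t$ iff $s_2 =_{E_{\mrd}} t_2$, and by Corollary \ref{cor:polynom-term-equiv-is-decidable} this is decidable in time linear in $\max\{\size(s_2),\size(t_2)\} \le 2^n$, i.e., in $O(2^n)$ time. Adding the $O(2^n)$ construction cost keeps the total at $O(2^n)$. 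The $=_{E_{\midrd}}$-case is entirely analogous, using Theorem \ref{thm:three-effective-representations}(c) to obtain $s_3, t_3 \in \rmPTid(X)$ of size at most $2^n$ and then the second claim of Corollary \ref{cor:polynom-term-equiv-is-decidable}.

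\textbf{Anticipated obstacle.} There is no real mathematical obstacle here — this corollary is a bookkeeping assembly of results already established. The only point requiring mild care is the accounting of costs: one must check that each reduction step (building the representative, then running the decision procedure on it) has the claimed complexity and that the size bounds on the intermediate polynomial terms are exactly those furnished by Theorem \ref{thm:three-effective-representations}, so that the linear-time decision procedure of Corollary \ref{cor:polynom-term-equiv-is-decidable}, when applied to objects of size $2^n$, yields overall time $O(2^n)$ and not something larger. One should also note explicitly that the reductions preserve the equivalence in both directions (which is what makes them genuine decision-procedure reductions rather than one-sided), which follows since all the congruences involved are equivalence relations and the representatives are $=_E$-/$=_{E_{\mrd}}$-/$=_{E_{\midrd}}$-equivalent to the inputs.
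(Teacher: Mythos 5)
Your proposal is correct and follows essentially the same route as the paper: construct representatives via Theorem \ref{thm:three-effective-representations}, transfer the question to these representatives, and decide $\rmAC$-equivalence in time linear in their size. The only cosmetic difference is that for the $=_{E_{\mrd}}$- and $=_{E_{\midrd}}$-cases you invoke Corollary \ref{cor:polynom-term-equiv-is-decidable}, whereas the paper appeals directly to Theorem \ref{thm:three-implications}(b),(c) together with Lemma \ref{lm:AC-equiv-decidable} — but that corollary is itself proved from exactly those ingredients, so the arguments coincide.
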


\begin{proof}  For the first claim, given  $s, t \in \T(X)$, use Theorem \ref{thm:three-effective-representations} to construct
$s', t' \in \rmST(X)$ with $s =_E s'$  and  $t =_E t'$  in  $O(n)$  steps. By Theorem \ref{thm:three-implications}(a) it follows that  $s =_E t$ if and only if $s' =_E t'$ if and only if $s' =_\rmAC t'$. The latter $\rmAC$-equivalences can be decided in $O(n)$ steps by using Lemma \ref{lm:AC-equiv-decidable}. 

For the second claim, we proceed analogously. By Theorem \ref{thm:three-effective-representations},
we construct  $s', t' \in \rmPT(X)$, each of size at most  $2^n$, with  $s =_{E_{\mrd}} s'$ and $t =_{E_\mrd} t'$    in  $O(2^n)$  steps.
By Theorem \ref{thm:three-implications}(b) we have  
$s =_{E_{\mrd}} t$ if and only if $s' =_{E_\mrd} t'$ if and only if $s' =_\rmAC t'$. The latter $\rmAC$-equivalences
can be decided by Lemma \ref{lm:AC-equiv-decidable} with a number of steps linear in the sizes of  $s'$ and $t'$.

The proof of the third claim can be obtained from the proof of the second one by replacing $=_{E_\mrd}$ and Theorem \ref{thm:three-implications}(b) by $=_{E_\midrd}$ and Theorem \ref{thm:three-implications}(c), respectively.
\end{proof}

Unfortunately, as is well-known from the theory of terms and polynomials over  $\mathbb{N}$, the above decision algorithm needs exponentially many steps (in the size of the given terms). This also applies here, using only right-distributivity, The above algorithm rests on the construction of a polynomial equivalent to a given term.
Now, let  $t_1 = \hat{1} \hat{+} \hat{1}$ and inductively, 
$t_{n+1} = (\hat{1} \hat{+} \hat{1}) \hat{\times} t_n$ for each $n \geq 1$. With our algorithm, we can transform  $t_n$  into a sum term with  $2^n$ summands of  $\hat{1}$. Since  $t_n$  has size $3 + 4 \times (n-1)$, this algorithm employs  $O(2^n)$  many steps.

Next, we wish to further investigate effective procedures for constructing polynomial terms as in Theorem \ref{thm:three-effective-representations}.

Given $t \in \T(X)$, perform the following algorithm for constructing a polynomial term  
$t' \in \rmPT(X)$  with  $t =_{E_\mrd} t'$. 

(1) Eliminate additions of $\hat{0}$ and multiplications with $\hat{0}$ or $\hat{1}$, as in the proof of Theorem \ref{thm:three-representations}.

(2) Replace a subterm of the form  
$(t_1 \hat{\times} t_2) \hat{\times} t_3$ by $t_1 \hat{\times} (t_2 \hat{\times} t_3)$.

(3) Replace a subterm of the form
$(t_1 \hat{+} t_2) \hat{\times} t_3$ by $(t_1 \hat{\times} t_3) \hat{+} (t_2 \hat{\times} t_3)$.

(4) Iterate (1), (2) and (3) as far as possible. 

We will see that this process terminates (cf. Lemma \ref{lm:termination}). It transforms the term $t$ into a simple term $t'$ which has no subterms of the form  $(t_1 \hat{\times} t_2) \hat{\times} t_3$ or $(t_1 \hat{+} t_2) \hat{\times} t_3$.
By Lemma \ref{lm:rd-polynomial} shown below, then  $t' \in \rmPT(X)$. Since these reductions imply $=_{E_{\mrd}}$-congruency, we obtain $t =_{E_{\mrd}} t'$ as claimed.

\begin{lemma}\label{lm:rd-polynomial}\rm
Let  $t \in \rmST(X)$. If $t$ has neither subterms of the form  
$(t_1 \hat{\times} t_2) \hat{\times} t_3$ nor subterms of the form $(t_1 \hat{+} t_2) \hat{\times} t_3$, 
then $t \in \rmPT(X)$.
\end{lemma}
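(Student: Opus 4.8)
The plan is to prove this by induction on $\size(t)$, following the structure of simple terms via their sum-product and product-sum decompositions. The statement is a structural characterization: a simple term with no subterm of the shape $(t_1 \hat{\times} t_2) \hat{\times} t_3$ (left-nested product) and no subterm of the shape $(t_1 \hat{+} t_2) \hat{\times} t_3$ (a sum used as a left multiplicative factor) is automatically a polynomial term in the sense of Definition~\ref{def:polynomial-terms}. So the induction must verify that one of the three closure conditions (a)--(c) defining $\rmPT(X)$ applies.

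First I would handle the base cases: if $t \in X \cup \{\hat{0}, \hat{1}\}$, then $t \in \rmPT(X)$ by condition (a) (note $\hat{0},\hat{1}$ are explicitly allowed and single variables are monomial terms). For the inductive step, since $t$ is simple and not a leaf, $t$ is either a sum term or a product term. If $t$ is a sum term, take its sum-product decomposition $t = t_1 \hat{+} \ldots \hat{+} t_n$ (Observation~\ref{obs:unique-decomposition}(a)); each $t_i$ is a product term or lies in $X \cup \{\hat{1}\}$. Each $t_i$ is a subterm of $t$, hence also contains no forbidden subterms, and $\size(t_i) < \size(t)$, so by the induction hypothesis $t_i \in \rmPT(X)$, and moreover $t_i \neq \hat{0}$. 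Applying condition (b) of Definition~\ref{def:polynomial-terms} repeatedly (building up $t_1 \hat{+} t_2$, then $(t_1 \hat{+} t_2) \hat{+} t_3$, etc.) gives $t \in \rmPT(X)$. A small care point: the decomposition groups parentheses in some fixed way, but condition (b) closes $\rmPT(X)$ under $\hat{+}$ for any two nonzero members, so whatever the actual parenthesization of $t$ is, it is reachable.

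The product-term case is where the hypotheses really bite. Let $t$ be a product term with product-sum decomposition $t = t_1 \hat{\times} \ldots \hat{\times} t_n$ (Observation~\ref{obs:unique-decomposition}(b)), so each $t_i$ is a sum term or lies in $X$. The key observation is that because $t$ has no subterm of the form $(u_1 \hat{\times} u_2) \hat{\times} u_3$, the product must be right-associated; and because $t$ has no subterm of the form $(u_1 \hat{+} u_2) \hat{\times} u_3$, no left factor in this right-associated chain can be a sum term. Concretely: writing $t$ in the normalized right-nested shape $t_1 \hat{\times}(t_2 \hat{\times}(\cdots \hat{\times} t_n))$, each prefix $t_i \hat{\times}(\ldots)$ has $t_i$ as its left factor of a $\hat{\times}$; the second prohibition forces $t_i \notin \{\text{sum terms}\}$, hence $t_i \in X$, for every $i$ — \emph{except possibly} the rightmost factor $t_n$, which is never a left factor of any $\hat{\times}$. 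I would argue carefully that the absence of left-nested products implies $t$ actually equals the right-associated term $t_1 \hat{\times}(t_2 \hat{\times}(\cdots))$ (not merely equals it up to associativity — it is literally that parenthesization), since any other parenthesization of a product of length $\geq 3$ contains a left-nested subproduct. Then $t_1, \ldots, t_{n-1} \in X$, so $t_1 \hat{\times} \ldots \hat{\times} t_{n-1}$ is a monomial term, and $t_n$ is either in $X$ (making all of $t$ a monomial term, hence in $\rmPT(X)$ by (a)) or a sum term which, as a subterm of $t$, has no forbidden subterms and is smaller, so $t_n \in \rmPT(X)$ with $t_n \notin \{\hat{0},\hat{1}\}$ by induction. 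Applying condition (c) — with $s = t_1 \hat{\times} \ldots \hat{\times} t_{n-1}$ a monomial term and the recursive structure — and peeling off monomial factors one at a time, yields $t \in \rmPT(X)$.

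The main obstacle I anticipate is the bookkeeping around parenthesization in the product case: translating ``no subterm of the form $(t_1 \hat{\times} t_2) \hat{\times} t_3$'' into the statement that $t$ is \emph{syntactically} right-associated, and then threading condition (c) of Definition~\ref{def:polynomial-terms} through that right-associated chain (condition (c) attaches a single monomial term on the left, so one recovers $t$ by successively prepending the variables $t_{n-1}, t_{n-2}, \ldots, t_1$ to $t_n$). The sum case and the base cases are routine; the product case requires being precise that ``product-sum decomposition'' together with the two no-subterm hypotheses pins down the parenthesization enough to apply (a) or (c).
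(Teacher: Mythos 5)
Your proof is correct, and its core mechanism is the same as the paper's: induction on $\size(t)$ with a case distinction on the root operator, using simplicity together with the two no-subterm hypotheses to force the left factor of any $\hat{\times}$ into $X$. The difference is in the amount of machinery. The paper never passes through the sum-product or product-sum decompositions and never needs your global claim that the product is syntactically right-associated: it simply writes $t=u_1\hat{\times}u_2$ and observes that, by the hypotheses, $u_1$ is neither a sum term nor a product term (and, since $t$ is simple, $u_1\notin\{\hat{0},\hat{1}\}$), hence $u_1\in X$; the induction hypothesis applied to the single subterm $u_2$ then lets Definition~\ref{def:polynomial-terms}(c) close the case, and the sum case is handled the same way with $t=u_1\hat{+}u_2$ and condition (b). So the ``main obstacle'' you anticipated --- pinning down the parenthesization of the whole product chain and threading condition (c) through it --- evaporates if you only peel off the root: the right-comb structure is recovered implicitly by the recursion rather than proved up front. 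Your right-associativity argument and the iterated use of (b) and (c) along the decompositions are sound (and your observation that (c) is applied with a single variable as the monomial factor is exactly what makes it work), but they are extra bookkeeping; the one-operator-at-a-time induction buys a two-line product case.
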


\begin{proof} We proceed by induction on the size of $t$. For this, let $t \in \rmST(X)$ which satisfies the condition of the lemma.

If $t\in \{\hat{0},\hat{1}\}$, then the statement is clear by Definition \ref{def:polynomial-terms}(1).

Let $t=u_1\hp u_2$. Then both  $u_1$ and  $u_2$
are in $\rmST(X)$ and they satisfy the condition of the lemma. Hence, by induction hypothesis,  $u_1,u_2\in  \rmPT(X)$ and thus, by Definition 
\ref{def:polynomial-terms}(2), $t \in \rmPT(X)$.

Lastly, let $t=u_1\hti u_2$. Since $t$ satisfies the condition of the lemma, $u_1$ is neither a sum term nor a product term, i.e., $u_1\in X$. Moreover,  by induction hypothesis, $u_2\in  \rmPT(X)$. Then
by Definition  \ref{def:polynomial-terms}(3), $t \in \rmPT(X)$.
\end{proof}

Next, we wish to show that in the above algorithm the replacements of (1), (2) and (3) can be done in any order and will terminate. Moreover, then we will obtain a \emph{unique} polynomial term, which can therefore be considered as the normal form of the term  $t$,
as will shown below in Theorem \ref{thm:polynomial-normal-form}.

For this, we recall some concepts from abstract reduction systems \cite[Sec.~2]{baanip98} and term rewriting  \cite[Sec.~4-6.]{baanip98}, see also \cite{klo92}. 

Let $A$ be a set and $\to$ a binary relation on $A$. An element $a\in A$ is a \emph{normal form} (with respect to $\to$) if there does not exist $b\in A$ such that $a\to b$.
For every $a,b\in A$, if $a\to^* b$ and $b$ is a normal form, then  \emph{$b$ is a normal form of $a$} (with respect to $\rightarrow$). We say that $\to$ is \emph{terminating } if there does not exist a family $(a_n\mid n\in\mathbb{N})$ of elements of 
$A$ such that $a_n \,\to\, a_{n+1}$ for each $n\in \mathbb{N}$. Moreover, $\to$ is \emph{confluent} if, for every $a,b_1,b_2\in A$ with $a\to^* b_1$ and $a\to_R^* b_2$, there exists $c\in A$ such that 
$b_1\to^* c$ and $b_2\to^* c$.

Now let $\Sigma$ be an arbitrary signature. 
We define the {\em set of positions} of terms by a mapping $\pos$ from $\T_\Sigma(Z)$ to the collection of finite subsets of $\mathbb{N}_+^*$ such that 
\begin{compactitem}
\item[(i)] for each $t \in (\Sigma^{(0)}\cup Z)$ let $\pos(t)=\{\varepsilon\}$ and 
\item[(ii)] for every $t=\sigma(t_1,\ldots,t_k)$ with $k \in \mathbb{N}_+$, $\sigma\in \Sigma^{(k)}$, and $t_1,\ldots,t_k\in \T_\Sigma(Z)$, let $\pos(t)=\{\varepsilon\}\cup\{iv \mid i \in [k], v \in \pos(t_i)\}$. 
\end{compactitem}

Then, let $t,u \in \T_\Sigma(Z)$ and $w \in \pos(t)$.
We define  the {\em subterm of $t$ at $w$}, denoted by $t|_w$ and the {\em replacement of the subterm of $t$ at $w$ by $u$}, denoted by $t[u]_w$, by structural induction as follows: 
\begin{compactitem}
\item[(i)] if $t \in (\Sigma^{(0)} \cup Z)$, then  $t|_\varepsilon = t$, and 
$t[u]_\varepsilon=u$, 
\item[(ii)] for every $t=\sigma(t_1,\ldots,t_k)$ with $k \in \mathbb{N}_+$, $\sigma \in \Sigma^{(k)}$, and $t_1,\ldots,t_k \in \T_\Sigma(Z)$,
  we let  $t|_\varepsilon = t$ and $t[u]_\varepsilon = u$, and for every $i \in [k]$ and $w' \in \pos(t_i)$, we define 
\[ t|_{iw'}=t_i|_{w'} \text{\ \ and \ \ } t[u]_{iw'}=\sigma(t_1,\ldots,t_{i-1},t_i[u]_{w'}, t_{i+1}, \ldots, t_k) .
\]
\end{compactitem}

By a \emph{substitution} we mean a mapping $\varphi: Z\to \T_\Sigma(Z)$. Such a mapping $\varphi$ extends uniquely to a $\Sigma$-algebra homomorphism from
$\mathsf{T}_\Sigma(Z)$ to itself. We denote this extension also by $\varphi$. The name substitution is due to the fact that for each $t \in \T_\Sigma(Z)$, the term $\varphi(t)$ is obtained by substituting $\varphi(z)$ for $z$ in $t$, for each $z\in Z$.

A  \emph{term rewriting system} over $\Sigma$ is a set $R$ of $\Sigma$-identities over $Z$ (cf. Section \ref{sec:preliminaries}) such that for each $(\ell,r)\in R$, all variables of $r$ occur in $\ell$.  We call the identities in $R$ \emph{rules} and we write a rule $(\ell,r)$ in the form $\ell \to r$. The reduction relation induced by $R$ on $\T_\Sigma(Z)$ (the adaptation of the corresponding concept in Section \ref{sec:preliminaries} for $E=R$ and $A=\T_\Sigma(Z)$) is the binary  relation $\Rightarrow_R$ on $\T_\Sigma(Z)$ defined as follows:  for every $t_1,t_2 \in \T_\Sigma(Z)$, we let $t_1 \Rightarrow_R t_2$ if there exist a position $w \in \pos(t_1)$, a rule $\ell \to r$ in $R$,  a substitution $\varphi: Z \to \T_\Sigma(Z)$ such that
$t_1|_w = \varphi(\ell)$ and $t_2 = t_1[\varphi(r)]_w$.

A term rewriting system $R$ is terminating  (respectively, confluent) if the relation $\Rightarrow_R$ is terminating  (respectively, confluent).

\begin{theorem}\label{thm:terminating-and-confluent}
\cite[Lm.~2.18]{baanip98} Let $R$ be a term rewriting system which is terminating and confluent.  Then each element of $\T_\Sigma(Z)$ has a unique normal form. \hfill$\Box$
\end{theorem}

Now we consider again how to construct, for a given term $t\in \T(X)$, a polynomial term  
$t' \in \rmPT(X)$  with  $t =_{E_\mrd} t'$. 

For this, we use a term rewriting system over the signature $\Sigma_{\mathrm{sb}}\cup X$, where we consider the elements of $X$ as nullary symbols.
We write $\T(X, Z)$ for $\T_{\Sigma_{\mathrm{sb}}\cup X}(Z)$, hence $\T(X)\subset \T(X,Z)$.

We consider the following term rewriting system
\begin{align*}
\cR=\{ & \rho_1\,:\,\hat{0} \hat{+} z \to z,\ \ \rho_2\,:\,z \hat{+} \hat{0} \to z,\  \\ 
& \rho_3\,:\,\hat{1} \hat{\times} z \to z,\ \ \rho_4\,:\,z \hat{\times} \hat{1} \to z \ \ \rho_5\,:\,\hat{0} \hat{\times} z \to \hat{0},\ \ \rho_6\,:\,z \hat{\times} \hat{0} \to \hat{0}, \\
& \rho_7\,: (z_1 \hat{\times}  z_2) \hat{\times} z_3 \to  z_1 \hat{\times}  (z_2 \hat{\times} z_3) \\
& \rho_8\,:\,(z_1 \hat{+} z_2)\hat{\times} z_3 \to (z_1 \hat{\times} z_3) \hat{+} (z_2\hat{\times} z_3)
 \}\enspace.
\end{align*}

Clearly, for every $s,t\in \T(X)$, the relation $s\Rightarrow^*_\cR t$ implies that $s=_{E_{\mrd}} t$. We will use this fact without further reference.
We will show that $\cR$ is terminating and confluent. 

We begin with the proof of termination. For the proof we employ the method 
based on reduction order and monotone polynomial interpretation, see e.g. Sections 5.2 and 5.3 of \cite{baanip98}. Since we do not need the full power of that method,  we do not recall all concepts and results in their general form.

Our aim is to define a mapping 
$|\;.\;|: \T(X, Z)\to \mathbb{N}\setminus\{0,1\}$ such that, 
for every $s,t \in \T(X, Z)$, if $s\Rightarrow_\cR t$, then $|s| > |t|$.
Then, for every $s,t \in \T(X, Z)$, if  $s \Rightarrow^+_\cR t$, then 
 $|s| > |t|$, hence there does not exist an infinite sequence $s_1 \Rightarrow_\cR s_2 \Rightarrow_\cR \ldots$ of reductions. Hence $\cR$ is terminating.

We define $|\;.\;|$ by induction as follows: for every $y\in X\cup Z\cup\{\hat{0},\hat{1}\}$, we let $|y|=2$, and for terms $t_1, t_2\in \T(X)$, we let $|t_1\hat{+}t_2|=|t_1|+|t_2|$ and 
$|t_1\hat{\times} t_2|=|t_1|^2|t_2|$. 

Next we show two properties of the mapping $|\;.\;|$. In the proof we will use the fact that $|t|>1$ for each $t\in \T(X)$ without any reference.

\begin{lemma}\rm \label{lm:prec-compatible-R-substitution}\hspace{1cm}
\begin{compactitem}
\item[(a)] For every rule
 $\ell\to r$ of $\cR$ and substitution $\varphi: Z \to \T(X, Z)$, we have $|\varphi(\ell)| > |\varphi(r)|$.
 \item[(b)] For every $s,t,u \in \T(X, Z)$, if $|s| > |t|$, then $|u\hat{+}s| > |u\hat{+}t|$, $|s\hat{+}u| > |t\hat{+}u|$, $|u\hat{\times}s| > |u\hat{\times}t|$, and $|s\hat{\times}u| >  |t\hat{\times}u|$.
\end{compactitem}
\end{lemma}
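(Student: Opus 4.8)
The plan is to prove both parts by direct inspection, relying throughout on one elementary observation used tacitly: $|v| \ge 2$ for every $v \in \T(X,Z)$. This is immediate by induction on the structure of $v$ from the definition of $|\;.\;|$, since every leaf receives weight $2$, a sum $|t_1|+|t_2|$ of two weights $\ge 2$ is $\ge 4$, and $|t_1|^2|t_2| \ge 4$ whenever $|t_1|,|t_2| \ge 2$.

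For part (b) I would simply note that $|\;.\;|$ is assembled from the monotone operations of addition and multiplication on positive integers. If $|s|>|t|$, then $|u\hat{+}s| = |u|+|s| > |u|+|t| = |u\hat{+}t|$, and symmetrically $|s\hat{+}u|>|t\hat{+}u|$; for multiplication, $|u\hat{\times}s| = |u|^2|s| > |u|^2|t| = |u\hat{\times}t|$ because $|u|^2 > 0$, and $|s\hat{\times}u| = |s|^2|u| > |t|^2|u| = |t\hat{\times}u|$ because $0 < |t| < |s|$ forces $|t|^2 < |s|^2$. That disposes of (b).

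For part (a) I would run through the eight rules of $\cR$, fixing an arbitrary substitution $\varphi$ and writing $a = |\varphi(z_1)|$, $b = |\varphi(z_2)|$, $c = |\varphi(z_3)|$, $d = |\varphi(z)|$ (all $\ge 2$). Rules $\rho_1$--$\rho_6$ are trivial: the respective left-hand weights are $d+2$, $d+2$, $4d$, $2d^2$, $4d$, $2d^2$, each strictly larger than the corresponding right-hand weight, namely $d$, $d$, $d$, $d$, $2$, $2$. The only genuine computations are for $\rho_7$ and $\rho_8$. For $\rho_7$ the left side evaluates to $(a^2b)^2c = a^4b^2c$ and the right side to $a^2(b^2c) = a^2b^2c$, so the claim reduces to $a^4 > a^2$, i.e. $a^2>1$. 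For $\rho_8$ the left side is $(a+b)^2c$ and the right side is $a^2c + b^2c = (a^2+b^2)c$, so the claim reduces to $(a+b)^2 > a^2 + b^2$, i.e. $2ab > 0$. Both hold since $a,b \ge 2$.

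I do not expect any real obstacle. The one point worth flagging is rule $\rho_7$: re-associating a product strictly decreases the weight only because $\hat{\times}$ is interpreted by the asymmetric term $|t_1|^2|t_2|$ squaring the weight of the left factor (with the plain product $|t_1||t_2|$ the two sides of $\rho_7$ would merely have equal weight). Likewise, assigning every leaf the weight $2$, strictly above $1$, is exactly what makes $\rho_5$, $\rho_6$ and the squaring steps in $\rho_7$, $\rho_8$ come out strict. So the whole argument is essentially a verification that these design choices work, which the computations above confirm.
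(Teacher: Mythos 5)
Your proof is correct and follows essentially the same route as the paper: part (b) by direct monotonicity of addition and multiplication on positive integers, and part (a) by exactly the same weight computations for $\rho_7$ (reducing to $a^4>a^2$) and $\rho_8$ (reducing to $(a+b)^2>a^2+b^2$), with the paper simply omitting the easy rules $\rho_1$--$\rho_6$ that you verify explicitly. Your preliminary observation that $|v|\ge 2$ for all $v\in\T(X,Z)$ is the same fact the paper records as $|t|>1$, so there is no substantive difference.
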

\begin{proof} 
The proof for rules  $\rho_1-\rho_6$ is omitted.
For rules $\rho_7$ and $\rho_8$,  let $\varphi(Z)\to \T(X, Z)$ be a substitution with $\varphi(z_i)=t_i$ for each $i\in\{1,2,3\}$. Then
\begin{align*}
|\varphi((z_1\hat{\times}z_2)\hat{\times} z_3)| =  |(t_1\hat{\times}t_2)\hat{\times} t_3|= |t_1|^4|t_2|^2|t_3| 
>   |t_1|^2|t_2|^2|t_3| = 
|t_1\hat{\times} (t_2\hat{\times}t_3)|=|\varphi(z_1\hat{\times} (z_2\hat{\times}z_3))|,
\end{align*}
and
\begin{align*}
& |\varphi((z_1\hat{+}z_2)\hat{\times} z_3)| =  |(t_1\hat{+}t_2)\hat{\times} t_3|= |t_1\hat{+}t_2|^2|t_3| =  (|t_1|+|t_2|)^2|t_3| 
>  \\ & |t_1|^2|t_3|+|t_2|^2|t_3| = 
|(t_1\hat{\times} t_3)\hat{+}(t_2\hat{\times} t_3)| = |\varphi((z_1\hat{\times} z_3)\hat{+}(z_2\hat{\times} z_3))|,
\end{align*}
which proves, respectively,  that (a) holds for rules $\rho_7$ and $\rho_8$. 

The proof of (b) is obvious by the definition of $|\;.\;|$.
\end{proof}

\begin{lemma}\label{lm:termination}\rm
The term rewriting system $\cR$ is terminating.
\end{lemma}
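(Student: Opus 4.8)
The plan is to exploit the polynomial interpretation $|\;.\;|$ and the two properties established in Lemma~\ref{lm:prec-compatible-R-substitution}. The key claim is that one reduction step strictly decreases the value of $|\;.\;|$; that is, for all $s,t \in \T(X,Z)$, if $s \Rightarrow_\cR t$, then $|s| > |t|$. Once this is shown, termination is immediate: an infinite sequence $s_1 \Rightarrow_\cR s_2 \Rightarrow_\cR \ldots$ would yield an infinite strictly decreasing sequence $|s_1| > |s_2| > \ldots$ of natural numbers, which is impossible since $(\mathbb{N},<)$ is well-founded. Hence no such infinite reduction sequence exists and $\cR$ is terminating.

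To prove the key claim, I would proceed by induction on the position $w \in \pos(s)$ at which the rewrite rule is applied, equivalently by structural induction on $s$. Suppose $s \Rightarrow_\cR t$ because there are a position $w$, a rule $\ell \to r$ in $\cR$, and a substitution $\varphi : Z \to \T(X,Z)$ with $s|_w = \varphi(\ell)$ and $t = s[\varphi(r)]_w$. In the base case $w = \varepsilon$, we have $s = \varphi(\ell)$ and $t = \varphi(r)$, so $|s| > |t|$ is exactly Lemma~\ref{lm:prec-compatible-R-substitution}(a). In the inductive case $w = iw'$ with $i \in \{1,2\}$ and $w' \in \pos(s_i)$, the term $s$ is not a leaf, so $s = s_1 \hat{+} s_2$ or $s = s_1 \hat{\times} s_2$, and $t$ is obtained from $s$ by replacing $s_i$ with a term $t_i$ such that $s_i \Rightarrow_\cR t_i$ at position $w'$. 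By the induction hypothesis, $|s_i| > |t_i|$, and then one of the four inequalities in Lemma~\ref{lm:prec-compatible-R-substitution}(b) (according to whether the root of $s$ is $\hat{+}$ or $\hat{\times}$ and whether $i = 1$ or $i = 2$) yields $|s| > |t|$. This completes the induction and the proof of the claim.

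I do not expect a real obstacle here: the substantive computations have already been carried out in Lemma~\ref{lm:prec-compatible-R-substitution} (the strict decrease under each rule and the strict monotonicity of $|\;.\;|$ in each argument), and what remains is the routine observation that a simplification order built from a strictly monotone polynomial interpretation is closed under contexts, so a step at an arbitrary position still decreases the value. The only point to be slightly careful about is the case analysis in the inductive step, matching the root symbol and the argument position of the redex with the correct clause of Lemma~\ref{lm:prec-compatible-R-substitution}(b); all four combinations are covered. One could alternatively phrase the argument abstractly by invoking the general theorem that a reduction order induced by a monotone polynomial interpretation proves termination (cf.\ \cite[Sec.~5.2--5.3]{baanip98}), but the direct induction above is short and self-contained and is what I would include.
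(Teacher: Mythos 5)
Your proposal is correct and follows essentially the same route as the paper: the paper also derives termination from the strict decrease of the interpretation $|\;.\;|$ under a single $\Rightarrow_\cR$-step, obtained from Lemma~\ref{lm:prec-compatible-R-substitution}(a) (rule decrease) and (b) (strict monotonicity in each argument), together with well-foundedness of $\mathbb{N}$. Your explicit induction on the position of the redex merely writes out the context-closure step that the paper leaves implicit.
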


\begin{proof} By Lemma \ref{lm:prec-compatible-R-substitution}, it follows that for every $s,t \in \T(X, Z)$, if $s\Rightarrow_\cR t$, then $|s| > |t|$. Hence $\cR$ is terminating.
\end{proof}

Now we turn to the proof of confluency. For this, we recall the concept of a critical pair, cf. \cite[Def.~6.2.1]{baanip98}. Let again $\Sigma$ be an arbitrary signature. For the definition of the critical pair,  we need the following concepts.

Let $t_1, t_2 \in \T_\Sigma(Z)$. A substitution $\varphi:Z \to \T_\Sigma(Z)$ is a \emph{unifier of $t_1$ and $t_2$} if $\varphi(t_1) = \varphi(t_2)$. A unifier $\varphi$ of $t_1$ and $t_2$ is a \emph{most general unifier of $t_1$ and $t_2$} (for short: mgu) if, for each unifier $\psi$ of $t_1$ and $t_2$ there exists a substitution $\theta:Z \to \T_\Sigma(Z)$ such that $\psi = \theta \circ \varphi$. 

Let $\ell_1 \to r_1$ and $\ell_2 \to r_2$ be two rules of some term rewrite system $R$ whose variables have been renamed such that $\ell_1$ and $\ell_2$ have no common variables.  If there exists a $w \in \pos(\ell_1)$ such that $\ell_1(w) \not\in Z$, and there exists a most general unifier $\varphi: Z \to \T_\Sigma(Z)$ of $\ell_1|_w$ and $\ell_2$, then we say that the two rules \emph{overlap}. In this case these objects determine the \emph{critical pair} $\langle\varphi(r_1), \varphi(\ell_1)[\varphi(r_2)]_w\rangle$ of $R$.

\begin{proposition}\label{prop:no-critical-pair-implies-loc-confluent}\rm (cf. \cite[Cor.~6.2.5]{baanip98}) A terminating term  rewriting system $R$ is confluent iff for each  $\langle t_1,t_2\rangle$  of its critical pairs there exists a $t \in \T_\Sigma(Z)$ such that $t_1 \Rightarrow_R^* t$ and  $t_2 \Rightarrow_R^* t$. \hfill$\Box$
\end{proposition}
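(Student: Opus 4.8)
The plan is to deduce this from the classical Critical Pair Lemma of Knuth and Bendix together with Newman's Lemma. Since $R$ is terminating, Newman's Lemma \cite{baanip98} shows that $R$ is confluent if and only if it is \emph{locally confluent}: whenever $t \Rightarrow_R t_1$ and $t \Rightarrow_R t_2$, there exists $t' \in \T_\Sigma(Z)$ with $t_1 \Rightarrow_R^* t'$ and $t_2 \Rightarrow_R^* t'$. So it suffices to prove that local confluence of $R$ is equivalent to joinability of all its critical pairs, and this latter equivalence no longer uses termination. The forward direction is immediate: if $\langle t_1, t_2 \rangle$ is the critical pair determined by an overlap of the (variable-renamed) rules $\ell_1 \to r_1$ and $\ell_2 \to r_2$ at a non-variable position $w$ of $\ell_1$ with most general unifier $\varphi$, then $\varphi(\ell_1) \Rightarrow_R t_1$ by applying the first rule at the root and $\varphi(\ell_1) \Rightarrow_R t_2$ by applying the second rule at $w$, so local confluence at $\varphi(\ell_1)$ already yields a common reduct.

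For the converse, I would assume that every critical pair is joinable and analyse an arbitrary peak $t \Rightarrow_R t_1$, $t \Rightarrow_R t_2$, where $t_1$ arises from $t$ by applying $\ell_1 \to r_1$ at a position $p_1$ under a substitution $\sigma_1$, and $t_2$ by applying $\ell_2 \to r_2$ at $p_2$ under $\sigma_2$. The case distinction is on the relative position of $p_1$ and $p_2$. If $p_1$ and $p_2$ are parallel, the two redexes occupy disjoint subterms of $t$, so rewriting $t_1$ at $p_2$ and $t_2$ at $p_1$ produces one and the same term. Otherwise one of the positions is a prefix of the other, say $p_2 = p_1 q$; here one distinguishes whether $q$ lies below a variable occurrence of $\ell_1$ or at a non-variable position of $\ell_1$ (allowing the two rules to be renamed copies of one another, so that self-overlaps are covered). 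In the variable case the $p_2$-redex is contained in $\sigma_1(z)$ for some variable $z$ of $\ell_1$, and one closes the diagram by rewriting this subterm in every occurrence that $r_1$ reproduces, using only that $\Rightarrow_R$ is stable under substitutions and under contexts. In the non-variable case $\ell_1|_q$ and $\ell_2$ unify, the divergence at $p_1$ is exactly a context-and-substitution instance of the critical pair coming from this overlap, and joinability of that critical pair transfers to joinability of the instance by the same stability property. Combining this with Newman's Lemma gives the proposition.

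The part I expect to be the main obstacle is the bookkeeping in the prefix case, especially the variable overlap: one has to make precise what ``below a variable occurrence of $\ell_1$'' means, to track the possibly several (or zero) copies of $\sigma_1(z)$ introduced by $r_1$, and to prove the auxiliary stability lemma that joinability of $\langle u_1, u_2 \rangle$ entails joinability of $\langle \psi(u_1), \psi(u_2) \rangle$ and of $\langle u[\psi(u_1)]_v, u[\psi(u_2)]_v \rangle$ for all substitutions $\psi$, terms $u$, and positions $v$. None of this is deep, but it is precisely what makes the standard proofs lengthy, which is why here we simply invoke \cite[Cor.~6.2.5]{baanip98} and, for further details, refer also to \cite{klo92}.
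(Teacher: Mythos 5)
Your sketch is correct: it is the standard Knuth--Bendix Critical Pair Lemma combined with Newman's Lemma, which is exactly the argument behind the result the paper invokes. The paper gives no proof of this proposition at all -- it is stated as a known fact with the reference \cite[Cor.~6.2.5]{baanip98} -- and since you likewise defer the detailed bookkeeping of the variable-overlap case to that same source, your treatment is essentially the same as the paper's.
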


Now we return to our particular term rewriting system $\cR$ and show the following result.

\begin{lemma}\label{lm:local-confluence}\rm\rm$\,$
The term rewriting system $\cR$ is confluent.
\end{lemma}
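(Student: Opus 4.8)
The plan is to use the critical-pair criterion. By Lemma~\ref{lm:termination}, $\cR$ is terminating, so by Proposition~\ref{prop:no-critical-pair-implies-loc-confluent} it suffices to check that every critical pair $\langle t_1,t_2\rangle$ of $\cR$ is joinable, i.e.\ that there is a term $t$ with $t_1\Rightarrow^*_\cR t$ and $t_2\Rightarrow^*_\cR t$; confluence of $\cR$ then follows.

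First I would enumerate all critical pairs. The only positions in the left-hand sides of the rules of $\cR$ that carry a binary operation symbol are the root of every left-hand side and, in addition, the left child of $\rho_7$ (labelled $\hat{\times}$) and of $\rho_8$ (labelled $\hat{+}$); no left-hand side is a single constant $\hat{0}$ or $\hat{1}$. Hence an overlap of two variable-renamed rules can occur only at the root of the ``upper'' rule --- which forces the two left-hand sides to have the same root symbol --- or by unifying a left-hand side into the left child of $\rho_7$ or of $\rho_8$. A routine case distinction along these lines produces the complete (finite) list of critical pairs.

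Next I would check joinability. Root overlaps among the ``simplification'' rules give constant terms: $\hat{0}\hat{+}\hat{0}$ from $\rho_1,\rho_2$; $\hat{1}\hat{\times}\hat{1}$ from $\rho_3,\rho_4$; $\hat{1}\hat{\times}\hat{0}$ from $\rho_3,\rho_6$; $\hat{0}\hat{\times}\hat{1}$ from $\rho_4,\rho_5$; $\hat{0}\hat{\times}\hat{0}$ from $\rho_5,\rho_6$; in each case both components of the critical pair rewrite in one step to the common reduct $\hat{0}$ or $\hat{1}$. Root overlaps of $\rho_4$ or $\rho_6$ with $\rho_7$ or $\rho_8$ arise when the third factor is $\hat{1}$ or $\hat{0}$, giving the terms $(z_1\hat{\times}z_2)\hat{\times}\hat{1}$, $(z_1\hat{+}z_2)\hat{\times}\hat{1}$, $(z_1\hat{\times}z_2)\hat{\times}\hat{0}$, $(z_1\hat{+}z_2)\hat{\times}\hat{0}$; for example $(z_1\hat{+}z_2)\hat{\times}\hat{1}$ reduces by $\rho_4$ to $z_1\hat{+}z_2$ and by $\rho_8$ to $(z_1\hat{\times}\hat{1})\hat{+}(z_2\hat{\times}\hat{1})$, which further reduces by two applications of $\rho_4$ to $z_1\hat{+}z_2$; while $(z_1\hat{+}z_2)\hat{\times}\hat{0}$ reduces by $\rho_6$ to $\hat{0}$, and by $\rho_8$, then $\rho_6$ twice, then $\rho_1$, again to $\hat{0}$; the cases with $\rho_7$ are analogous, using $\rho_4$ resp.\ $\rho_6$. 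Overlaps of $\rho_3,\ldots,\rho_6$ inside the left child of $\rho_7$ produce $(\hat{1}\hat{\times}z_2)\hat{\times}z_3$, $(z_1\hat{\times}\hat{1})\hat{\times}z_3$, $(\hat{0}\hat{\times}z_2)\hat{\times}z_3$, $(z_1\hat{\times}\hat{0})\hat{\times}z_3$, each joinable in at most two steps --- to $z_2\hat{\times}z_3$ or $z_1\hat{\times}z_3$ using $\rho_3$, or to $\hat{0}$ using $\rho_5$ and $\rho_6$ --- and overlaps of $\rho_1,\rho_2$ inside the left child of $\rho_8$ produce $(\hat{0}\hat{+}z_2)\hat{\times}z_3$ and $(z_1\hat{+}\hat{0})\hat{\times}z_3$, joinable to $z_2\hat{\times}z_3$ resp.\ $z_1\hat{\times}z_3$ by $\rho_5$ followed by $\rho_1$ resp.\ $\rho_2$.

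The only critical pairs needing more than one or two reduction steps on a side are the self-overlap of $\rho_7$ inside its own left child, with critical pair $\langle (z_1\hat{\times}z_2)\hat{\times}(z_3\hat{\times}z_4),\ (z_1\hat{\times}(z_2\hat{\times}z_3))\hat{\times}z_4\rangle$, both components reducing to $z_1\hat{\times}(z_2\hat{\times}(z_3\hat{\times}z_4))$ (by one more $\rho_7$-step, resp.\ two more), and the overlap of $\rho_8$ inside the left child of $\rho_7$, with critical pair $\langle (z_1\hat{+}z_2)\hat{\times}(z_3\hat{\times}z_4),\ ((z_1\hat{\times}z_3)\hat{+}(z_2\hat{\times}z_3))\hat{\times}z_4\rangle$, both components reducing --- by $\rho_8$, resp.\ by $\rho_8$ and then $\rho_7$ twice --- to $(z_1\hat{\times}(z_3\hat{\times}z_4))\hat{+}(z_2\hat{\times}(z_3\hat{\times}z_4))$. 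Having verified that all critical pairs of $\cR$ are joinable and recalling that $\cR$ is terminating, Proposition~\ref{prop:no-critical-pair-implies-loc-confluent} yields that $\cR$ is confluent. The main obstacle is not conceptual but organizational: one must be careful to generate \emph{every} overlap, in particular all those matching into the left children of $\rho_7$ and $\rho_8$, and to compute each critical pair correctly; once the list is complete, every join is a short, mechanical computation.
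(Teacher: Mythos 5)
Your proof is correct and follows essentially the same route as the paper: termination (Lemma~\ref{lm:termination}) plus joinability of all critical pairs, combined via Proposition~\ref{prop:no-critical-pair-implies-loc-confluent}. You are in fact more complete than the paper's version, which displays only the two non-trivial overlaps (the self-overlap of $\rho_7$ and the overlap of $\rho_8$ into the left child of $\rho_7$) and leaves the remaining cases to the reader; your enumeration of the overlap positions and your join of the $\rho_7/\rho_8$ critical pair to $(z_1\hat{\times}(z_3\hat{\times}z_4))\hat{+}(z_2\hat{\times}(z_3\hat{\times}z_4))$ are both accurate.
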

\begin{proof} By Lemma \ref{lm:termination}, it suffices to show that all critical pairs of $\cR$ are joinable in the sense of Proposition \ref{prop:no-critical-pair-implies-loc-confluent}.

In the following table we show some critical pairs of $\cR$. In the first column we show the rules $\rho_i$ and $\rho_j$ which we consider for a critical pair, where $i,j\in \{1,\ldots,8\}$. By $\ell_i$ and $r_i$ we denote the left-hand side and the right-hand side of the rule $\rho_i$, respectively. 
We assume that the variables $z_1,z_2$, and $z_3$ in $\rho_j$ are renamed $y_1,y_2$, and $y_3$, respectively (cf. the definition of critical pair). In the second, third, and fourth column we show the position $w$ at which $\ell_i|_w$ and $\ell_j$ overlap, the most general unifier, and the critical pair, respectively.

\

\begin{tabular}{l|l|l|l}
rules $\rho_i$ and &  &  &  \\
 renamed $\rho_j$ & overlap & mgu $\varphi$ & critical pair $\langle\varphi(r_i), \varphi(\ell_i)[\varphi(r_j)]_w\rangle$ \\\hline
& & &\\
$\rho_7, \rho_7$ & $\ell_7|_1,\ell_7$ & $z_1\mapsto y_1\hat{\times} y_2$
& $\langle(y_1\hat{\times}y_2)\hat{\times}(y_3\hat{\times}z_3)$, $\big((y_1\hat{\times}y_2)\hat{\times}y_3\big)\hat{\times}z_3)\rangle$\\
& & $z_2 \mapsto y_3$& \\\hline
& & & \\
$\rho_7, \rho_8$ & $\ell_7|_1,\ell_8$ & $z_1\mapsto y_1\hat{+} y_2$ & $\langle(y_1\hat{+} y_2)\hat{\times}(y_3\hat{\times}z_3)$, $\big((y_1\hat{\times}y_3)\hat{+}(y_2\hat{\times}y_3)
\big)\hat{\times}z_3\rangle$\\
& & $z_2 \mapsto y_3$& 
\end{tabular}

\


It is easy to check that for each  $\langle t_1,t_2\rangle$  of the above critical pairs there exists a $t \in \T(X, Z)$ such that $t_1 \Rightarrow_\cR^* t$ and  $t_2 \Rightarrow_\cR^* t$.
For instance, let us consider the critical pair for $\rho_7, \rho_8$.  Then
\[(y_1\hat{+} y_2)\hat{\times}(y_3\hat{\times}z_3) \Rightarrow_\cR y_1 \hat{\times}(y_3\hat{\times}z_3) \hat{+} y_2 \hat{\times}(y_3\hat{\times}z_3)\Rightarrow_\cR^2 (y_1 \hat{\times}y_3)\hat{\times}z_3 \hat{+} (y_2 \hat{\times}y_3)\hat{\times}z_3
\]
by using rules $\rho_8$ and $\rho_7$ twice, and 
\[\big((y_1\hat{\times}y_3)\hat{+}(y_2\hat{\times}y_3)
\big)\hat{\times}z_3  \Rightarrow_\cR (y_1 \hat{\times}y_3)\hat{\times}z_3 \hat{+} (y_2 \hat{\times}y_3)\hat{\times}z_3\]
by using rule $\rho_8$.

The proof for the other critical pairs is left to the reader.
\end{proof}

As an immediate consequence, we obtain the following normal form result for representing arbitrary terms by polynomial terms.

\begin{theorem}\label{thm:polynomial-normal-form} For each term  $t \in \T(X)$, there is a unique polynomial term  $t' \in \rmPT(X)$  in normal form with respect to $\Rightarrow_\cR$ such that  $t \Rightarrow_\cR^* t'$, in particular  $t =_{E_\mrd} t'$.
\end{theorem}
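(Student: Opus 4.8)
The plan is to derive Theorem~\ref{thm:polynomial-normal-form} directly from the abstract reduction theory already assembled in this section. By Lemma~\ref{lm:termination}, the term rewriting system $\cR$ is terminating, and by Lemma~\ref{lm:local-confluence} it is confluent. Hence Theorem~\ref{thm:terminating-and-confluent} applies to $\cR$ over the signature $\Sigma_{\mathrm{sb}}\cup X$ (with the elements of $X$ treated as nullary symbols), and every element of $\T(X,Z)$ has a unique normal form with respect to $\Rightarrow_\cR$. In particular, since $\T(X)\subseteq \T(X,Z)$ and $\T(X)$ is closed under $\Rightarrow_\cR$ (no rule of $\cR$ introduces a variable from $Z$ that was not already present, and the left-hand sides that are applicable to a ground term produce ground terms), each term $t\in\T(X)$ has a unique normal form $t'$ with $t\Rightarrow_\cR^* t'$.

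The remaining task is to identify this normal form $t'$ as a polynomial term and to record the consequence $t =_{E_\mrd} t'$. For the first point, I would argue that a term $t'\in\T(X)$ which is in $\cR$-normal form is necessarily simple and in $\rmPT(X)$. Being a normal form means no rule $\rho_1,\dots,\rho_8$ applies: the inapplicability of $\rho_1,\rho_2$ means $t'$ contains no subterm $\hat 0\hat{+}u$ or $u\hat{+}\hat 0$; the inapplicability of $\rho_3,\rho_4$ means no subterm $\hat 1\hat{\times}u$ or $u\hat{\times}\hat 1$; and the inapplicability of $\rho_5,\rho_6$ means no subterm $\hat 0\hat{\times}u$ or $u\hat{\times}\hat 0$. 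A short induction on $\size(t')$ then shows that either $t' = \hat 0$ or $t'$ contains no occurrence of $\hat 0$ at all and no subterm of the form $\hat 1\hat{\times}u$ or $u\hat{\times}\hat 1$; that is, $t'$ is simple. Next, the inapplicability of $\rho_7$ means $t'$ has no subterm of the form $(u_1\hat{\times}u_2)\hat{\times}u_3$, and the inapplicability of $\rho_8$ means $t'$ has no subterm of the form $(u_1\hat{+}u_2)\hat{\times}u_3$. Then Lemma~\ref{lm:rd-polynomial} yields $t'\in\rmPT(X)$ directly.

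Finally, for each reduction step of $\cR$ the two terms are $=_{E_\mrd}$-congruent: rules $\rho_1,\dots,\rho_8$ are, respectively, instances (in both directions) of the identities $e_3,e_3,e_5,e_6,e_8,e_7,e_4,e_9$, all of which lie in $E_\mrd$; this was already remarked in the paragraph preceding Lemma~\ref{lm:termination}. Since $=_{E_\mrd}$ is a congruence closed under contexts and substitutions, $s\Rightarrow_\cR t$ implies $s =_{E_\mrd} t$, and by transitivity $t =_{E_\mrd} t'$. Combining the uniqueness of the $\cR$-normal form, the identification of it as an element of $\rmPT(X)$, and this congruence observation gives the theorem.

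I do not expect any genuine obstacle here: the theorem is essentially a packaging of Lemmas~\ref{lm:termination}, \ref{lm:local-confluence} and~\ref{lm:rd-polynomial} together with Theorem~\ref{thm:terminating-and-confluent}. The only point requiring a modicum of care is the (entirely routine) verification that a $\cR$-normal form in $\T(X)$ really does satisfy the hypotheses of Lemma~\ref{lm:rd-polynomial}, i.e.\ that it is simple and free of the two forbidden product shapes; this is immediate from the shapes of the left-hand sides of $\rho_1$ through $\rho_8$.
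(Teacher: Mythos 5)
Your proposal is correct and follows essentially the same route as the paper's proof: uniqueness of the $\cR$-normal form from Lemmas~\ref{lm:termination} and~\ref{lm:local-confluence} via Theorem~\ref{thm:terminating-and-confluent}, identification of the normal form as a polynomial term via the shapes of the left-hand sides of $\rho_1,\dots,\rho_8$ and Lemma~\ref{lm:rd-polynomial}, and the observation that $\Rightarrow_\cR^*$ implies $=_{E_\mrd}$. Your spelled-out induction showing the normal form is simple is just a more detailed version of the paper's remark ``due to the shape of the rules in $\cR$''.
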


\begin{proof} Let $t \in \T(X)$. By Lemmas 
\ref{lm:termination} and \ref{lm:local-confluence}, and Theorem \ref{thm:terminating-and-confluent}, there is a unique term $t'\in \T(X)$ in normal form with respect to $\Rightarrow_\cR$.
Due to the shape of the rules in $\cR$, $t' \in \rmST(X)$ and  $t'$ has neither subterms of the form  
$(t_1 \hat{\times} t_2) \hat{\times} t_3$ nor subterms of the form $(t_1 \hat{+} t_2) \hat{\times} t_3$.   Then $t' \in \rmPT(X)$ by Lemma  \ref{lm:rd-polynomial}. 
\end{proof}

We note that different reduction strategies by $\cR$
may involve different numbers of applications of the right-distributivity rule  $\rho_7$. As example, we use the term given after Corollary \ref{cor:equiv-is-decidable}. 

\begin{example}\label{ex:number-of-right-distributivity}\rm 
Let  $t_1 = \hat{1} \hat{+} \hat{1}$ and inductively, 
$t_n = (\hat{1} \hat{+} \hat{1}) \hat{\times} t_{n-1}$ for each $n \geq 2$.
Fix some  $n \geq 2$. Then $t_n$  is a product of  $n$  factors of $\hat{1} \hat{+} \hat{1}$.

In our first strategy, we apply right-distributivity always to the left-most factor.
We claim that then for  $t_n$  we will use  $2^{n-1} - 1$  applications of right-distributivity to obtain the equivalent polynomial term. We have:
\[t_n \Rightarrow_\cR (\hat{1} \hti t_{n-1}) \ \hp \ (\hat{1} \hti t_{n-1}) \Rightarrow_\cR^2 t_{n-1} \hp t_{n-1}\enspace.\]

By induction, for  $t_{n-1}$  our method employs  $2^{n-2} - 1$  applications of right-distributivity to obtain the equivalent polynomial term.
Substituting these for both  $t_{n-1}$, we obtain a polynomial term equivalent to  $t_n$ 
(a sum term with  $2^n$ summands of  $\hat{1}$).

In total then we have  $1 + 2(2^{n-2} - 1) = 2^{n-1} - 1$  applications of right-distributivity, as claimed. 
Since  $t_n$  has size $3 + 4 \times (n-1)$, 
this is exponential in the size of  $t_n$.

In our second strategy, we follow the algorithm described after the proof of Theorem \ref{thm:three-effective-representations}. First, note that the final factor  
$(\hat{1} \hat{+} \hat{1})$  is already a polynomial term. 
Now we can apply right-distributivity to the next factor on the left and rule  $\rho_3$, and we obtain
\begin{align*}
t_n & \Rightarrow_\cR^* t_{n-2} \hti \big((\hat{1} \hat{+} \hat{1})\ \hti \ (\hat{1} \hat{+} \hat{1})\big) 
\Rightarrow_\cR^3  t_{n-2} \hti \big((\hat{1} \hat{+} \hat{1}) \hp (\hat{1} \hat{+} \hat{1})\big)
\end{align*}
In contrast to the previous sum term, here we have obtained again a product term, now with  $n-1$  factors.
By an induction hypothesis, this employs  $n-2$  further applications of right-distributivity
(just apply right-distributivity successively from the right to the left).

In total, we need  $n-1$  applications of right-distributivity to find the (same) equivalent polynomial term.
This number is bounded by the size of  $t_n$.\hfill$\Box$
\end{example}

Next we wish to derive a result like Theorem \ref{thm:polynomial-normal-form} for the idempotent case, i.e., for a construction of an id-reduced polynomial term as in Theorem \ref{thm:three-representations}. For this, in order to simplify, e.g., a term like $s\hp (t \hp s)$ to $s\hp t$, our term rewriting rules will need to include rules for associativity and commutativity. However, as is well-known, an inclusion of both implications taken from the identities  $e_1, e_2$ leads to a term rewriting systems which is (obviously) not terminating. Therefore, as is standard in the literature, we will employ $R/E$ rewriting, where $R$ is a term rewriting system and $E$ is a set of identities, see e.g. \cite{joukir84} and \cite{bacpla85}. The relation $\Rightarrow_{R/E}$, called \emph{$R$-rewriting modulo $E$}, is defined by  $\Rightarrow_{R/E} \; = \;  \Leftrightarrow^*_E\circ \Rightarrow_R \circ \Leftrightarrow^*_E$. We say that \emph{$R$ is $E$-terminating} if the relation $\Rightarrow_{R/E}$ is terminating.

Let 
\[\rho_9\,:\,z \hat{+} z \to z
\text{\ \ and\ \ } \cR_{\mrid} = \cR \cup \{\rho_9\}.\]
Clearly, the term rewriting systems  $\cR$  and  $\cR_{\mrid}$  are not  $\rmAC$-terminating, as can be seen by considering $\rho_7 \in \cR$ and $e_4 \in \rmAC$. Therefore, subsequently we consider the set $\rmAC_+ = \{ e_1, e_2 \}$. Also, let  $\rmAC_{+,\mrid} = \{ e_1, e_2, e_{11} \}$.

\begin{lemma}\label{lm:idempotent-termination}\rm\rm$\,$
The term rewriting system $\cR_{\mrid}$ is $\rmAC_+$-terminating on $\T(X)$.
\end{lemma}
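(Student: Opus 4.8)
The plan is to adapt the polynomial-interpretation argument already used for $\cR$ in Lemma \ref{lm:termination}, but now relative to $\rmAC_+$-equivalence. Recall that $\Rightarrow_{\cR_\mrid/\rmAC_+} \, = \, \Leftrightarrow^*_{\rmAC_+} \circ \Rightarrow_{\cR_\mrid} \circ \Leftrightarrow^*_{\rmAC_+}$, so to prove $\rmAC_+$-termination it suffices to exhibit a mapping $|\,\cdot\,| \colon \T(X,Z) \to \mathbb{N}\setminus\{0,1\}$ such that (i) $|s| = |t|$ whenever $s \Leftrightarrow_{\rmAC_+} t$, and (ii) $|s| > |t|$ whenever $s \Rightarrow_{\cR_\mrid} t$; then along any $\Rightarrow_{\cR_\mrid/\rmAC_+}$-step the value of $|\,\cdot\,|$ strictly decreases, so no infinite chain exists. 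The map $|\,\cdot\,|$ already defined before Lemma \ref{lm:prec-compatible-R-substitution} is a natural candidate: it was built to strictly decrease under $\rho_1$--$\rho_8$. The two things I would need to check in addition are that it also strictly decreases under the new rule $\rho_9 \colon z \hat{+} z \to z$, and that it is genuinely invariant under $\rmAC_+ = \{e_1, e_2\}$.

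First I would verify the $\rmAC_+$-invariance. Since $|t_1 \hat{+} t_2| = |t_1| + |t_2|$ and ordinary addition of natural numbers is associative and commutative, one has $|t_1 \hat{+}(t_2 \hat{+} t_3)| = |(t_1 \hat{+} t_2)\hat{+} t_3|$ and $|t_1 \hat{+} t_2| = |t_2 \hat{+} t_1|$; a routine structural induction (using the monotonicity already recorded in Lemma \ref{lm:prec-compatible-R-substitution}(b)) lifts this to $|s| = |t|$ for all $s \Leftrightarrow_{\rmAC_+} t$. Next, for the rule $\rho_9$ and any substitution $\varphi$, put $u = \varphi(z)$; then $|\varphi(z \hat{+} z)| = |u \hat{+} u| = 2|u| > |u| = |\varphi(z)|$ since $|u| > 1$. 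Combined with Lemma \ref{lm:prec-compatible-R-substitution}(a) for the rules $\rho_1$--$\rho_8$ and Lemma \ref{lm:prec-compatible-R-substitution}(b) for contextual monotonicity, this gives $|s| > |t|$ for every $\cR_\mrid$-step $s \Rightarrow_{\cR_\mrid} t$, exactly as in the proof of Lemma \ref{lm:termination}. Hence $|s| > |t|$ for every $\Rightarrow_{\cR_\mrid/\rmAC_+}$-step, so $\cR_\mrid$ is $\rmAC_+$-terminating on $\T(X)$ (indeed on $\T(X,Z)$).

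I would therefore structure the proof as: (1) state that by definition of $\Rightarrow_{\cR_\mrid/\rmAC_+}$ it suffices to find a well-founded measure that is $\rmAC_+$-invariant and strictly decreasing under each $\cR_\mrid$-rule in arbitrary context; (2) reuse the mapping $|\,\cdot\,|$; (3) note $\rmAC_+$-invariance from commutativity/associativity of $+$ on $\mathbb{N}$; (4) invoke Lemma \ref{lm:prec-compatible-R-substitution} for $\rho_1$--$\rho_8$ and add the one-line computation $|u\hat{+}u| = 2|u| > |u|$ for $\rho_9$; (5) conclude by the usual argument that an infinite $\Rightarrow_{\cR_\mrid/\rmAC_+}$-chain would produce an infinite strictly decreasing chain in $\mathbb{N}$. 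The only mild subtlety — and the place where a careful statement is needed rather than a calculation — is making the $\rmAC_+$-invariance of $|\,\cdot\,|$ precise as a congruence-closure argument, i.e., that invariance at the top level plus contextual monotonicity propagates to invariance under $\Leftrightarrow^*_{\rmAC_+}$; this is exactly parallel to the reasoning in the well-definedness part of the proof of Theorem \ref{thm:S-is free}, so it can be stated briefly. I expect no real obstacle here; the lemma is essentially a bookkeeping extension of Lemma \ref{lm:termination}.
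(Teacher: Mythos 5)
Your proposal is correct and follows essentially the same route as the paper: the paper's (much terser) proof also reuses the interpretation $|\,\cdot\,|$, noting that an $\Rightarrow_{\cR_\mrid/\rmAC_+}$-step factors as $s =_{\rmAC_+} s' \Rightarrow_{\cR_\mrid} t' =_{\rmAC_+} t$ and that $|s| = |s'| > |t'| = |t|$. Your added checks (the $\rmAC_+$-invariance of $|\,\cdot\,|$ and the strict decrease $|u\hat{+}u| = 2|u| > |u|$ for $\rho_9$) are exactly the details the paper leaves implicit, and they are correct.
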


\begin{proof} Let $s,t \in \T(X)$
such that $s \Rightarrow_{\cR_{\mrid}/\rmAC_+} t$. There exist
 $s',t' \in \T(X)$  with  
$s =_{\rmAC_+} s' \Rightarrow_{\cR_{\mrid}} t' =_{\rmAC_+} t$. 
Then  $|s| = |s'| > |t'| = |t|$. Hence, $\cR_{\mrid}/\rmAC_+$ is terminating.
\end{proof}

Next, we derive a confluence type result for the relation $\Rightarrow^*_{\cR_{\mrid}/\rmAC_+}$.

\begin{lemma}\label{lm:idempotent-confluence}\rm Let $s,t,u \in \rmST(X)$  be such that  $u \Rightarrow^*_{\cR_{\mrid}/\rmAC_+} s$
and $u \Rightarrow^*_{\cR_{\mrid}/\rmAC_+} t$.
Then there are $s'', t'' \in \rmPTid(X)$ in $\cR$-normal form such that
$s \Rightarrow^*_{\cR_{\mrid}/\rmAC_+} s''$ and
$t \Rightarrow^*_{\cR_{\mrid}/\rmAC_+} t''$, and $s'' =_{\rmAC_+} t''$.
\end{lemma}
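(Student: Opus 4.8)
The plan is to reduce $s$ and $t$ as far as possible under $\Rightarrow_{\cR_{\mrid}/\rmAC_+}$ and to identify the resulting normal forms. Since $\Rightarrow_{\cR_{\mrid}/\rmAC_+}$ is terminating on $\T(X)$ by Lemma~\ref{lm:idempotent-termination}, I would first pick $\cR_{\mrid}/\rmAC_+$-normal forms $s''$ and $t''$ with $s \Rightarrow^*_{\cR_{\mrid}/\rmAC_+} s''$ and $t \Rightarrow^*_{\cR_{\mrid}/\rmAC_+} t''$. Next I would check that $s''$ (and likewise $t''$) is a polynomial term in $\cR$-normal form. Being a $\cR_{\mrid}/\rmAC_+$-normal form means that no term $\rmAC_+$-equivalent to $s''$ contains a $\cR_{\mrid}$-redex; in particular $s''$ contains no redex of any of $\rho_1,\dots,\rho_8$. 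Applying this to $\rho_5,\rho_6$ excludes $\hat{0}$ occurring next to a $\hti$, to $\rho_1,\rho_2$ (using commutativity of $\hp$ from $\rmAC_+$ to handle the left neighbour) excludes $\hat{0}$ occurring next to a $\hp$, so $s'' = \hat{0}$ or $s''$ is $\hat{0}$-free; applying it to $\rho_3,\rho_4$ excludes subterms $\hat{1}\hti u$ and $u\hti\hat{1}$; hence $s'' \in \rmST(X)$. Applying it to $\rho_7$ and $\rho_8$ excludes subterms of the form $(u_1\hti u_2)\hti u_3$ and $(u_1\hp u_2)\hti u_3$, so $s''\in\rmPT(X)$ by Lemma~\ref{lm:rd-polynomial}; and since $s''$ has no redex of any rule of $\cR$, it is in $\cR$-normal form.

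The technical core is an auxiliary claim: \emph{if $a,b\in\rmST(X)$ are in $\cR$-normal form and $a=_{\rmAC}b$, then already $a=_{\rmAC_+}b$}. I would prove it by induction on $\size(a)$ via the characterization of $\rmAC$-equivalence in Lemma~\ref{lm:AC-equiv-characterization}. The base case $a,b\in X\cup\{\hat{0},\hat{1}\}$ forces $a=b$. If $a,b$ are sum terms with sum-product decompositions $a=a_1\hp\dots\hp a_n$ and $b=b_1\hp\dots\hp b_n$ matched by a permutation $\varphi$ with $a_i=_{\rmAC}b_{\varphi(i)}$, then the $a_i,b_i$ are subterms in $\cR$-normal form of smaller size, so $a_i=_{\rmAC_+}b_{\varphi(i)}$ by the induction hypothesis, and $a=_{\rmAC_+}b$ follows because $e_1,e_2\in\rmAC_+$ permit permuting and reparenthesizing summands. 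If $a,b$ are product terms with product-sum decompositions $a=a_1\hti\dots\hti a_n$ and $b=b_1\hti\dots\hti b_n$ with $a_i=_{\rmAC}b_i$ for each $i$, then $\cR$-normality forces both to be right-combed, i.e.\ $a=a_1\hti(a_2\hti(\dots\hti a_n))$ and $b=b_1\hti(b_2\hti(\dots\hti b_n))$ syntactically, so no reassociation is needed and the induction hypothesis together with the fact that $=_{\rmAC_+}$ is a congruence gives $a=_{\rmAC_+}b$.

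Using this claim I would then show that $s''$ (and $t''$) is id-reduced, i.e.\ lies in $\rmPTid(X)$. By Lemma~\ref{lm:id-reduced-polynomial-terms} it is enough that no sum subterm of $s''$ has two $\rmAC$-equivalent summands in its sum-product decomposition. If a sum subterm $s'=s'_1\hp\dots\hp s'_m$ of $s''$ had $s'_i=_{\rmAC}s'_j$ with $i\ne j$, then, since $s'_i,s'_j$ are subterms of $s''$ and hence in $\cR$-normal form, the claim gives $s'_i=_{\rmAC_+}s'_j$; rearranging $s'$ with $e_1,e_2$ inside $s''$ so that $s'_i,s'_j$ become adjacent and replacing $s'_j$ by $s'_i$ produces a term $w$ with $w=_{\rmAC_+}s''$ containing the subterm $s'_i\hp s'_i$, i.e.\ a $\rho_9$-redex, so $s''$ would not be a $\cR_{\mrid}/\rmAC_+$-normal form --- a contradiction. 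Hence $s'',t''\in\rmPTid(X)$.

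To finish, I would observe that every rule of $\cR_{\mrid}$ and every identity of $\rmAC_+$ is a directed instance of an identity in $E_{\midrd}$ ($\rho_1,\rho_2$ of $e_3$; $\rho_3$ of $e_5$; $\rho_4$ of $e_6$; $\rho_5$ of $e_8$; $\rho_6$ of $e_7$; $\rho_7$ of $e_4$; $\rho_8$ of $e_9$; $\rho_9$ of $e_{11}$; and $e_1,e_2\in E_{\midrd}$), so each $\Rightarrow_{\cR_{\mrid}/\rmAC_+}$-step preserves $=_{E_{\midrd}}$-equivalence; hence $s''=_{E_{\midrd}}u=_{E_{\midrd}}t''$, that is $[s'']_{\midrd}=[t'']_{\midrd}$ in $\sfFB_{\midrd}(X)$. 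Since $s'',t''\in\rmPTid(X)$, Theorem~\ref{thm:three-implications}(c) yields $s''=_{\rmAC}t''$, and then the auxiliary claim (both terms being in $\cR$-normal form) upgrades this to $s''=_{\rmAC_+}t''$, as required. I expect the main obstacle to be the auxiliary claim together with the careful bookkeeping showing that $\cR_{\mrid}/\rmAC_+$-normal forms are automatically simple, right-combed and id-reduced; once those are in place the rest is routine.
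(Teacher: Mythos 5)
Your proof is correct, and its skeleton coincides with the paper's: produce id-reduced polynomial terms $s'',t''$ in $\cR$-normal form reachable from $s$ and $t$, observe that every $\Rightarrow_{\cR_{\mrid}/\rmAC_+}$-step preserves $=_{E_{\midrd}}$ so that $s''=_{E_{\midrd}}t''$, invoke Theorem \ref{thm:three-implications}(c) to get $s''=_{\rmAC}t''$, and finally upgrade this to $s''=_{\rmAC_+}t''$ using $\cR$-normality. Where you diverge is in how $s''$ and $t''$ are obtained: the paper constructs them in two explicit stages (first $\cR$-normalize via Theorem \ref{thm:polynomial-normal-form} to polynomial terms $s',t'$, then use $e_1,e_2$ to bring $\rmAC$-equal summands together and delete duplicates with $\rho_9$, cleaning up any resulting $\hat{1}$-factors with $\rho_4$), whereas you simply take arbitrary $\cR_{\mrid}/\rmAC_+$-normal forms, which exist by Lemma \ref{lm:idempotent-termination}, and then prove structurally that any such normal form is automatically simple, polynomial, $\cR$-normal and id-reduced. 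Your route is slightly cleaner in that it yields a characterization of the normal forms rather than just an existence statement, at the price of the redex-exclusion bookkeeping. You also supply a full inductive proof of the auxiliary claim that $\cR$-normal forms which are $=_{\rmAC}$-equivalent are already $=_{\rmAC_+}$-equivalent (right-combed products need no reassociation); the paper uses this fact in its last line but only asserts it, so your argument fills in a detail that is genuinely needed in both proofs.
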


\begin{proof} By Theorem \ref{thm:polynomial-normal-form}, we find polynomial terms $s', t' \in \rmPT(X)$  in  $\cR$-normal form such that $s \Rightarrow_\cR^* s'$  and  $t \Rightarrow_\cR^* t'$. Now, by using $\rmAC_+$ we can re-order the summands of sum-product decompositions of  $s'$ and $t'$ and of their subterms so that identical summands are next to each other. 
Then we can apply rule  $\rho_9$  to delete multiple summands.
Here, it may happen that we reduce a sum of the form  $(\hat{1} \hp \ldots \hp \hat{1})$  to  $\hat{1}$.

In case the sum  $(\hat{1} \hp \ldots \hp \hat{1})$  occurred as a factor in a product-sum decomposition of  $s'$  resp.  $t'$ or their subterms, it must have occurred as the last factor of this product, 
since otherwise we could have applied right-distributivity (rule $\rho_8)$, 
but  $s'$  and  $t'$  are in  $\cR$-normal form. 
In this case we delete the factor  $\hat{1}$ in these products by applying rule  $\rho_4$.
Hence we obtain 
$s'', t'' \in \rmPTid(X)$ such that
$s' \Rightarrow^*_{\cR_{\mrid}/\rmAC_+} s''$ and
$t' \Rightarrow^*_{\cR_{\mrid}/\rmAC_+} t''$.
These applications of rule  $\rho_9$ and possibly rule $\rho_4$ do not change the structure of the parenthesizing, so we cannot apply rules  $\rho_7$ or $\rho_8$. Hence  $s''$  and  $t''$  are in $\cR$-normal form.

Then, we have
$s'' =_{\rmAC_\mrid} s' =_{E_\mrd} s =_{E_\midrd} u =_{E_\midrd} t =_{E_\mrd} t' =_{\rmAC_\mrid} t''$, so $s'' =_{E_\midrd} t''$.
By Theorem \ref{thm:three-implications}(c), we obtain  $s'' =_\rmAC t''$.
Since  $s''$ and $t''$  are in $\cR$-normal form, we obtain $s'' =_{\rmAC_+} t''$.
\end{proof}

As an immediate consequence, we obtain the following normal form result for representing arbitrary terms by id-reduced polynomial terms.

\begin{theorem}\label{thm:idempotent-polynomial-normal-form} For each term  $t \in \T(X)$, there is an $\rmAC_+$-unique id-reduced polynomial term  $t' \in \rmPTid(X)$  in  $\cR_{\mrid}/\rmAC_+$-normal form such that  $t \Rightarrow^*_{\cR_{\mrid}/\rmAC_+} t'$, in particular  $t =_{E_\midrd} t'$.
\end{theorem}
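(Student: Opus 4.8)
The plan is to derive Theorem~\ref{thm:idempotent-polynomial-normal-form} as a direct corollary of the two preparatory lemmas just proved, namely Lemma~\ref{lm:idempotent-termination} ($\cR_{\mrid}$ is $\rmAC_+$-terminating on $\T(X)$) and Lemma~\ref{lm:idempotent-confluence} (the confluence-type statement for $\Rightarrow^*_{\cR_{\mrid}/\rmAC_+}$). The existence of $t'$ is the easy half: starting from an arbitrary $t \in \T(X)$, by $\rmAC_+$-termination there is no infinite $\Rightarrow_{\cR_{\mrid}/\rmAC_+}$-chain, so repeatedly applying reduction steps must terminate in some $t' \in \T(X)$ which is a $\cR_{\mrid}/\rmAC_+$-normal form with $t \Rightarrow^*_{\cR_{\mrid}/\rmAC_+} t'$. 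One then has to argue that this normal form is actually an id-reduced polynomial term, i.e., $t' \in \rmPTid(X)$: since no rule $\rho_1,\ldots,\rho_8$ applies, $t'$ is simple and (by Lemma~\ref{lm:rd-polynomial}) a polynomial term, and since $\rho_9$ does not apply modulo $\rmAC_+$, no sum subterm of $t'$ has two $\rmAC_+$-equal (hence, as these are summands in sum-product decompositions, two $\rmAC$-equal) summands; by Lemma~\ref{lm:id-reduced-polynomial-terms} this is exactly the defining condition of $\rmPTid(X)$, so $t' \in \rmPTid(X)$. Finally $t =_{E_\midrd} t'$ because each rule of $\cR_{\mrid}$ and each identity of $\rmAC_+$ is contained in (the symmetric closure of) $E_{\midrd}$, so $\Rightarrow^*_{\cR_{\mrid}/\rmAC_+} \,\subseteq\, =_{E_{\midrd}}$.

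The uniqueness up to $\rmAC_+$ is where Lemma~\ref{lm:idempotent-confluence} does the work. Suppose $t' $ and $t''$ are both id-reduced polynomial terms in $\cR_{\mrid}/\rmAC_+$-normal form with $t \Rightarrow^*_{\cR_{\mrid}/\rmAC_+} t'$ and $t \Rightarrow^*_{\cR_{\mrid}/\rmAC_+} t''$. Applying Lemma~\ref{lm:idempotent-confluence} with $u = t$, $s = t'$, $t = t''$ yields $s'', t'' \in \rmPTid(X)$ in $\cR$-normal form with $t' \Rightarrow^*_{\cR_{\mrid}/\rmAC_+} s''$, $t'' \Rightarrow^*_{\cR_{\mrid}/\rmAC_+} (\text{call it}) \,u''$, and $s'' =_{\rmAC_+} u''$. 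But $t'$ and $t''$ are already $\cR_{\mrid}/\rmAC_+$-normal forms, so the only reduction steps available from them are modulo $\rmAC_+$; hence $t' =_{\rmAC_+} s''$ and $t'' =_{\rmAC_+} u''$. Combining, $t' =_{\rmAC_+} s'' =_{\rmAC_+} u'' =_{\rmAC_+} t''$, which is the claimed $\rmAC_+$-uniqueness.

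I expect the main obstacle — or rather the main point needing care rather than genuine difficulty — to be the bookkeeping in the uniqueness argument about exactly which reduction steps remain available from a $\cR_{\mrid}/\rmAC_+$-normal form. One must note that ``$t'$ is a $\cR_{\mrid}/\rmAC_+$-normal form'' means precisely that there is no $v$ with $t' \Rightarrow_{\cR_{\mrid}/\rmAC_+} v$ and $v \neq_{\rmAC_+} t'$ in the appropriate sense; equivalently, in the quotient by $=_{\rmAC_+}$ the element is irreducible. Then any sequence $t' \Rightarrow^*_{\cR_{\mrid}/\rmAC_+} s''$ can only consist of $=_{\rmAC_+}$-steps, giving $t' =_{\rmAC_+} s''$. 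A subtlety worth a sentence: Lemma~\ref{lm:idempotent-confluence} is stated for $s,t,u \in \rmST(X)$, so one should first note that the $t'$, $t''$ produced by the existence part already lie in $\rmST(X)$ (indeed in $\rmPTid(X) \subseteq \rmPT(X) \subseteq \rmST(X)$) and that $t$ itself can be replaced, via Theorem~\ref{thm:three-effective-representations}(a) or directly by rules $\rho_1$–$\rho_6$, by a simple term without changing the $=_{E_\midrd}$-class or the set of reachable normal forms — or, more simply, observe that we only ever need to invoke Lemma~\ref{lm:idempotent-confluence} with $u$ one of the simple terms $t'$, $t''$, applying it to the trivial derivations $t' \Rightarrow^*_{\cR_{\mrid}/\rmAC_+} t'$ and $t' \Rightarrow^*_{\cR_{\mrid}/\rmAC_+} t''$ once a common ancestor has been located. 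Since all of this is routine chasing of definitions, the theorem follows quickly.
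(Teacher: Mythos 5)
Your overall route coincides with the paper's: the theorem is proved there precisely as an immediate consequence of Lemma~\ref{lm:idempotent-termination} (existence of a $\cR_{\mrid}/\rmAC_+$-normal form) and Lemma~\ref{lm:idempotent-confluence} ($\rmAC_+$-uniqueness), and your existence half (a normal form is simple since no $\rho_1$--$\rho_6$ redex survives, a polynomial term by Lemma~\ref{lm:rd-polynomial}, and id-reduced since $\rho_9$ is inapplicable modulo $\rmAC_+$) is the intended argument. One imprecision there: the parenthetical step from ``no two $\rmAC_+$-equal summands'' to ``no two $\rmAC$-equal summands'' goes the wrong way as justified, since $\rmAC_+\subseteq\rmAC$. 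What you actually need is that $\rmAC$-equal summands of $t'$ are already $\rmAC_+$-equal, and the correct reason is not that they occur in a sum-product decomposition but that $t'$ is $\rho_7$-irreducible: all products in it are right-associated, so identity $e_4$ is never needed in relating such summands. (This is the same fact the paper itself uses silently at the end of the proof of Lemma~\ref{lm:idempotent-confluence}, where $\rmAC$-equality of two terms in $\cR$-normal form is upgraded to $\rmAC_+$-equality.)

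The genuine gap is in your handling of the hypothesis $u\in\rmST(X)$ of Lemma~\ref{lm:idempotent-confluence} for the uniqueness part. Your first workaround, replacing $t$ by a simple term, claims without proof that this does not change the set of reachable normal forms; but the two given reductions $t\Rightarrow^*_{\cR_{\mrid}/\rmAC_+}t'$ and $t\Rightarrow^*_{\cR_{\mrid}/\rmAC_+}t''$ may diverge before any $\rho_1$--$\rho_6$ step is taken, so this claim is itself a confluence-type assertion and cannot be assumed. Your second workaround invokes a derivation $t'\Rightarrow^*_{\cR_{\mrid}/\rmAC_+}t''$ that you do not have: $t'$ is a normal form, so such a derivation exists only in the degenerate case $t'=t''$, which is what is to be proved. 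The repair is short: either note that the proof of Lemma~\ref{lm:idempotent-confluence} uses $u$ only through the chain $s=_{E_\midrd}u=_{E_\midrd}t$, which holds verbatim for $u=t\in\T(X)$, so the restriction $u\in\rmST(X)$ is inessential; or bypass the lemma for uniqueness entirely: $t',t''\in\rmPTid(X)$ with $t'=_{E_\midrd}t=_{E_\midrd}t''$, hence $t'=_{\rmAC}t''$ by Theorem~\ref{thm:three-implications}(c), and since both are in $\cR$-normal form this yields $t'=_{\rmAC_+}t''$.
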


\begin{proof}
Immediate by Lemmas \ref{lm:idempotent-termination} and \ref{lm:idempotent-confluence}.
\end{proof}



\section{An application in weighted automata theory}\label{sect:wlc-strong-bimonoids}

As noted in Section \ref{sect:introduction}, weighted (tree) automata  assign to each input
(i.e., a word or a term over a ranked alphabet) a value from some given weight structure. A weighted (tree) automaton is called \emph{finite-valued}, if the set of all values assigned is finite. From the beginning of the theory of weighted automata,
it has been an essential question to find conditions for determining whether weighted automata over particular weight structures are finite-valued. It is easy to see that if the weight structure is a strong bimonoid  $\sfB$  which is locally finite (i.e., each finitely generated strong subbimonoid is finite), then each weighted (tree) automaton over  $\sfB$  is finite-valued (cf., e.g., \cite{fulvog22}). 
For more detailed investigations, cf. e.g. \cite{drostuvog10}, \cite{drofulkosvog21}, and \cite{fulvog22}. Among others, the notion of "weakly locally finite" strong bimonoid was introduced (precise definition is given below).  
It was shown that each weighted  automaton over a weakly locally finite strong bimonoid is finite-valued 
\cite[Lm.~18]{drostuvog10}. 
However, the situation changes for weighted tree automata. In fact, by Theorem \cite[Thm.~5.1]{drofultepvog24}, for each ranked alphabet which contains at least one binary symbol and for each finitely generated strong bimonoid, we can construct a weighted tree automaton over that ranked alphabet which takes on all elements of the given strong bimonoid as values. Hence, if the strong bimonoid is finitely generated and infinite, then this weighted tree automaton is infinite-valued even if the strong bimonoid is weakly locally finite. Therefore, it is important to know if there are strong bimonoids which are weakly locally finite but not locally finite. The question was answered positively in \cite[Thm.~3.5]{drofultepvog24}, where a right-distributive and weakly locally finite but not locally finite strong bimonoid was constructed.

The main goal of this section is to sharpen this result as follows:
we will construct an idempotent right-distributive strong bimonoid which is weakly locally finite but not locally finite. 

We start with giving the relevant definitions. Let $\Sigma$ be a signature and $\sfA=(A,\theta)$ be a $\Sigma$-algebra. For $A' \subseteq A$, we denote by $\langle A' \rangle_{\theta(\Sigma)}$ the smallest subset of $A$ which contains $A'$ and is closed under the operations in $\theta(\Sigma)$. 
The $\Sigma$-algebra $\sfA=(A,\theta)$ is \emph{locally finite} if, for each finite subset $A' \subseteq A$, the set $\langle A' \rangle_{\theta(\Sigma)}$ is finite.

Let $\B=(B,\oplus,\otimes,\0,\1)$ be a strong bimonoid and $A\subseteq B$. The \emph{weak closure of $A$ (with respect to $\B$)}, denoted by $\wcl(A)$,  is the smallest subset $C\subseteq B$ such that $A\cup\{\0,\1\}\subseteq C$ and, for every  $b,b'\in C$ and $a \in A$, we have $b\oplus b' \in C$ and $b\otimes a \in C$. 

We call the strong bimonoid $\B$
\begin{compactitem}
  \item \emph{additively locally finite} if  $(B,\oplus,\0)$ is locally finite,
  \item \emph{multiplicatively locally finite} if  $(B,\otimes,\1)$ is locally finite,
  \item \emph{bi-locally finite} if it is additively and multiplicatively locally finite, and
  \item \emph{weakly locally finite} if, for each finite subset $A \subseteq B$, the weak closure of $A$ is finite.
  \end{compactitem}
  
 The following  implications between the above properties of strong bimonoids immediately follow from the corresponding definitions:
 \[\text{locally finite}\ \Rightarrow \ \text{weakly locally finite}\ \Rightarrow \ \text{bi-locally finite}.\]
  
 We will also use the following result.
 
\begin{lemma}\label{obs:biloc-fin+right-disrt-loc-fin} \rm  \cite[Rem.~17]{drostuvog10}  Let $\B$ be a right distributive strong bimonoid.  
 Then $\B$ is bi-locally finite if and only if  $\B$ is weakly locally finite.
\end{lemma}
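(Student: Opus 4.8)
The plan is to observe that one implication is free and to prove the other by a closure argument. The implication ``weakly locally finite $\Rightarrow$ bi-locally finite'' is already contained in the chain of implications stated immediately before the lemma, so only the converse needs work: assuming $\B=(B,\oplus,\otimes,\0,\1)$ is right-distributive and bi-locally finite, I want to show it is weakly locally finite. Fix a finite set $A\subseteq B$; the goal is that $\wcl(A)$ is finite.

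First I would form the finite set $A_0=A\cup\{\0,\1\}$ and let $M$ be the multiplicative submonoid of $\B$ generated by $A_0$. Since $\B$ is multiplicatively locally finite and $A_0$ is finite, $M$ is finite; note that $\0\in M$. Then let $D$ be the additive submonoid of $\B$ generated by $M$; since $\B$ is additively locally finite and $M$ is finite, $D$ is finite. The key step is to show $\wcl(A)\subseteq D$, for which it suffices to check that $D$ satisfies the three closure properties defining $\wcl(A)$: clearly $A\cup\{\0,\1\}\subseteq A_0\subseteq M\subseteq D$, and $D$ is closed under $\oplus$ by construction. For closure under right multiplication by an element $a\in A$: every element of $D$ can be written as $m_1\oplus\cdots\oplus m_k$ with $m_i\in M$, and by right-distributivity (and an immediate induction on $k$) we get $(m_1\oplus\cdots\oplus m_k)\otimes a=(m_1\otimes a)\oplus\cdots\oplus(m_k\otimes a)$; each $m_i\otimes a$ lies in $M$ because $m_i\in M$, $a\in A_0$, and $M$ is multiplicatively closed, so the whole sum lies in $D$. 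Hence $D$ is one of the sets in the intersection defining $\wcl(A)$, so $\wcl(A)\subseteq D$ and therefore $\wcl(A)$ is finite.

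I do not expect a genuine obstacle here: the proof is essentially a bookkeeping argument around the fact that $D$, the additive closure of a finite multiplicatively closed set, is already stable under all operations permitted in forming $\wcl(A)$. The one place requiring a little care is the closure of $D$ under right multiplication by $A$, and this is exactly where right-distributivity is indispensable --- without it, $(m_1\oplus\cdots\oplus m_k)\otimes a$ could leave $D$. Including $\0$ and $\1$ in $A_0$ from the start removes any special-case fussing about the neutral elements.
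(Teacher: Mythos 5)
Your argument is correct: one direction is indeed immediate from the implication chain, and for the converse your set $D$ (the additive closure of the finite multiplicative closure $M$ of $A\cup\{\0,\1\}$) satisfies all three defining closure properties of $\wcl(A)$, with right-distributivity used exactly at the one step where it is needed, so $\wcl(A)\subseteq D$ is finite. The paper gives no proof of this lemma (it is quoted from the cited reference), and your argument is the standard one underlying that remark, so there is nothing to compare beyond noting the match.
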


We just note in passing that without the assumption of right-distributivity, 
there are examples of strong bimonoids which are bi-locally finite but not weakly locally finite, 
see \cite[Ex.~2.1(2)]{drovog12} and \cite[Ex.~25]{drostuvog10} (cf. \cite[Ex.~2.6.10(2),(9)]{fulvog22}).

In the rest  of this section we will prove the following result (for the proof cf. Theorem~\ref{thm:M-is-what-we-want}).

\begin{theorem}\label{thm:existence-theorem}  There exists an idempotent right-distributive strong bimonoid  $\M$
which is weakly locally finite but not locally finite.
\end{theorem}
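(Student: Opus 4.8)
The plan is to exhibit a concrete strong bimonoid $\M$ — the one to be introduced in the construction below and analysed in Theorem~\ref{thm:M-is-what-we-want} — obtained as a quotient of a free idempotent right-distributive strong bimonoid. Concretely, I would take $X$ to consist of a single generator $x$, add to the idempotency axioms $E_{\midrd}$ finitely many extra identities, and let $\M$ be the corresponding quotient of $\sfT(X)$; the extra identities are chosen to collapse products "just enough", forcing $x \hti x = x$ and $x \hti q = x$ for every sum $q$ built from three or more summands, while leaving products $x \hti q$ with $q$ a two-summand sum untouched. Since $\M$ is a quotient of an idempotent right-distributive strong bimonoid, it is itself an idempotent right-distributive strong bimonoid by Lemma~\ref{identitiestofactoralgebras}, so only the two finiteness statements require work.

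For \emph{weak local finiteness} I would invoke Lemma~\ref{obs:biloc-fin+right-disrt-loc-fin}: as $\M$ is right-distributive, it is enough to show it bi-locally finite. Additive local finiteness is immediate, since $(\M,\oplus,\0)$ is an idempotent commutative monoid, i.e.\ a join-semilattice, and the submonoid generated by an $n$-element set then has at most $2^n$ elements. For multiplicative local finiteness I would first give a terminating and confluent term-rewriting presentation of the congruence, by adjoining the extra rules to the system $\cR_{\mrid}$ of Section~\ref{sect:decision-procedures} and re-running the termination argument (the polynomial interpretation $|\,.\,|$ still decreases strictly) and the critical-pair analysis; this provides canonical representatives for the elements of $\M$. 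From right-distributivity together with the collapsing rules one then reads off that, for any finite $A\subseteq\M$, the $\size$ of every iterated product of elements of $A$ is bounded, whence the multiplicative submonoid $\langle A\rangle_{\otimes}$ is finite; keeping track of multiplicities here uses right-cancellativity (Corollary~\ref{cor:cancellation-in-Pol}) exactly as in the proof of Theorem~\ref{thm:strong-bimonoid-Pol-id}.

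For \emph{failure of local finiteness} I would consider the family $p_0 = x$ and $p_{n+1} = x\otimes(\1\oplus p_n)$ and show that the $p_n$ are pairwise distinct in $\M$. Their canonical forms are the nested terms $x\hti(\hat 1\hp(x\hti(\hat 1\hp\cdots)))$, which are irreducible for the rewriting system — each inner sum has only two summands, so no collapsing rule applies — and which are pairwise non-$\rmAC$-equivalent because their sizes are pairwise distinct; hence, by Theorem~\ref{thm:idempotent-polynomial-normal-form} (or the analogue of Theorem~\ref{thm:three-implications}(c) for the extended system), they represent distinct elements of $\M$. Since $\M$ is generated by the single element $x$ and already contains all the $p_n$, it is a finitely generated infinite strong bimonoid, hence not locally finite, which completes the proof.

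The step I expect to be the main obstacle is the design of the extra identities, and the verification that they make $\M$ multiplicatively locally finite without collapsing it: in a right-distributive (but not left-distributive) setting the naive quotient by $x\hti x = x$ alone is \emph{not} multiplicatively locally finite, since iterated right-multiplication of a sum by itself — e.g.\ the powers $(\hat 1\hp x)^{n}$ — keeps producing new product summands, and the collapsing rules are tailored precisely to stop this growth. The delicate part is to check simultaneously that (i) the extended rewriting system is still terminating and confluent, (ii) it does not identify the surviving family $\{p_n\}$, and (iii) it does bound the sizes of arbitrary iterated products; once the rewriting system is under control, (ii) and (iii) become bookkeeping with canonical forms, supported by the normal-form results (Theorems~\ref{thm:three-implications} and \ref{thm:idempotent-polynomial-normal-form}) and the cancellation results (Corollary~\ref{cor:cancellation-in-Pol}) obtained earlier.
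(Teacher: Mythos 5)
Your overall architecture matches the paper's: quotient the free idempotent right-distributive strong bimonoid by a congruence that collapses "enough" products, use Lemma~\ref{obs:biloc-fin+right-disrt-loc-fin} to reduce weak local finiteness to bi-local finiteness, get additive local finiteness from idempotency, and witness non-local-finiteness with the family $p_0=x$, $p_{n+1}=x\otimes(\1\oplus p_n)$ --- this is exactly the paper's witness family. The gap is in the choice of collapsing identities, and it is fatal in both of the two ways one can read your proposal. If the collapse is imposed as a genuine identity (schema) such as $x\hti(z_1\hp z_2\hp z_3)=x$, then instantiating $z_2=z_3$ and using idempotency $e_{11}$ yields $x\hti(a\hp b)=x$ for all $a,b$, which kills $p_1=x\times(1+x)$ and hence the whole witness family. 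If instead the collapse is imposed only on normal forms with three \emph{pairwise distinct} summands, then the quotient is not multiplicatively locally finite: take $v=\pi_1+\pi_2$ with $\pi_1=x\times(1+x)$ and $\pi_2=x\times(x+\pi_1)$, and set $\nu=x\times(\pi_1+\pi_2)$. One computes $\pi_1\otimes v=x\times(\pi_1+\pi_2+\nu)=x$ (three summands, collapsed) but $\pi_2\otimes v=x\times(x+\nu)$, a \emph{two}-summand product that your rules leave untouched; continuing, $v^2=x+x(x+\nu)$, $v^3=\nu+x(\nu+f_1)$ with $f_1=\nu\otimes v=x(x+x(x+\nu))$, and in general each power $v^n$ contains a summand of the form $x\times(\alpha+\beta)$ with $\alpha\neq\beta$ of strictly larger size than before. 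None of your rules ever applies to these, so $\langle\{v\}\rangle_{\otimes}$ is infinite. Your claimed size bound on iterated products --- the one step you flagged as "bookkeeping" --- is precisely what fails.

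The paper avoids this by collapsing not according to the number of summands of the right factor but according to the presence, \emph{anywhere as a subpolynomial}, of a nested triple product $p\times_{\mrd}(q\times_{\mrd}r)$ with $p,q,r\notin\{0,1\}$ (the "large" polynomials of Definition~\ref{def:large-polynomial-ALT}). This class is absorbing under addition and under multiplication on either side (Lemma~\ref{lm:p-generatum-large}) and contains every triple product (Lemma~\ref{lm:p-q-r-large}), so identifying all large polynomials into a single class makes the multiplicative closure of any finite set consist of the generators, their binary products, and one extra class --- which is exactly the hypothesis of Lemma~\ref{lm:weakly locally finite strong bimonoid}. (Note that in my counterexample $v^3=v\times_{\mrd}(v\times_{\mrd}v)$ is large and so gets absorbed.) Meanwhile the witness family survives because each $p_{n+1}=[x]_{\rmAC}\times(1+p_n)$ has $1$ as one of the two summands of its inner sum, so no subpolynomial ever matches the forbidden pattern. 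If you want to repair your proof, replace your summand-counting identities by this "largeness" congruence and prove the two closure lemmas; the rest of your argument then goes through essentially as in the paper.
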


The underlying idea is indicated by the following lemma.

\begin{lemma}\label{lm:weakly locally finite strong bimonoid} \rm Let  $\sfB = (B,+,\times,0,1)$  be an idempotent right-distributive strong bimonoid, and let  $d \in B$. Let  $\sim$  be a congruence relation on  $\sfB$ such that for all $a,b,c \in \sfB \setminus \{ 0, 1\}$, we have
$a \times (b \times c) \sim d \times (d \times d)$. 
Then  $\sfB/{\sim}$ is an idempotent right-distributive weakly locally finite strong bimonoid.    
\end{lemma}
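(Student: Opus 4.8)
The plan is to show that the quotient $\sfB/{\sim}$ inherits the algebraic laws of $\sfB$ automatically and then to verify finiteness of weak closures by hand, using the hypothesis to collapse all ``deep'' products to a single element.

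First I would record the easy part. Since $\sim$ is a congruence on the strong bimonoid $\sfB$, the quotient $\sfB/{\sim}$ is again a strong bimonoid; moreover, by Lemma~\ref{identitiestofactoralgebras} (or Lemma~\ref{lm:free-algebra-quotient}), every identity satisfied by $\sfB$ is satisfied by $\sfB/{\sim}$. In particular, as $\sfB$ is idempotent ($b\oplus b = b$, i.e.\ satisfies $e_{11}$) and right-distributive (satisfies $e_9$), so is $\sfB/{\sim}$. Thus $\sfB/{\sim}$ is an idempotent right-distributive strong bimonoid, and by Lemma~\ref{obs:biloc-fin+right-disrt-loc-fin} it only remains to prove that $\sfB/{\sim}$ is \emph{bi-locally finite}, i.e.\ both $(\sfB/{\sim},+,0)$ and $(\sfB/{\sim},\times,1)$ are locally finite. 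I write $\overline{b}$ for the $\sim$-class of $b$ and $\overline{d^3}$ for the common class $\overline{d\times(d\times d)}$.

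For \emph{multiplicative} local finiteness, let $\overline{A}=\{\overline{a_1},\dots,\overline{a_k}\}$ be a finite subset of $\sfB/{\sim}$ and consider the submonoid it generates under $\times$ and $1$. Any product of generators either reduces (using $0\times x = 0 = x\times 0$ and $1\times x = x = x\times 1$) to $0$, to $1$, to a single $\overline{a_i}$ with $a_i\notin\{0,1\}$, or to a product $\overline{b}\times(\overline{b'}\times\overline{b''})$ of at least three factors none of which is $0$ or $1$. In the last case, since $\times$ is associative in $\sfB/{\sim}$ and the hypothesis gives $b\times(b'\times b'')\sim d\times(d\times d)$ for all such $b,b',b''$, every such product equals $\overline{d^3}$. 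Hence the generated submonoid is contained in $\{0,1,\overline{d^3}\}\cup\{\overline{a_i} : a_i\notin\{0,1\}\}$, a finite set, so $(\sfB/{\sim},\times,1)$ is locally finite.

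For \emph{additive} local finiteness, let again $\overline{A}=\{\overline{a_1},\dots,\overline{a_k}\}$ be finite and consider the submonoid generated under $+$ and $0$. Because $+$ is associative, commutative \emph{and idempotent} in $\sfB/{\sim}$, every element of this submonoid is a sum of a \emph{subset} of $\{\overline{a_1},\dots,\overline{a_k}\}$ together possibly with $0$ (duplicates collapse by idempotency, order and bracketing are irrelevant). There are at most $2^k$ such subsets, so the generated submonoid is finite, and $(\sfB/{\sim},+,0)$ is locally finite. Combining the two, $\sfB/{\sim}$ is bi-locally finite, hence weakly locally finite by Lemma~\ref{obs:biloc-fin+right-disrt-loc-fin}, which completes the proof. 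The only mildly delicate point — and the one I would state carefully — is the reduction step in the multiplicative case: one must be sure that after normalising away all occurrences of $0$ and $1$, \emph{every} remaining nonempty product of $\ge 3$ genuine factors is $\sim$-equivalent to $d\times(d\times d)$; this is exactly what the hypothesis on $\sim$ supplies, once one also uses that a product of $\ge 3$ factors can be reassociated (via associativity in the quotient) into the shape $x\times(y\times z)$ with $x,y,z\notin\{0,1\}$.
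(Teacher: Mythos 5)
Your overall strategy is exactly the paper's: pass idempotence and right-distributivity to the quotient, observe that idempotence gives additive local finiteness, use the hypothesis to get multiplicative local finiteness, and conclude via Lemma~\ref{obs:biloc-fin+right-disrt-loc-fin}. However, there is a slip in your multiplicative case analysis. You claim every product of generators reduces to $0$, to $1$, to a single generator, or to a product of at least three factors none of which is (congruent to) $0$ or $1$, and you conclude that the generated submonoid is contained in $\{0,1,\overline{d^3}\}\cup\{\overline{a_i} : a_i\notin\{0,1\}\}$. This omits products of \emph{exactly two} nontrivial factors, e.g.\ $\overline{a_1}\otimes\overline{a_2}$: the hypothesis only collapses triple (and longer) products, so such binary products need not equal $\overline{d^3}$ nor any single generator, and your claimed containment is false as stated. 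The repair is immediate — adjoin the finitely many (at most $k^2$) binary products of generators to your finite superset, which is precisely what the paper's proof does by listing ``all binary products of elements of $F$'' explicitly. With that one addition your argument is complete and coincides with the paper's.
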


\begin{proof} We write  $\sfB/{\sim} = (B/{\sim},\oplus,\otimes,\0,\1)$.
Clearly,  $\sfB/{\sim}$  is an idempotent, hence additively locally finite, right-distributive strong bimonoid.  We show that it is also multiplicatively locally finite as follows.
If  $F$  is a finite subset of $\sfB/{\sim}$,
then the multiplicative submonoid  $\langle F\rangle_{\{\otimes\}}$  of $\sfB/{\sim}$  generated by  $F$ contains $F \cup \{ \1\}$,
all binary products of elements of  $F$  and, possibly, $[d]_\sim\otimes ([d]_\sim \otimes [d]_\sim)$.
Thus $\langle F\rangle_{\{\otimes\}}$ is finite.

Hence, $\sfB/{\sim}$  is bi-locally finite. Since  $\sfB/{\sim}$  is right-distributive, by Lemma \ref{obs:biloc-fin+right-disrt-loc-fin}, it  is weakly locally finite.    
\end{proof}

The problem is to construct a strong bimonoid  $\sfB$ and a congruence  $\sim$ satisfying the conditions of Lemma \ref{lm:weakly locally finite strong bimonoid}, but so that the strong bimonoid  $\sfB/{\sim}$  is not locally finite.

For this, we will exploit the idempotent right-distributive strong bimonoid  $\sfBidrd[X]$. 
Clearly,  $\sfBidrd[X]$  is not multiplicatively locally finite, since for each  $x \in X$,
the multiplicative submonoid generated by  $\{[x]_\rmAC\}$  is infinite.
Hence our goal is to choose a suitable congruence  $\sim$ on  $\sfBidrd[X]$ so that the strong bimonoid  $\sfM(X) = \sfBidrd[X]/{\sim}$  satisfies the conditions of Lemma \ref{lm:weakly locally finite strong bimonoid}, without being locally finite.

We will need the notion of subpolynomial. 
Intuitively, an id-reduced polynomial $p$  is a subpolynomial of an id-reduced polynomial $q$, if  $q$  can be obtained from  $p$
and arbitrary id-reduced polynomials resp. monomials by the constructions described in Observation \ref{obs:id-reduced-polynomials}. The exact definition is the following.
\begin{definition}\label{def:subpolynomial}\rm Let  $p, q \in \bbBidrd[X]$.
We say that  $p$  is a \emph{subpolynomial} of  $q$, 
if whenever  $U$  is a subset of  $\sfBidrd[X]$  satisfying that    
\begin{compactitem}
\item[--] $p \in U$,
\item[--] if  $r \in U$, $r' \in \bbBidrd[X]$ and  $m$  is a monomial, then  $r +_{\mrid} r' \in U$  and 
$m \times r \in U$, and
\item[--] if  $m \in U$  is a monomial and  $r \in \bbBidrd[X]$, then  $m \times r \in U$,
\end{compactitem}
then  $q \in U$. \hfill$\Box$
\end{definition}

Equivalently, we have  $p = [s]_\rmAC$ and  $q = [t]_\rmAC$
for some  $s,t \in \rmPTid(X)$  such that  $s$  is a subterm of  $t$.

We define the collection of  large polynomials in  $\bbBidrd[X]$ as follows. Intuitively, all products $ p \times_{\mrd} (q \times_{\mrd} r)$ of id-reduced polynomials
$p, q, r \in \bbBidrd[X]\setminus \{ 0, 1\}$ are large, and any id-reduced polynomial obtained from a large polynomial by adding or multiplying it with any further 
id-reduced polynomials should also be large. As formal definition we take the following.

\begin{definition}\label{def:large-polynomial-ALT} \rm A polynomial $q \in \bbBidrd[X] \setminus \{ 0, 1 \}$  is \emph{large},
if  $q$  has a subpolynomial  $p$  of the form   
\[p = [x]_\rmAC \times (([y_1]_\rmAC \times p') + \ldots + ([y_n]_\rmAC \times p')),\] 
where  $n \geq 1$, $x,y_1 \in X$, $y_i \in X \cup \{ \hat{1} \}$  for each  $i \in [n]$, 
the elements  $y_1,\ldots, y_n$ are pairwise different, and  
$p'\in \bbBidrd[X] \setminus \{0, 1\}$.\hfill$\Box$
\end{definition}

Subsequently, we will use the following implication several times:
If  $q \in \bbBidrd[X]$  is large,  $r \in \bbBidrd[X]$  and  $m$  is a monomial, 
then  $q +_{\mrid} r = r +_{\mrid} q$  and  $m \times q$  are also large. 
This is immediate, since each subpolynomial of  $q$  
is also a subpolynomial of  $q +_{\mrid} r$  and of  $m \times q$.

\begin{lemma}\label{lm:p-q-r-large}\rm Let  $p, q, r \in \bbBidrd[X] \setminus \{ 0, 1\}$.
Then  $p \times_{\mrd} (q \times_{\mrd} r)$  is large. 
\end{lemma}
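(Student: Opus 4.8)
The plan is to prove this by induction, reducing the general statement to a core claim about products of the shape $[x]_\rmAC \times_\mrd (q \times_\mrd r)$ with $x \in X$. Two facts will be used throughout: largeness is preserved when one adds any further id-reduced polynomial and when one multiplies on the left by a monomial (both observed just before the lemma); and, by right cancellativity (Corollary \ref{cor:cancellation-in-Pol}(b)) together with Definition \ref{def:operations}(b), if $q_0$ is an id-reduced sum polynomial and $r \ne 0, 1$, then $q_0 \times_\mrd r$ is again a sum polynomial with at least two distinct summands. The structural descriptions of Observation \ref{obs:id-reduced-polynomials-decomposition} and Lemma \ref{lm:uniqueness-lemma-polynomials}(c) will be used freely.

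First I would eliminate the outer factor $p$. Since $p \ne 1$, by Observation \ref{obs:id-reduced-polynomials-decomposition} $p$ has a summand that is a monomial or an id-reduced product polynomial, so, largeness surviving $+_\mrid$, it suffices to treat $p$ equal to a monomial or an id-reduced product polynomial. If $p = m \times p'$ with $m$ a monomial and $p'$ an id-reduced sum polynomial (Lemma \ref{lm:uniqueness-lemma-polynomials}(c)), then $p \times_\mrd (q \times_\mrd r) = m \times (p' \times_\mrd (q \times_\mrd r))$ by Definition \ref{def:operations}(c); as $p' \ne 0, 1$ and $\size(p') < \size(p)$, the induction hypothesis makes $p' \times_\mrd (q \times_\mrd r)$ large, hence so is $m \times (\cdot)$. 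If $p = [x_1]_\rmAC \times \cdots \times [x_k]_\rmAC$ is a monomial, then by associativity of $\times$ and Definition \ref{def:operations}(a) one has $p \times_\mrd (q \times_\mrd r) = [x_1]_\rmAC \times \bigl( [x_2]_\rmAC \times \cdots \times [x_k]_\rmAC \times_\mrd (q \times_\mrd r) \bigr)$, and a side induction on $k$ reduces to $k = 1$, i.e.\ $p = [x]_\rmAC$ for a single $x \in X$.

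The core claim is then: for all $q, r \in \bbBidrd[X] \setminus \{0, 1\}$ and $x \in X$, the polynomial $[x]_\rmAC \times_\mrd (q \times_\mrd r)$ is large; I would prove it by induction on $\size(q)$ with a case distinction on $q$. If $q$ is a monomial or an id-reduced product polynomial, write $q = m_q \times q_0$ with $m_q$ a nonempty monomial; then $q \times_\mrd r$ equals $m_q \times r$ or $m_q \times (q_0 \times_\mrd r)$ (Definition \ref{def:operations}), so $[x]_\rmAC \times_\mrd (q \times_\mrd r) = ([x]_\rmAC \times m_q) \times s$ with $s \ne 0, 1$ and a monomial prefix $[x]_\rmAC \times m_q$ of length $\ge 2$; any such polynomial is large, taking $n = 1$ in Definition \ref{def:large-polynomial-ALT} (if $s$ happens to be a monomial one instead has a monomial of length $\ge 3$, again large with $n = 1$). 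If $q = q_1 + \cdots + q_m$ is an id-reduced sum polynomial ($m \ge 2$) some of whose summands $q_{i_0}$ is a monomial of length $\ge 2$ or an id-reduced product polynomial, then $q_{i_0} \times_\mrd r$ occurs as a summand of the id-reduced sum polynomial $q \times_\mrd r$, hence is a subpolynomial of $[x]_\rmAC \times_\mrd (q \times_\mrd r)$, and it is large: directly when its monomial prefix has length $\ge 2$, and otherwise --- when $q_{i_0} = [z]_\rmAC \times q_{i_0}'$ --- by applying the core claim's induction hypothesis to the strictly smaller sum polynomial $q_{i_0}'$. In the remaining case every $q_i$ is $1$ or lies in $X/\rmAC$; then by Definition \ref{def:operations}(b) and Corollary \ref{cor:cancellation-in-Pol}(b), $q \times_\mrd r$ equals, up to reordering, $([y_1]_\rmAC \times r) + \cdots + ([y_s]_\rmAC \times r)$ with pairwise distinct $y_1, \dots, y_s \in X$ ($s \ge 1$), together with an extra summand $r$ precisely when some $q_i = 1$; thus $[x]_\rmAC \times_\mrd (q \times_\mrd r)$ is literally of the form in Definition \ref{def:large-polynomial-ALT} with $p' = r$, the extra summand $r$ being the contribution of an index with $y_j = \hat{1}$ (for which $[y_j]_\rmAC \times p' = p' = r$) --- which is exactly what the clause $y_i \in X \cup \{\hat{1}\}$ in the definition allows.

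The step I expect to be the main obstacle is this last case analysis. The delicate points are: verifying that the relevant products ($q_{i_0} \times_\mrd r$, and the $[y_i]_\rmAC \times r$) really occur as summands of the \emph{id-reduced} sum polynomial $q \times_\mrd r$, so that they qualify as subpolynomials --- this needs their pairwise distinctness, i.e.\ right cancellativity; correctly matching a $1$-summand of $q$ against the $y_j = \hat{1}$ possibility in Definition \ref{def:large-polynomial-ALT}, bearing in mind that $r$ may itself be a sum polynomial so the single term $[\hat{1}]_\rmAC \times r = r$ may contribute several summands; and isolating the two reusable facts ``a product polynomial with a monomial prefix of length $\ge 2$ (or a monomial of length $\ge 3$) is large'' and ``a sum polynomial times a nonzero non-unit polynomial is again a sum polynomial with $\ge 2$ distinct summands'', after which the remaining verifications are routine.
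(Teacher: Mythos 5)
Your proof is correct and follows essentially the same route as the paper's: induction on the sizes of the factors, case analysis on the structure of $p$ and $q$, closure of largeness under $+_{\mrid}$ and left multiplication by monomials, right cancellativity to get pairwise distinct summands, and the direct match with Definition~\ref{def:large-polynomial-ALT} in the case where all summands of $q$ lie in $\XAC \cup \{1\}$. The only differences are organizational (you isolate a core claim for $p \in \XAC$ and short-circuit the monomial/product cases by exhibiting a length-$\ge 2$ monomial prefix directly, where the paper recurses via its joint induction on $\size(p)+\size(q)$), and all the delicate points you flag are handled correctly.
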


\begin{proof} We put  $u = p \times_{\mrd} (q \times_{\mrd} r)$. We proceed by induction on  $\size(p) + \size(q)$.
First, assume that  $p = [x]_\rmAC$  with  $x \in X$. 
Then  $u = p \times (q \times_{\mrd} r)$.

If also  $q \in X/{\rmAC}$, say  $q = [y]_\rmAC$  with  $y \in X$, 
then  $u = [x]_\rmAC \times ([y]_\rmAC \times r)$
has the required form showing that $u$ is large.

Now, let  $q$  be a sum polynomial. Write
$q = q_1 + \ldots + q_n$  where $n \geq 2$ and each  $q_i$  is a product polynomial
or an element of $X/{\rmAC} \cup \{ 1 \}$, and the elements  $q_1, \ldots, q_n$  are pairwise different. 
Recall that by Corollary \ref{cor:cancellation-in-Pol}(b), the elements
$q_1 \times_{\mrd} r, \ldots, q_n \times_{\mrd} r$  are pairwise different.
Then
$u = p \times (q_1 \times_{\mrd} r + \ldots + q_n \times_{\mrd} r)$. 

First assume that  $q_i \in X/{\rmAC} \cup \{ 1 \}$  for each  $i \in [n]$. 
Since  $q$  is id-reduced, we have  $q_i = 1$  for at most one  $i \in [n]$. 
Since  $n \geq 2$, we may assume that  $q_1\in X/{\rmAC}$.
Then  $u = p \times (q_1 \times r + \ldots + q_n \times r)$, 
showing that  $u$  has the form described in Definition \ref{def:large-polynomial-ALT}.

Second, assume that there exists $i \in [n]$ such that $q_i$  is a product polynomial.
Then $q_i = m_i \times q'_i$ for a monomial  $m_i$  and a polynomial  $q'_i \neq 1$.
Since $\size(m_i) + \size(q'_i) < \size(q) < \size(p) + \size(q)$,
by our induction hypothesis the product
$q_i \times_{\mrd} r = m_i \times_{\mrd} (q'_i \times_{\mrd} r)$
is large. Then, as noted above,  
$q_1 \times_{\mrd} r + \ldots + q_n \times_{\mrd} r$  is also large, hence also  $u$. 

Next, let  $q$  be a product polynomial. Write
$q = m \times q'$  with a monomial  $m$  and a polynomial $q'$. 
Then  $u = p \times (m \times (q' \times_{\mrd} r))$. 
Since $\size(m) + \size(q') < \size(q) < \size(p) + \size(q)$,
by our induction hypothesis the product  
$m \times (q' \times_{\mrd} r) = m \times_{\mrd} (q' \times_{\mrd} r) $  is large.
Again, as noted above, then also  $u$  is large. 

Secondly, assume that  $p$  is a sum polynomial. Write
$p = p_1 + \ldots + p_n$  where  $n \geq 2$, and each  $p_i$  is a product polynomial
or an element of $X/{\rmAC} \cup \{ 1 \}$, and the elements $p_1, \ldots, p_n$  are pairwise different. 
Since  $n \geq 2$, there is  $j \in [n]$  with  $p_j \neq 1$. We have
$u = p_1 \times_{\mrd} (q \times_{\mrd} r) + \ldots + p_n \times_{\mrd} (q \times_{\mrd} r)$.
Since $\size(p_j) + \size(q) < \size(p) + \size(q)$, 
by our induction hypothesis the product
$p_j \times_{\mrd} (q \times_{\mrd} r)$  is large, consequently also  $u$. 

Finally, let  $p$  be a product polynomial. Write
$p = m \times p'$  with a monomial  $m$  and a polynomial $p'$. Then
$u = m \times (p' \times_{\mrd} (q \times_{\mrd} r))$.
Since $\size(p') + \size(q) < \size(p) + \size(q)$, 
by the induction hypothesis the product  $p' \times_{\mrd} (q \times_{\mrd} r)$
is large, consequently again also  $u$. 
\end{proof}

Next we show that the collection of large polynomials is closed under the addition of 
and multiplication with arbitrary id-reduced polynomials.

\begin{lemma}\label{lm:p-generatum-large}\rm Let  $p, q \in \bbBidrd[X] \setminus \{ 0, 1 \}$.
If  $p$  is large, then  $p +_{\mrid} q$, $p \times_{\mrd} q$  and  $q \times_{\mrd} p$  are also large. 
\end{lemma}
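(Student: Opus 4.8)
The plan is to prove Lemma~\ref{lm:p-generatum-large} by using the preceding observation about subpolynomials together with the decomposition structure of id-reduced polynomials. The key idea is already indicated right before the lemma: if $p$ is large, witnessed by a subpolynomial $p_0$ of the form described in Definition~\ref{def:large-polynomial-ALT}, then it suffices to show that $p_0$ remains a subpolynomial of $p +_{\mrid} q$, of $p \times_{\mrd} q$, and of $q \times_{\mrd} p$. For the first two, this is immediate from the remark preceding the lemma (each subpolynomial of $p$ is a subpolynomial of $p +_{\mrid} r$ and of $m \times r$ — and the addition case covers $p +_{\mrid} q$ directly, while the multiplication with a monomial handles the case where $q$ is a monomial). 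So the only real work is the case $q \times_{\mrd} p$, where $p$ sits on the \emph{right} of the product, and the case $p \times_{\mrd} q$ when $q$ is not a monomial.

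First I would dispose of $p +_{\mrid} q$: since $p$ is a subpolynomial of $p +_{\mrid} q$ (it is literally a summand, up to the idempotent reduction, so a subpolynomial), and $p_0$ is a subpolynomial of $p$, transitivity of the subpolynomial relation gives that $p_0$ is a subpolynomial of $p +_{\mrid} q$; hence $p +_{\mrid} q$ is large. Next, $p \times_{\mrd} q$: here I would argue by induction on $\size(p)$, using Definition~\ref{def:operations}. If $p$ is a monomial it cannot be large (its only subpolynomials are monomials, which are not of the required form), so this case is vacuous. If $p = p_1 + \ldots + p_n$ is a sum polynomial, then $p \times_{\mrd} q = p_1 \times_{\mrd} q + \ldots + p_n \times_{\mrd} q$; the subpolynomial $p_0$ witnessing largeness of $p$ is a subpolynomial of some $p_i$, and by induction $p_i \times_{\mrd} q$ inherits $p_0$ as a subpolynomial (or one must check $p_0$ survives the $\times_{\mrd} q$ operation — here the inductive statement to carry is precisely ``if $p_i$ has $p_0$ as a subpolynomial then so does $p_i \times_{\mrd} q$''). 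If $p = m \times p'$ is a product polynomial, then $p \times_{\mrd} q = m \times (p' \times_{\mrd} q)$, and $p_0$ being a subpolynomial of $p$ is either a subpolynomial of $p'$ — handled by induction — or equals $p$ itself, in which case I must check directly that $p = m \times p'$ being of the large shape forces $m \times (p' \times_{\mrd} q)$ to contain a large subpolynomial; unfolding the shape $p_0 = [x]_\rmAC \times (([y_1]_\rmAC \times p') + \ldots + ([y_n]_\rmAC \times p'))$ and applying $\times_{\mrd} q$ via Definition~\ref{def:operations}(b),(c) shows that the result is $[x]_\rmAC \times (([y_1]_\rmAC \times (p' \times_{\mrd} q)) + \ldots + ([y_n]_\rmAC \times (p' \times_{\mrd} q)))$, which again has the large shape with $p'$ replaced by $p' \times_{\mrd} q \in \bbBidrd[X] \setminus \{0,1\}$ (using that $p' \neq 1$ and $q \neq 1$ and Corollary~\ref{cor:cancellation-in-Pol}(b) to keep the $[y_i]_\rmAC \times (p' \times_{\mrd} q)$ pairwise distinct).

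The case $q \times_{\mrd} p$ is the one I expect to be the main obstacle, because the large subpolynomial of $p$ is not automatically a subpolynomial of $q \times_{\mrd} p$ — the multiplication $\times_{\mrd}$ acts on the left factor, reaching ``into'' $p$ only at the leaves. Here I would again induct, this time on $\size(q)$, following the structure of Definition~\ref{def:operations}: for $q$ a monomial, $q \times_{\mrd} p = q \times p = m \times p$, and $p$ (hence $p_0$) is a subpolynomial of $m \times p$, done. For $q = q_1 + \ldots + q_n$ a sum polynomial, $q \times_{\mrd} p = q_1 \times_{\mrd} p + \ldots + q_n \times_{\mrd} p$, and by induction each $q_i \times_{\mrd} p$ is large (note $q_i$ may be $1$, in which case $q_i \times_{\mrd} p = p$ which is large by hypothesis; otherwise $q_i \neq 0,1$ and induction applies), and then $q \times_{\mrd} p$ is large by the remark preceding the lemma (adding large to anything stays large). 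For $q = m \times q'$ a product polynomial, $q \times_{\mrd} p = m \times (q' \times_{\mrd} p)$, where $q' \times_{\mrd} p$ is large by induction ($\size(q') < \size(q)$, and $q' \neq 0, 1$ since $q$ is a product polynomial that is not a monomial — if $q'$ were $1$ then $q = m$ would be a monomial), and multiplying a large polynomial by the monomial $m$ keeps it large by the remark. This completes the induction and the proof; the only subtlety to be careful about is tracking when the relevant sub-factors lie in $\bbBidrd[X] \setminus \{0,1\}$ so that the inductive hypothesis and the ``large shape'' requirement of Definition~\ref{def:large-polynomial-ALT} are legitimately applicable.
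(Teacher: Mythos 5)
Your treatment of $p +_{\mrid} q$ and of $q \times_{\mrd} p$ is essentially sound and close in spirit to the paper's, but the $p \times_{\mrd} q$ part contains a genuine error and an incomplete case analysis. First, the claim ``if $p$ is a monomial it cannot be large'' is false: Definition~\ref{def:large-polynomial-ALT} allows $n=1$ and $p'$ a monomial, so $[x]_\rmAC \times ([y_1]_\rmAC \times p')$ is then a monomial of length at least $3$; equivalently, Lemma~\ref{lm:p-q-r-large} with $p=q=r=[x]_\rmAC$ already makes $[x]_\rmAC \times_{\mrd} ([x]_\rmAC \times_{\mrd} [x]_\rmAC)$ large (this is exactly what the application in Section~\ref{sect:wlc-strong-bimonoids} needs). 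So the monomial base case of your induction is not vacuous; it must be argued (easily: every subpolynomial of a monomial $p$ is a subpolynomial of $p \times q$), and its absence also undercuts your sum case when the large witness $p_0$ sits inside a summand $p_i$ that is a (long) monomial. Second, the inductive invariant you float in the sum case --- ``if $p_i$ has $p_0$ as a subpolynomial then so does $p_i \times_{\mrd} q$'' --- is false: right multiplication by $\times_{\mrd}$ rewrites the rightmost leaves, so e.g.\ for $p_i = p_0 = [x]\times(([y_1]\times[z])+([y_2]\times[z]))$ one has $p_0 \times_{\mrd} [w] = [x]\times(([y_1]\times[zw])+([y_2]\times[zw]))$, of which $p_0$ is not a subpolynomial. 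Relatedly, in the product case $p = m\times p'$ your dichotomy ``$p_0$ is a subpolynomial of $p'$ or $p_0 = p$'' is not exhaustive: $p_0$ can also be $m''\times p'$ for a proper suffix $m''$ of $m$, or a sub-monomial of $m$, and these cases need (easy, but separate) arguments of the same kind as your shape computation for $p_0 = p$.

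All of these gaps are repairable, but note that the paper sidesteps them entirely by leaning on Lemma~\ref{lm:p-q-r-large} instead of tracking the witness $p_0$ through the multiplication: for $p \times_{\mrd} q$ it observes that if $p$ is a product polynomial, or a sum polynomial having at least one product-polynomial summand (which must happen if $p$ is large and not a monomial), then the relevant factor/summand times $q$ is a triple product and hence large by Lemma~\ref{lm:p-q-r-large}, regardless of where the witness sits; the only case where largeness of $p$ is actually used is the monomial case, handled by subpolynomial inheritance. Adopting that strategy (which you never invoke, although you use the lemma nowhere) would eliminate both the false invariant and the missing subcases; as written, your induction for $p \times_{\mrd} q$ does not go through.
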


\begin{proof} Let  $p$  be large. The result for  $p +_{\mrid} q$  was already noted before.

Now we consider  $p \times_{\mrd} q$. We proceed by case distinction.

First, let  $p$  be a monomial. Then   $p \times_{\mrd} q = p \times q$.
Each subpolynomial of  $p$  is also a subpolynomial of $p \times q$.
Hence  $p \times q$  is large. 

Secondly, let  $p$  be a product polynomial. Then $p = m \times p'$  for a monomial  $m$
and a polynomial  $p' \neq 1$. Then by Lemma \ref{lm:p-q-r-large},
$p \times_{\mrd} q = m \times_{\mrd} (p' \times_{\mrd} q)$  is large.

Thirdly, assume that  $p$  is a sum polynomial. Write
$p = p_1 + \ldots + p_n$  where  $n \geq 2$ and each  $p_i$  is a product polynomial
or an element of $X/{\rmAC} \cup \{ 1 \}$, and the elements $p_1,\ldots, p_n$  are pairwise different.
Clearly, we cannot have  $p_i \in X/{\rmAC} \cup \{ 1 \}$  for each  $i \in [n]$,
since then  $p$  would not be large.
Hence there is  $i \in [n]$ such that  $p_i$  is a product polynomial.
Then, as shown above,  $p_i \times_{\mrd} q$  is large by Lemma \ref{lm:p-q-r-large}.
As already seen, then the sum polynomial
$p \times_{\mrd} q = p_1 \times_{\mrd} q + \ldots + p_n \times_{\mrd} q$
is also large.

Finally, we consider  $q \times_{\mrd} p$. We proceed again by case distinction.

If  $q$  is a monomial, we have  $q \times_{\mrd} p = q \times p$
and our assumption on  $p$ implies the result for  $q \times p$.

Also, if  $q$  is a product polynomial, by Lemma \ref{lm:p-q-r-large}, $q \times_{\mrd} p$  is large.

It remains to consider the case that  $q$  is a sum polynomial. Write
$q = q_1 + \ldots + q_n$ where  $n \geq 2$ and each  $q_i$  is a product polynomial
or an element of $X/{\rmAC} \cup \{ 1 \}$, and the elements  $q_1,\ldots, q_n$  are pairwise different. Then
$q \times_{\mrd} p = q_1 \times_{\mrd} p + \ldots + q_n \times_{\mrd} p$.
Now if  $q_1$  is a product polynomial, then by Lemma \ref{lm:p-q-r-large},
$q_1 \times_{\mrd} p$  is large. From this we obtain, as before, that the sum given by
$q \times_{\mrd} p$  is large.
But if  $q_1 \in X/{\rmAC} \cup \{ 1 \}$, then   $q_1 \times_{\mrd} p = q_1 \times p$,
and the assumption on  $p$ implies that  $q_1 \times p$  is large,
consequently again also  $q \times_{\mrd} p$.
\end{proof}

We note in passing that, as a consequence, a polynomial  $\pi \in \sfBidrd[X]$ is large
if and only if  $\pi$  contains a product of the form  
$p \times_{\mrd} (q \times_{\mrd} r)$, for some  $p, q, r \in \bbBidrd[X] \setminus \{ 0, 1\}$, as a subpolynomial.
Here the "if"  part is immediate by Lemma \ref{lm:p-q-r-large}. 
The converse is also immediate noting
that the subpolynomial described in Definition \ref{def:large-polynomial-ALT} can be written as
$[x]_\rmAC \times_\mrd (([y_1]_\rmAC  + \ldots + [y_n]_\rmAC) \times_\mrd p'))$. 
This was our goal mentioned before Definition \ref{def:large-polynomial-ALT}.
However, this characterization will not be used subsequently.

Now we define a binary relation $\sim_L$  on  $\bbBidrd[X]$  as follows: for every $q, r \in \bbBidrd[X]$, we let

$q \sim_L r$  if and only if  $q = r$  or  both  $q$  and  $r$ are large.

Clearly, by Lemma \ref{lm:p-generatum-large}, if  $p, q, r \in \bbBidrd[X]$  and  $q \sim_L r$, then also
$p +_{\mrid} q \sim_L p +_{\mrid} r$, $p \times_{\mrd} q \sim_L p \times_{\mrd} r$, and  $q \times_{\mrd} p \sim_L r \times_{\mrd} p$.
Hence  $\sim_L$  is a congruence on   $\sfBidrd[X]$.

The quotient algebra of $\sfBidrd[X]$ with respect to $\sim_L$ is
\[ \sfBidrd[X]/{\sim_L}=(\bbBidrd[X]/{\sim_L}, +_{\mrid}/{\sim_L}, \times_{\mrd}/{\sim_L}, [0]_{\sim_L}, [1]_{\sim_L}) \enspace.\]
The algebra $\sfBidrd[X]/{\sim_L}$ is an idempotent,  right-distributive strong bimonoid because it is a factor algebra of the strong bimonoid $\sfBidrd[X]$.

In the following we abbreviate $\sfBidrd[X]/{\sim_L}$ by $\sfM(X)$, and abbreviate also the components of $\sfBidrd[X]/{\sim_L}$ by writing
$\sfM(X) =(\rmM(X),\oplus,\otimes,\0,\1)$. Moreover, for  each $q\in \bbBidrd[X]$, we abbreviate $[q]_{\sim_L}$ by $[q]_L$.

With the following result, we also obtain Theorem \ref{thm:existence-theorem}.

\begin{theorem}\label{thm:M-is-what-we-want} The strong bimonoid $\sfM(X) =(\rmM(X),\oplus,\otimes,\0,\1)$ is idempotent,  right-distributive, weakly locally finite and not locally finite.
\end{theorem}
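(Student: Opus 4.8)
The plan is to verify the four properties of $\sfM(X)$ separately, relying on the structural results already established.

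\textbf{Idempotency and right-distributivity.} These are immediate: $\sfM(X) = \sfBidrd[X]/{\sim_L}$ is a quotient of the idempotent right-distributive strong bimonoid $\sfBidrd[X]$ (Theorem~\ref{thm:strong-bimonoid-Pol-id}) by the congruence $\sim_L$, and by Lemma~\ref{identitiestofactoralgebras} quotients inherit all identities, so $\sfM(X)$ satisfies $e_9$ and $e_{11}$. I would just cite this.

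\textbf{Weakly locally finite.} First I would check the hypotheses of Lemma~\ref{lm:weakly locally finite strong bimonoid} with $\sfB = \sfBidrd[X]$, $\sim\; =\; \sim_L$, and $d = [x]_\rmAC$ for some fixed $x \in X$ (recall $X$ is nonempty). For arbitrary $[p]_L, [q]_L, [r]_L \in \rmM(X) \setminus \{\0,\1\}$, I need $[p]_L \otimes ([q]_L \otimes [r]_L) = [d]_L \otimes ([d]_L \otimes [d]_L)$ in $\sfM(X)$. Since $[p]_L, [q]_L, [r]_L$ are different from $\0$ and $\1$, I may choose representatives $p, q, r \in \bbBidrd[X] \setminus \{0, 1\}$ (here one must observe that $[p]_L \ne \0$ forces $p \ne 0$ — because $0$ is not large, its class is $\{0\}$ — and similarly $p \ne 1$; and $[p]_L$ could still be a class containing $0$? no: $0 = 0$ only, so $[0]_L = \{0\}$, fine). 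Then by Lemma~\ref{lm:p-q-r-large}, $p \times_\mrd (q \times_\mrd r)$ is large, and likewise $d \times_\mrd (d \times_\mrd d) = [x]_\rmAC \times_\mrd ([x]_\rmAC \times_\mrd [x]_\rmAC)$ is large (again by Lemma~\ref{lm:p-q-r-large}, since $[x]_\rmAC \notin \{0,1\}$). Two large polynomials are $\sim_L$-equivalent by definition of $\sim_L$, so their classes coincide in $\sfM(X)$. Hence the hypothesis of Lemma~\ref{lm:weakly locally finite strong bimonoid} holds, and that lemma gives that $\sfM(X)$ is weakly locally finite.

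\textbf{Not locally finite.} This is the main obstacle, and the idea is to exhibit an infinite finitely generated substructure. The natural candidate is the multiplicative submonoid generated by $[[x]_\rmAC]_L$, i.e. the set of classes $[[x]_\rmAC^n]_L$ for $n \ge 1$ where $[x]_\rmAC^n$ denotes the $n$-fold $\times_\mrd$-product (equivalently, the monomial $[x_1]_\rmAC \times \cdots \times [x_n]_\rmAC$ with all $x_i = x$). I would argue that each $[x]_\rmAC^n$ is a monomial, hence is \emph{not} large: a large polynomial must contain a subpolynomial of the shape in Definition~\ref{def:large-polynomial-ALT}, which has a $+_\mrid$-node (or, via the remark after Lemma~\ref{lm:p-generatum-large}, contains a subpolynomial of the form $p \times_\mrd (q \times_\mrd r)$ with $p,q,r \notin \{0,1\}$); but a monomial term $x_1 \hti \cdots \hti x_n$ has no $\hat{+}$ occurring at all and its only product-sum decompositions are into single variables, so by Lemma~\ref{lm:uniqueness-lemma-polynomials} no such subpolynomial exists. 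Therefore $[[x]_\rmAC^n]_L = \{[x]_\rmAC^n\}$ is a singleton class, and the map $n \mapsto [[x]_\rmAC^n]_L$ is injective since $n \mapsto [x]_\rmAC^n$ is injective in $\bbBidrd[X]$ (distinct monomials have distinct $\rmAC$-classes, e.g. by a size argument, $\size([x]_\rmAC^n) = n$). Thus the submonoid $\langle \{[[x]_\rmAC]_L\}\rangle_{\{\otimes\}}$ is infinite, so the strong subbimonoid of $\sfM(X)$ generated by the finite set $\{[[x]_\rmAC]_L\}$ is infinite, and $\sfM(X)$ is not locally finite.

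The one delicate point I expect to need care with is confirming that monomials really are excluded from being large — i.e. that neither the literal definition (Definition~\ref{def:large-polynomial-ALT}) nor any closure under $+_\mrid$, $\times_\mrd$ from Lemma~\ref{lm:p-generatum-large} can produce a monomial as a large polynomial. The cleanest way is: the set of large polynomials is, by Lemmas~\ref{lm:p-q-r-large} and~\ref{lm:p-generatum-large}, contained in the set of id-reduced polynomials having a subpolynomial of the form $p \times_\mrd (q \times_\mrd r)$ with $p,q,r \in \bbBidrd[X]\setminus\{0,1\}$; and I would show by induction on $\size$ that a monomial has no such subpolynomial — every subpolynomial of a monomial $[x_1]_\rmAC \times \cdots \times [x_n]_\rmAC$ is of the form $[x_i]_\rmAC \times \cdots \times [x_j]_\rmAC$ (a sub-monomial), using the uniqueness of product-sum decompositions (Lemma~\ref{lm:uniqueness-lemma-polynomials}(c)), and such an expression never equals $p \times_\mrd (q \times_\mrd r)$ with all three factors $\ne 1$, since that would force a product-sum decomposition with an inner parenthesized factor that is itself a product, contradicting the flat monomial form. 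With that lemma in hand, the argument above goes through cleanly.
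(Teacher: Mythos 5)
Your treatment of idempotency, right-distributivity and weak local finiteness is correct and matches the paper. The genuine gap is in the ``not locally finite'' part: your key claim that monomials are not large is false. Definition~\ref{def:large-polynomial-ALT} allows $n=1$, in which case the required subpolynomial is just $[x]_\rmAC \times ([y_1]_\rmAC \times p')$ with no $\hat{+}$ occurring at all; equivalently, by Lemma~\ref{lm:p-q-r-large}, every product $p \times_\mrd (q \times_\mrd r)$ with $p,q,r \in \bbBidrd[X]\setminus\{0,1\}$ is large. Now the monomial $[x]_\rmAC^{\,n}$ with $n \ge 3$ \emph{is} such a product: by associativity of $\times$ (identity $e_4 \in \rmAC$) we have $[x]_\rmAC^{\,n} = [x]_\rmAC \times_\mrd \bigl([x]_\rmAC \times_\mrd [x]_\rmAC^{\,n-2}\bigr)$, so it is large. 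Hence $[[x]_\rmAC^{\,3}]_L = [[x]_\rmAC^{\,4}]_L = \cdots$ all equal the single class of large polynomials, and the multiplicative submonoid generated by $[[x]_\rmAC]_L$ is finite (it is $\{\1, [[x]_\rmAC]_L, [[x]_\rmAC^{\,2}]_L\}$ together with the class of large elements). Your intended counterexample collapses precisely because the congruence $\sim_L$ was designed to make the quotient multiplicatively locally finite. Indeed, your claim would contradict the weak local finiteness you prove in the same argument: the weak closure of $\{[[x]_\rmAC]_L\}$ is closed under right multiplication by the generator, so it contains all powers $[[x]_\rmAC^{\,n}]_L$, and if these were pairwise distinct the weak closure would be infinite.

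To repair the argument one needs witnesses that are built using \emph{both} operations, in a way not captured by the weak closure (which permits $b \oplus b'$ and only right multiplication $b \otimes a$ by generators $a$). The paper's proof takes $p_0 = [x]_\rmAC$ and $p_{n+1} = [x]_\rmAC \times (1 + p_n)$, i.e.\ it alternates adding $\1$ with \emph{left} multiplication by the generator. It then shows by induction that each $p_n$ is not large (using the sum-product uniqueness of Lemma~\ref{lm:uniqueness-lemma-polynomials}(b): a large subpolynomial would have to sit inside $p_n$, which it does not) and that the $p_n$ are pairwise distinct; hence the classes $a_n = [p_n]_L$ are pairwise distinct and all lie in $\langle\{\1, a_0\}\rangle_{\{\oplus,\otimes\}}$, so $\sfM(X)$ is not locally finite. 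Some construction of this kind, escaping the weak-closure operations, is essential; the purely multiplicative family you propose cannot work.
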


\begin{proof} We show that  $\sfM(X)$  is weakly locally finite. 
Choose any  $\pi \in \bbBidrd[X] \setminus \{ 0, 1\}$, and let  $p, q, r \in \bbBidrd[X] \setminus \{ 0, 1\}$. By Lemma \ref{lm:p-q-r-large}, the products  $p \times_{\mrd} (q \times_{\mrd} r)$  and
$\pi \times_{\mrd} (\pi \times_{\mrd} \pi)$  are both large, hence
$p \times_{\mrd} (q \times_{\mrd} r) \sim_L \pi \times_{\mrd} (\pi \times_{\mrd} \pi)$.
Now Lemma \ref{lm:weakly locally finite strong bimonoid}  shows that  
$\sfM(X) = \sfBidrd[X]/{\sim_L}$  is weakly locally finite, as claimed.

It remains to show that  $\sfM(X)$ is not locally finite. Choose an $x \in X$.

We define, for each  $n \in \mathbb{N}$,  the polynomials  $p_n \in \rmPT(X)$  inductively by letting
\begin{align*}
\text{$p_0 = [x]_\rmAC$  and  
$p_{n+1} = [x]_\rmAC \times (1 + p_n) = [x]_{\rmAC} \times_{\mrd} (1 + p_n)$.}
\end{align*}

So, e.g.,  $p_1 = [x]_\rmAC \times (1 + [x]_\rmAC)$  and 
$p_2 = [x]_\rmAC \times (1 + p_1) =  [x]_\rmAC \times (1 + ([x]_\rmAC \times (1 + [x]_\rmAC)))$.
Then $p_n \in \bbBidrd[X]$ for each $n\in \mathbb{N}$.

Moreover, for each $n \in \mathbb{N}$,  $p_n$  is a product polynomial and,
in particular, we have  $1 + p_n = 1 +_{\mrid} p_n$. 

Now we show that, for each  $n \in \mathbb{N}$,  $p_n$  is not large. 
Clearly, $p_0$  and  $p_1$  are not large. 
Now let  $n \geq 1$ such that  $p_n$  is not large, but suppose that  $p_{n+1}$  is large.
Then  $p_{n+1}$  has a subpolynomial  $p$  of the form described in Definition \ref{def:large-polynomial-ALT}.
Since  $p_{n+1} = 1 + p_n$  is a sum polynomial but  $p$ is a product polynomial,
we have  $p_{n+1} \neq p$. Hence there are polynomials  $q_1, q_2 \in \bbBidrd[X]$
such that  $p$  is a subpolynomial of  $q_1$  and  $p_{n+1} =  q_1 +_{\mrid} q_2$.
By Lemma \ref{lm:uniqueness-lemma-polynomials}(b), we obtain  $q_1 = p_n$  and  $q_2 = 1$.
Hence, $p$  is a subpolynomial of  $p_n$, a contradiction.

Alternatively, we may argue for  $p_{n+1}$  and its subpolynomial $p$  as follows.
Choose simple terms  $s, t \in \rmST(X)$  such that  
$p_{n+1} = [t]_\rmAC$, $p = [s]_\rmAC$  and  $s$  is a subterm of  $t$.
Then, due to the form of $p$ and  the product term  $s$, the labeled graph  $\overline{t}$
would contain a vertex labeled with  $\boxplus$ whose children are not  $\overline{1}$.
But considering the construction of $p_{n+1}$, it is easy to see that in  $\overline{t}$,
each vertex labeled with  $\boxplus$ has precisely two children, one of them being $\overline{1}$.

Consequently, for each $n \in \mathbb{N}$, $p_n$  is not large. 

Next, let  $m \in \mathbb{N}$. We claim that  $p_m \neq p_n$ for all  $n \in \mathbb{N}$  with  $m < n$.
We proceed by induction on  $m$. Trivially,  $p_0 \neq p_n$  for each  $n \in \mathbb{N}_+$.
Now let $m, n \in \mathbb{N}$  with  $m+1 < n$. Suppose we had  $p_{m+1} = p_n$.
Then $n \geq 2$  and $1 + p_m = p_{m+1} = p_n = 1 + p_{n-1}$.
Since  $p_m$  and  $p_{n-1}$  are product polynomials,
by Lemma \ref{lm:uniqueness-lemma-polynomials}(b), we obtain  $p_m = p_{n-1}$.
But since  $m < n-1$, our induction hypothesis implies  $p_m \neq p_{n-1}$, a contradiction.

Now for each  $n \in \mathbb{N}$, we let $a_n = [p_n]_{L} \in \rmM(X)$.
Then
$a_{n+1} = [[x]_{\rmAC}]_L \otimes (\1 \oplus  a_n)$  for each  $n \in \mathbb{N}$,
so $a_n \in \langle \{ \1, a_0 \} \rangle_{\{\oplus, \otimes\}}$.

Now if  $m, n \in \mathbb{N}$  with  $m < n$, both  $p_m$  and  $p_n$ are not large and  $p_m \neq p_n$.
Hence, $p_m \not\sim_L p_n$, showing  $a_m \neq a_n$. 

Thus,  $\langle \{ \1, a_0 \} \rangle_{\{\oplus, \otimes\}}$  is infinite,
showing that  $\sfM(X)$  is not locally finite.
\end{proof}




\bibliographystyle{alpha}
\phantomsection
\addcontentsline{toc}{chapter}{Bibliography} 
\bibliography{signature2.bbl}

\end{document}